\DeclareFontFamily{U}{mathx}{\hyphenchar\font45}
\DeclareFontShape{U}{mathx}{m}{n}{
      <5> <6> <7> <8> <9> <10>
      <10.95> <12> <14.4> <17.28> <20.74> <24.88>
      mathx10
      }{}
\DeclareSymbolFont{mathx}{U}{mathx}{m}{n}
\DeclareMathAccent{\widecheck}{0}{mathx}{"71}
\newtheorem{thm}{Theorem}[section]
\newaliascnt{lem}{thm}
\newtheorem{lem}[lem]{Lemma}
\newaliascnt{cnj}{thm}  
\newaliascnt{qst}{thm}  
\newaliascnt{prp}{thm}  
\newtheorem{prp}[prp]{Proposition}
\newaliascnt{cor}{thm}  
\newtheorem{cor}[cor]{Corollary}
\theoremstyle{definition}
\newaliascnt{dfn}{thm}  
\newtheorem{dfn}[dfn]{Definition}
\newaliascnt{xpl}{thm}
\newtheorem{xpl}[xpl]{Example}
\numberwithin{equation}{section}
\author{Tristan Bice}
\address{
Federal University of Santa Catarina\\
Florianopolis\\
Brazil
}
\email{Tristan.Bice@gmail.com}
\thanks{This research has been supported by a CAPES (Brazil) postdoctoral fellowship through the program ''Science without borders'', PVE project 085/2012.}
\keywords{C*-Algebras, Type Decomposition, Annihilators, Non-Commutative Topology, Ortholattices}
\subjclass[2010]{Primary: 46L05}
\begin{document}

\title{Annihilators and Type Decomposition in C*-Algebras}

\begin{abstract}
We initiate the study of annihilators in C*-algebras, showing that they are, in many ways, the best C*-algebra analogs of projections in von Neumann algebras.  Using them, we obtain a type decomposition for arbitrary C*-algebras that is symmetric and completely consistent with the classical von Neumann algebra type decomposition.  We also show that annihilators admit a very simple notion of equivalence that is again completely consistent with the notion of Murray-von Neumann equivalence in von Neumann algebras, sharing many of its general order theoretic properties.
\end{abstract}

\maketitle

\section{Introduction}

\subsection{Motivation}\label{Motivation}

It is no exaggeration to say that type decomposition and Murray-von Neumann equivalence of projections are absolutely fundamental to the theory of von Neumann algebras.  These concepts have been extended to other operators in a couple of different ways (see below), and this has certainly led to some interesting theory.  However, only very specific aspects of the von Neumann algebra theory have been generalized to C*-algebras in this way.  No doubt it was accepted that this is an unavoidable fact of life, that much of the von Neumann algebra theory simply can not be applied in any general way to the much larger class of C*-algebras.  In the present paper we show this to be false.  By choosing generalizations appropriately, using annihilators rather than projections, a surprising amount of the basic von Neumann algebra theory does indeed extend fully to C*-algebras.

Let us first go back to the beginning and consider a von Neumann algebra $A$.  Here, type decomposition is obtained by utilizing the projections $\mathcal{P}(A)$, and crucial to this is their order structure, specifically the fact $\mathcal{P}(A)$ is a complete orthomodular lattice.  Also crucial is the fact that projections exist in abundance in an arbitrary von Neumann algebra.  In C*-algebras, on the other hand, there may be no non-zero projections whatsoever, even in the simple case.  And even when they are plentiful, they may fail to form a lattice.  Consequently, to prove results for C*-algebras that generalize or are analogous to classical von Neumann algebra results, like those relating to type decomposition, involves finding an appropriate replacement for projections on which an appropriate analog of Murray-von Neumann equivalence can be defined.

One example of this can be found in \cite{CuntzPedersen1979}, where projections in a C*-algebra $A$ are replaced with positive elements and $a,b\in A_+$ are said to be equivalent if there exists $(x_n)\subseteq A$ such that $a=\sum x_nx_n^*$ and $b=\sum x_n^*x_n$, where the sums are norm convergent.  The close relationship between traces and Murray-von Neumann equivalence classes in von Neumann algebras generalizes to positive operators with this notion of equivalence, as demonstrated in \cite{CuntzPedersen1979}.  An analogous classification and even a decomposition of an arbitrary C*-algebra into types I, II and III is also obtained in \cite{CuntzPedersen1979} Proposition 4.13.  However, this decomposition is neither symmetric (the type III part can only be found in the quotient w.r.t. the type I part, not the other way around, and likewise the type II part is only obtained at the end as a quotient of quotients) nor consistent with the classical von Neumann algebra type classification (for example, $B(l^2)$ is a type I von Neumann algebra but not a type I C*-algebra\footnote{Here, and here only, we are using the terminology of \cite{CuntzPedersen1979}, where a C*-algebra is said to be of type I if all its representations are type I.  Other standard terms for such C*-algebras are `GCR' and `postliminal'.  Throughout the rest of this article we will use the term `postliminal'.} (see \cite{Pedersen1979} 6.1.2)).  Furthermore, $A_+$ will not be a lattice unless $A$ is commutative and other natural order theoretic properties fail for $A_+$ in general, for example the sum of two finite elements of $A_+$ may be infinite (see \cite{CuntzPedersen1979} Corollary 7.10).

Slight variants of the above can be obtained by changing the sums in the definition of equivalence, e.g. by specifying that they must be finite, or allowing them to represent supremums that might not necessarily converge in norm.  Another quite different example is given in \cite{Cuntz1977}, where projections are replaced with arbitrary operators and $a,b\in A$ are said to be equivalent if $a\lessapprox b$ and $b\lessapprox a$, where $a\lessapprox b$ means $a=cbd$ for some $c,d\in A$.  Again, this leads to a natural notion of a finite and factorial (simple) C*-algebra.  The natural norm closed variant of this (i.e. $c_nbd_n\rightarrow a$, for some $(c_n),(d_n)\subseteq A$) has also received considerable attention and has a strong relation to the dimension functions on $A$ (see \cite{BlackadarHandelman1982}).  Although again the order structures so obtained are generally less tractable and there is a limit to how far the analogy to projections in von Neumann algebras can be pushed (there does not appear to even be a canonical decomposition into types in this case, for example).

However, there is another quite different, but very natural, candidate to replace projections with in an arbitrary C*-algebra $A$, one that seems to have been largely overlooked.  Namely, we can use the left (or right) annihilator ideals, i.e. those of the form $\{a\in A:\forall b\in B(ab=0)\}$ for some $B\subseteq A$.  Equivalently, we can use the hereditary C*-subalgebras corresponding to these left ideals (see \cite{Effros1963} Theorem 2.4 or \cite{Pedersen1979} Theorem 1.5.2), which we refer to simply as \emph{annihilators}.  Indeed, the map $p\mapsto pAp$ is an order isomorphism from projections (with their canonical order $p\leq q\Leftrightarrow pq=p$) to a subset of annihilators (ordered by inclusion).  This map is even surjective whenever $A$ is a von Neumann algebra (see \autoref{annisep} (\ref{annisep1})) or, more generally, an AW*-algebra (see \cite{Berberian1972}), thus yielding a precise correspondence between projections and annihilators in this case.\footnote{This correspondence is well known, as is the fundmental importance of studying projections in AW*-algebras.  Despite this, however, there does not seem to be any indication, either in \cite{Berberian1972} or elsewhere in the operator algebra literature (except in \cite{Arzikulov2013}), that it was ever thought the annihilators themselves might be of interest in more general contexts.} 

Unlike projections, though, the annihilators still exist in abundance in an arbitrary C*-algebra, thanks to the continuous functional calculus (see the discussion preceeding \autoref{xab}).  And we can see from the outset that they have greater potential to fulfil the role of projections in a von Neumann algebra, as they also always form a complete lattice.  Furthermore, there is a natural orthocomplementation on annihilators, something we do not have for arbitrary hereditary C*-subalgebras (the collection of all hereditary C*-subalgebras may not even be complemented).  It turns out that this orthocomplementation is not always orthomodular (see \autoref{nonorthoxpl}), but we can prove a close approximation to orthomodularity (see \S\ref{SvsO}) which allows much of the von Neumann algebra theory to be generalized fully to arbitrary C*-algebras.  In particular, we can obtain a type decomposition that is symmetric and completely consistent with the classical type decomposition of von Neumann algebras, together with a natural analog of Murray-von Neumann equivalence that is again completely consistent with the classical notion.  The type decomposition itself can actually be obtained in a very general order theoretic context, and in the specific case of annihilators in a C*-algebra, the definitions in \eqref{pI}, \eqref{pII}, \eqref{pIII}, and \eqref{pIV} yield the following result.
\begin{thm}
For any C*-algebra $A$ we have orthogonal annihilator ideals $A_\mathrm{I}$, $A_\mathrm{II}$, $A_\mathrm{III}$ and $A_\mathrm{IV}$ such that \[A_\mathrm{I}\oplus A_\mathrm{II}\oplus A_\mathrm{III}\oplus A_\mathrm{IV}\] is an essential ideal in $A$.
\end{thm}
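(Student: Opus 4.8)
The approach is to obtain this as a special case of the general order-theoretic type decomposition, applied to the complete ortholattice of annihilators of $A$. The starting point is that the closed two-sided ideals $I$ of $A$ satisfying $I=I^{\perp\perp}$ --- the \emph{annihilator ideals} --- form a complete Boolean algebra under inclusion, with complementation $I\mapsto I^\perp$ and meet given by intersection, sitting inside the complete ortholattice of all annihilators; recall too that a closed ideal $J$ of $A$ is essential exactly when $J^\perp=\{0\}$, and that $(I_1+I_2+I_3+I_4)^\perp=I_1^\perp\cap I_2^\perp\cap I_3^\perp\cap I_4^\perp$ for any closed ideals $I_1,I_2,I_3,I_4$ of $A$. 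Since $A_\mathrm{I}+A_\mathrm{II}+A_\mathrm{III}+A_\mathrm{IV}$ will be an internal direct sum once the four ideals are pairwise orthogonal, it therefore suffices to produce pairwise orthogonal annihilator ideals $A_\mathrm{I},A_\mathrm{II},A_\mathrm{III},A_\mathrm{IV}$ with $A_\mathrm{I}^\perp\cap A_\mathrm{II}^\perp\cap A_\mathrm{III}^\perp\cap A_\mathrm{IV}^\perp=\{0\}$. Following \eqref{pI}--\eqref{pIV}, I would take $A_\mathrm{T}$, for $\mathrm{T}\in\{\mathrm{I},\mathrm{II},\mathrm{III},\mathrm{IV}\}$, to be the largest annihilator ideal every nonzero annihilator subideal of which contains a nonzero annihilator of the corresponding ``pure'' type $\mathrm{T}$.

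Two points then have to be pinned down. First, this $A_\mathrm{T}$ is well defined, because the join $\bigvee_\alpha J_\alpha$ of all annihilator ideals $J_\alpha$ with the stated property again has it: any nonzero annihilator subideal $K$ of $\bigvee_\alpha J_\alpha$ meets some $J_\alpha$ nontrivially --- otherwise each $J_\alpha\subseteq K^\perp$, whence $\bigvee_\alpha J_\alpha\subseteq K^\perp$ --- so $K$ inherits a nonzero pure-type-$\mathrm{T}$ annihilator from $J_\alpha$; one also uses here the abundance of annihilators furnished by the continuous functional calculus (see the discussion before \autoref{xab}) and the fact that the ``pure'' properties are hereditary. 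Second, the $A_\mathrm{T}$ are pairwise orthogonal: since an annihilator ideal generated by a pure-type-$\mathrm{S}$ annihilator contains no nonzero pure-type-$\mathrm{T}$ annihilator when $\mathrm{S}\neq\mathrm{T}$, were $A_\mathrm{S}\cap A_\mathrm{T}$ nonzero one could pick a pure-type-$\mathrm{S}$ annihilator inside it and pass to the annihilator ideal it generates --- which lies in $A_\mathrm{T}$ and hence must contain a pure-type-$\mathrm{T}$ annihilator --- a contradiction. Thus $A_\mathrm{S}\cap A_\mathrm{T}=\{0\}$ for $\mathrm{S}\neq\mathrm{T}$.

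It remains to show $A_\mathrm{I}^\perp\cap A_\mathrm{II}^\perp\cap A_\mathrm{III}^\perp\cap A_\mathrm{IV}^\perp=\{0\}$, which I would derive from the exhaustiveness statement that every nonzero annihilator ideal of $A$ contains a nonzero annihilator of one of the four pure types. Granted this, if that intersection were nonzero then, being an annihilator ideal, it would contain a nonzero pure-type-$\mathrm{T}$ annihilator $q$, and then the annihilator ideal generated by $q$ would lie in $A_\mathrm{T}\cap A_\mathrm{T}^\perp=\{0\}$, forcing $q=0$. Exhaustiveness itself I would prove by a cascade of dichotomies inside a given nonzero annihilator ideal $I$: peel off the largest subideal all of whose nonzero subideals carry an abelian annihilator (type $\mathrm{I}$), then inside its Boolean complement in $I$ the largest subideal built in the same way from finite annihilators (type $\mathrm{II}$), then the semifinite remainder (type $\mathrm{III}$), leaving a residual subideal which one must check is genuinely of type $\mathrm{IV}$; as these four pieces join to $I$, at least one is nonzero and, by its defining property, contains a pure annihilator of that type. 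The step I expect to be the main obstacle is precisely this last identification: because the orthocomplementation on annihilators need not be orthomodular (\autoref{nonorthoxpl}), the residual piece cannot simply be discarded, and verifying that it genuinely carries pure-type-$\mathrm{IV}$ annihilators relies on the quantitative substitute for orthomodularity developed in \S\ref{SvsO} --- the one ingredient with no analog in the von Neumann algebra setting, where the orthomodular law holds, $A_\mathrm{IV}=\{0\}$, and the cascade already closes after three steps.
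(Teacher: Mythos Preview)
Your outline is on the right track and broadly matches the paper's strategy, but the paper's execution is cleaner and avoids the detours you build in. Instead of defining each $A_\mathrm{T}$ intrinsically via undefined ``pure type $\mathrm{T}$'' annihilators and then separately arguing orthogonality and exhaustiveness, the paper simply observes that the type classes $\mathbf{D}\subseteq\mathbf{M}\subseteq\mathbf{O}$ of \eqref{Ddef}--\eqref{Odef} give \emph{nested} type ideals $[A]^\perp_\mathbf{D}\subseteq[A]^\perp_\mathbf{M}\subseteq[A]^\perp_\mathbf{O}$, hence (via \autoref{cideal} and \autoref{tdcor}) nested central elements $q_{[A]^\perp_\mathbf{D}}\leq q_{[A]^\perp_\mathbf{M}}\leq q_{[A]^\perp_\mathbf{O}}$ in the complete Boolean algebra $\mathrm{c}[A]^\perp$ of annihilator ideals (\autoref{centralannihilators}). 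The four parts are then the successive Boolean differences \eqref{pI}--\eqref{pIV}, so pairwise orthogonality and $p_\mathrm{I}\vee p_\mathrm{II}\vee p_\mathrm{III}\vee p_\mathrm{IV}=\mathbf{1}$ are automatic from Boolean algebra---no cascade of dichotomies or ``pure type'' bookkeeping is needed.

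Two specific points where your version wobbles. First, you never say what ``pure type $\mathrm{T}$'' means, and your later gloss (abelian / finite / semifinite / residual) does not match the paper's definitions, which are in terms of distributive / modular / orthomodular sublattices; for annihilators in C*-algebras the link between $\sim$-finiteness and modularity is only partially established (see \autoref{permod} and the remarks after \eqref{pIV}), so swapping the vocabularies is not innocuous. Second, your orthogonality step---that the annihilator ideal generated by a pure-type-$\mathrm{S}$ annihilator contains no nonzero pure-type-$\mathrm{T}$ annihilator---is not free; making it precise for $\mathrm{S}=\mathrm{I}$ already needs something like \autoref{discomden}. Finally, separativity enters earlier and more pervasively than you indicate: it is not merely what rescues the type~$\mathrm{IV}$ residual, but what makes \autoref{sepcomportho} and hence \autoref{tdcor} work at all, i.e.\ what guarantees that $\mathrm{c}[A]^\perp$ is a complete sublattice and that the $q_T$'s exist.
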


When $A$ is a von Neumann algebra, $A_\mathrm{I}$, $A_\mathrm{II}$ and $A_\mathrm{III}$ are indeed the usual type I, II and III parts in the classical von Neumann algebra decomposition (see the comments after \eqref{pIV}).  As every von Neumann algebra is an AW*-algebra, i.e. every annihilator ideal is of the form $pA$ for some central $p\in\mathcal{P}(A)$, a finite sum of annihilator ideals is again an annihilator ideal, coming from the sum of the corresponding projections.  As the only essential annihilator ideal is the entire algebra itself, we have $A=A_\mathrm{I}\oplus A_\mathrm{II}\oplus A_\mathrm{III}$.  As $\mathcal{P}(A)$ is orthomodular, the extra type IV part is $\{0\}$ here, although we do not know if the same is true for annihilators in C*-algebras.  Indeed, this paper puts us in much the same position as von Neumann himself was in at the early stages of his investigation into von Neumann algebras (see \cite{vonNemann1930} and \cite{MurrayvonNemann1936}).  Namely, we can decompose an arbitrary C*-algebra into various types but do not know if all these potential types are actually realized by some C*-algebra, most notably we do not know if there are any type IV C*-algebras (just as von Neumann did not verify the existence of type III von Neumann algebras until later in \cite{vonNemann1940}).

It should now be clear that the annihilators in a C*-algebra are of fundamental importance.  The C*-algebra theory required for the initial investigation presented here is not even particularly great, and this paper should be accessible to anyone familiar with the material in the relevant parts of the first few chapters of \cite{Pedersen1979}.  Given this, it is very surprising that more papers analyzing the annihilator structure of C*-algebras have not been written before and, in our opinion, such an analysis is well overdue.  We know of only one such article, namely \cite{Arzikulov2013}, where a few ideas similar to those presented here are also discussed.  However, there are simple counterexamples to \cite{Arzikulov2013} Lemma 16.1 (see the discussion at the end of \S\ref{NCT}) which, unfortunately, is used repeatedly in \cite{Arzikulov2013} and thus puts the results there into question.  The key point is that care must be taken to distinguish arbitrary open projections from those that are also topologically regular, which amounts to distinguishing aribitrary hereditary C*-subalgebras from annihilators.  And there is no mention of an analog of Murray-von Neumann equivalence for annihilators in \cite{Arzikulov2013}, although analogs of Murray-von Neumann equivalence for arbitrary hereditary C*-subalgebras have been considered before (see \cite{OrtegaRordamThiel2012} and \cite{PeligradZsido2000}).  The difference between annihilators and arbitrary hereditary C*-subalgebras may at first seem slight, but it turns out that it is the annihilators that are more amenable to attack by a fortuitous combination of non-commutative topology, order theory and algebra, as we proceed to demonstrate in this paper.

It is really the order theory that is central here.  Kaplansky initiated a program to isolate the algebraic structure of von Neumann algebras, and this is what allowed the von Neumann algebra theory to be generalized to AW*-algebras (and, more generally, Baer *-rings).  All we are really doing is taking this a step further, isolating the order structure of projections in AW*-algebras in such a way that the theory can be generalized to annihilators in C*-algebras.  Von Neumann himself isolated the order structure of projections in finite von Neumann algebras, those in which the projection lattice is modular, resulting in the elegant theory of continuous geometries (see \cite{vonNeumann1960}).  Even when the projection lattice of a von Neumann algebra is not modular, it is still orthomodular, and this inspired the development of a large body of work on orthomodular lattices (see \cite{Kalmbach1983}).  Type decompositions have also been obtained for certain orthomodular lattices, namely the dimension lattices of \cite{Loomis1955}, and these have been successively generalized to various other contexts, like the espaliers in \cite{GoodearlWehrung2005} and the effect algebras in \cite{FoulisPulmannova2013}.  However, these other contexts still assume something equivalent to orthomodularity in the ortholattice case, like unique orthogonal complements.\footnote{This is no longer true for the pre-effect algebras in \cite{ChajdaKuhr2012} and presumably type decomposition could also be done for the subclass of pre-effect algebras corresponding to separative complete ortholattices, and probably some more general subclass (centrality is discussed in \cite{ChajdaKuhr2012}, although they do not quite go as far as doing type decomposition).  But we decided to stick to ortholattices rather than pre-effect algebras, as these are certainly sufficient for analyzing the annihliators in a C*-algebra, and probably also more familiar to operator algebraists.}

We diverge from this previous work with the simple observation that separativity, a significantly weaker assumption than orthomodularity, is sufficient for much of the development of the theory.  And this is most fortunate, for we can only verify that the annihilators in an arbitrary C*-algebra are separative (although in strong sense quite close to orthomodularity \textendash\, see \autoref{epsep} and \autoref{nonorthoxpl}).  Also, we work with what we call type relations, which are again more general than the dimension relations in dimension lattices (e.g. they need not satisfy finite (orthogonal) divisibility).  Again, we see that these are sufficient for much of the theory to be developed which, yet again, is fortunate because we can only verify these weaker properties for what we believe to be the natural equivalence relation on annihilators generalizing Murray-von Neumann equivalence.  We also make a number of other order theoretic observations and generalizations of our own that do not seem to have appeared elsewhere in the literature, even in the orthomodular case.  Really, you could see this paper as bringing the theory of lattices and operator algebras back together after over half a century of divergence from von Neumann's seminal work in both fields, namely in continuous geometry and von Neumann algebras.

\subsection{Outline}

We start off in \S\ref{OrderTheory} by developing the theory of type decomposition in an abstract order theoretic context general enough to be applied later to annihilators in a C*-algebra.  We take \cite{Kalmbach1983} as our primary reference, although we have to do things in greater generality as we are concerned with ortholattices that may not be orthomodular.  In particular, we have to be careful to distinguish $[p]$ from $[p]_p$ (see the discussion following \autoref{[p]_q}).  As such, this section should be of independent interest to lattice theorists, although many of the new results are relatively straightforward generalizations of known results for orthomodular lattices.  As mentioned above, the key observation here is that separativity, rather than orthomodularity, is sufficient to prove \autoref{sepcomportho}.  

In \S\ref{AvsP}, we start by gathering together some relevant facts about projections and annihilators, and the relationship between them.  In \S\ref{annsec} we set  the stage for our investigation of the annihilators, defining them as the orthocompletion of a C*-algebra $A$ with respect to a certain preorthogonality relation.  Next we introduce some standard non-commutative topological terminology in \S\ref{NCT} and show in \autoref{annproj} that the annihilators correspond precisely to the topologically regular open projections.  Then we make some important observations about spectral projections and projections in general in \S\S\ref{specsec} and \ref{psec}.

It is in \S\ref{SvsO} that we develop the theory needed to prove that the annihilators satisfy the all important property known as separativity.  In fact, \autoref{epsep} says that the annihilators satisfy a strong form of separativity which is very close to orthomodularity.  On the way, we also strengthen a result from \cite{AkemannEilers2002}, showing that non-regular open dense projections must, in fact, be as non-regular as possible (see \autoref{0sep} and the discussion at the start of \S\ref{SvsO}).

With separativity out of the way, we are free to apply the theory in \S\ref{OrderTheory} and investigate the interplay between the algebraic structure of a C*-algebra $A$ and the order structure of its annihilators $[A]^\perp$.  In \S\ref{annideals}, we see that the central annihilators are precisely the annihilator ideals and that the annihilators always have the relative centre property.  Next, in \S\ref{Equivalence}, we define and investigate what we believe to be the natural analog of the fundamental notion of Murray-von Neumann equivalence.  We then move on to discuss the abelian annihilators in \S\ref{AA}, starting with \autoref{commuteBoolean} which says that $A$ is commutative precisely when $[A]^\perp$ is a Boolean algebra.  We then extend the results of \S\ref{AA} to homogeneous annihilators in \S\ref{HomogeneousAnnihilators}, and show that the annihilator notion of homogeneity is closely related to the more classical representation theoretic notion.

In \S\ref{MVF} we investigate C*-algebras of continuous functions from topological spaces $X$ to finite rank matrices $M_n$.  More specifically we show how to represent hereditary C*-subalgebras/open projections of such C*-algebras by lower semicontinuous projection valued functions on $X$.  The moral of the story here is that the theory of annihilators turns out to be the theory of these projection functions modulo nowhere dense subsets of $X$.  Lastly, in \S\ref{Examples}, we give a number of examples illustrating the subtle distinction between various order theoretic and algebraic notions.

\subsection{Acknowledgements}
The author would like to thank Charles Akemann and Dave Penneys for many helpful comments on earlier versions of this paper, as well as Vladimir Pestov, for giving the author the opportunity to pursue the research that lead to this paper.

\section{Order Theory}\label{OrderTheory}

\subsection{Basic Definitions}\label{BasicDefinitions}

\begin{dfn}[Relation Terminology]
A relation $R$ on a set $S$ is
\begin{itemize}
\item \emph{reflexive} if $sRs$, for all $s\in S$.
\item \emph{transitive} if $sRt$ and $tRu\Rightarrow sRu$, for all $s,t,u\in S$.
\item \emph{symmetric} if $sRt\Leftrightarrow tRs$, for all $s,t\in S$.
\item \emph{antisymmetric} if $sRt$ and $tRs\Rightarrow s=t$, for all $s,t\in S$.
\item \emph{annihilating} if $sRs\Rightarrow\forall t\in S(sRt)$, for all $s\in S$.
\item a \emph{preorder} if $R$ is reflexive and transitive.
\item a \emph{partial order} if $R$ is an antisymmetric preorder.
\item an \emph{equivalence relation} if $R$ is a symmetric preorder.
\item a \emph{preorthogonality relation} if $R$ is symmetric and annihilating.
\item an \emph{orthogonality relation} if $R$ is a preorthogonality relation and, for $s,t\in S$,
\begin{equation}\label{orthorel}
\forall u\in S(s\perp u\Leftrightarrow t\perp u)\quad\Leftrightarrow\quad s=t.
\end{equation}
\end{itemize}
\end{dfn}

\begin{dfn}[Partial Order Terminology]
Let $\mathbb{P}$ be a partial order.  We call $\mathbb{P}$ a \emph{lattice} if every pair $p,q\in\mathbb{P}$ has a supremum (least upper bound) and infimum (greatest lower bound), denoted by $p\vee q$ and $p\wedge q$ respectively.  A lattice $\mathbb{P}$ is \emph{complete} if every $S\subseteq\mathbb{P}$ has a supremum and infimum, denoted by $\bigvee S$ and $\bigwedge S$ respectively.  If $\mathbb{P}$ has a greatest element $\mathbf{1}$ and least element $\mathbf{0}$ then $p$ and $q$ are \emph{complementary} if $p\vee q=\mathbf{1}$ and $p\wedge q=\mathbf{0}$.  If every element of $\mathbb{P}$ has a complement then $\mathbb{P}$ is \emph{complemented}.  For a preorder $\mathbb{P}$, we write $p<q$ to mean $p\leq q$ but $q\nleq p$, and we call $S\subseteq\mathbb{P}$ \emph{order-dense} in $\mathbb{P}$ if
\begin{equation}\label{densedef}
\forall p\in\mathbb{P}(p>\mathbf{0}\Rightarrow \exists s\in S(\mathbf{0}<s\leq p)),
\end{equation}
and we call $S$ \emph{join-dense} in $\mathbb{P}$ if, for all $p\in\mathbb{P}$ (with $p>\mathbf{0}$),
\begin{equation}\label{jd}
p=\bigvee\{q\in S:q\leq p\}.
\end{equation}
\end{dfn}

\begin{dfn}[Function Terminology]
A function $f$ on $\mathbb{P}$ is
\begin{itemize}
\item \emph{involutive} if $f(f(p))=p$, for all $p\in\mathbb{P}$.
\item a \emph{complementation} if $p$ and $f(p)$ are complementary, for all $p\in\mathbb{P}$.
\item \emph{order preserving} if $p\leq q\Rightarrow f(p)\leq f(q)$, for all $p,q\in\mathbb{P}$.
\item \emph{antitone} if $p\leq q\Rightarrow f(q)\leq f(p)$, for all $p,q\in\mathbb{P}$.
\item an \emph{orthocomplementation} if $f$ is an antitone involutive complementation.
\item an \emph{order isomorphism} if $f$ is 1-1, onto, and $f$ and $f^{-1}$ are order preserving.
\item an \emph{orthoisomorphism} if, further, $f(p^\perp)=f(p)^\perp$, for all $p\in\mathbb{P}$.
\end{itemize}
A partial order $\mathbb{P}$ with a distinguished orthocomplementation is an \emph{orthoposet} and, if $\mathbb{P}$ is also a lattice, an \emph{ortholattice}.
\end{dfn}

\subsection{Orthocompletions}\label{TheCompletion}

We will be interested in a particular case of the following situation (see \S\ref{annsec}).  We are given a relation $\perp$ on a set $S$ and, for $T\subseteq S$, define \[T^\perp=\{s\in S:\forall t\in T(t\perp s)\}\qquad\textrm{and}\qquad[S]^\perp=\{T^\perp:T\subseteq S\}.\]
Also, for future reference, we make the following definition.
\begin{dfn}
We call $T\subseteq S$ \emph{essential} (w.r.t. $\perp$) if $T^{\perp\perp}=S$.
\end{dfn}

For a collection of subsets $\mathcal{T}$ of $S$, we have $(\bigcup\mathcal{T})^\perp=\bigcap\{T^\perp:T\in\mathcal{T}\}$.  In particular, this means infimums, w.r.t. the inclusion order, always exist in $[S]^\perp$ are are simply given by intersections.  Furthermore, $S=\emptyset^\perp$ is the largest element of $[S]^\perp$, while $S^\perp=\{s\in S:\forall t\in S(t\perp s)\}$ is the smallest, and $T\mapsto T^\perp$ is
\begin{enumerate}\label{perp}
\item\label{perp1} antitone,
\item\label{perp2} involutive on $[S]^\perp$, if $\perp$ is symmetric, and
\item\label{perp3} an orthocomplementation on $[S]^\perp$, if $\perp$ is a preorthogonality relation.
\end{enumerate}
\eqref{perp1} is immediate, and for \eqref{perp2} note that symmetry implies $T\subseteq T^{\perp\perp}$, for all $T\subseteq S$, and thus, by \eqref{perp1}, $(T^{\perp\perp})^\perp\subseteq T^\perp\subseteq (T^\perp)^{\perp\perp}$, i.e. $T^\perp=T^{\perp\perp\perp}$.  Lastly, if $\perp$ is also annihilating then $T\cap T^\perp=S^\perp$, for all $T\in[S]^\perp$, and hence $T^\perp\vee T=T^\perp\vee T^{\perp\perp}=S$ so $T$ and $T^\perp$ are complementary, i.e. $[S]^\perp$ is a complete ortholattice.  We call $[S]^\perp$ the \emph{orthocompletion} of $S$ w.r.t. $\perp$.  In fact, we have really just proved a slightly more general version of \cite{MacLaren1964} Lemma 2.1, and what we have denoted by $[S]^\perp$ is denoted in \cite{MacLaren1964} by $L(S)$, where it is called the completion of $S$.  As shown in \cite{MacLaren1964} Theorem 2.4, it really is the canonical completion by cuts when $S$ itself is an orthoposet.


We define the preorder $\dashv$ \emph{induced} by $\perp$ on $S$ by \[s\dashv t\quad\Leftrightarrow\quad\{t\}^\perp\subseteq\{s\}^\perp.\]  Note that $\dashv$ is a partial order if and only if $\perp$ satisfies \eqref{orthorel} and that
\begin{equation}\label{sperpperp}
s\mapsto\{s\}^{\perp\perp}
\end{equation}
is an order preserving map from $S$ (ordered by $\dashv$) to $[S]^\perp$ (ordered by $\subseteq$).  If $\perp$ is symmetric then ${}^\perp$ is involutive and hence we actually have \[s\dashv t\quad\Leftrightarrow\quad\{s\}^{\perp\perp}\subseteq\{t\}^{\perp\perp}.\]
If $T\subseteq S$ is join-dense in $S$ (see \eqref{jd}) w.r.t. $\dashv$ then, by (a slight generalization of) \cite{MacLaren1964} Theorem 2.5, the map
\begin{equation}\label{jdorthoiso}
U\mapsto U\cap T\quad\textrm{ is an orthoisomorphism witnessing}\quad[S]^\perp\cong[T]^\perp.
\end{equation}

Going in the other direction, given an orthoposet $\mathbb{P}$, we can always define an orthogonality relation $\perp$ by \[p\perp q\quad\Leftrightarrow\quad p\leq q^\perp.\]  If $\mathbb{P}=[S]^\perp$, where $\perp$ is a preorthogonality relation on $S$ then, for $T,U\in[S]^\perp$, the relation $\perp$ on $[S]^\perp$ defined in this way is related to the original relation $\perp$ on $S$ by \[T\perp U\quad\Leftrightarrow\quad\forall t\in T\forall u\in U(t\perp u).\]  Furthermore, for $T\in[S]^\perp$, we can consider the restriction of $\perp$ to $T$ and then \[[T]^\perp=[T]_T,\] according to \autoref{[p]_q}.


\subsection{Relative Complements}

\begin{dfn}\label{[p]_q}
Given an ortholattice $\mathbb{P}$ and $p,q,r\in\mathbb{P}$ we define $r^{\perp_p}=r^\perp\wedge p$ and
\[[q]_p=\{r^{\perp_p}:r\leq p\textrm{ and }r^{\perp_p}\leq q\}.\]
\end{dfn}

In particular, $[p]_p=\{r^{\perp_p}:r\leq p\}$.  We also drop the subscript when $r=\mathbf{1}$, i.e. \[[p]=[p]_\mathbf{1}=\{q:q\leq p\}\quad\textrm{and}\quad[q]_p=[p]_p\cap[q].\]  It is important to note that $[p]$ may not be an ortholattice (see the Hasse diagram below), in contrast to $[p]_p$.

\begin{prp}\label{[p]_p}
If $\mathbb{P}$ is an ortholattice and $p\in\mathbb{P}$ then $[p]_p$ is an ortholattice.  If $q\in[p]_p$ then $[q]_q\subseteq[p]_p$ while, for any $q\in[p]$, we have
\begin{equation}\label{[p]_peq}
q^{\perp_p\perp_p}=\bigwedge\{r\in[p]_p:q\leq r\}.
\end{equation} 
\end{prp}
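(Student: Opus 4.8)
The plan is to identify $[p]_p$ with the set of closed elements of a closure operator on the bounded lattice $[p]=\{q:q\leq p\}$ and read off all three assertions. (Note that $[p]$ \emph{is} a lattice, with meets and joins inherited from $\mathbb{P}$ and with bounds $\mathbf{0}$ and $p$; it merely fails to carry an orthocomplementation, which is exactly the point of the Hasse diagram below.) Write $\phi(r):=r^{\perp_p}=r^\perp\wedge p$ for $r\in[p]$. First I would establish two ``Galois-type'' facts: $\phi$ is antitone on $[p]$ (immediate from antitonicity of $\perp$ and monotonicity of $\wedge$), and $\phi(\phi(r))\geq r$ for every $r\in[p]$, since $r^\perp\wedge p\leq r^\perp$ forces $(r^\perp\wedge p)^\perp\geq r^{\perp\perp}=r$, whence, $p$ being $\geq r$ as well, $\phi\phi(r)=(r^\perp\wedge p)^\perp\wedge p\geq r$. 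Feeding $\phi\phi(r)\geq r$ into the antitone map $\phi$, and feeding $\phi\phi\geq\mathrm{id}$ into the element $\phi(r)$, gives $\phi\phi\phi=\phi$. Hence $c:=\phi\circ\phi$ is inflationary, order preserving and idempotent, i.e.\ a closure operator on $[p]$, and its set of closed elements $\{q\in[p]:c(q)=q\}$ is exactly $[p]_p=\phi([p])$: if $q=\phi(r)$ then $c(q)=\phi\phi\phi(r)=\phi(r)=q$, and conversely a closed $q$ equals $\phi(\phi(q))\in\phi([p])$.

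From this the ortholattice structure is routine. For $q_1,q_2\in[p]_p$ the meet $q_1\wedge q_2$ taken in $\mathbb{P}$ is again closed, because $c(q_1\wedge q_2)\leq c(q_1)\wedge c(q_2)=q_1\wedge q_2\leq c(q_1\wedge q_2)$; so $[p]_p$ inherits meets from $\mathbb{P}$. Joins in $[p]_p$ are then $q_1\vee_{[p]_p}q_2=c(q_1\vee q_2)$, which by De Morgan in $\mathbb{P}$ equals $\phi(\phi(q_1)\wedge\phi(q_2))$; one checks directly (using $\phi\phi(q_i)=q_i$ for closed $q_i$) that this really is the least closed element above both $q_1$ and $q_2$. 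Thus $[p]_p$ is a bounded lattice with top $p=\phi(\mathbf{0})$ and bottom $\mathbf{0}=\phi(p)$. Finally $\phi$ restricts to an orthocomplementation of $[p]_p$: it is antitone, it is involutive on closed elements, and it is a complementation because $q\wedge\phi(q)=q\wedge q^\perp\wedge p=\mathbf{0}$ already in $\mathbb{P}$, while $q\vee_{[p]_p}\phi(q)=\phi\bigl(\phi(q)\wedge\phi\phi(q)\bigr)=\phi\bigl(\phi(q)\wedge q\bigr)=\phi(\mathbf{0})=p$. Hence $[p]_p$ is an ortholattice.

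For the inclusion $[q]_q\subseteq[p]_p$ when $q\in[p]_p$: given $r\leq q$ we have $r^{\perp_q}=r^\perp\wedge q=(r^\perp\wedge p)\wedge q=\phi(r)\wedge q$ (using $q\leq p$), a meet of two members of $[p]_p$, hence a member of $[p]_p$ by the meet-closure established above. For the displayed formula, fix $q\in[p]$ and set $M:=\{r\in[p]_p:q\leq r\}$. Then $q^{\perp_p\perp_p}=c(q)$ is closed and $\geq q$, so $c(q)\in M$; and for any $r\in M$, applying the order-preserving map $c$ to $q\leq r$ gives $c(q)\leq c(r)=r$. So $c(q)$ is the least element of $M$, whence $c(q)=\bigwedge M$, the infimum existing precisely because it is attained at $c(q)$.

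The statement is not deep; the only points that need a little care are the identities $\phi\phi\geq\mathrm{id}$ and $\phi\phi\phi=\phi$ that turn $\phi\phi$ into an honest closure operator, and the fact that $\mathbb{P}$ is not assumed complete. Because of the latter, $[p]_p$ must be produced as the set of closed elements of a closure operator on the bounded lattice $[p]$ rather than via any completion argument, and the $\bigwedge$ in the last identity has to be exhibited as an actual minimum rather than merely asserted to exist; everything else is an unwinding of definitions.
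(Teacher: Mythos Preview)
Your proof is correct and essentially the same as the paper's, just reorganized around the explicit closure-operator framework: your inflationary--antitone pair $\phi\phi\geq\mathrm{id}$ and $\phi\phi\phi=\phi$ are exactly the paper's computations $q\leq(p\wedge q^\perp)^\perp\wedge p$ and $p\wedge q^\perp=((p\wedge q^\perp)^\perp\wedge p)^\perp\wedge p$, and both of you obtain meet-closure, the characterization $[p]_p=\{q:q^{\perp_p\perp_p}=q\}$, the formula \eqref{[p]_peq}, and the inclusion $[q]_q\subseteq[p]_p$ from these. The only cosmetic difference is that for the inclusion you show directly that each $r^{\perp_q}=(r^\perp\wedge p)\wedge q$ is a meet of two $[p]_p$-elements, whereas the paper takes an arbitrary $r\in[q]_q$ and writes $r=r^{\perp_q\perp_q}=r^{\perp_q\perp_p}\wedge q$; these are the same argument read in opposite directions.
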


\begin{proof}
First note that for $q,r\leq p$ we have $(q^\perp\wedge p)\wedge(r^\perp\wedge p)=(q\vee r)^\perp\wedge p\in[p]_p$ so infimums exist and agree with those in $\mathbb{P}$.  Next note that, when $q\leq p$, we have $q\leq(p\wedge q^\perp)^\perp\wedge p$ so $q^\perp\geq((p\wedge q^\perp)^\perp\wedge p)^\perp$ and \[p\wedge q^\perp\geq((p\wedge q^\perp)^\perp\wedge p)^\perp\wedge p=((p\wedge q^\perp)\vee p^\perp)\wedge p\geq p\wedge q^\perp,\] so $^{\perp_p}$ is involutive and, therefore, actually characterizes the elements of $[p]_p$, i.e. \[\qquad[p]_p=\{q=q^{\perp_p\perp_p}:q\in\mathbb{P}\}.\]  And if $q\in[p]_p$ and $r\in[q]_q$ then $r^{\perp_q}\leq p$ and hence $r^{\perp_q\perp_p}\in[p]_p$.  Therefore $r=r^{\perp_q\perp_q}=r^{\perp_q\perp_p}\wedge q\in[p]_p$, i.e. $[q]_q\subseteq [p]_p$.  As ${}^\perp$ is order reversing, so is ${}^{\perp_p}$ so, in particular, supremums also exist.  Also, if $r\in[p]_p$ and $q\in[r]$ then $r^{\perp_p}\leq q^{\perp_p}$ and hence $q^{\perp_p\perp_p}\leq r^{\perp_p\perp_p}=r$, thus verifying \eqref{[p]_peq}.  Finally, for any $q\in[p]_p$, we have $q^{\perp_p}\wedge q\leq q^\perp\wedge q\leq\mathbf{0}$ and hence also $q^{\perp_p}\vee_p q=\mathbf{0}^{\perp_p}=p$.  Thus $q^{\perp_p}$ is a complement of $q$ in $[p]_p$, i.e. $[p]_p$ is an ortholattice with orthocomplement function ${}^{\perp_p}$.
\end{proof}

On the other hand, $[p]$ is always a sublattice of $\mathbb{P}$, while $[p]_p$ may not be.  Indeed, while infimums in $[p]_p$ agree with those in $\mathbb{P}$, the same can not be said for supremums.  For example, in the ortholattice represented by the following Hasse diagram ($x\leq y$ in such a diagram if and only if $y$ appears above $x$ and joined to it by lines), which appears as \cite{Kalmbach1983} Figure 6.5, $[p]_p=\{\mathbf{0},a^\perp,c^\perp,p\}$ and hence $a^\perp\vee_p c^\perp=p$, even though $a^\perp\vee c^\perp=b$.  Also, $[p]=\{\mathbf{0},a^\perp,c^\perp,p,b\}$, which does not possess any orthocomplement functions. 

\begin{figure}[h!]
\caption{}\label{H1}
\begin{center}
\begin{tikzpicture}
  \node (max) at (0,3) {$\mathbf{1}$};
	\node (p) at (0,2) {$p$};
  \node (a) at (-2,1) {$a$};
  \node (b) at (0,1) {$b$};
  \node (c) at (2,1) {$c$};
  \node (cp) at (-2,0) {$c^\perp$};
  \node (bp) at (0,0) {$b^\perp$};
  \node (ap) at (2,0) {$a^\perp$};
	\node (pp) at (0,-1) {$p^\perp$};
  \node (min) at (0,-2) {$\mathbf{0}$};
  \draw (min) -- (cp) -- (a) -- (max) -- (c) -- (ap) -- (min)
  (min) -- (pp) -- (bp) -- (a)
  (bp) -- (c)
	(max) -- (p) -- (b);
  \draw[preaction={draw=white, -,line width=6pt}] (cp) -- (b) -- (ap);
\end{tikzpicture}
\end{center}
\end{figure}

\subsection{Order Types}

\begin{dfn}\label{orthomod}
A preorder $\mathbb{P}$ is \emph{separative} if, for all $p,q\in\mathbb{P}$, \[p\nleq q\quad\Rightarrow\quad\exists r\in\mathbb{P}(\mathbf{0}<r\leq p\textrm{ and }r\wedge q=\mathbf{0}).\]
We call an orthoposet $\mathbb{P}$ \emph{orthomodular} if, for all $p,q\in\mathbb{P}$, $p\perp q\Rightarrow p\vee q$ exists, and
\begin{equation}\label{orthomodeq}
q\leq p\quad\Rightarrow\quad p=q\vee(p\wedge q^\perp)
\end{equation}
A lattice $\mathbb{P}$ is \emph{modular} if, for $p,q,r\in\mathbb{P}$, 
\[q\leq p\quad\Rightarrow\quad p\wedge(q\vee r)=q\vee(p\wedge r).\]
A lattice $\mathbb{P}$ is \emph{distributive} if, for all $p,q,r\in\mathbb{P}$,
\begin{equation}\label{distributive}
p\wedge(q\vee r)=(p\wedge q)\vee(p\wedge r)\qquad\textrm{and}\qquad p\vee(q\wedge r)=(p\vee q)\wedge(p\vee r).
\end{equation}
A \emph{Boolean algebra} is a distributive complemented lattice.
\end{dfn}

Every element of a Boolean algebra in fact has a \emph{unique} complement and the map taking each element to this unique complement is an orthocomplement function.  In fact, an ortholattice is uniquely complemented if and only if it is a Boolean algebra, by \cite{Kalmbach1983} \S3 Proposition 7.  For an ortholattice, we immediately have \[\textrm{distributivity}\quad\Rightarrow\quad\textrm{modularity}\quad\Rightarrow\quad\textrm{orthomodularity}\quad\Rightarrow\quad\textrm{separativity}.\]

To see that the first two of these implications can not be reversed, it suffices to note that the subspaces of a Hilbert space $H$ are modular (more generally, submodules of a module are modular, hence the name) but not distributive if $\dim(H)>1$, while the \emph{closed} subspaces are orthomodular but not modular if $\dim(H)=\infty$, by \cite{Kalmbach1983} \S5 Proposition 5.  This last fact is actually key to showing that the projections in an infinite AW*-algebra are not modular (see \cite{Kaplansky1955} Theorem on page 1).  There are also finite ortholattices that illustrate these differences, for example the Chinese latern MO2 represented by \cite{Kalmbach1983} Figure 2.1 11 is modular but not distributive, while the ortholattice in \cite{Kalmbach1983} Figure 3.2 is orthomodular but not modular.  For an example of an ortholattice that is separative but not orthomodular, consider the orthodouble of the 8 element Boolean algebra given in \cite{Flachsmeyer1982} Figure 2b, as represented by the following Hasse diagram.

\begin{center}
\begin{tikzpicture}
  \node (max) at (0,2) {$\mathbf{1}$};
  \node (a) at (-6,1) {$a$};
  \node (b) at (-4,1) {$b$};
  \node (c) at (-2,1) {$c$};
  \node (d) at (-6,0) {$d$};
  \node (e) at (-4,0) {$e$};
  \node (f) at (-2,0) {$f$};
	\node (ap) at (6,0) {$a^\perp$};
  \node (bp) at (4,0) {$b^\perp$};
  \node (cp) at (2,0) {$c^\perp$};
  \node (dp) at (6,1) {$d^\perp$};
  \node (ep) at (4,1) {$e^\perp$};
  \node (fp) at (2,1) {$f^\perp$};
  \node (min) at (0,-1) {$\mathbf{0}$};
  \draw (min) -- (d) -- (a) -- (max) -- (c) -- (f) -- (min)
  (min) -- (e) -- (a)
  (e) -- (c)
	(max) -- (b);
  \draw[preaction={draw=white, -,line width=6pt}] (d) -- (b) -- (f);
  \draw (min) -- (cp) -- (fp) -- (max) -- (dp) -- (ap) -- (min)
  (min) -- (bp) -- (fp)
  (bp) -- (dp)
	(max) -- (ep);
  \draw[preaction={draw=white, -,line width=6pt}] (cp) -- (ep) -- (ap);
\end{tikzpicture}
\end{center}
For an example of an ortholattice that is not even separative, just consider $O_6$ in \cite{Kalmbach1983} Figure 3.1.

\subsection{Separativity}

Separativity is fundamental to our later work because it is precisely what is required to turn order-density into join-density.

\begin{prp}\label{jdod}
A preorder $\mathbb{P}$ is separative if and only if, for all $S\subseteq\mathbb{P}$, \[S\textrm{ is join-dense}\quad\Leftrightarrow\quad S\textrm{ is order-dense}.\]
\end{prp}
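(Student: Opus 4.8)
The plan is to split the claimed biconditional into its two implications, prove the hard one using separativity, and establish the converse by contraposition with an explicit family of subsets. First I would record the trivial half, valid for \emph{any} preorder with a least element $\mathbf 0$: if $S$ is join-dense and $p>\mathbf 0$, then $p=\bigvee\{q\in S:q\le p\}$, and since the join of $\emptyset$ (equivalently of $\{\mathbf 0\}$) is $\mathbf 0<p$, the set $\{q\in S:q\le p\}$ must contain some $s$ with $\mathbf 0<s\le p$, i.e.\ $S$ is order-dense. So the entire content of the ``only if'' direction is that separativity promotes order-density to join-density; this is the crux.

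To see that promotion, let $S$ be order-dense, fix $p>\mathbf 0$, and set $T=\{q\in S:q\le p\}$. Clearly $p$ is an upper bound of $T$, so it suffices to show $p\le u$ for every upper bound $u$ of $T$. If $p\nleq u$, separativity provides $r$ with $\mathbf 0<r\le p$ and $r\wedge u=\mathbf 0$ (in the preorder setting I read this as ``$\mathbf 0$ is the only common lower bound of $r$ and $u$'', and $\bigvee$ up to the equivalence $p\sim q\Leftrightarrow p\le q\le p$; with these readings every step here is unchanged). Order-density applied to $r$ gives $s\in S$ with $\mathbf 0<s\le r\le p$, so $s\in T$ and hence $s\le u$; but $s\le r$ as well, so $s$ is a common lower bound of $r$ and $u$, forcing $s=\mathbf 0$ and contradicting $s>\mathbf 0$. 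Thus $p=\bigvee T$ and $S$ is join-dense.

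For the converse I would prove the contrapositive: assume $\mathbb P$ is not separative, witnessed by $p\nleq q$ such that $r\wedge q\ne\mathbf 0$ (i.e.\ $r$ and $q$ have a nonzero common lower bound) whenever $\mathbf 0<r\le p$; note $p>\mathbf 0$, since $p=\mathbf 0$ would give $p\le q$. Put $S=\{s\in\mathbb P:s\le q\}\cup\{s\in\mathbb P:s\nleq p\}$. I claim this $S$ is order-dense but not join-dense. Order-density: if $x>\mathbf 0$ and $x\nleq p$ then $x\in S$; if $\mathbf 0<x\le p$, the failure of separativity at $(p,q)$ yields $t$ with $\mathbf 0<t\le x$ and $t\le q$, so $t\in S$ and $\mathbf 0<t\le x$. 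Non-join-density at $p$: any $s\in S$ with $s\le p$ cannot lie in the second disjunct, hence lies in the first, i.e.\ $s\le q$; so $q$ is an upper bound of $\{s\in S:s\le p\}$, and since $p\nleq q$, $p$ is not the least upper bound of that set, giving $p\ne\bigvee\{s\in S:s\le p\}$. This contradicts the assumed biconditional.

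The main obstacle is really the single implication of the second paragraph — order-density $\Rightarrow$ join-density — which is exactly where separativity is indispensable; the only other point needing care is the choice of $S$ in the converse, since the naive set $\{s:s\le q\}$ need not be order-dense and the extra disjunct $\{s:s\nleq p\}$ is precisely what repairs this without spoiling the failure of join-density at $p$.
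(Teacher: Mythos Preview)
Your proof is correct and follows essentially the same approach as the paper: the same witness set $S=[q]\cup\{s:s\nleq p\}$ for the converse, and the same core idea (separativity plus order-density forces any upper bound of $\{s\in S:s\le p\}$ to dominate $p$) for the forward direction. The only cosmetic difference is that the paper phrases the forward direction contrapositively (not join-dense $\Rightarrow$ not order-dense) while you argue it directly, and you spell out the trivial ``join-dense $\Rightarrow$ order-dense'' implication more carefully than the paper does.
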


\begin{proof} Join-density certainly implies order-density, while if $S$ is not join-dense then we can find $p,q\in\mathbb{P}$ with $p\nleq q$ even though $q\geq r$, for all $r\in[p]\cap S$.  If $\mathbb{P}$ is separative then we can find $t\in\mathbb{P}$ with $\mathbf{0}<t\leq p$ and $t\wedge q=\mathbf{0}$, and hence there is no $s\leq t$ with $\mathbf{0}<s\in S$, i.e. $S$ is not order-dense.

On the other hand, if $\mathbb{P}$ is not separative then we have $p,q\in\mathbb{P}$ with $p\nleq q$ even though there is no $r\leq p$ with $r>\mathbf{0}$ and $r\wedge q=\mathbf{0}$.  Now consider \[S=[q]\cup\{s\in\mathbb{P}:s\nleq p\}.\]  If $t\nleq p$ then $t\in S$, while if $\mathbf{0}<t\leq p$ then $t\wedge q\neq\mathbf{0}$, i.e. there exists $s>\mathbf{0}$ with $t\geq s\in[q]\subseteq S$.  Thus $S$ is order-dense, however, $S\cap[p]\subseteq[q]$ which, as $p\nleq q$, means that $p\neq\bigvee S\cap[p]$, so $S$ is not join-dense.
\end{proof}

In a lattice $\mathbb{P}$ another equivalent of separativity is obtained if we replace $p\nleq q$ in the definition of separativity with the apparently stronger condition $q<p$.  For $p\nleq q$ implies $p\wedge q<p$ and hence we could find $r\leq p$ with $\mathbf{0}=(q\wedge p)\wedge r=q\wedge(p\wedge r)=q\wedge r$.

\subsection{Perspectivity}\label{persec}

\begin{dfn}
If $\mathbb{P}$ is an ortholattice, we say $p,q\in\mathbb{P}$ are
\begin{enumerate}
\item \emph{perspective} if $p$ and $q$ have a common complement.
\item \emph{orthoperspective} if $p$ and $q$ have a common orthogonal complement.
\item \emph{semiorthoperspective} if $p$ and $q^\perp$ are complementary.
\end{enumerate}
These relations will be denoted by $\sim_\mathrm{p}$, $\sim_\mathrm{op}$ and $\sim_\mathrm{sop}$ respectively.
\end{dfn}
The definition of perspectivity is perfectly valid in an arbitrary lattice with $\mathbf{1}$ and $\mathbf{0}$ and is fundamental to the theory of continuous geometries (see \cite{vonNeumann1960}).  Note that $p$ and $q^\perp$ are complementary if and only if $p^\perp$ and $q$ are complementary, so semiorthoperspectivity is a symmetric relation.  Semiorthoperspective $p$ and $q$ are sometimes said to be `in position $p'$', while if $p$ is also semiorthoperspective to $q^\perp$ then they are `in generic position' or `in position $p$' (see \cite{Berberian1972} \S13 Definition 2 and Definition 3).  Perspectivity is weaker than semiorthoperspectivity, however their transitive closures are the same.  For if $p$ and $q$ have common complement $r$ then $p$ is semiorthoperspective to $r^\perp$ which is in turn semiorthoperspective to $q$.  Orthoperspectivity on the other hand is much stronger, and is often just equality (see \autoref{orthoequiv}).  Note that $p$ and $q$ are orthoperspective if and only if $(p\vee q)^\perp$ is complementary to both $p$ and $q$.

\begin{prp}\label{orthoperpequiv}
For a symmetric transitive relation $\sim$ on an ortholattice $\mathbb{P}$, the following are equivalent.
\begin{enumerate}
\item\label{orthoperpequiv1} $\sim$ is weaker than orthoperspectivity.
\item\label{orthoperpequiv2} $q\leq p$ and $q^\perp\wedge p=\mathbf{0}\Rightarrow p\sim q$.
\item\label{orthoperpequiv3} $q\sim q^{\perp_p\perp_p}$, for all $p\in\mathbb{P}$ and $q\in[p]$.
\end{enumerate}
\end{prp}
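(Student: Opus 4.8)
The plan is to prove the cycle of implications \eqref{orthoperpequiv1}$\Rightarrow$\eqref{orthoperpequiv2}$\Rightarrow$\eqref{orthoperpequiv3}$\Rightarrow$\eqref{orthoperpequiv1}. Apart from elementary ortholattice manipulations (De~Morgan's laws and the involutivity of ${}^\perp$), the only ingredient I would use is the characterization of orthoperspectivity recorded just before the proposition: $p\sim_\mathrm{op}q$ if and only if $(p\vee q)^\perp$ is complementary to both $p$ and $q$. Note also that $\sim$ being symmetric and transitive (and, via \eqref{orthoperpequiv1}, reflexive) is all that is needed; at no point will modularity or orthomodularity be invoked.

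For \eqref{orthoperpequiv1}$\Rightarrow$\eqref{orthoperpequiv2} I would argue directly. Assume $q\leq p$ and $q^\perp\wedge p=\mathbf{0}$. Since $q\leq p$ we have $(p\vee q)^\perp=p^\perp$, which is trivially complementary to $p$; and it is complementary to $q$ as well, because $p^\perp\wedge q\leq p^\perp\wedge p=\mathbf{0}$ and, by De~Morgan and the hypothesis, $p^\perp\vee q=(p\wedge q^\perp)^\perp=\mathbf{0}^\perp=\mathbf{1}$. Hence $p\sim_\mathrm{op}q$, and \eqref{orthoperpequiv1} gives $p\sim q$.

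For \eqref{orthoperpequiv3}$\Rightarrow$\eqref{orthoperpequiv1}, suppose $p\sim_\mathrm{op}q$ and set $m=p\vee q$, so that $m^\perp$ is complementary to $p$ and to $q$. Then $p^{\perp_m}=p^\perp\wedge m=p^\perp\wedge(m^\perp)^\perp=(p\vee m^\perp)^\perp=\mathbf{1}^\perp=\mathbf{0}$, whence $p^{\perp_m\perp_m}=\mathbf{0}^{\perp_m}=m$, and symmetrically $q^{\perp_m\perp_m}=m$. Since $p,q\in[m]$, two applications of \eqref{orthoperpequiv3} give $p\sim m$ and $q\sim m$, and symmetry and transitivity of $\sim$ then give $p\sim q$.

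The step I expect to be the main obstacle is \eqref{orthoperpequiv2}$\Rightarrow$\eqref{orthoperpequiv3}, where the key realization is to apply \eqref{orthoperpequiv2} not with the given $p$ but with $p$ replaced by $r=q^{\perp_p\perp_p}$. Given $q\leq p$, it is elementary that $q\leq r$: from $q^{\perp_p}=q^\perp\wedge p\leq q^\perp$ we get $q\leq(q^{\perp_p})^\perp$, and combined with $q\leq p$ this gives $q\leq(q^{\perp_p})^\perp\wedge p=r$. The crucial computation is $q^\perp\wedge r=q^\perp\wedge(q^{\perp_p})^\perp\wedge p=(q^\perp\wedge p)\wedge(q^{\perp_p})^\perp=q^{\perp_p}\wedge(q^{\perp_p})^\perp=\mathbf{0}$. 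Then \eqref{orthoperpequiv2}, applied with $p$ replaced by $r$, gives $r\sim q$, and symmetry of $\sim$ gives $q\sim q^{\perp_p\perp_p}$; as $p$ and $q\in[p]$ were arbitrary, \eqref{orthoperpequiv3} follows. The only real difficulty here is bookkeeping with the relative complement ${}^{\perp_p}$; nothing from \autoref{[p]_p} beyond the definition of ${}^{\perp_p}$ is required.
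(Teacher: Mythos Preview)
Your proof is correct and follows essentially the same approach as the paper: the paper proves the four pairwise implications \eqref{orthoperpequiv1}$\Leftrightarrow$\eqref{orthoperpequiv2} and \eqref{orthoperpequiv2}$\Leftrightarrow$\eqref{orthoperpequiv3}, while you close the cycle \eqref{orthoperpequiv1}$\Rightarrow$\eqref{orthoperpequiv2}$\Rightarrow$\eqref{orthoperpequiv3}$\Rightarrow$\eqref{orthoperpequiv1}, but the individual computations (passing through $p\vee q$ for the orthoperspectivity direction, and applying the hypothesis with $q^{\perp_p\perp_p}$ in place of $p$ for \eqref{orthoperpequiv2}$\Rightarrow$\eqref{orthoperpequiv3}) are identical to the paper's.
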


\begin{proof}\
\begin{itemize}
\item[\eqref{orthoperpequiv1}$\Rightarrow$\eqref{orthoperpequiv2}] $p\vee q=p$ so $p\vee(p\vee q)^\perp=\mathbf{1}$ and $q\vee(p\vee q)^\perp=(q^\perp\wedge p)^\perp=\mathbf{1}$ so $p$ and $q$ are orthoperspective and hence $p\sim q$.
\item[\eqref{orthoperpequiv2}$\Rightarrow$\eqref{orthoperpequiv1}] If $p\vee(p\vee q)^\perp=\mathbf{1}$ then $p^\perp\wedge(p\vee q)=\mathbf{0}$ so orthoperspectivity and \eqref{orthoperpequiv2} imply $p\sim p\vee q\sim q$ which, by transitivity, gives $p\sim q$.
\item[\eqref{orthoperpequiv2}$\Rightarrow$\eqref{orthoperpequiv3}] $q\leq q^{\perp_p\perp_p}$ and $q^\perp\wedge q^{\perp_p\perp_p}=q^{\perp_p}\wedge q^{\perp_p\perp_p}=\mathbf{0}$ so $q\sim q^{\perp_p\perp_p}$.
\item[\eqref{orthoperpequiv3}$\Rightarrow$\eqref{orthoperpequiv2}] $q^{\perp_p\perp_p}=(q^\perp\wedge p)^{\perp_p}=\mathbf{0}^{\perp_p}=p$ so $p\sim q$.
\end{itemize}
\end{proof}

(Transitivity only appears in \eqref{orthoperpequiv2}$\Rightarrow$\eqref{orthoperpequiv1} so orthoperspectivity satisfies \eqref{orthoperpequiv2} and \eqref{orthoperpequiv3}.)

\begin{prp}\label{pqposp'}
If $\mathbb{P}$ is an ortholattice, $p,q\in\mathbb{P}$, $[p]_p=[p]$ and $[q]=[q]_q$ then
\[p'=(p\wedge q^\perp)^{\perp_p}\quad\textrm{and}\quad q'=(q\wedge p^\perp)^{\perp_q}\quad\textrm{are semiorthoperspective}.\]
\end{prp}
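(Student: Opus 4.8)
The plan is to unwind the definitions and reduce everything to a single join computation. By definition, $p'\sim_\mathrm{sop}q'$ means $p'$ and $q'^\perp$ are complementary, i.e.\ $p'\vee q'^\perp=\mathbf{1}$ and $p'\wedge q'^\perp=\mathbf{0}$. Observe first that both the hypotheses and the conclusion are symmetric under interchanging $(p,p')$ with $(q,q')$: swapping $p$ and $q$ turns $p'=(p\wedge q^\perp)^{\perp_p}$ into $q'$ and vice versa, and $\sim_\mathrm{sop}$ is symmetric. Moreover, $p'$ and $q'^\perp$ are complementary if and only if $p'^\perp$ and $q'$ are complementary (apply $^\perp$ and de Morgan to the two defining equations). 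So it suffices to prove just one of the two equations, say $p'\vee q'^\perp=\mathbf{1}$; applying this with $p$ and $q$ swapped gives $q'\vee p'^\perp=\mathbf{1}$, and taking orthocomplements then yields $p'\wedge q'^\perp=\mathbf{0}$.

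Next I would record three facts. (i) By de Morgan, $q'^\perp=\bigl((q\wedge p^\perp)^\perp\wedge q\bigr)^\perp=(q\wedge p^\perp)\vee q^\perp$. (ii) By \autoref{[p]_p}, $^{\perp_p}$ is an orthocomplementation on the ortholattice $[p]_p$; since $[p]_p=[p]$ and $[p]$ is always a sublattice of $\mathbb{P}$, the joins in $[p]_p$ coincide with those of $\mathbb{P}$, so $p\wedge q^\perp$ and its relative complement $p'=(p\wedge q^\perp)^{\perp_p}$ are genuinely complementary in $[p]$; hence $p'\vee(p\wedge q^\perp)=p$, and joining both sides with $q^\perp$ (absorbing $p\wedge q^\perp\leq q^\perp$) gives $p'\vee q^\perp=p\vee q^\perp$. (iii) Symmetrically, from $[q]_q=[q]$ we get $q'\vee(q\wedge p^\perp)=q$.

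Finally, since $q'=(q\wedge p^\perp)^\perp\wedge q=(q^\perp\vee p)\wedge q\leq p\vee q^\perp$, fact (i) together with (ii) gives
\[p'\vee q'^\perp=(p'\vee q^\perp)\vee(q\wedge p^\perp)=(p\vee q^\perp)\vee(q\wedge p^\perp)\geq q'\vee(q\wedge p^\perp)=q,\]
the last equality being (iii). As the middle term also dominates $q^\perp$, it dominates $q\vee q^\perp=\mathbf{1}$, so $p'\vee q'^\perp=\mathbf{1}$, and the symmetry argument of the first paragraph completes the proof.

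The one genuinely substantive point — the main obstacle, such as it is — is item (ii): recognizing that the hypothesis $[p]_p=[p]$ is exactly what lets one replace $p'\vee(p\wedge q^\perp)$ by $p$ (a localized instance of orthomodularity on the interval $[p]$), and being careful that this needs $[p]_p$ to actually \emph{coincide} with $[p]$, not merely to be an ortholattice — otherwise the join $p'\vee(p\wedge q^\perp)$ could only be computed in $[p]_p$, where it need not agree with the join in $\mathbb{P}$ (cf.\ the Hasse diagram after \autoref{[p]_p}). Once this is in hand, the rest is routine lattice bookkeeping.
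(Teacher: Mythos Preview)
Your proof is correct. The argument is essentially dual to the paper's: the paper shows $p'\wedge q'^\perp=\mathbf{0}$ directly (setting $s=p'\wedge q'^\perp$, observing $s\leq p$ so $s\perp(q\wedge p^\perp)$, and $s\leq q'^\perp$ so $s\perp q'$; then $[q]_q=[q]$ gives $(q\wedge p^\perp)\vee q'=q$, forcing $s\leq p\wedge q^\perp$ while also $s\leq p'\leq(p\wedge q^\perp)^\perp$, so $s=\mathbf{0}$), whereas you show $p'\vee q'^\perp=\mathbf{1}$ directly. Both rely on exactly the same substantive observation you highlight in your final paragraph --- that $[p]_p=[p]$ makes the relative complement a genuine complement in $\mathbb{P}$, i.e.\ $(p\wedge q^\perp)\vee p'=p$ holds in $\mathbb{P}$ and not merely in $[p]_p$ --- and then finish by symmetry and de~Morgan. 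The paper's version is marginally shorter since it avoids the intermediate computation of $p'\vee q^\perp=p\vee q^\perp$, but there is no real difference in content.
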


\begin{proof}
Note that $s=p'\wedge q'^\perp\leq p$ and hence $s\perp(q\wedge p^\perp)$.  But then $s\perp(q\wedge p^\perp)\vee q'=(q\wedge p^\perp)\vee(q\wedge p^\perp)^{\perp_q}=q$, as $[q]=[q]_q$, and hence $s\leq p\wedge q^\perp$ which, as $s\leq(p\wedge q^\perp)^\perp$, means $s=\mathbf{0}$.  Now $q'\wedge p'^\perp=\mathbf{0}$ follows by a symmetric argument.
\end{proof}

\begin{cor}\label{sopcor}
If $\mathbb{P}$ is an ortholattice, $p,q\in\mathbb{P}$, $[p]_p=[p]$ and $[q]=[q]_q$ then
\[\exists r\in\mathbb{P}(p\leq r\sim_\mathrm{sop}q)\quad\Rightarrow\quad\exists s\in\mathbb{P}(p\sim_\mathrm{sop}s\leq q).\]
\end{cor}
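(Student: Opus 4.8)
The plan is to obtain this as an essentially immediate consequence of \autoref{pqposp'} applied to the pair $p,q$ itself. The standing hypotheses $[p]_p=[p]$ and $[q]=[q]_q$ are precisely what \autoref{pqposp'} requires (with $p$ in the ``first'' slot and $q$ in the ``second''), so the only genuinely new ingredient is a single order-theoretic observation squeezed out of the assumption $p\le r\sim_\mathrm{sop}q$.

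First I would unpack $r\sim_\mathrm{sop}q$: by definition this says $r$ and $q^\perp$ are complementary, so in particular $r\wedge q^\perp=\mathbf 0$. Since $p\le r$, this forces $p\wedge q^\perp\le r\wedge q^\perp=\mathbf 0$, hence $p\wedge q^\perp=\mathbf 0$. (Only the ``meet is $\mathbf 0$'' half of semiorthoperspectivity of $r$ and $q$ gets used, and the element $r$ itself plays no further role.)

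Next I would apply \autoref{pqposp'} to $p$ and $q$, producing semiorthoperspective elements $p'=(p\wedge q^\perp)^{\perp_p}$ and $q'=(q\wedge p^\perp)^{\perp_q}$. By the previous step $p\wedge q^\perp=\mathbf 0$, so, unwinding \autoref{[p]_q}, $p'=\mathbf 0^{\perp_p}=\mathbf 0^\perp\wedge p=\mathbf 1\wedge p=p$; likewise $q'=(q\wedge p^\perp)^\perp\wedge q\le q$. Setting $s:=q'$ then yields $p=p'\sim_\mathrm{sop}q'=s\le q$, which is exactly the desired conclusion.

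There is essentially no obstacle here — all the real content is already packaged in \autoref{pqposp'}. The only two points needing care are (i) matching the two bracket hypotheses to the correct arguments of \autoref{pqposp'} (the first argument must satisfy $[p]_p=[p]$, the second $[q]=[q]_q$, both of which hold by assumption), and (ii) the trivial identity $\mathbf 0^{\perp_p}=p$ from \autoref{[p]_q}, which collapses $p'$ down to $p$ and thereby converts the hypothesis's ``$p\le r$'' into the conclusion's ``$=p$''.
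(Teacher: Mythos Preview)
Your proof is correct and follows essentially the same approach as the paper's own proof: derive $p\wedge q^\perp=\mathbf{0}$ from $p\le r$ and $r\wedge q^\perp=\mathbf{0}$, then apply \autoref{pqposp'} and observe that $p'=\mathbf{0}^{\perp_p}=p$, setting $s=q'$. The paper's version is simply more terse, omitting the explicit unpacking of the definition of $\sim_\mathrm{sop}$ and the verification that $q'\le q$.
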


\begin{proof}
With $p'$ and $q'$ as above we have $p'\sim_\mathrm{sop}q'$.  But $p\wedge q^\perp\leq r\wedge q^\perp=\mathbf{0}$ so $p'=\mathbf{0}^{\perp_p}=p$.  So, setting $s=q'$, we are done.
\end{proof}

\subsection{Finiteness}

\begin{dfn}
Given a symmetric relation $\sim$ on $\mathbb{P}$, we call $p\in\mathbb{P}$ \emph{$\sim$-finite} if \[p\sim q\leq p\quad\Rightarrow\quad p=q.\]
If $\mathbb{P}$ is an ortholattice, we call $p\in\mathbb{P}$ \emph{$\sim$-orthofinite} if \[p\sim q\leq p\quad\Rightarrow\quad p\wedge q^\perp=\mathbf{0}.\] We call $\sim$ itself \emph{(ortho)finite} if all elements of $\mathbb{P}$ are $\sim$-(ortho)finite.
\end{dfn}

Note that if $\sim$ is transitive and weaker than orthoperspectivity then, when $p$ is not orthofinite, there exists $q\leq p$ with $p\sim q\sim q^{\perp_p\perp_p}<p$, as $q^{\perp_p}\neq\mathbf{0}$, i.e.
\begin{eqnarray*}
p\textrm{ is orthofinite} &\Leftrightarrow& \{p\}^\sim\cap[p]_p=\{p\}.\textrm{ Also, by definition,}\\
p\textrm{ is finite} &\Leftrightarrow& \{p\}^\sim\cap[p]=\{p\}.
\end{eqnarray*}

\begin{dfn}
We say $\sim$ is \emph{finitely additive} if, whenever $p\perp q$, $r\perp s$, $p\sim r$ and $q\sim r$, we have $p\vee q\sim r\vee s$.
\end{dfn}

\begin{prp}
A finitely additive reflexive relation $\sim$ on an ortholattice $\mathbb{P}$ is orthofinite if and only if $\mathbf{1}$ is (ortho)finite.
\end{prp}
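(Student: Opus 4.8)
The plan is to prove the equivalence by first recording two trivial observations and then establishing the contrapositive of the substantive implication.

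First I would note that for the top element orthofiniteness and finiteness coincide: if $\mathbf{1}\sim q\leq\mathbf{1}$ then $\mathbf{1}\wedge q^\perp=q^\perp$, so the condition "$\mathbf{1}\wedge q^\perp=\mathbf{0}$" is literally "$q^\perp=\mathbf{0}$", i.e. "$q=\mathbf{1}$"; this is what licenses the "(ortho)" in the statement. Next, the implication "$\sim$ orthofinite $\Rightarrow$ $\mathbf{1}$ orthofinite" is immediate, since orthofiniteness of the relation means orthofiniteness of every element.

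For the converse I would argue \emph{contrapositively}: assume some $p\in\mathbb{P}$ is not $\sim$-orthofinite, so there is $q\leq p$ with $p\sim q$ and $p\wedge q^\perp\neq\mathbf{0}$. Since $q\leq p$ we have $q\perp p^\perp$, and trivially $p\perp p^\perp$; moreover $p\sim q$ and, by reflexivity, $p^\perp\sim p^\perp$. Applying finite additivity to the orthogonal pair $p,p^\perp$ and the orthogonal pair $q,p^\perp$ (with the relations $p\sim q$ and $p^\perp\sim p^\perp$) yields $\mathbf{1}=p\vee p^\perp\sim q\vee p^\perp$. By De Morgan in the ortholattice $\mathbb{P}$ we have $(q\vee p^\perp)^\perp=q^\perp\wedge p=p\wedge q^\perp\neq\mathbf{0}$, so setting $r=q\vee p^\perp\leq\mathbf{1}$ we get $\mathbf{1}\sim r$ with $\mathbf{1}\wedge r^\perp=r^\perp\neq\mathbf{0}$, which shows $\mathbf{1}$ is not $\sim$-orthofinite. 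Hence $\mathbf{1}$ $\sim$-orthofinite forces every $p$ to be $\sim$-orthofinite, i.e. $\sim$ is orthofinite.

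There is no serious obstacle here; the only point requiring a little care is the bookkeeping in the single application of finite additivity, namely arranging both "second summands" to be $p^\perp$ so that the hypothesis $p^\perp\sim p^\perp$ is provided by reflexivity. The argument uses nothing beyond reflexivity, finite additivity, complementarity of $p$ and $p^\perp$, and the De Morgan law valid in any ortholattice; in particular symmetry of $\sim$ is not needed.
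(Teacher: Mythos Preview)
Your proof is correct and follows essentially the same approach as the paper's: both argue the nontrivial direction contrapositively, starting from $p\sim q\leq p$ with $p\wedge q^\perp\neq\mathbf{0}$, and use reflexivity on $p^\perp$ together with finite additivity to obtain $\mathbf{1}=p\vee p^\perp\sim q\vee p^\perp$ with $(q\vee p^\perp)^\perp=p\wedge q^\perp\neq\mathbf{0}$. Your version is simply more explicit about verifying the orthogonality hypotheses for finite additivity and about why orthofiniteness and finiteness coincide at $\mathbf{1}$.
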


\begin{proof}
If $\sim$ is orthofinite then, in particular, $\mathbf{1}$ is (ortho)finite.  If $\sim$ is not orthofinite, we have $p\sim q\leq p$ with $p\wedge q^\perp\neq\mathbf{0}$.  As $\sim$ is finitely additive and reflexive, we have $\mathbf{1}=p\vee p^\perp\sim q\vee p^\perp$ even though $(q\vee p^\perp)^\perp=p\wedge q^\perp\neq\mathbf{0}$ so $q\vee p^\perp\neq\mathbf{1}$.
\end{proof}

\subsection{Orthomodularity}

Orthomodularity has a number of important equivalents (for more see \cite{Kalmbach1983} \S3 Theorem 2).

\begin{prp}\label{orthoequiv}
For an ortholattice $\mathbb{P}$, the following are equivalent.
\begin{enumerate}
\item\label{orthoequiv1} $\mathbb{P}$ is orthomodular.
\item\label{orthoequiv2} Orthogonal complements are unique.
\item\label{orthoequiv3} $[p]_p=[p]$, for all $p\in\mathbb{P}$.
\item\label{orthoequiv4} Orthoperspectivity is finite.
\item\label{orthoequiv6} Orthoperspectivity is equality.
\item\label{orthoequiv7} $\sim$-orthofiniteness and $\sim$-finiteness always coincide.
\item\label{orthoequiv5} For all $p,q,r\in\mathbb{P}$, $q\leq p$ and $q\perp r\Rightarrow p\wedge(q\vee r)=q\vee(p\wedge r)$.
\end{enumerate}
\end{prp}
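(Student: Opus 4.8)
The plan is to obtain the equivalence of all seven conditions from the implication network
\[\eqref{orthoequiv5}\Leftrightarrow\eqref{orthoequiv1},\qquad\eqref{orthoequiv1}\Rightarrow\eqref{orthoequiv2}\Rightarrow\eqref{orthoequiv3}\Rightarrow\eqref{orthoequiv1},\qquad\eqref{orthoequiv1}\Rightarrow\eqref{orthoequiv6}\Rightarrow\eqref{orthoequiv4}\Rightarrow\eqref{orthoequiv3},\qquad\eqref{orthoequiv1}\Leftrightarrow\eqref{orthoequiv7}.\]
Since $\mathbb{P}$ is an ortholattice, the clause ``$p\perp q\Rightarrow p\vee q$ exists'' in the definition of orthomodularity is automatic, so $\eqref{orthoequiv1}$ will be used throughout interchangeably with the law \eqref{orthomodeq}.

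The algebraic core consists of $\eqref{orthoequiv1}\Leftrightarrow\eqref{orthoequiv5}$ together with the cycle $\eqref{orthoequiv1}\Rightarrow\eqref{orthoequiv2}\Rightarrow\eqref{orthoequiv3}\Rightarrow\eqref{orthoequiv1}$. Here $\eqref{orthoequiv5}\Rightarrow\eqref{orthoequiv1}$ is the instance $r=q^\perp$ of $\eqref{orthoequiv5}$ (legitimate since $q\leq(q^\perp)^\perp$), using $q\vee q^\perp=\mathbf{1}$; and $\eqref{orthoequiv1}\Rightarrow\eqref{orthoequiv5}$ rests on the observation that in an orthomodular lattice $q\perp r$ forces $(q\vee r)\wedge q^\perp=r$ --- for $\leq$, apply \eqref{orthomodeq} to $r\leq(q\vee r)\wedge q^\perp$ and note the remaining summand lies below both $q\vee r$ and $q^\perp\wedge r^\perp=(q\vee r)^\perp$, hence is $\mathbf{0}$ --- after which applying \eqref{orthomodeq} to $q\leq p\wedge(q\vee r)$ and substituting yields the identity. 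For $\eqref{orthoequiv1}\Rightarrow\eqref{orthoequiv2}$: if $r\leq p^\perp$ and $p\vee r=\mathbf{1}$, then \eqref{orthomodeq} applied to $r\leq p^\perp$ gives $p^\perp=r\vee(p^\perp\wedge r^\perp)=r\vee(p\vee r)^\perp=r$. For $\eqref{orthoequiv3}\Rightarrow\eqref{orthoequiv1}$: when $[p]_p=[p]$ the two agree as posets, and as $[p]$ is a sublattice of $\mathbb{P}$ the join of the ortholattice $[p]_p$ is just the restriction of that of $\mathbb{P}$, so the identity $q\vee_p q^{\perp_p}=p$ established inside the proof of \autoref{[p]_p} becomes $q\vee(p\wedge q^\perp)=p$, which is \eqref{orthomodeq}. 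The one step carrying real content is $\eqref{orthoequiv2}\Rightarrow\eqref{orthoequiv3}$: given $q\leq p$, I would verify that $n:=(p\wedge q^\perp)\vee p^\perp$ is an orthogonal complement of $q$ --- it lies below $q^\perp$ (using $q\leq p$, so $p^\perp\leq q^\perp$), and $q\vee n=c\vee p^\perp$ where $c:=q\vee(p\wedge q^\perp)$, which equals $\mathbf{1}$ because $c^\perp\leq q^\perp$ gives $c^\perp\wedge p\leq q^\perp\wedge p\leq c$ and hence $c^\perp\wedge p=\mathbf{0}$ --- so uniqueness forces $n=q^\perp$; taking orthocomplements rewrites this as $(p^\perp\vee q)\wedge p=q$, i.e. $q=q^{\perp_p\perp_p}$, whence $q\in[p]_p$ and therefore $[p]_p=[p]$.

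For the perspectivity conditions I would use \autoref{orthoperpequiv} and the remark following its proof, by which orthoperspectivity itself satisfies \eqref{orthoperpequiv3}, namely $q\sim_\mathrm{op}q^{\perp_p\perp_p}$ whenever $q\leq p$. Then $\eqref{orthoequiv1}\Rightarrow\eqref{orthoequiv6}$ follows from $\eqref{orthoequiv2}$, since a common orthogonal complement of $p$ and $q$ must equal both $p^\perp$ and $q^\perp$, hence $p=q$; $\eqref{orthoequiv6}\Rightarrow\eqref{orthoequiv4}$ is immediate; and $\eqref{orthoequiv4}\Rightarrow\eqref{orthoequiv3}$ follows because $q\leq q^{\perp_p\perp_p}\leq p$ with $q^{\perp_p\perp_p}\sim_\mathrm{op}q$, so $\sim_\mathrm{op}$-finiteness of $q^{\perp_p\perp_p}$ forces $q=q^{\perp_p\perp_p}$, i.e. $[p]_p=[p]$.

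Finally, for $\eqref{orthoequiv7}$, one half is soft: a $\sim$-finite element is always $\sim$-orthofinite (from $p=q$ one gets $p\wedge q^\perp=\mathbf{0}$), and under orthomodularity $\sim$-orthofiniteness conversely implies $\sim$-finiteness, since $p\sim q\leq p$ with $p\wedge q^\perp=\mathbf{0}$ gives $p=q$ by \eqref{orthomodeq}; this is $\eqref{orthoequiv1}\Rightarrow\eqref{orthoequiv7}$. For $\eqref{orthoequiv7}\Rightarrow\eqref{orthoequiv1}$ I would argue contrapositively: if \eqref{orthomodeq} fails at some $q\leq p$, put $c=q\vee(p\wedge q^\perp)$, so that $c<p$, and let $\sim$ be equality together with the single symmetric pair $\{p,c\}$; then every element is $\sim$-orthofinite (the only non-trivial case being $p\sim c\leq p$, where $c^\perp\leq q^\perp$ gives $p\wedge c^\perp\leq p\wedge q^\perp\leq c$ and so $p\wedge c^\perp=\mathbf{0}$) while $p$ is plainly not $\sim$-finite, contradicting $\eqref{orthoequiv7}$. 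I expect the genuinely delicate point of the whole argument to be hitting on the auxiliary element $n=(p\wedge q^\perp)\vee p^\perp$ in $\eqref{orthoequiv2}\Rightarrow\eqref{orthoequiv3}$; every other step reduces to a one-line use of \eqref{orthomodeq} or of \autoref{orthoperpequiv}.
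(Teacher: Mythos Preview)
Your proof is correct and follows the same circle of ideas as the paper's, with only minor differences in the implication routing. Two points are worth noting. First, for \eqref{orthoequiv1}$\Rightarrow$\eqref{orthoequiv5} the paper simply cites \cite{Kalmbach1983} \S3 Theorem 5, whereas you supply a self-contained argument via the auxiliary identity $(q\vee r)\wedge q^\perp=r$ for $q\perp r$; this is a genuine (small) improvement in self-containment. Second, for the failure of \eqref{orthoequiv7} in the non-orthomodular case the paper takes $\sim$ to be orthoperspectivity itself (which is always orthofinite, and fails to be finite exactly when \eqref{orthoequiv4} fails), while you manufacture a bespoke two-point relation $\sim$ from a specific counterexample to \eqref{orthomodeq}; both work, and the paper's choice is arguably more canonical since it ties \eqref{orthoequiv7} directly back to \eqref{orthoequiv4}. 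Your \eqref{orthoequiv2}$\Rightarrow$\eqref{orthoequiv3} via $n=(p\wedge q^\perp)\vee p^\perp$ is the dual of the paper's \eqref{orthoequiv2}$\Rightarrow$\eqref{orthoequiv1} via $r=q\vee(p\wedge q^\perp)$, and the computation $c^\perp\wedge p\leq p\wedge q^\perp\leq c$ is exactly the same in both.
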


\begin{proof}\
\begin{itemize}
\item[(\ref{orthoequiv1})$\Rightarrow$(\ref{orthoequiv2})] If $q\leq p^\perp$ and $p\vee q=\mathbf{1}$ then $p^\perp\wedge q^\perp=\mathbf{1}^\perp=\mathbf{0}$ so orthomodularity gives $p^\perp=q\vee(q^\perp\wedge p^\perp)=q$, i.e. orthogonal complements are unique.
\item[\eqref{orthoequiv2}$\Rightarrow$\eqref{orthoequiv1}] Given $q\leq p$ let $r=q\vee(p\wedge q^\perp)\leq p$.  Then $p^\perp\vee r=p^\perp\vee q\vee(p\wedge q^\perp)=p^\perp\vee q\vee(p^\perp\vee q)^\perp=\mathbf{1}$ so \eqref{orthoequiv2} $r=p$, showing that $\mathbb{P}$ is orthomodular.

\item[\eqref{orthoequiv2}$\Rightarrow$\eqref{orthoequiv4}] If $q\leq p=p^{\perp\perp}$ and $\mathbf{0}=q^\perp\wedge p=(q\vee p^\perp)^\perp$ then \eqref{orthoequiv2} gives $q=p^{\perp\perp}=p$.
\item[\eqref{orthoequiv4}$\Rightarrow$\eqref{orthoequiv2}] If $p\in\mathbb{P}$ has an orthogonal complement $q<p^\perp$ then $p^\perp\wedge q^\perp=\mathbf{0}$.

\item[\eqref{orthoequiv6}$\Rightarrow$\eqref{orthoequiv4}]  Equality is finite.
\item[\eqref{orthoequiv4}$\Rightarrow$\eqref{orthoequiv6}]  If $p$ and $q$ are orthoperspective then so are $p$ and $p\vee q$ which, if orthoperspectivity is finite, means $p=p\vee q$.  Likewise $q=p\vee q=p$.

\item[\eqref{orthoequiv4}$\Rightarrow$\eqref{orthoequiv7}]  $q\leq p$ and $q^\perp\wedge p=\mathbf{0}$ means $p$ and $q$ are orthoperspective which, from \eqref{orthoequiv4}, gives $p=q$.  Thus $\sim$-orthofiniteness implies $\sim$-finiteness.
\item[\eqref{orthoequiv7}$\Rightarrow$\eqref{orthoequiv4}]  Orthoperspectivity is orthofinite.  If it is not finite these notions differ.

\item[\eqref{orthoequiv6}$\Rightarrow$\eqref{orthoequiv3}]  $q$ and $q^{\perp_p\perp_p}$ are orthoperspective so \eqref{orthoequiv6} gives $q=q^{\perp_p\perp_p}\in[p]_p$ for $q\in[p]$.
\item[\eqref{orthoequiv3}$\Rightarrow$\eqref{orthoequiv4}] If $q<p$ but $q^\perp\wedge p=\mathbf{0}$ then $q\notin[p]_p$.

\item[\eqref{orthoequiv1}$\Rightarrow$\eqref{orthoequiv5}] See \cite{Kalmbach1983} \S3 Theorem 5.
\item[\eqref{orthoequiv5}$\Rightarrow$\eqref{orthoequiv1}] Immediate by setting $r=q^\perp$.
\end{itemize}
\end{proof}

So if $\mathbb{P}$ is not orthomodular then we have $[p]\neq[p]_p$, for some $p\in\mathbb{P}$, and there is no reason to think that properties $[p]$ inherits from $\mathbb{P}$, like separativity, are necessarily inherited by $[p]_p$.  However, if we happen to know that $[p]_p$ is order-dense in $[p]$ (which is true for the annihilators we will be interested in \textendash\, see the comment after \autoref{prp1}), then $[p]_p$ will indeed be separative if $\mathbb{P}$ is.

\begin{prp}
If $\mathbb{P}$ is an orthomodular lattice and $p,q\in\mathbb{P}$ then
\[p'=(p\wedge q^\perp)^{\perp_p}\quad\textrm{and}\quad q'=(q\wedge p^\perp)^{\perp_q}\quad\textrm{are maximal semiorthoperspective}.\]
\end{prp}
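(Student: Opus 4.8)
The plan is to separate the assertion into two parts: that $p'$ and $q'$ are semiorthoperspective, and that the pair is maximal. For the first part, since $\mathbb{P}$ is orthomodular, \autoref{orthoequiv}\eqref{orthoequiv3} gives $[p]_p=[p]$ and $[q]_q=[q]$, so the hypotheses of \autoref{pqposp'} are met and that proposition yields $p'\sim_{\mathrm{sop}}q'$ at once, the formulas for $p'$ and $q'$ there being literally the ones here. From the orthocomplementation ${}^{\perp_p}$ on the ortholattice $[p]_p=[p]$ (see \autoref{[p]_p}) I also record $p'\vee(p\wedge q^\perp)=p$ and $p'\le(p\wedge q^\perp)^\perp$ (i.e.\ $p'\perp(p\wedge q^\perp)$), and symmetrically $q'\vee(q\wedge p^\perp)=q$ and $q'\perp(q\wedge p^\perp)$; these are what will feed the orthomodular law.

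For maximality I read ``$p'$ and $q'$ are maximal semiorthoperspective'' as: no semiorthoperspective pair lies strictly above $(p',q')$ inside $[p]\times[q]$, i.e.\ if $p'\le r\le p$, $q'\le s\le q$ and $r\sim_{\mathrm{sop}}s$ then $r=p'$ and $s=q'$. So fix such $r,s$. Since $r$ and $s^\perp$ are complementary and $s\le q$ forces $s^\perp\ge q^\perp$, we get $r\wedge q^\perp\le r\wedge s^\perp=\mathbf{0}$, hence $r\wedge(p\wedge q^\perp)=\mathbf{0}$; now the orthomodular identity \autoref{orthoequiv}\eqref{orthoequiv5}, applied with $p'\le r$ and $p'\perp(p\wedge q^\perp)$, gives
\[r=r\wedge p=r\wedge\bigl(p'\vee(p\wedge q^\perp)\bigr)=p'\vee\bigl(r\wedge(p\wedge q^\perp)\bigr)=p'.\]
Symmetrically, $r$ and $s^\perp$ complementary also gives $r^\perp\wedge s=\mathbf{0}$, and $r\le p$ forces $r^\perp\ge p^\perp$, so $s\wedge(q\wedge p^\perp)\le s\wedge r^\perp=\mathbf{0}$, whence the same orthomodular identity (now with $q'\le s$ and $q'\perp(q\wedge p^\perp)$) yields $s=s\wedge q=q'\vee\bigl(s\wedge(q\wedge p^\perp)\bigr)=q'$.

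The one delicate point is fixing the meaning of ``maximal semiorthoperspective'': $(p',q')$ is \emph{not} the greatest semiorthoperspective pair inside $[p]\times[q]$, nor a maximal semiorthoperspective pair in all of $\mathbb{P}\times\mathbb{P}$ — simple rank-counting examples in $\mathbb{C}^4$ break both — so maximality must be read as maximality within the box $[p]\times[q]$. Once that is pinned down the argument is routine: semiorthoperspectivity of $(r,s)$ forces the two meet relations $r\wedge q^\perp=\mathbf{0}$ and $s\wedge p^\perp=\mathbf{0}$, and the orthomodular law collapses $r$ to $p'$ and $s$ to $q'$, with uniqueness of orthogonal complements (orthomodularity again) being precisely what singles out $p'$ among the various complements of $p\wedge q^\perp$ in $[p]$.
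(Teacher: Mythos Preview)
Your proof is correct and follows essentially the same route as the paper's. Both obtain semiorthoperspectivity from \autoref{pqposp'} combined with $[p]_p=[p]$ (orthomodularity), and both reduce maximality to the observation that any $r\in[p]$ semiorthoperspective to some $s\in[q]$ must satisfy $r\wedge(p\wedge q^\perp)=\mathbf{0}$. The paper argues contrapositively (if $r>p'$ then orthomodularity forces $r\wedge(p\wedge q^\perp)\neq\mathbf{0}$, so $r\wedge q^\perp\neq\mathbf{0}$ and $r$ cannot be semiorthoperspective to anything in $[q]$), while you argue directly via \autoref{orthoequiv}\eqref{orthoequiv5}; these are the same step. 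Your explicit discussion of what ``maximal'' must mean here---maximality within $[p]\times[q]$ rather than in $\mathbb{P}\times\mathbb{P}$---is a useful clarification that the paper leaves implicit (its equality $s\wedge p'^\perp=s\wedge p\wedge q^\perp$ tacitly assumes $s\leq p$).
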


\begin{proof}
Semiorthoperspectivity is immediate from \autoref{pqposp'} and \autoref{orthoequiv}.  For maximality, note that if $s>p'$ then $\mathbf{0}\neq s\wedge p'^\perp=s\wedge p\wedge q^\perp$, by orthomodularity.  Thus $s\wedge q^\perp\neq0$ so $s$ could not be semiorthoperspective to anything in $[q]$.
\end{proof}


\subsection{Modularity}

As shown in \cite{Jacobson1985} Theorem 8.4, $\mathbb{P}$ is modular if and only if, for $p,q,r\in\mathbb{P}$,
\begin{equation}\label{persp}
p\vee r=q\vee r,\ p\wedge r=q\wedge r\textrm{ and }p\leq q\quad\Rightarrow\quad p=q.
\end{equation}\label{perfin}
In particular, this means perspectivity is a finite relation on $\mathbb{P}$.  In fact, if $\mathbb{P}$ is an ortholattice then
\begin{equation}\label{modperfin}
\textrm{modularity}\qquad\Leftrightarrow\qquad\textrm{perspectivity is finite}.
\end{equation}
To see the converse, first note that if perspectivity is finite then so is orthoperspectivity and hence $\mathbb{P}$ is orthomodular, by \autoref{orthoequiv}.  Now say $p,q\in\mathbb{P}$ satisfy the conditions on the left of \eqref{persp} then set $p'=p\wedge(p\wedge r)^\perp$ and $p''=p'\vee(p'\vee r)^\perp$ and likewise for $q'$ and $q''$.  Then we see that $p'\wedge r=\mathbf{0}=q'\wedge r$ and hence $p''\wedge r=\mathbf{0}=q''\wedge r$, as well as $p''\vee r=\mathbf{1}=q''\vee r$.  So if perspectivity is finite then $p''=q''$.  As $\mathbb{P}$ is orthomodular, $p'\vee r=p\vee r=q\vee r=q'\vee r$, and so orthomodularity together with $p''=q''$ implies $p'=q'$ and hence orthomodularity together with $p\wedge r=q\wedge r$ finally yields $p=q$.

\subsection{The Centre}

\begin{dfn}\label{centredef}
In an ortholattice $\mathbb{P}$, we say $s,t\in\mathbb{P}$ \emph{commute} if \eqref{distributive} holds whenever $p,q,r\in\{s,s^\perp,t,t^\perp\}$.  We call $p\in\mathbb{P}$ \emph{central} if it commutes with all $q\in\mathbb{P}$.  If $\mathbb{P}$ is complete and $p\in\mathbb{P}$, we define the \emph{central cover} $\mathrm{c}(p)$ of $p$ by \[\mathrm{c}(p)=\bigwedge\{q\geq p:q\textrm{ is central}\}.\]  Given $T\subseteq\mathbb{P}$ we define \[\mathrm{c}T=\{\mathrm{c}(t):t\in T\}\] so $\mathrm{c}\mathbb{P}=\{p\in\mathbb{P}:p\textrm{ is central}\}$.  We call $p,q\in\mathbb{P}$ \emph{very orthogonal} if $\mathrm{c}(p)\perp \mathrm{c}(q)$.
\end{dfn}

The only non-trivial instances of \eqref{distributive}, for $p,q,r\in\{s,s^\perp,t,t^\perp\}$, are of the form
\begin{eqnarray}
p\wedge q &=& p\wedge(p^\perp\vee q)\quad\textrm{and}\label{com1}\\
p &=& (p\wedge q)\vee(p\wedge q^\perp),\label{com2}
\end{eqnarray}
for $p\in\{s,s^\perp\}$ and $q\in\{t,t^\perp\}$, or $p\in\{t,t^\perp\}$ and $q\in\{s,s^\perp\}$.  Thus, the order theoretic definition of commutativity agrees with the algebraic definition for projections in a C*-algebra, by \autoref{pq=qp}.  In fact, for orthomodular lattices, each non-trivial instance of \eqref{distributive} is equivalent to every other and to \[\mathbf{1}=(s\wedge t)\vee(s^\perp\wedge t)\vee(s\wedge t^\perp)\vee(s^\perp\wedge t^\perp),\] by \cite{Kalmbach1983} \S3 Lemma 3 and Proposition 8.  Even if $\mathbb{P}$ is not orthomodular, we still have the following important characterizations of centrality.

\begin{dfn}
Given partial orders $\mathbb{P}$ and $\mathbb{Q}$ we order $\mathbb{P}\times\mathbb{Q}$ coordinatewise, i.e. $(p,q)\leq(r,s)\Leftrightarrow p\leq r\textrm{ and }q\leq s$.  If they are orthocomplemented, we make $\mathbb{P}\times\mathbb{Q}$ orthocomplemented by defining $(p,q)^\perp=(p^\perp,q^\perp)$.  Given an ortholattice $\mathbb{P}$ and $p\in\mathbb{P}$, we say $\mathbb{P}$ is \emph{canonically isomorphic} to $[p]\times[p^\perp]$ to mean that $[p]=[p]_p$, $[p^\perp]=[p^\perp]_{p^\perp}$ and the (order preserving) maps $(q,r)\mapsto q\vee r$ and $q\mapsto(q\wedge p,q\wedge p^\perp)$, from $[p]\times[p^\perp]$ to $\mathbb{P}$ and vice versa, are inverse to each other.
\end{dfn}

\begin{thm}\label{centralequiv}
If $\mathbb{P}$ is an ortholattice then the following are equivalent for $p\in\mathbb{P}$.
\begin{enumerate}
\item\label{centralequiv1} $q=(q\wedge p)\vee(q\wedge p^\perp)$, for all $q\in\mathbb{P}$.
\item\label{centralequiv2} $p$ is central.
\item\label{centralequiv3} $\mathbb{P}$ is canonically isomorphic to $[p]\times[p^\perp]$.
\end{enumerate}
\end{thm}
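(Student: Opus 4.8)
The statement is a cycle of three equivalences for an element $p$ of an ortholattice $\mathbb{P}$: the weak distributive identity \eqref{centralequiv1}, centrality \eqref{centralequiv2}, and the canonical product decomposition \eqref{centralequiv3}. The plan is to prove the cycle \eqref{centralequiv2}$\Rightarrow$\eqref{centralequiv1}$\Rightarrow$\eqref{centralequiv3}$\Rightarrow$\eqref{centralequiv2}, since the first implication is essentially trivial (centrality includes \eqref{centralequiv1} as the instance \eqref{com2} of \eqref{distributive} with $q=p$ there and the free variable ranging over all of $\mathbb{P}$), and the last implication is clean once the product structure is in hand.

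\textbf{First I would handle \eqref{centralequiv2}$\Rightarrow$\eqref{centralequiv1}:} this is immediate, since \eqref{com2} (with $p$ playing the role of both $s$ and one of the factors, and $q$ arbitrary) is exactly \eqref{centralequiv1}; centrality of $p$ means $p$ commutes with every $q$, and commuting entails \eqref{com2}.

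\textbf{Next, \eqref{centralequiv1}$\Rightarrow$\eqref{centralequiv3}:} here is the bulk of the work. Assuming $q=(q\wedge p)\vee(q\wedge p^\perp)$ for all $q$, I would first show $[p]=[p]_p$ and $[p^\perp]=[p^\perp]_{p^\perp}$. Given $q\le p$, apply \eqref{centralequiv1} to $q^\perp$ to get $q^\perp=(q^\perp\wedge p)\vee(q^\perp\wedge p^\perp)$; since $q\le p$ forces $q^\perp\wedge p^\perp=p^\perp$, this reads $q^\perp=(q^\perp\wedge p)\vee p^\perp$, whence $q=q^{\perp\perp}\ge(q^\perp\wedge p)^\perp\wedge p=q^{\perp_p\perp_p}$, and the reverse inequality $q\le q^{\perp_p\perp_p}$ always holds, so $q=q^{\perp_p\perp_p}\in[p]_p$ by \autoref{[p]_p}; dually for $p^\perp$. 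Then the two maps $(q,r)\mapsto q\vee r$ and $q\mapsto(q\wedge p,q\wedge p^\perp)$ must be shown mutually inverse. One composite, $q\mapsto(q\wedge p)\vee(q\wedge p^\perp)$, is the identity by hypothesis \eqref{centralequiv1}. For the other, starting from $(q,r)\in[p]\times[p^\perp]$, I must check $(q\vee r)\wedge p=q$ and $(q\vee r)\wedge p^\perp=r$; the inequality $\ge q$ is clear, and for $\le q$ I would apply \eqref{centralequiv1} to $q\vee r$, noting $(q\vee r)\wedge p^\perp\le$ something orthogonal to $q$ — more carefully, write $s=(q\vee r)\wedge p$; since $r\le p^\perp$ and $q\le p$, one has $s\wedge r\le p\wedge p^\perp=\mathbf{0}$, and using \eqref{centralequiv1} on $s$ together with $s\le q\vee r$ and antisymmetry pins down $s=q$. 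The dual identity follows symmetrically, giving that the maps are inverse, hence \eqref{centralequiv3}.

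\textbf{Finally, \eqref{centralequiv3}$\Rightarrow$\eqref{centralequiv2}:} if $\mathbb{P}\cong[p]\times[p^\perp]$ canonically, then for any $q$ we have $q=(q\wedge p)\vee(q\wedge p^\perp)$ by the inverse-map property, and the same applies to $q^\perp$; one then verifies \eqref{com1} and \eqref{com2} for all $p',q'\in\{p,p^\perp\}\times\{q,q^\perp\}$ by transporting the computation into the product $[p]\times[p^\perp]$, where it reduces coordinatewise to trivial identities in $[p]$ and $[p^\perp]$ (the key point being that $p$ corresponds to $(\mathbf{1},\mathbf{0})$, which is manifestly central in a product). The main obstacle is the middle step \eqref{centralequiv1}$\Rightarrow$\eqref{centralequiv3}, specifically verifying that $(q\vee r)\wedge p=q$ for $q\le p$, $r\le p^\perp$ — this is where \eqref{centralequiv1} must be deployed most delicately, since absent orthomodularity one cannot invoke the usual modular-law manipulations and must instead argue directly from the single distributive instance available together with the characterization of $[p]_p$ from \autoref{[p]_p}.
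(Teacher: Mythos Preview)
Your cycle \eqref{centralequiv2}$\Rightarrow$\eqref{centralequiv1}$\Rightarrow$\eqref{centralequiv3}$\Rightarrow$\eqref{centralequiv2} matches the paper's exactly; the paper simply defers \eqref{centralequiv1}$\Rightarrow$\eqref{centralequiv3} to Kalmbach and MacLaren, while you supply the details directly. Your argument that $[p]=[p]_p$ is correct (in fact you get equality $q=(q^\perp\wedge p)^\perp\wedge p$ immediately, not just $\geq$), and the coordinatewise verification for \eqref{centralequiv3}$\Rightarrow$\eqref{centralequiv2} is fine.

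There is one genuine gap, however, in your verification that $(q\vee r)\wedge p=q$ for $q\leq p$, $r\leq p^\perp$. You propose to ``apply \eqref{centralequiv1} on $s$'' where $s=(q\vee r)\wedge p$; but since $s\leq p$ already, this gives only the triviality $s=(s\wedge p)\vee(s\wedge p^\perp)=s\vee\mathbf{0}=s$. The observation $s\wedge r=\mathbf{0}$ together with $s\leq q\vee r$ does not by itself force $s\leq q$ in a general ortholattice. The clean fix is to first derive the \emph{dual} of \eqref{centralequiv1}: applying \eqref{centralequiv1} to $q^\perp$ and taking orthocomplements gives
\[
q=(q\vee p)\wedge(q\vee p^\perp)\quad\text{for all }q\in\mathbb{P}.
\]
Now for $q\leq p$ this reads $q=p\wedge(q\vee p^\perp)$, and since $r\leq p^\perp$ gives $q\vee r\leq q\vee p^\perp$, we obtain $(q\vee r)\wedge p\leq(q\vee p^\perp)\wedge p=q$, with the reverse inequality clear. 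This completes the missing step.
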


\begin{proof}
\item[\eqref{centralequiv1}$\Rightarrow$\eqref{centralequiv3}] See \cite{Kalmbach1983} \S3 Theorem 1 or \cite{MacLaren1964} Theorem 3.2.
\item[\eqref{centralequiv3}$\Rightarrow$\eqref{centralequiv2}] Immediately verified by coordinatewise calculations.
\item[\eqref{centralequiv2}$\Rightarrow$\eqref{centralequiv1}] Immediate from the definition of central.
\end{proof}

Note that \eqref{centralequiv3} is important because it means we can now do calculations coordinatewise in $[p]\times[p^\perp]$ rather than $\mathbb{P}$.  For example, say we had $p\in \mathrm{c}\mathbb{P}$ and $q\in\mathbb{P}$ and we want to show that
\begin{equation}\label{c(pwedgeq)}
\mathrm{c}(p\wedge q)=p\wedge\mathrm{c}(q).
\end{equation}
If equality fails here then we would have $r\in\mathrm{c}\mathbb{P}$ with $r\geq p\wedge q$ with $r\ngeq p\wedge\mathrm{c}(q)$.  By replacing $r$ with $r\wedge p\wedge\mathrm{c}(q)$ if necessary we may assume that $r<p\wedge\mathrm{c}(q)$.  Then $r\vee(p^\perp\wedge\mathrm{c}(q))\geq(p\wedge q)\vee(p^\perp\wedge q)=q$ even though \[r\vee(p^\perp\wedge \mathrm{c}(q))<(p\wedge\mathrm{c}(q))\vee(p^\perp\wedge\mathrm{c}(q))=\mathrm{c}(q),\] where we know the first inequality is strict because the inequality in the first coordinates in $[p]\times[p^\perp]$ is strict.  This contradicts the fact $\mathrm{c}(q)$ is the central cover of $q$ and we are done.

Still assuming $p\in\mathrm{c}\mathbb{P}$, we have $q\wedge p=(q\wedge p^\perp)^{\perp_q}\in[q]_q$.  For any $r\in[q]_q$, we have $r\wedge(q\wedge p)=(r\wedge q)\wedge p=r\wedge p$ and \[r\wedge(q\wedge p)^{\perp_q}=r\wedge q\wedge(q\wedge p)^\perp=r\wedge(q\wedge p^\perp)=r\wedge p^\perp.\]  So $r=(r\wedge p)\vee(r\wedge p^\perp)=(r\wedge(q\wedge p))\vee_q(r\wedge(q\wedge p)^{\perp_q})$ and $q\wedge p\in\mathrm{c}_q[q]_q$, i.e.
\begin{equation}\label{c_q}
\{q\wedge p:p\in\mathrm{c}\mathbb{P}\}\subseteq\mathrm{c}_q[q]_q.
\end{equation}
In general this inclusion can be strict (see \autoref{H1}, where $\mathrm{c}_b[b]_b=[b]=\{b,c^\perp,a^\perp,\mathbf{0}\}$ even though $\mathrm{c}\mathbb{P}=\{\mathbf{1},\mathbf{0}\}$), although not for the annihilators in a C*-algebra (see \autoref{c_B}).  This inclusion is also an equality for central elements, i.e. given $p\in\mathrm{c}\mathbb{P}$, we have $[p]=[p]_p$ and $\mathrm{c}\mathbb{P}\cong\mathrm{c}[p]\times\mathrm{c}[p^\perp]$ so
\begin{equation}\label{c([p])}
\mathrm{c}_p[p]_p=\mathrm{c}[p].
\end{equation}
Another consequence of \eqref{centralequiv3} is that, for any $p\in\mathrm{c}\mathbb{P}$, $p^\perp$ is the unique complement of $p$ so, in fact, $\mathrm{c}\mathbb{P}$ is a Boolean algebra, by \cite{Kalmbach1983} \S3 Proposition 7.  For orthomodular $\mathbb{P}$, the converse also holds.

\begin{prp}\label{cenuniqcomp}
For orthomodular $\mathbb{P}$, $p\in\mathrm{c}\mathbb{P}$ if $p^\perp$ is its only complement.
\end{prp}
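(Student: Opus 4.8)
The plan is to reduce to the centrality criterion of \autoref{centralequiv}: assuming $p^\perp$ is the only complement of $p$, it suffices to check $q=(q\wedge p)\vee(q\wedge p^\perp)$ for every $q\in\mathbb{P}$, which is condition \eqref{centralequiv1}.

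First I would isolate the obstruction to this identity. Fix $q$, set $c=(q\wedge p)\vee(q\wedge p^\perp)\leq q$ and $d=q\wedge c^\perp$; then orthomodularity \eqref{orthomodeq} gives $q=c\vee d$, so the whole proposition reduces to showing $d=\mathbf{0}$. A one-line computation shows $d$ is disjoint from both $p$ and $p^\perp$: since $d\leq q$ we get $d\wedge p\leq q\wedge p\leq c$, and $d\leq c^\perp$ then forces $d\wedge p=\mathbf{0}$; symmetrically $d\wedge p^\perp=\mathbf{0}$.

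The crux is to manufacture from $d$ a second complement of $p$ and invoke uniqueness. I would put $v=p^\perp\vee d$ and $x=v\wedge(p\wedge v)^\perp$. Applying \eqref{orthomodeq} to $p\wedge v\leq v$ gives $v=(p\wedge v)\vee x$, hence $p\vee x=p\vee v\geq p\vee p^\perp=\mathbf{1}$; and $x\leq v$ together with $x\leq(p\wedge v)^\perp$ gives $x\wedge p=x\wedge(p\wedge v)=\mathbf{0}$. So $x$ is a complement of $p$, whence $x=p^\perp$ by hypothesis. But $d\leq v$ and $d\wedge(p\wedge v)\leq d\wedge p=\mathbf{0}$ yield $d\leq v\wedge(p\wedge v)^\perp=x=p^\perp$, and combined with $d\wedge p^\perp=\mathbf{0}$ this forces $d=\mathbf{0}$, completing the proof.

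The only genuinely non-routine step is spotting that $v=p^\perp\vee d$, suitably trimmed of $p$, is the element to feed into the uniqueness hypothesis; everything surrounding it is an immediate application of \eqref{orthomodeq}. Note that orthomodularity gets used essentially twice --- to split off $d$ and to obtain $v=(p\wedge v)\vee x$ --- which is to be expected, since the statement fails for general ortholattices (for instance a line in the subspace lattice of $\mathbb{C}^2$ has many complements yet is not central).
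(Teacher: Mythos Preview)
Your argument has a genuine gap in the final step. From $d\leq v$ and $d\wedge(p\wedge v)=\mathbf{0}$ you conclude $d\leq v\wedge(p\wedge v)^\perp$. This amounts to saying that in the orthomodular lattice $[v]$, the fact that $d$ meets $p\wedge v$ in $\mathbf{0}$ forces $d\leq(p\wedge v)^{\perp_v}$. But ``disjoint'' does not imply ``orthogonal'' in an orthomodular lattice: already in $\mathcal{P}(M_2)$ two distinct rank-one projections have meet $0$ without either lying below the other's orthocomplement. Even the extra information $d\wedge p^\perp=\mathbf{0}$ does not help, since (after identifying $x$ with $p^\perp$) you are exactly asserting that an element disjoint from both a projection and its orthocomplement must be $\mathbf{0}$, which fails for the same reason.

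The fix is to feed the uniqueness hypothesis a different candidate complement, built directly from $d$ rather than from $p^\perp\vee d$. Set $d''=d\vee(p\vee d)^\perp$. Then $p\vee d''=(p\vee d)\vee(p\vee d)^\perp=\mathbf{1}$, and using \autoref{orthoequiv}\eqref{orthoequiv5} with $d\leq p\vee d$ and $d\perp(p\vee d)^\perp$ gives $(p\vee d)\wedge d''=d$, whence $p\wedge d''=p\wedge d=\mathbf{0}$. So $d''$ is a complement of $p$; if $d''=p^\perp$ then $d\leq p^\perp$ and $d=d\wedge p^\perp=\mathbf{0}$, as desired. This is essentially the paper's construction: the paper takes the analogous element $q'\vee(p\vee q')^\perp$ (with $q'=q\wedge(q\wedge p)^\perp$, which contains your $d$) and shows directly it is a complement of $p$ different from $p^\perp$ whenever $p$ fails to be central.
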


\begin{proof}
If $p$ is not central then $q>(q\wedge p)\vee(q\wedge p^\perp)$, for some $q\in\mathbb{P}$.  Setting $q'=q\wedge(q\wedge p)^\perp$, we have $p\wedge q'=\mathbf{0}$ and, by orthomodularity, $q'>q\wedge p^\perp\geq q'\wedge p^\perp$.  This implies $q''=q'\vee(p\vee q')^\perp\neq p^\perp$ and, again by orthomodularity, $p\wedge q''=p\wedge q'=\mathbf{0}$.  Also $p\vee q''=p\vee q'\vee(p\vee q')^\perp=\mathbf{1}$, so $q''$ is a complement of $p$ different from $p^\perp$.
\end{proof}

For order theoretic type decompositions, what we actually need is an infinite version of \eqref{centralequiv3}, and the key extra condition required for this is separativity.

\begin{thm}\label{sepcomportho}
If $\mathbb{P}$ is a separative complete ortholattice, $q\in\mathbb{P}$ and $(p_\alpha)\subseteq\mathrm{c}\mathbb{P}$,
\begin{enumerate}
\item\label{sepcomportho1} $q\wedge\bigvee p_\alpha=\bigvee q\wedge p_\alpha$, and
\item\label{sepcomportho2} $q=(q\wedge\bigvee p_\alpha)\vee(q\wedge(\bigvee p_\alpha)^\perp)$.
\end{enumerate}
\end{thm}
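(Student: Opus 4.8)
The plan is to prove both statements by contradiction, feeding the witness supplied by separativity into the defining identity of centrality. Write $p=\bigvee p_\alpha$ throughout. The one structural fact I need is: since each $p_\alpha$ is central, \eqref{centralequiv1} gives $t=(t\wedge p_\alpha)\vee(t\wedge p_\alpha^\perp)$ for every $t\in\mathbb{P}$, so $t\wedge p_\alpha=\mathbf{0}$ forces $t=t\wedge p_\alpha^\perp\leq p_\alpha^\perp$. Taking this over all $\alpha$ and using that ${}^\perp$, being an antitone involution on the complete lattice $\mathbb{P}$, is an order anti-isomorphism and hence satisfies De Morgan's law, we obtain the implication $t\wedge p_\alpha=\mathbf{0}$ for all $\alpha\ \Rightarrow\ t\leq\bigwedge_\alpha p_\alpha^\perp=p^\perp$. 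The point to stress is that $p$ itself need not be central when $\mathbb{P}$ is only separative rather than orthomodular, so \autoref{centralequiv} is not directly available; separativity is precisely the substitute that makes the argument go through, and arranging this is really the whole content.

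For \eqref{sepcomportho1}, the inequality $\bigvee(q\wedge p_\alpha)\leq q\wedge p$ is immediate from $p_\alpha\leq p$. Suppose the reverse fails, i.e. $q\wedge p\nleq\bigvee(q\wedge p_\alpha)$. Separativity produces $t$ with $\mathbf{0}<t\leq q\wedge p$ and $t\wedge\bigvee(q\wedge p_\alpha)=\mathbf{0}$, so $t\wedge(q\wedge p_\alpha)=\mathbf{0}$ for every $\alpha$; since $t\leq q$ this is just $t\wedge p_\alpha=\mathbf{0}$ for every $\alpha$. By the observation above, $t\leq p^\perp$, while also $t\leq p$, so $t=t\wedge p^\perp\leq p\wedge p^\perp=\mathbf{0}$, a contradiction.

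For \eqref{sepcomportho2}, again $\geq$ is trivial. Assume $q\nleq(q\wedge p)\vee(q\wedge p^\perp)$ and let separativity give $\mathbf{0}<t\leq q$ with $t\wedge\bigl((q\wedge p)\vee(q\wedge p^\perp)\bigr)=\mathbf{0}$. Then $t\wedge(q\wedge p)=\mathbf{0}=t\wedge(q\wedge p^\perp)$, and since $t\leq q$ these say $t\wedge p=\mathbf{0}=t\wedge p^\perp$. The first of these, via the centrality observation, yields $t\leq p^\perp$; combined with $t\wedge p^\perp=\mathbf{0}$ this again forces $t=\mathbf{0}$, a contradiction. Hence both identities hold. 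The only place care is needed is in the elementary meet manipulations that use $t\leq q$ to simplify $t\wedge(q\wedge p_\alpha)$ to $t\wedge p_\alpha$ (and likewise for $p$, $p^\perp$); beyond that I expect no subtleties.
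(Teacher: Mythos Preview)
Your proof is correct and uses essentially the same idea as the paper: the key observation that $t\wedge p_\alpha=\mathbf{0}$ forces $t\leq p_\alpha^\perp$ (via centrality of $p_\alpha$), combined with separativity. The only difference is presentational: the paper packages the argument once by showing that $S=[\bigwedge_\alpha p_\alpha^\perp]\cup\bigcup_\alpha[p_\alpha]$ is order-dense and hence (by \autoref{jdod}) join-dense, from which both statements drop out simultaneously, whereas you run the separativity-by-contradiction argument separately for each part.
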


\begin{proof}
Both statements follow from the fact \[S=[\bigwedge p^\perp_\alpha]\cup\bigcup[p_\alpha]\] is join-dense in $\mathbb{P}$.  For this, it suffices to prove $S$ is order-dense, by \autoref{jdod}.  Now note that, for any $q\in\mathbb{P}$, $q\wedge p_\alpha=\mathbf{0}$ implies $q\leq p^\perp_\alpha$, as $q=(q\wedge p_\alpha)\vee(q\wedge p^\perp_\alpha)$.  So if this were true for all $\alpha$, we would have $q\leq\bigwedge p^\perp_\alpha$.  In any case, if $q>\mathbf{0}$, we have $s\in S$ with $\mathbf{0}<s\leq q$.
\end{proof}

So, by \eqref{sepcomportho2} above and \autoref{centralequiv} \eqref{centralequiv1}, if $\mathbb{P}$ is a separative complete ortholattice,
\begin{equation}\label{centresublattice}
\mathrm{c}\mathbb{P}\textrm{ is a complete sublattice.}
\end{equation}
Another important consequence is the following.

\begin{cor}\label{ordertd}
For orthogonal $(p_\alpha)\subseteq\mathrm{c}\mathbb{P}$ in a separative complete ortholattice $\mathbb{P}$,
\begin{equation}\label{ordertdeq}
[\bigvee p_\alpha]\cong\prod[p_\alpha].
\end{equation}
\end{cor}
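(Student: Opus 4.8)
The plan is to write the isomorphism down explicitly and let \autoref{sepcomportho} carry the load. Put $p=\bigvee_\alpha p_\alpha$. By \eqref{centresublattice} (a consequence of \autoref{sepcomportho}) the centre $\mathrm{c}\mathbb{P}$ is a complete sublattice, so $p$ is central and hence, by \autoref{centralequiv}, $[p]=[p]_p$ is a complete ortholattice with orthocomplementation $^{\perp_p}$. The candidate maps are
\[
\Phi\colon[p]\to\prod_\alpha[p_\alpha],\quad q\mapsto(q\wedge p_\alpha)_\alpha,
\qquad
\Psi\colon\prod_\alpha[p_\alpha]\to[p],\quad (q_\alpha)_\alpha\mapsto\bigvee_\alpha q_\alpha,
\]
which are well defined ($q\wedge p_\alpha\leq p_\alpha$ and $\bigvee_\alpha q_\alpha\leq\bigvee_\alpha p_\alpha=p$) and manifestly order preserving. (On the right-hand side each $[p_\alpha]=[p_\alpha]_{p_\alpha}$, since $p_\alpha$ is central, so it is an ortholattice with orthocomplementation $^{\perp_{p_\alpha}}$ and no ambiguity arises.)

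First I would check $\Psi\circ\Phi=\mathrm{id}$: for $q\leq p$, \autoref{sepcomportho}\eqref{sepcomportho1} gives $\Psi(\Phi(q))=\bigvee_\alpha(q\wedge p_\alpha)=q\wedge\bigvee_\alpha p_\alpha=q\wedge p=q$. For $\Phi\circ\Psi=\mathrm{id}$ I would fix $\beta$ and compute $(\bigvee_\alpha q_\alpha)\wedge p_\beta$: since $p_\beta$ is central, \autoref{centralequiv}\eqref{centralequiv3} gives an order isomorphism $\mathbb{P}\cong[p_\beta]\times[p_\beta^\perp]$, which therefore preserves arbitrary joins, and joins in a product of complete lattices are coordinatewise, so $(\bigvee_\alpha q_\alpha)\wedge p_\beta=\bigvee_\alpha(q_\alpha\wedge p_\beta)$; as the family is orthogonal, $q_\alpha\leq p_\alpha\leq p_\beta^\perp$ for $\alpha\neq\beta$, so every term but the $\beta$-th is $\mathbf{0}$ and the join equals $q_\beta\wedge p_\beta=q_\beta$. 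Hence $\Phi$ is an order isomorphism with inverse $\Psi$.

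It then remains to see that $\Phi$ is an orthoisomorphism, i.e.\ $q^{\perp_p}\wedge p_\alpha=(q\wedge p_\alpha)^{\perp_{p_\alpha}}=(q\wedge p_\alpha)^\perp\wedge p_\alpha$ for each $\alpha$; the left-hand side is $q^\perp\wedge p\wedge p_\alpha=q^\perp\wedge p_\alpha$, so I must show $q^\perp\wedge p_\alpha=(q\wedge p_\alpha)^\perp\wedge p_\alpha$. One inclusion is trivial; for the other, set $x=(q\wedge p_\alpha)^\perp\wedge p_\alpha$, so $x\perp(q\wedge p_\alpha)$ while $x\leq p_\alpha$ forces $x\perp p_\alpha^\perp$, whence $x\leq(q\wedge p_\alpha)^\perp\wedge(p_\alpha^\perp)^\perp=\big((q\wedge p_\alpha)\vee p_\alpha^\perp\big)^\perp$. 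But $q=q\wedge p=\bigvee_\gamma(q\wedge p_\gamma)$, again by \autoref{sepcomportho}\eqref{sepcomportho1}, with $q\wedge p_\gamma\leq p_\gamma\leq p_\alpha^\perp$ for $\gamma\neq\alpha$, so $q\leq(q\wedge p_\alpha)\vee p_\alpha^\perp$ and therefore $x\perp q$, i.e.\ $x\leq q^\perp\wedge p_\alpha$, as wanted. The only non-formal ingredient, invoked twice, is the identity $\bigvee_\alpha(q\wedge p_\alpha)=q$ for $q\leq p$ — exactly \autoref{sepcomportho}\eqref{sepcomportho1} — so the point at which separativity is genuinely used has already been isolated there, and I expect no further obstacle; the rest is bookkeeping with the product order and the definition of $^{\perp_{p_\alpha}}$.
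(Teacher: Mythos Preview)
Your proof is correct and follows essentially the same approach as the paper: the same pair of maps $\Phi,\Psi$, with $\Psi\circ\Phi=\mathrm{id}$ coming from \autoref{sepcomportho}\eqref{sepcomportho1} and $\Phi\circ\Psi=\mathrm{id}$ coming from centrality of $p_\beta$. The paper handles the latter slightly more directly, bounding $\bigvee_{\alpha\neq\beta}q_\alpha\leq p_\beta^\perp$ and then using that $p_\beta$ commutes with $q_\beta$ to get $p_\beta\wedge(q_\beta\vee p_\beta^\perp)=q_\beta$; your route via the product decomposition $\mathbb{P}\cong[p_\beta]\times[p_\beta^\perp]$ amounts to the same thing. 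You go further than the paper by also verifying that $\Phi$ is an orthoisomorphism, which the paper does not spell out in the proof of this corollary (it only establishes the order isomorphism), though your extra argument is correct and useful for the subsequent identity \eqref{bigveepalpha}.
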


\begin{proof}
Define $f:[\bigvee_\alpha p_\alpha]\rightarrow\prod_\alpha [p_\alpha]$ and $g:\prod_\alpha[p_\alpha]\rightarrow[\bigvee_\alpha p_\alpha]$ by \[f(q)=\prod(p_\alpha\wedge q)\qquad\textrm{and}\qquad g((q_\alpha))=\bigvee q_\alpha.\]  Take $(q_\alpha)\subseteq\mathbb{P}$ with $q_\alpha\leq p_\alpha$, for all $\alpha$.  Given that $p_\alpha$ commutes with $q_\alpha$, we have $p_\alpha\wedge(q_\alpha\vee(\bigvee_{\beta\neq\alpha}q_\beta))\leq p_\alpha\wedge(q_\alpha\vee p_\alpha^\perp)=q_\alpha$, so $f\circ g$ is the identity map.  But $g\circ f$ is also the identity map, by \autoref{sepcomportho} \eqref{sepcomportho1} and thus $g$ and $f$ are (canonical) isomorphisms inverse to each other.
\end{proof}

Given a collection of ortholattices $(\mathbb{P}_\alpha)$ and $(p_\alpha),(q_\alpha)\in\prod\mathbb{P}_\alpha$ with $(q_\alpha)\leq(p_\alpha)$, we have $(q_\alpha)^\perp\wedge(p_\alpha)=(q_\alpha^{\perp_\alpha})\wedge(p_\alpha)=(q_\alpha^{\perp_\alpha}\wedge p_\alpha)$, i.e. \[[(p_\alpha)]_{(p_\alpha)}=\prod[p_\alpha]_{p_\alpha}.\]  This means that, by \autoref{ordertd}, given very orthogonal $(p_\alpha)\subseteq \mathbb{P}$ in a separative complete ortholattice $\mathbb{P}$, we also have \begin{equation}\label{bigveepalpha}
[\bigvee p_\alpha]_{\bigvee p_\alpha}\cong\prod[p_\alpha]_{p_\alpha}.
\end{equation}

\subsection{Type Decomposition}\label{tdsec}

\begin{dfn}\label{td}
Given a complete ortholattice $\mathbb{P}$, we call $T\subseteq\mathbb{P}$ a \emph{type ideal} if, whenever we have pairwise very orthogonal $S\subseteq\mathbb{P}$,
\begin{equation}\label{typedef}
S\subseteq T\qquad\Leftrightarrow\qquad\bigvee S\in T.
\end{equation}
If \eqref{typedef} holds for arbitrary $S\subseteq T$ then $T$ is a \emph{complete ideal}.
\end{dfn}

These type ideals correspond to the type-determining subsets defined in \cite{FoulisPulmannova2010} \S4 (6) in the context of effect algebras.  Note that if $T$ is a complete ideal then $T=[\bigvee T]$, i.e. complete ideals are precisely those subsets of $\mathbb{P}$ of the form $[p]$.  And if $T$ is a type ideal in a complete Boolean algebra $\mathbb{P}$ then $T$ is actually a complete ideal.  For then $\mathrm{c}\mathbb{P}=\mathbb{P}$ so if $p\leq q\in T$ then $q=p\vee(p^\perp\wedge q)$ and hence $p\in T$.  While given any $(t_\alpha)\subseteq T$ we can define (very) orthogonal $(s_\alpha)$ by $s_\alpha=t_\alpha\wedge(\bigvee_{\beta<\alpha}t_\beta)^\perp$ and transfinite induction gives $\bigvee t_\alpha=\bigvee s_\alpha\in T$.

The most common examples of type ideals come from the the type relations and type classes to be defined in the following sections.  However, there is one important example we can give straight away, namely the \emph{equality type ideal} $\mathbb{P}_=$ of any ortholattice $\mathbb{P}$ defined by
\begin{equation}\label{eqtypeideal}
\mathbb{P}_==\{p\in\mathbb{P}:[p]=[p]_p\}.
\end{equation}
This is indeed a type ideal, by \eqref{ordertdeq} and \eqref{bigveepalpha}, and $\mathbb{P}=\mathbb{P}_=$ if and only if $\mathbb{P}$ is orthomodular, by \autoref{orthoequiv}.

We call $T$ and $S$ \emph{complementary} when $(\bigvee T)\wedge(\bigvee S)=\mathbf{0}$ and $(\bigvee T)\vee(\bigvee S)=\mathbf{1}$.  

\begin{prp}\label{cideal}
Given a separative complete ortholattice $\mathbb{P}$ and a type ideal $T\subseteq\mathbb{P}$, the following pairs are complementary complete ideals of $\mathrm{c}\mathbb{P}$.
\begin{eqnarray}
T\cap\mathrm{c}\mathbb{P}\quad &\textrm{and}& \quad\{p\in\mathrm{c}\mathbb{P}:\mathrm{c}[p]\cap T=\{\mathbf{0}\}\}.\label{tdprp1}\\
\mathrm{c}T\quad &\textrm{and}& \quad\{p\in\mathrm{c}\mathbb{P}:[p]\cap T=\{\mathbf{0}\}\}.\label{tdprp2}
\end{eqnarray}
\end{prp}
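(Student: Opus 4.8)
The plan is to push everything into the centre $\mathrm{c}\mathbb{P}$, which by \eqref{centresublattice} and the remark following it is a \emph{complete Boolean algebra}, and inside which every type ideal is automatically a complete ideal (the remark after \autoref{td}). So the substance of the argument is, first, to verify that $T\cap\mathrm{c}\mathbb{P}$ and $\mathrm{c}T$ are type ideals of $\mathrm{c}\mathbb{P}$ --- whence they are complete ideals $[e]\cap\mathrm{c}\mathbb{P}$ and $[f]\cap\mathrm{c}\mathbb{P}$ for $e=\bigvee(T\cap\mathrm{c}\mathbb{P})$ and $f=\bigvee\mathrm{c}T$ in $\mathrm{c}\mathbb{P}$ --- and, second, to identify $\{p\in\mathrm{c}\mathbb{P}:\mathrm{c}[p]\cap T=\{\mathbf{0}\}\}$ and $\{p\in\mathrm{c}\mathbb{P}:[p]\cap T=\{\mathbf{0}\}\}$ with $[e^\perp]\cap\mathrm{c}\mathbb{P}$ and $[f^\perp]\cap\mathrm{c}\mathbb{P}$ (again complete ideals of $\mathrm{c}\mathbb{P}$, being of the form $[q]\cap\mathrm{c}\mathbb{P}$), after which complementarity of the two pairs is just $e\wedge e^\perp=f\wedge f^\perp=\mathbf{0}$ and $e\vee e^\perp=f\vee f^\perp=\mathbf{1}$.

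Before that I would isolate three elementary facts valid for $p\in\mathrm{c}\mathbb{P}$. (i) If $t\in T$ then $t\wedge p\in T$: indeed $t=(t\wedge p)\vee(t\wedge p^\perp)$ by centrality, and the two joinands are very orthogonal because their central covers $p\wedge\mathrm{c}(t)$ and $p^\perp\wedge\mathrm{c}(t)$ (computed via \eqref{c(pwedgeq)}) are orthogonal, so $t\in T$ forces $t\wedge p\in T$ by the type ideal property. (ii) $\mathrm{c}[p]=[p]\cap\mathrm{c}\mathbb{P}$, since $q\leq p$ gives $\mathrm{c}(q)\leq\mathrm{c}(p)=p$, while a central $r\leq p$ equals $\mathrm{c}(r)$. (iii) $\mathrm{c}$ preserves arbitrary joins, because $\mathrm{c}\mathbb{P}$ is a complete sublattice, so $\bigvee\mathrm{c}(s_\alpha)$ is central and dominates $\bigvee s_\alpha$.

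For the type ideal claims: a pairwise orthogonal $S\subseteq\mathrm{c}\mathbb{P}$ is pairwise very orthogonal in $\mathbb{P}$ and has $\bigvee S\in\mathrm{c}\mathbb{P}$, so $S\subseteq T\cap\mathrm{c}\mathbb{P}\Leftrightarrow S\subseteq T\Leftrightarrow\bigvee S\in T\Leftrightarrow\bigvee S\in T\cap\mathrm{c}\mathbb{P}$, settling $T\cap\mathrm{c}\mathbb{P}$ at once. For $\mathrm{c}T$: if $\bigvee S\in\mathrm{c}T$, say $\bigvee S=\mathrm{c}(t)$, then $s\leq\mathrm{c}(t)$ gives $s=\mathrm{c}(t\wedge s)\in\mathrm{c}T$ by (i) and (iii); conversely, if $S\subseteq\mathrm{c}T$, write each $s=\mathrm{c}(t_s)$ and replace $t_s$ by $u_s=t_s\wedge s\in T$, which still has $\mathrm{c}(u_s)=s$, so $\{u_s:s\in S\}$ is pairwise very orthogonal in $T$, hence $\bigvee_s u_s\in T$ and $\mathrm{c}(\bigvee_s u_s)=\bigvee_s\mathrm{c}(u_s)=\bigvee S\in\mathrm{c}T$. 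This replacement --- meeting a family of $\mathrm{c}$-preimages with their own central covers to make it very orthogonal --- together with fact (i) is the one genuinely non-mechanical point; everything else is bookkeeping with centrality and the Boolean structure of $\mathrm{c}\mathbb{P}$.

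Finally, for $p\in\mathrm{c}\mathbb{P}$ we have $\mathrm{c}[p]\cap T=([p]\cap\mathrm{c}\mathbb{P})\cap T=[p]\cap(T\cap\mathrm{c}\mathbb{P})=[p\wedge e]\cap\mathrm{c}\mathbb{P}$, which equals $\{\mathbf{0}\}$ exactly when $p\wedge e=\mathbf{0}$, i.e.\ $p\leq e^\perp$; this identifies the second set in \eqref{tdprp1} with $[e^\perp]\cap\mathrm{c}\mathbb{P}$. Similarly $[p]\cap T=\{\mathbf{0}\}$ iff $t\wedge p=\mathbf{0}$ for every $t\in T$ (the forward implication using $t\wedge p\in[p]\cap T$), iff $\mathrm{c}(t)\leq p^\perp$ for every $t\in T$ (by \eqref{c(pwedgeq)}), iff $f=\bigvee\mathrm{c}T\leq p^\perp$, iff $p\leq f^\perp$; so the second set in \eqref{tdprp2} is $[f^\perp]\cap\mathrm{c}\mathbb{P}$. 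Since the joins of $[e]\cap\mathrm{c}\mathbb{P}$, $[e^\perp]\cap\mathrm{c}\mathbb{P}$, $[f]\cap\mathrm{c}\mathbb{P}$, $[f^\perp]\cap\mathrm{c}\mathbb{P}$ are $e$, $e^\perp$, $f$, $f^\perp$ respectively, complementarity of the two pairs follows.
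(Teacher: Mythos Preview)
Your proof is correct and follows essentially the same approach as the paper: both reduce to the complete Boolean algebra $\mathrm{c}\mathbb{P}$, observe that $T\cap\mathrm{c}\mathbb{P}$ and $\mathrm{c}T$ are type (hence complete) ideals there, and then identify the complementary sets as $[e^\perp]\cap\mathrm{c}\mathbb{P}$ and $[f^\perp]\cap\mathrm{c}\mathbb{P}$. Your treatment is in fact more explicit than the paper's --- your facts (i)--(iii) and the replacement $u_s=t_s\wedge s$ unpack steps the paper compresses into a single sentence (``$\mathrm{c}(\bigvee p_\alpha)=\bigvee\mathrm{c}(p_\alpha)$ \ldots\ so $\mathrm{c}T$ is a type ideal'') --- but the underlying argument is the same.
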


\begin{proof}\
\begin{itemize}
\item[\eqref{tdprp1}] As $T$ is a type ideal, so is $T\cap\mathrm{c}\mathbb{P}$ which, as $\mathrm{c}\mathbb{P}$ is a complete Boolean algebra, means it is a complete ideal of $\mathrm{c}\mathbb{P}$.  Letting $q=\bigvee(T\cap\mathrm{c}\mathbb{P})$, we immediately have
\[\mathrm{c}[q^\perp]\subseteq\{p\in\mathrm{c}\mathbb{P}:\mathrm{c}[p]\cap T=\{\mathbf{0}\}\},\]
while if $p\in\mathrm{c}\mathbb{P}$ satisfies $\mathrm{c}[p]\cap T=\{\mathbf{0}\}$ then, as $p\wedge q\in T\cap\mathrm{c}\mathbb{P}$ (because $p\wedge q\leq q$ and $q\in T\cap\mathrm{c}\mathbb{P}$), we must have $p\wedge q=\mathbf{0}$ and hence $p\in[q^\perp]$, i.e. the inclusion above is an equality.

\item[\eqref{tdprp2}] By \eqref{c(pwedgeq)} and \eqref{centresublattice}, $\mathrm{c}(\bigvee p_\alpha)=\bigvee\mathrm{c}(p_\alpha)$ whenever $(p_\alpha)\subseteq\mathbb{P}$ so $\mathrm{c}T$ is a type ideal and thus, as above, a complete ideal of $\mathrm{c}\mathbb{P}$.  The fact that its complementary complete ideal is $\{p\in\mathrm{c}\mathbb{P}:[p]\cap T=\{\mathbf{0}\}\}$ follows in a similar manner, with another application of \eqref{c(pwedgeq)}.
\end{itemize}
\end{proof}

It immediately follows that any type ideal naturally gives rise to the following two type decompositions.

\begin{cor}\label{tdcor}
Given a type ideal $T$ in a separative complete ortholattice $\mathbb{P}$, there are unique $p_T,q_T\in\mathrm{c}\mathbb{P}$ such that
\begin{eqnarray}
p_T\in T\quad &\textrm{and}& \quad\mathrm{c}[p_T^\perp]\cap T=\{\mathbf{0}\}.\label{tdcor1}\\
q_T\in\mathrm{c}T\quad &\textrm{and}& \quad[q_T^\perp]\cap T=\{\mathbf{0}\}.\label{tdcor2}
\end{eqnarray}
\end{cor}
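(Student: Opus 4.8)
The plan is to read both decompositions off directly from \autoref{cideal}, together with the remark made just after \autoref{td} that the complete ideals of a complete ortholattice are exactly the principal down-sets $[p]$. The one preliminary point to record is that $\mathrm{c}\mathbb{P}$ is itself a complete ortholattice, indeed a complete Boolean algebra, by \eqref{centresublattice} and the paragraph preceding \autoref{cenuniqcomp}; hence a subset $I\subseteq\mathrm{c}\mathbb{P}$ is a complete ideal of $\mathrm{c}\mathbb{P}$ precisely when $I=[p]\cap\mathrm{c}\mathbb{P}$ with $p=\bigvee I\in\mathrm{c}\mathbb{P}$, in which case $I$ contains its own supremum $p$. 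Moreover, since complements in the Boolean algebra $\mathrm{c}\mathbb{P}$ are unique, the complementary complete ideal of $[p]\cap\mathrm{c}\mathbb{P}$ is exactly $[p^\perp]\cap\mathrm{c}\mathbb{P}$.

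For \eqref{tdcor1}: by \autoref{cideal} \eqref{tdprp1}, $T\cap\mathrm{c}\mathbb{P}$ is a complete ideal of $\mathrm{c}\mathbb{P}$, so putting $p_T=\bigvee(T\cap\mathrm{c}\mathbb{P})$ we get $T\cap\mathrm{c}\mathbb{P}=[p_T]\cap\mathrm{c}\mathbb{P}$ and $p_T\in T$. Its complementary complete ideal, which \autoref{cideal} \eqref{tdprp1} identifies as $\{p\in\mathrm{c}\mathbb{P}:\mathrm{c}[p]\cap T=\{\mathbf{0}\}\}$, equals $[p_T^\perp]\cap\mathrm{c}\mathbb{P}$ by the observation above; applying that description to the element $p_T^\perp$ itself yields $\mathrm{c}[p_T^\perp]\cap T=\{\mathbf{0}\}$, which gives existence. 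For uniqueness, suppose $p\in\mathrm{c}\mathbb{P}$ also satisfies $p\in T$ and $\mathrm{c}[p^\perp]\cap T=\{\mathbf{0}\}$. The first condition puts $p$ in $T\cap\mathrm{c}\mathbb{P}=[p_T]\cap\mathrm{c}\mathbb{P}$, so $p\leq p_T$; the second puts $p^\perp$ in the complementary ideal $[p_T^\perp]\cap\mathrm{c}\mathbb{P}$, so $p^\perp\leq p_T^\perp$ and hence $p_T\leq p$ by antitonicity of the orthocomplementation. Thus $p=p_T$.

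For \eqref{tdcor2}: the argument is the same, using \autoref{cideal} \eqref{tdprp2} in place of \eqref{tdprp1} and setting $q_T=\bigvee\mathrm{c}T$. Since $\mathrm{c}T$ is now the relevant complete ideal, $q_T\in\mathrm{c}T$ automatically; its complementary complete ideal $\{p\in\mathrm{c}\mathbb{P}:[p]\cap T=\{\mathbf{0}\}\}$ equals $[q_T^\perp]\cap\mathrm{c}\mathbb{P}$, and applying that to $q_T^\perp$ gives $[q_T^\perp]\cap T=\{\mathbf{0}\}$; uniqueness follows exactly as before.

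I do not expect any real obstacle here: granted \autoref{cideal}, the corollary is just the bookkeeping observation that a complementary pair of complete ideals in the Boolean algebra $\mathrm{c}\mathbb{P}$ is the same data as a single central element and its complement. The only things needing a little care are checking that $\mathrm{c}\mathbb{P}$ genuinely qualifies as a complete ortholattice so that the ``complete ideals are principal'' remark applies to it, and matching up, in each of \eqref{tdcor1} and \eqref{tdcor2}, the two displayed conditions with ``$p_T$ (resp.\ $q_T$) lies in one ideal and its orthocomplement lies in the complementary ideal'' \textemdash\ both of which are immediate from the descriptions in \autoref{cideal}.
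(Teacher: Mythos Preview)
Your proposal is correct and is exactly the argument the paper has in mind: the paper gives no separate proof of this corollary, merely saying it ``immediately follows'' from \autoref{cideal}, and what you have written is precisely that immediate derivation, spelling out that a complementary pair of complete ideals in the complete Boolean algebra $\mathrm{c}\mathbb{P}$ amounts to a central element and its orthocomplement. Your uniqueness argument is the natural one and your care in checking that $\mathrm{c}\mathbb{P}$ is a complete ortholattice (so the principal--down-set characterization of complete ideals applies) is appropriate.
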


It follows immediately from \autoref{td} that, if $\mathbb{P}$ is a complete ortholattice, $\kappa$ is some (finite or infinite) cardinal and $T\subseteq\mathbb{P}$ is a type ideal, then so is
\begin{equation}\label{T^n}
T^\kappa=\{\bigvee S:S\subseteq T\textrm{ and }|T|=\kappa\}.
\end{equation}
If $\mathbb{P}$ is also separative then $\mathrm{c}(\bigvee S)=\bigvee\mathrm{c}(S)$, for any $S\subseteq\mathbb{P}$, which means $\mathrm{c}T=\mathrm{c}T^\kappa$, as $\mathrm{c}T$ is a complete ideal by \autoref{cideal}, i.e. $q_T=q_{T^\kappa}$.  It also means that $p_{T^\kappa}\leq q_T$, even though in this case we can have $p_T<p_{T^\kappa}$.


\subsection{Homogeneity}

We now show how order-density yields homogeneous decompositions.

\begin{dfn}\label{homdef}
Assume $\mathbb{P}$ is a complete ortholattice.  We say $p\in\mathbb{P}$ is $T$-\emph{homogeneous} if there are orthogonal $S\subseteq T\cap[p]$ with $p=\bigvee S$ and $\mathrm{c}S=\{\mathrm{c}(p)\}$.  If $|S|=\kappa$, we say that $p$ has \emph{order} $\kappa$ or that $p$ is \emph{$\kappa$-$T$-homogeneous}.  We denote the $\kappa$-$T$-homogeneous elements by $T_\kappa$.  We call $p\in\mathbb{P}$ $T$-\emph{subhomogeneous} if there are very orthogonal $T$-homogeneous $S$ with $p=\bigvee S$.  If each $s\in S$ has order $<\kappa$ then we say that $p$ is \emph{$<\!\!\kappa$-$T$-subhomogeneous}.  By $\kappa$-$T$-subhomogeneous we mean $<\!\!\kappa^+$-$T$-subhomogeneous.  We denote the $T$-subhomogeneous and $<\!\!\kappa$-$T$-subhomogeneous elements by $T_\mathrm{sub}$ and $T_{<\kappa}$ respectively
\end{dfn}

If $\mathbb{P}$ is a separative complete ortholattice and $T$ is a type ideal in $\mathbb{P}$ then so is $T_\mathrm{sub}$, $T_{<\kappa}$ and $T_\kappa$, for each cardinal $\kappa$.  Hence we get the type decompositions in \autoref{tdcor}, and the central $<\!\!\kappa$-$T$-subhomogeneous part is just the join of all the central $\lambda$-$T$-homogeneous parts, for $\lambda<\kappa$, i.e.
\[p_{T_{<\kappa}}=\bigvee_{\lambda<\kappa}p_{T_\lambda}\qquad\textrm{and}\qquad p_{T_\mathrm{sub}}=\bigvee_\lambda p_{T_\lambda}\]
Also $T\subseteq T_\kappa\subseteq T_{<\kappa^+}\subseteq T^\kappa$ (see \eqref{T^n}), so $\mathrm{c}T\subseteq\mathrm{c}T_\kappa\subseteq\mathrm{c}T_{<\kappa^+}\subseteq\mathrm{c}T^\kappa=\mathrm{c}T$, for all $\kappa$, and $\mathrm{c}T_\mathrm{sub}=\mathrm{c}T$, so $p_{T_\mathrm{sub}}\leq q_{T_\mathrm{sub}}=q_T$.  We aim to show that $p_{T_\mathrm{sub}}=q_T$ for suitable $T$.

Also note that the order of a homogeneous element is not unique in general.  For one thing, as long as $\mathbf{0}\in T$, then $\mathbf{0}$ is $\kappa$-$T$-homogeneous, for all $\kappa$.  However, this can also happen for non-zero elements, i.e. we can have $T_\lambda\cap T_\kappa\neq\{\mathbf{0}\}$ for $\lambda\neq\kappa$.

\begin{thm}\label{homthm}
If $T$ is an order-dense type ideal in a separative complete ortholattice $\mathbb{P}$ then $\mathbf{1}$ is $T$-subhomogeneous.
\end{thm}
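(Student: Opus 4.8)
The key observation is that if $\mathbf 1=\bigvee S$ with $S$ pairwise very orthogonal and each $s\in S$ being $T$-homogeneous, then the $\mathrm c(s)$ are pairwise orthogonal with $\bigvee_s\mathrm c(s)=\mathrm c(\bigvee S)=\mathbf 1$, so by \autoref{ordertd} $\mathbb P\cong\prod_s[\mathrm c(s)]$ with $\bigvee S$ corresponding to $(s)_s$ and $\mathbf 1$ to $(\mathrm c(s))_s$, forcing every $s$ to be \emph{central}. Thus it is natural, and enough, to build the decomposition out of central $T$-homogeneous elements. I would take, by Zorn's lemma, a maximal pairwise orthogonal family $\mathcal C\subseteq\mathrm c\mathbb P$ of nonzero $T$-homogeneous central elements and set $c=\bigvee\mathcal C$ (central, by \eqref{centresublattice}). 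Distinct members of $\mathcal C$ are central and orthogonal, hence very orthogonal, so once we show $c=\mathbf 1$ the family $\mathcal C$ itself witnesses that $\mathbf 1$ is $T$-subhomogeneous.

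To see $c=\mathbf 1$: otherwise $c^\perp>\mathbf 0$, and $[c^\perp]=[c^\perp]_{c^\perp}$ is again a separative complete ortholattice in which $T\cap[c^\perp]$ is an order-dense type ideal, with central covers and centrality computed as in $\mathbb P$ (by \eqref{c([p])}). So it suffices to prove the Key Lemma: \emph{every nonzero separative complete ortholattice $\mathbb Q$ with an order-dense type ideal $T$ contains a nonzero central $T$-homogeneous element.} Granting this, applying it inside $[c^\perp]$ yields a nonzero central $c'\le c^\perp$ that is $T$-homogeneous, and since $\mathrm c(c')=c'\le c^\perp$ it is very orthogonal to every member of $\mathcal C$, contradicting maximality.

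For the Key Lemma I would proceed in two steps. First I produce a $t^*\in T$ with $\mathrm c(t^*)=\mathbf 1_\mathbb Q$: take a maximal family $R\subseteq T$ whose members have pairwise orthogonal central covers (its members are then automatically pairwise orthogonal, as $t\le\mathrm c(t)\le\mathrm c(t')^\perp\le t'^\perp$ for distinct $t,t'\in R$); order-density forces $\bigvee_{t\in R}\mathrm c(t)=\mathbf 1_\mathbb Q$, for otherwise a nonzero element of $T$ below the orthocomplement of this join could be adjoined to $R$; and since $R$ is pairwise very orthogonal, \eqref{typedef} gives $t^*:=\bigvee R\in T$, with $\mathrm c(t^*)=\bigvee_{t\in R}\mathrm c(t)=\mathbf 1_\mathbb Q$ as $\mathrm c$ preserves joins. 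Second, I saturate: enlarge $\{t^*\}$ to a maximal orthogonal family $R'\subseteq\{t\in T:\mathrm c(t)=\mathbf 1_\mathbb Q\}$ and show $h:=\bigvee R'=\mathbf 1_\mathbb Q$, so that $R'$ exhibits $\mathbf 1_\mathbb Q$ itself as a central $T$-homogeneous element.

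The saturation step is the crux, and the one place where separativity (not just completeness) is genuinely used. The difficulty is that when $h<\mathbf 1_\mathbb Q$, separativity together with order-density (via \autoref{jdod}, turning order-density into join-density) only produces a nonzero $t\in T$ with $t\wedge h=\mathbf 0$, and such a $t$ need be neither orthogonal to $h$ nor of full central cover, so $R'$ cannot simply be enlarged; the delicate case is $h^\perp=\mathbf 0$, i.e. $h$ dense. The plan for getting around this is to show such an $h$ cannot persist: re-running the ``parallel'' construction of the first step inside the part of $\mathbb Q$ that $h$ has not yet filled yields genuinely orthogonal new pieces of full central cover, and iterating this transfinitely — with $\mathrm c(h)=\mathbf 1_\mathbb Q$ maintained and $h$ strictly increasing — must terminate at $h=\mathbf 1_\mathbb Q$. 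Making this iteration precise, in particular ruling out a dense proper $h$, is where I expect the real work to be.
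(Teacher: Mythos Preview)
Your overall architecture (reduce to a Key Lemma producing a nonzero central $T$-homogeneous element, then exhaust by a maximal family) is sound, and your Step~1 is fine. The genuine gap is in Step~2: the claim that the saturated $h=\bigvee R'$ equals $\mathbf 1_\mathbb Q$ --- i.e.\ that $\mathbf 1_\mathbb Q$ itself is $T$-homogeneous --- is simply false, and no amount of transfinite iteration ``with $\mathrm c(h)=\mathbf 1_\mathbb Q$ maintained'' will repair it. Take $\mathbb Q=\mathcal P(M_2\oplus M_3)$ with $T$ the abelian projections. Any $t\in T$ with $\mathrm c(t)=\mathbf 1_\mathbb Q$ has rank~$1$ in each summand, so a pairwise orthogonal family of such elements has at most two members; the maximal $h$ has rank~$2$ in the $M_3$ summand, hence $h<\mathbf 1_\mathbb Q$. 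Below $h^\perp$ there is \emph{no} element of $T$ with full central cover, so your proposed iteration stalls immediately rather than strictly increasing $h$.

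The fix is not to push $h$ up to $\mathbf 1_\mathbb Q$ but to recognise that the obstruction itself hands you the homogeneous piece. Let $c=\bigvee\mathrm c\bigl(T\cap[h^\perp]\bigr)$. Maximality of $R'$ (together with your Step~1 argument run inside $[h^\perp]$) gives $c<\mathbf 1_\mathbb Q$, so $c^\perp\neq\mathbf 0$. Order-density forces $c^\perp\wedge h^\perp=\mathbf 0$ (any nonzero $t\in T$ below it would have $\mathrm c(t)\le c\wedge c^\perp=\mathbf 0$), and centrality of $c^\perp$ then yields $c^\perp=(c^\perp\wedge h)\vee(c^\perp\wedge h^\perp)=c^\perp\wedge h=\bigvee_{t\in R'}(c^\perp\wedge t)$, with each $c^\perp\wedge t\in T$ and $\mathrm c(c^\perp\wedge t)=c^\perp$. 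So $c^\perp$ is your nonzero central $T$-homogeneous element. This is exactly the mechanism in the paper's proof: there one builds a decreasing chain of central covers $\mathrm c(t_\alpha)$ and takes the successive gaps $s_\alpha=\mathrm c(t_\alpha)^\perp\wedge\bigwedge_{\beta<\alpha}\mathrm c(t_\beta)$ as the homogeneous central pieces, rather than ever expecting any single stage to fill $\mathbf 1$.
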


\begin{proof}
By \eqref{tdprp2}, we may recursively define $t_\alpha\in T_\alpha$ so that $\mathrm{c}(t_\alpha)=\bigvee\mathrm{c}T_\alpha$, where $T_\alpha$ is the type ideal given by $T_\alpha=T\cap[(\bigvee_{\beta<\alpha}t_\beta)^\perp]$.  Let $s_\alpha=\mathrm{c}(t_\alpha)^\perp\wedge\bigwedge_{\beta<\alpha}\mathrm{c}(t_\beta)\in\mathrm{c}\mathbb{P}$, by \eqref{centresublattice}.  If $s=s_\alpha\wedge(\bigvee_{\beta<\alpha}t_\beta)^\perp\neq\mathbf{0}$ then, by the order density of $T$, we would have non-zero $t\in T\cap[s]\subseteq T_\alpha$.  This $t$ would then be very orthogonal to $t_\alpha$ so $t\vee t_\alpha\in T_\alpha$ and $\mathrm{c}(t\vee t_\alpha)>\mathrm{c}(t_\alpha)$, contradicting the definition of $t_\alpha$.  Thus $s_\alpha=s_\alpha\wedge\bigvee_{\beta<\alpha}t_\beta=\bigvee_{\beta<\alpha}(s_\alpha\wedge t_\beta)$.  Also, for each $\beta<\alpha$, \[\mathrm{c}(s_\alpha\wedge t_\beta)=s_\alpha\wedge\mathrm{c}(t_\beta)=s_\alpha,\] so each $s_\alpha$ is homogeneous.  As the $(t_\alpha)$ are orthogonal, they must be eventually $\mathbf{0}$.  Thus $(\bigvee s_\alpha)^\perp=\bigwedge\mathrm{c}(t_\alpha)=\mathbf{0}$ so the $(s_\alpha)$ witness the subhomogeneity of $\mathbf{1}$.
\end{proof}

\begin{cor}
If $T$ is an order-dense type ideal in a separative complete ortholattice $\mathbb{P}$ and $T_\lambda\cap T_\kappa\cap\mathrm{c}\mathbb{P}=\{\mathbf{0}\}$, whenever $\lambda\neq\kappa$, then there are unique orthogonal $(t_\kappa)\subseteq\mathrm{c}\mathbb{P}$ with $t_\kappa\in T_\kappa$, for all $\kappa$, and \[\mathbf{1}=\bigvee t_\kappa.\]
\end{cor}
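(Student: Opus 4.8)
The plan is to take $t_\kappa$ to be the central $\kappa$-$T$-homogeneous part $p_{T_\kappa}$ supplied by \autoref{tdcor}~\eqref{tdcor1}; this is legitimate because, as noted after \autoref{homdef}, $T_\kappa$ is again a type ideal in the separative complete ortholattice $\mathbb{P}$. By construction $t_\kappa\in T_\kappa\cap\mathrm{c}\mathbb{P}$ and $\mathrm{c}[t_\kappa^\perp]\cap T_\kappa=\{\mathbf{0}\}$, so three things remain: that the $t_\kappa$ are pairwise orthogonal, that $\bigvee_\kappa t_\kappa=\mathbf{1}$, and uniqueness.

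For orthogonality, fix $\lambda\neq\kappa$ and put $r=t_\lambda\wedge t_\kappa$, which is central by \eqref{centresublattice}. Since $t_\kappa$ is central, \autoref{centralequiv}~\eqref{centralequiv1} exhibits $t_\lambda=(t_\lambda\wedge t_\kappa)\vee(t_\lambda\wedge t_\kappa^\perp)$ as a join of two orthogonal central (hence very orthogonal) elements, so the defining property \eqref{typedef} of the type ideal $T_\lambda$ forces $t_\lambda\wedge t_\kappa\in T_\lambda$; by the symmetric argument $t_\lambda\wedge t_\kappa\in T_\kappa$. Thus $r\in T_\lambda\cap T_\kappa\cap\mathrm{c}\mathbb{P}=\{\mathbf{0}\}$ by hypothesis, i.e.\ $t_\lambda\perp t_\kappa$. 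This ``split one central element by another central element and invoke \eqref{typedef}'' move will be used again below.

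For $\bigvee_\kappa t_\kappa=\mathbf{1}$, the essential input is order-density. By \autoref{homthm}, $\mathbf{1}$ is $T$-subhomogeneous, so $\mathbf{1}\in T_\mathrm{sub}\cap\mathrm{c}\mathbb{P}$; applying the characterization in \autoref{tdcor}~\eqref{tdcor1} to the type ideal $T_\mathrm{sub}$ then gives $p_{T_\mathrm{sub}}=\mathbf{1}$, and combining this with the identity $p_{T_\mathrm{sub}}=\bigvee_\lambda p_{T_\lambda}$ recorded after \autoref{homdef} yields $\bigvee_\kappa t_\kappa=\mathbf{1}$. I expect this step to be the main obstacle: it is precisely where order-density is genuinely needed, funnelled through \autoref{homthm} and the description of $p_{T_\mathrm{sub}}$ as the join of the $p_{T_\lambda}$, and one has to be careful that the central covers occurring in the subhomogeneous decomposition of $\mathbf{1}$ are arranged so that a $\kappa$-homogeneous piece really feeds into $p_{T_\kappa}$ rather than into some $p_{T_\lambda}$ with $\lambda\neq\kappa$.

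For uniqueness, suppose $(t'_\kappa)\subseteq\mathrm{c}\mathbb{P}$ is orthogonal with $t'_\kappa\in T_\kappa$ for all $\kappa$ and $\bigvee_\kappa t'_\kappa=\mathbf{1}$. Since a central element has $^\perp$ as its unique complement and, by \autoref{sepcomportho}~\eqref{sepcomportho1} together with orthogonality, $\bigvee_{\lambda\neq\kappa}t'_\lambda$ is a complement of $t'_\kappa$, we get $t_\kappa'^\perp=\bigvee_{\lambda\neq\kappa}t'_\lambda$. If $\mathbf{0}\neq r\in\mathrm{c}[t_\kappa'^\perp]\cap T_\kappa$, then $r$ is central with $r\leq\bigvee_{\lambda\neq\kappa}t'_\lambda$, so \autoref{sepcomportho}~\eqref{sepcomportho1} gives $r=\bigvee_{\lambda\neq\kappa}(r\wedge t'_\lambda)$, and the split-and-\eqref{typedef} argument shows each $r\wedge t'_\lambda\in T_\kappa\cap T_\lambda\cap\mathrm{c}\mathbb{P}=\{\mathbf{0}\}$, whence $r=\mathbf{0}$, a contradiction. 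Therefore $\mathrm{c}[t_\kappa'^\perp]\cap T_\kappa=\{\mathbf{0}\}$, and the uniqueness clause of \autoref{tdcor}~\eqref{tdcor1} gives $t'_\kappa=p_{T_\kappa}=t_\kappa$. Apart from the join identity in the third paragraph, all of this is routine bookkeeping with \eqref{typedef} and centrality.
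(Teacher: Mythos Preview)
Your proof is correct and follows essentially the same approach as the paper: both arguments hinge on \autoref{homthm} for existence and on identifying each $t_\kappa$ with $p_{T_\kappa}$ via \autoref{tdcor}~\eqref{tdcor1} for uniqueness. The only organizational difference is that the paper first builds a decomposition by grouping the very orthogonal homogeneous pieces from \autoref{homthm} by order, whereas you start from $t_\kappa=p_{T_\kappa}$ and reach $\bigvee_\kappa t_\kappa=\mathbf{1}$ through the identity $p_{T_\mathrm{sub}}=\bigvee_\lambda p_{T_\lambda}$; these are two routes to the same fact.
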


\begin{proof}
Existence follows immediately from \autoref{homthm} by joining all resulting homogeneous elements of the same order.  Uniqueness now follows from \autoref{tdcor}, as $T_\lambda\cap T_\kappa\cap\mathrm{c}\mathbb{P}=\{\mathbf{0}\}$, for $\lambda\neq\kappa$, means that $t_\kappa=p_{T_\kappa}$, for all $\kappa$.
\end{proof}

\subsection{Type Relations}

\begin{dfn}\label{tr}
A relation $\sim$ on a complete ortholattice $\mathbb{P}$ is a \emph{type relation} if, whenever $(q_\alpha),(r_\alpha)\subseteq\mathbb{P}$ are such that $(q_\alpha\vee r_\alpha)$ are very orthogonal,
\begin{equation}\label{treq}
\forall\alpha(q_\alpha\sim r_\alpha)\qquad\Leftrightarrow\qquad\bigvee q_\alpha\sim\bigvee r_\alpha.
\end{equation}
We call $\sim$ \emph{proper} if $p\sim\mathbf{0}\Rightarrow p=\mathbf{0}$, for all $p\in\mathbb{P}$.
\end{dfn}

For any complete ortholattice $\mathbb{P}$, equality is a trivial example of a proper reflexive type relation, and it is of course the strongest reflexive relation on $\mathbb{P}$.  While 
\begin{equation}\label{cpleqcq}
\mathrm{c}(p)\leq\mathrm{c}(q)
\end{equation}
also defines a type relation, the weakest proper type relation.  For if $\precsim$ is such a relation then $p\precsim q$ implies $\mathrm{c}(q)^\perp\wedge p\precsim\mathrm{c}(q)^\perp\wedge q=\mathbf{0}$ so $p\leq\mathrm{c}(q)$ and hence $\mathrm{c}(p)\leq\mathrm{c}(q)$.  Thus \[\mathrm{c}(p)=\mathrm{c}(q)\] is the weakest symmetric proper type relation and this will coincide with equality if and only if $\mathbb{P}=\mathrm{c}\mathbb{P}$, i.e. if and only if $\mathbb{P}$ is a Boolean algebra, in which case they both represent the unique proper reflexive symmetric type relation on $\mathbb{P}$.  Slightly more interesting examples are given by perspectivity, orthoperspectivity and semiorthoperspectivity, which are type relations by \autoref{ordertd}.

Any type relation $\sim$ naturally defines other type relations $\precsim$ and $\precsim_\mathrm{rel}$ by
\begin{eqnarray}
p\precsim q &\Leftrightarrow& \exists r\in[q](p\sim r),\textrm{ and}\label{precsimdef}\\
p\precsim_\mathrm{rel} q &\Leftrightarrow& \exists r\in[q]_q(p\sim r).\label{precsimreldef}
\end{eqnarray}
In general $\precsim_\mathrm{rel}$ is stronger than $\precsim$, and they coincide when $\mathbb{P}$ is orthomodular, by \autoref{orthoequiv}.  Even in the non-orthomodular case, if $\sim$ is weaker than orthoperspectivity (on $\mathbb{P}^\sim$ at least), then $p\sim r\leq q$ implies $r\sim r^{\perp_q\perp_q}\leq q$ and hence, if $\sim$ is also transitive, $\precsim_\mathrm{rel}$ and $\precsim$ again coincide.

\begin{prp}\label{trti}
For any reflexive type relation on a separative complete ortholattice $\mathbb{P}$, the following subsets are type ideals.
\begin{enumerate}
\item\label{trtd1} The $\sim$-finite elements of $\mathbb{P}$.
\item\label{trtd2} The $\sim$-orthofinite elements of $\mathbb{P}$.
\end{enumerate}
\end{prp}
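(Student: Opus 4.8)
The plan is to verify directly the defining equivalence \eqref{typedef} in each case, using \autoref{sepcomportho} as the only real tool: it lets us pass freely between a pairwise very orthogonal family $(p_\alpha)$, its join $p=\bigvee p_\alpha$, and the central covers $\mathrm{c}(p_\alpha)$. The computational lemma I would isolate first is: if $(p_\alpha)\subseteq\mathbb{P}$ is pairwise very orthogonal with $p=\bigvee p_\alpha$ and $q\leq p$, then $\mathrm{c}(p_\alpha)\wedge p=p_\alpha$, $q\wedge p_\alpha=q\wedge\mathrm{c}(p_\alpha)$, and $q=\bigvee_\alpha(q\wedge p_\alpha)$. The first holds because distributing the central $\mathrm{c}(p_\alpha)$ over $\bigvee p_\beta$ via \autoref{sepcomportho} \eqref{sepcomportho1} leaves only the $\beta=\alpha$ term (for $\beta\neq\alpha$, $p_\beta\leq\mathrm{c}(p_\beta)\leq\mathrm{c}(p_\alpha)^\perp$); the second is then immediate, and the third follows from $q=q\wedge\bigvee\mathrm{c}(p_\alpha)=\bigvee(q\wedge\mathrm{c}(p_\alpha))$, again by \eqref{sepcomportho1}. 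Note also $\mathrm{c}(q\wedge p_\alpha)\leq\mathrm{c}(p_\alpha)$, so $(q\wedge p_\alpha)$ is again very orthogonal.

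For \eqref{trtd1}, fix pairwise very orthogonal $S=(p_\alpha)$ with join $p$. If $p$ is $\sim$-finite and $p_\alpha\sim q\leq p_\alpha$ for some fixed $\alpha$, apply the type relation to the family equal to $(p_\alpha,q)$ in coordinate $\alpha$ and $(p_\beta,p_\beta)$ elsewhere — its coordinatewise joins are exactly $(p_\gamma)$, hence very orthogonal, and reflexivity handles the coordinates $\beta\neq\alpha$ — to get $p\sim q\vee\bigvee_{\beta\neq\alpha}p_\beta$; since the right side is $\leq p$, finiteness of $p$ forces equality, and meeting with the central $\mathrm{c}(p_\alpha)$ (which fixes $q$ and $p_\alpha$ and annihilates the $p_\beta$) recovers $p_\alpha=q$. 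Conversely, if every $p_\alpha$ is $\sim$-finite and $p\sim q\leq p$, write $q=\bigvee q_\alpha$ with $q_\alpha=q\wedge p_\alpha\leq p_\alpha$ as above; applying the type relation to the family $(p_\alpha,q_\alpha)$ (coordinate joins $(p_\alpha)$, very orthogonal) turns $p\sim q$ into $p_\alpha\sim q_\alpha$ for all $\alpha$, whence $p_\alpha=q_\alpha$ by finiteness, and joining up gives $p=q$. This is exactly \eqref{typedef} for the $\sim$-finite elements. (Non-symmetry of $\sim$ causes no trouble: in ``$p$ is $\sim$-finite'' the larger element is always on the left, and that is the side the type-relation property hands back to us.)

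For \eqref{trtd2} the argument is parallel, with $p_\alpha\wedge q^\perp$ in place of $p_\alpha$. In the forward direction, the same padding trick gives $p\sim q':=q\vee\bigvee_{\beta\neq\alpha}p_\beta\leq p$, so orthofiniteness of $p$ yields $p\wedge q'^\perp=\mathbf{0}$; since $p_\alpha\wedge q^\perp\leq p\wedge q^\perp\wedge\bigwedge_{\beta\neq\alpha}p_\beta^\perp=p\wedge q'^\perp$ (using $p_\alpha\leq p_\beta^\perp$), we get $p_\alpha\wedge q^\perp=\mathbf{0}$. In the converse direction, with $q=\bigvee q_\alpha$ and $q_\alpha=q\wedge\mathrm{c}(p_\alpha)$, orthofiniteness of each $p_\alpha$ gives $p_\alpha\wedge q_\alpha^\perp=\mathbf{0}$; a De Morgan computation using that $\mathrm{c}(p_\alpha)$ is central and $p_\alpha\leq\mathrm{c}(p_\alpha)$ shows $p_\alpha\wedge q_\alpha^\perp=p_\alpha\wedge q^\perp=p\wedge q^\perp\wedge\mathrm{c}(p_\alpha)$, so $p\wedge q^\perp\wedge\mathrm{c}(p_\alpha)=\mathbf{0}$ for all $\alpha$, and since $p\wedge q^\perp\leq p\leq\bigvee\mathrm{c}(p_\alpha)$, one final use of \autoref{sepcomportho} \eqref{sepcomportho1} gives $p\wedge q^\perp=\mathbf{0}$.

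The main obstacle is purely organizational: the $p_\alpha$ need not be central, so neither $q$ nor $q^\perp$ distributes over $\bigvee p_\alpha$ directly, and every such manipulation must be rerouted through the central covers via \autoref{sepcomportho}. Pinning down the identities $q\wedge p_\alpha=q\wedge\mathrm{c}(p_\alpha)$ and $p_\alpha\wedge q_\alpha^\perp=p_\alpha\wedge q^\perp$, and arranging the padded families so that their coordinatewise joins genuinely are the very orthogonal family $(p_\alpha)$ (which is what the definition of a type relation demands), is where all the care goes; everything else is reflexivity of $\sim$ together with the definitions of $\sim$-finite and $\sim$-orthofinite.
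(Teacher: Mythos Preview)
Your proof is correct and follows essentially the same approach as the paper's. The only difference is packaging: the paper compresses all of your coordinate computations into a single appeal to the isomorphism $[\bigvee p_\alpha]\cong\prod[p_\alpha]$ of \autoref{ordertd} (so that any $q\leq\bigvee p_\alpha$ automatically decomposes as $\bigvee q_\alpha$ with $q_\alpha\leq p_\alpha$, and $q<p$ iff some $q_\beta<p_\beta$), whereas you rederive exactly the pieces of that isomorphism you need directly from \autoref{sepcomportho} and centrality of the $\mathrm{c}(p_\alpha)$. Your explicit treatment of the orthofinite case is a bit more detailed than the paper's ``essentially the same proof,'' but the content is identical.
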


\begin{proof}\
\begin{itemize}
\item[\eqref{trtd1}] Say we have very orthogonal $(p_\alpha)\subseteq\mathbb{P}$.  If $\bigvee p_\alpha$ were not $\sim$-finite, we would have $\bigvee p_\alpha\sim\bigvee q_\alpha$ for some $(q_\alpha)$ with $q_\beta<p_\beta$, for some $\beta$, by \autoref{ordertd}.  As $\sim$ is a type relation, we would have $p_\beta\sim q_\beta$, and hence $p_\beta$ would not be $\sim$-finite.  On the other hand, if $p_\beta$ is not $\sim$-finite, for some $\beta$, then $p_\beta\sim q_\beta$ for some $q_\beta<p_\beta$ and then $\bigvee p_\alpha\sim\bigvee q_\alpha<\bigvee p_\alpha$, where $q_\alpha=p_\alpha$ for $\alpha\neq\beta$, and hence $\bigvee p_\alpha$ is not $\sim$-finite.
\item[\eqref{trtd2}] Essentially the same proof as in \eqref{trtd1}.
\end{itemize}
\end{proof}

\begin{thm}\label{permod}
If $\sim$ is a finite symmetric transitive type relation on a complete ortholattice $\mathbb{P}$ weaker than perspectivity then $\mathbb{P}$ is modular and $\sim$ coincides with $\sim_\mathrm{p}$.
\end{thm}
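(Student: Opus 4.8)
The plan is to first use the hypotheses to show $\mathbb{P}$ is modular, and then deduce that $\sim$ must coincide with perspectivity. For modularity, I would invoke the characterization \eqref{modperfin}, namely that for an ortholattice, modularity is equivalent to perspectivity being a finite relation. So the real task is to show $\sim_\mathrm{p}$ is finite given that $\sim$ is finite and weaker than $\sim_\mathrm{p}$. Here ``weaker'' cuts the wrong way — knowing $\sim \subseteq \sim_\mathrm{p}$ and $\sim$ finite does not immediately give $\sim_\mathrm{p}$ finite. The way around this is to exploit that $\sim_\mathrm{p}$ and $\sim_\mathrm{sop}$ have the same transitive closure (noted in \S\ref{persec}), and that $\sim$ is transitive, symmetric, and a type relation. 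First I would observe that, since $\sim$ is weaker than perspectivity hence (via the common-complement argument) weaker than the transitive closure of semiorthoperspectivity — but actually we want the reverse containment too.

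The cleaner route: show $\sim$ is in fact weaker than orthoperspectivity would be too strong; instead I would argue directly that $\sim_\mathrm{p} \subseteq \sim$. Suppose $p \sim_\mathrm{p} q$ with common complement $r$. Then $p$ is semiorthoperspective to $r^\perp$ and $r^\perp$ is semiorthoperspective to $q$. Using \autoref{pqposp'}-style reasoning — passing to $p' = (p \wedge (r^\perp)^\perp)^{\perp_p}$ etc. — and the fact that $\sim$ is a symmetric transitive type relation, I would try to show $p \sim r^\perp$ and $r^\perp \sim q$, giving $p \sim q$ by transitivity. The mechanism for ``$p \sim r^\perp$ when they are semiorthoperspective'': since $p \vee r = \mathbf{1}$ and $p \wedge r = \mathbf{0}$, and $r^\perp \vee r = \mathbf{1}$, $r^\perp \wedge r = \mathbf{0}$, both $p$ and $r^\perp$ are complements of $r$; I'd want to produce a decomposition into a central part where they're forced equal plus a part handled by the type-relation and finiteness properties. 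Once $\sim_\mathrm{p} \subseteq \sim$, combined with the hypothesis $\sim \subseteq \sim_\mathrm{p}$, we get $\sim \,=\, \sim_\mathrm{p}$; and then finiteness of $\sim$ gives finiteness of $\sim_\mathrm{p}$, hence modularity by \eqref{modperfin}.

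So the logical skeleton is: (1) assume $\sim$ finite, symmetric, transitive, type relation, weaker than $\sim_\mathrm{p}$; (2) show semiorthoperspective elements are $\sim$-related, using \autoref{pqposp'}, the type-relation property \eqref{treq} applied to a very-orthogonal decomposition separating the ``central-cover-equal'' piece from its complement, and reflexivity/finiteness to kill the complementary piece; (3) conclude $\sim_\mathrm{p} \subseteq \sim$ via the common-complement-to-two-semiorthoperspectivities trick and transitivity of $\sim$; (4) hence $\sim\,=\,\sim_\mathrm{p}$, so $\sim_\mathrm{p}$ is finite; (5) apply \eqref{modperfin} to conclude $\mathbb{P}$ is modular.

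The main obstacle I anticipate is step (2): showing that semiorthoperspectivity implies $\sim$. The subtlety is that $p$ and $r^\perp$ being complements of $r$ does not make them equal — this is exactly why one needs more than a crude argument. I expect to need to split off the central part $c$ where $p \wedge c = r^\perp \wedge c$ forced (equivalently where everything is ``classical'') versus the part where one uses that $\sim$ is weaker than perspectivity to get a common complement, feeds it through the type relation, and uses finiteness to collapse strict inequalities. An alternative, possibly cleaner, is to avoid proving $\sim_\mathrm{p}\subseteq\sim$ entirely and instead directly show $\sim_\mathrm{p}$ is finite: if $p \sim_\mathrm{p} q \leq p$, then $p$ and $q$ share a complement $r$; one wants $p = q$. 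Since $q \leq p$ and both have complement $r$, modularity-type manipulation gives $q \vee r = p \vee r = \mathbf{1}$ and $q \wedge r = p \wedge r = \mathbf{0}$; passing to $[p]_p$ (an ortholattice by \autoref{[p]_p}) one might show $q$ and $p$ become orthoperspective there, then transfer finiteness of $\sim$ down. Whichever variant the author uses, the delicate point — distinguishing ``shares a complement'' from ``equal'' and forcing the collapse using the conjunction of \emph{type relation} plus \emph{finiteness} — is where the proof's weight lies.
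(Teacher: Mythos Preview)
You have the direction of ``weaker than'' reversed. In the paper's convention (see \autoref{orthoperpequiv}, where ``$\sim$ is weaker than orthoperspectivity'' is equated with ``orthoperspective $\Rightarrow$ $\sim$''), the hypothesis ``$\sim$ is weaker than perspectivity'' means $\sim_\mathrm{p}\subseteq\sim$, not $\sim\subseteq\sim_\mathrm{p}$. With this corrected, your entire programme collapses in the easy direction: $\sim_\mathrm{p}$ is finite immediately, since $p\sim_\mathrm{p}q\leq p$ gives $p\sim q\leq p$ and hence $p=q$ by finiteness of $\sim$. Then \eqref{modperfin} yields modularity at once. All of your steps (2)--(3), the delicate semiorthoperspectivity arguments, the \autoref{pqposp'}-style manipulations, are aimed at proving $\sim_\mathrm{p}\subseteq\sim$ --- but that is precisely the hypothesis.

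The genuine content of the theorem is the \emph{other} inclusion, $\sim\subseteq\sim_\mathrm{p}$, which your proposal never addresses (you assume it as ``the hypothesis''). The paper's route here is not elementary: having established modularity, one invokes Kaplansky's theorem that a complete modular ortholattice is a continuous geometry, and then von Neumann's result (\cite{vonNeumann1960} Part III Theorem 2.7) that perspectivity satisfies generalized comparison in any continuous geometry. Given $q\sim r$, generalized comparison produces $p\in\mathrm{c}\mathbb{P}$ with $p\wedge q\sim_\mathrm{p}s\leq p\wedge r$ and $p^\perp\wedge r\sim_\mathrm{p}t\leq p^\perp\wedge q$. Since $\sim$ is a type relation, $q\sim r$ splits as $p\wedge q\sim p\wedge r$ and $p^\perp\wedge q\sim p^\perp\wedge r$; combining with $\sim_\mathrm{p}\subseteq\sim$ and transitivity gives $s\sim p\wedge r$, and finiteness forces $s=p\wedge r$. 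Likewise $t=p^\perp\wedge q$, so $q$ and $r$ are perspective in each central piece and hence perspective. None of this machinery --- continuous geometries, Kaplansky, generalized comparison --- appears in your proposal, and it is essential.
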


\begin{proof}
If $\mathbb{P}$ were not orthomodular, we would have $[p]\neq[p]_p$, for some $p\in\mathbb{P}$, and thus $q<q^{\perp_p\perp_p}$, for any $q\in[p]\setminus[p]_p$, even though $q^{\perp_p\perp_p}\sim q$, by \autoref{orthoperpequiv}, contradicting finiteness.  Perspectivity must also be finite, being weaker than $\sim$, and this comibined with orthomodularity means $\mathbb{P}$ is modular, by \eqref{perfin}.

As $\mathbb{P}$ is complete, it is a continuous geometry, by \cite{Kaplansky1955}.  Following the proof of \cite{Kaplansky1951} Theorem 6.6(c), we note that \cite{vonNeumann1960} Part III Theorem 2.7 means that perspectivity satisfies generalized comparison, i.e. for any $q,r\in\mathbb{P}$, we have $p\in\mathrm{c}\mathbb{P}$ (the centre we have defined is the same as the centre defined at the start of \cite{vonNeumann1960} Part III Chapter I, either by \autoref{centralequiv} \eqref{centralequiv3} or by \cite{vonNeumann1960} Part I Theorem 5.4 and \autoref{cenuniqcomp}) with $p\wedge q$ perspective to some $s\leq p\wedge r$ and $p^\perp\wedge r$ perspective to some $t\leq p^\perp\wedge q$.  So if $q\sim r$ then $s\sim p\wedge q\sim p\wedge r$ which, by finiteness, means $s=p\wedge r$ and, likewise, $t=p^\perp\wedge q$.  So $p\wedge q$ and $p\wedge r$ are perspective, as are $p^\perp\wedge q$ and $p^\perp\wedge r$ so, by \autoref{centralequiv} \eqref{centralequiv3}, $q$ and $r$ are also perspective.
\end{proof}

\subsection{Type Classes}

\begin{dfn}\label{tc}
Let $\mathbf{C}$ be the class of complete lattices with a relation $\perp$.  We call $\mathbf{T}\subseteq\mathbf{C}$ a \emph{type class} if it is closed under isomorphisms and, for $(\mathbb{P}_\alpha)\subseteq\mathbf{T}$,
\begin{equation}\label{tcprod}
(\mathbb{P}_\alpha)\subseteq\mathbf{T}\qquad\Leftrightarrow\qquad\prod\mathbb{P}_\alpha\in\mathbf{T}
\end{equation}
\end{dfn}

\begin{prp}\label{tctd}
If $\mathbf{T}$ is a type class and $\mathbb{P}$ is a separative complete ortholattice then the following subsets of $\mathbb{P}$ are type ideals.
\begin{eqnarray}\label{tctdeq}
\mathbb{P}_\mathbf{T} &=& \{p\in\mathbb{P}:[p]\in\mathbf{T}\}.\label{tctd1}\\
\mathbb{P}_{\mathbf{T}\mathrm{rel}} &=& \{p\in\mathbb{P}:[p]_p\in\mathbf{T}\}.\label{tctd2}
\end{eqnarray}
\end{prp}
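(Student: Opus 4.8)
The plan is to reduce the whole statement to one structural fact: \emph{for any pairwise very orthogonal family $(p_\alpha)\subseteq\mathbb{P}$, writing $p=\bigvee p_\alpha$, the map $q\mapsto(q\wedge p_\alpha)_\alpha$ is an isomorphism $[p]\cong\prod_\alpha[p_\alpha]$ of complete lattices with $\perp$, and similarly $[p]_p\cong\prod_\alpha[p_\alpha]_{p_\alpha}$.} Granting this, the proposition is bookkeeping: for pairwise very orthogonal $S=(p_\alpha)$ one gets
\[
S\subseteq\mathbb{P}_\mathbf{T}\ \Leftrightarrow\ \forall\alpha\,([p_\alpha]\in\mathbf{T})\ \Leftrightarrow\ \prod_\alpha[p_\alpha]\in\mathbf{T}\ \Leftrightarrow\ [\bigvee S]\in\mathbf{T}\ \Leftrightarrow\ \bigvee S\in\mathbb{P}_\mathbf{T},
\]
where the middle equivalence is \eqref{tcprod}, the third uses closure of $\mathbf{T}$ under isomorphism together with the structural fact, and the outer two are the definition \eqref{tctd1} of $\mathbb{P}_\mathbf{T}$; running the same chain with every $[\,\cdot\,]$ replaced by $[\,\cdot\,]_{\,\cdot\,}$ and invoking $[p]_p\cong\prod[p_\alpha]_{p_\alpha}$ handles $\mathbb{P}_{\mathbf{T}\mathrm{rel}}$. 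The degenerate case $S=\emptyset$ causes no trouble: the empty product is the one-element lattice, which lies in $\mathbf{T}$ by \eqref{tcprod}, so $\mathbf{0}\in\mathbb{P}_\mathbf{T}\cap\mathbb{P}_{\mathbf{T}\mathrm{rel}}$.

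To establish the structural fact I would put $c_\alpha=\mathrm{c}(p_\alpha)$, which by the definition of very orthogonality form a pairwise orthogonal family in $\mathrm{c}\mathbb{P}$; then \autoref{ordertd} supplies the canonical (ortho)isomorphism $q\mapsto(q\wedge c_\alpha)_\alpha$ from $[\bigvee c_\alpha]$ onto $\prod[c_\alpha]$. Splitting off each central $c_\alpha$ via \autoref{centralequiv} \eqref{centralequiv3} (so that $p_\beta\leq c_\alpha^\perp$ for $\beta\neq\alpha$) one computes $p\wedge c_\alpha=p_\alpha$, so $p$ corresponds to $(p_\alpha)_\alpha$; restricting the isomorphism to the ideal $[p]$, whose image is exactly $\{(q_\alpha):q_\alpha\leq p_\alpha\}=\prod[p_\alpha]$, gives the lattice isomorphism $[p]\cong\prod[p_\alpha]$, and $q\wedge c_\alpha=q\wedge p_\alpha$ for $q\leq p$ identifies it with the asserted map. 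For the $\perp$-relations: on $[\bigvee c_\alpha]$ the orthogonality inherited from $\mathbb{P}$ coincides with the intrinsic one (since $q\leq\bigvee c_\alpha$ always), so the canonical isomorphism matches it with the coordinatewise orthogonality on $\prod[c_\alpha]$; restricting to $[p]$ preserves this because $q_\alpha\leq r_\alpha^{\perp_{c_\alpha}}\Leftrightarrow q_\alpha\leq r_\alpha^\perp$ for $q_\alpha,r_\alpha\leq p_\alpha$. The relative version $[p]_p\cong\prod[p_\alpha]_{p_\alpha}$ is precisely \eqref{bigveepalpha}, and since it comes from the canonical isomorphism of \autoref{ordertd} it is already an orthoisomorphism, so nothing extra is needed there.

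The main obstacle is exactly this structural fact --- the step of upgrading \autoref{ordertd} from orthogonal \emph{central} families to merely \emph{very orthogonal} ones while keeping the orthogonality relation under control. This is where separativity is genuinely used (through \autoref{sepcomportho} and \autoref{ordertd}) and where one must be careful with the central-cover bookkeeping; everything after it is just unwinding \autoref{tc} and \autoref{td}.
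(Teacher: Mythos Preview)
Your proposal is correct and follows exactly the paper's approach: the paper's proof is the single line ``$\mathbb{P}_\mathbf{T}$ and $\mathbb{P}_{\mathbf{T}\mathrm{rel}}$ are type ideals by \eqref{ordertdeq} and \eqref{bigveepalpha} respectively,'' and your argument is precisely an unpacking of that citation. In particular, your careful passage from \autoref{ordertd} (stated for orthogonal \emph{central} families) to very orthogonal families via the central covers $c_\alpha=\mathrm{c}(p_\alpha)$, together with your bookkeeping on the $\perp$-relations, makes explicit the step the paper leaves to the reader when it invokes \eqref{ordertdeq} for $\mathbb{P}_\mathbf{T}$.
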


\begin{proof}
$\mathbb{P}_\mathbf{T}$ and $\mathbb{P}_{\mathbf{T}\mathrm{rel}}$ are type ideals by \eqref{ordertdeq} and \eqref{bigveepalpha} respectively.
\end{proof}

Any subclass of $\mathbf{C}$ consisting of all those complete lattices satisfying some universally quantified sentence in the language $\{\mathbf{0},\leq,\perp,\wedge,\vee\}$ will be a type class, like the following important examples \textendash
\begin{eqnarray}
\mathbf{D} &=& \{\mathbb{P}\in\mathbf{C}:\mathbb{P}\textrm{ is a distributive}\},\label{Ddef}\\
\mathbf{M} &=& \{\mathbb{P}\in\mathbf{C}:\mathbb{P}\textrm{ is a modular}\},\textrm{ and}\label{Mdef}\\
\mathbf{O} &=& \{\mathbb{P}\in\mathbf{C}:\mathbb{P}\textrm{ is a orthomodular}\}.\label{Odef}
\end{eqnarray}
(note that \autoref{orthoequiv}\eqref{orthoequiv5} characterizes orthomodularity just with an orthogonality relation $\perp$, rather than an orthocomplementation ${}^\perp$, which we may not have for $[p]$).  We also have the type class $\mathbf{S}=\{\mathbb{P}\in\mathbf{C}:\mathbb{P}\textrm{ is a separative}\}$, although this does not lead to any interesting type decompositions in a separative complete ortholattice $\mathbb{P}$ because then $\mathbb{P}=\mathbb{P}_\mathbf{S}$ and, even though we may have $\mathbb{P}\neq\mathbb{P}_{\mathbf{S}\mathrm{rel}}$ (see the comments after \autoref{orthoequiv}), we will still have $\mathrm{c}\mathbb{P}\subseteq\mathbb{P}_{\mathbf{S}\mathrm{rel}}$.  Now, using the type ideals and type decompositions coming from several instances of \eqref{tctd1} and \eqref{tdcor2} respectively, we can define
\begin{eqnarray}
p_\mathrm{I} &=& q_{\mathbb{P}_\mathbf{D}},\label{pI}\\
p_\mathrm{II} &=& q^\perp_{\mathbb{P}_\mathbf{D}}\wedge q_{\mathbb{P}_\mathbf{M}},\label{pII}\\
p_\mathrm{III} &=& q^\perp_{\mathbb{P}_\mathbf{M}}\wedge q_{\mathbb{P}_\mathbf{O}},\textrm{ and}\label{pIII}\\
p_\mathrm{IV} &=& q^\perp_{\mathbb{P}_\mathbf{O}}.\label{pIV}
\end{eqnarray}

If $\mathbb{P}$ is the projection lattice $\mathcal{P}(A)$ of a von Neumann algebra $A$ then these projections do indeed correspond to those you get from the classical von Neumann algebra type decomposition.  Specifically, to see that $p_\mathrm{I}A$ corresponds to the type I part, see the comments after \autoref{pq=qp}, and to see that this even extends to annihilators in C*-algebras, see \autoref{commuteBoolean}.  Also $p_\mathrm{II}A$ corresponds the classical type II part because a von Neumann algebra is finite if and only if its projection lattice is modular (see \cite{Berberian1972} \S34 Proposition 1 and \cite{Kaplansky1955} Theorem on page 1).  This fact can also be extended, at least partially, to annihilators in C*-algebras, by \autoref{permod} and \autoref{simtr}.  And $\mathcal{P}(A)$ is necessarily orthomodular, which means $p_\mathrm{IV}=0$, so $p_\mathrm{III}A$ also corresponds to the type III part in this case.  We do not know if the same is true for the annihilators in a C*-algebra, i.e. whether there exist any type IV C*-algebras at all.  The annihilators in such a C*-algebra would be wildly different from any ortholattices seen before in operator algebras, as they would fail to be orthomodular in a very strong way.

We can also use \autoref{tdcor} and \eqref{T^n}, to define
\begin{eqnarray*}
p_\mathrm{I_n} &=& p_{\mathbb{P}_{\mathbf{D}n}},\textrm{ and}\\
p_\mathrm{II_1} &=& q^\perp_{\mathbb{P}_\mathbf{D}}\wedge p_{\mathbb{P}_\mathbf{M}}.
\end{eqnarray*}
If $\mathbb{P}$ is again the projection lattice $\mathcal{P}(A)$ of a von Neumann algebra $A$ then $p_\mathrm{I_n}A$ and $p_\mathrm{II_1}A$ are again the type $\mathrm{I_n}$ and $\mathrm{II_1}$ part respectively in the classical von Neumann type decomposition.  In this case, a supremum of finite projections is again finite, which means that $p_\mathrm{I_n}\leq p_{\mathbb{P}_\mathbf{M}}$ and $\mathbb{P}^n_\mathbf{M}=\mathbb{P}_\mathbf{M}$.  In an arbitrary separative complete ortholattice $\mathbb{P}$ we do not have any notion of Murray-von Neumann equivalence and so there is no reason to believe these facts remain true in general.  For annihilators in C*-algebras we will, however, define an analgous notion (see \S\ref{Equivalence}) which will enable us to prove the first of these facts (see \autoref{nsubcor}\eqref{nsubcor3}).

We could have also used the relative type-ideals in \eqref{tctd2}, rather than those in \eqref{tctd1}, to define the above type classes, as given below.
\begin{eqnarray}
p_{\mathrm{I}\mathrm{rel}} &=& q_{\mathbb{P}_{\mathbf{D}\mathrm{rel}}},\label{pIp}\\
p_{\mathrm{II}\mathrm{rel}} &=& q^\perp_{\mathbb{P}_{\mathbf{D}\mathrm{rel}}}\wedge q_{\mathbb{P}_{\mathbf{M}\mathrm{rel}}},\label{pIIp}\\
p_{\mathrm{III}\mathrm{rel}} &=& q^\perp_{\mathbb{P}_{\mathbf{M}\mathrm{rel}}}\wedge q_{\mathbb{P}_{\mathbf{O}\mathrm{rel}}},\textrm{ and}\label{pIIIp}\\
p_{\mathrm{IV}\mathrm{rel}} &=& q^\perp_{\mathbb{P}_{\mathbf{O}\mathrm{rel}}}.\label{pIVp}
\end{eqnarray}
If $\mathbb{P}$ is the projection lattice in a von Neumann algebra then, as this is orthomodular, we have $[p]=[p]_p$ (see \autoref{orthoequiv} \eqref{orthoequiv3}) and this decomposition is exactly the same as the previous one.  But for annihilators in C*-algebras, it may well be different, although we do at least know that $p_{\mathrm{I}\mathrm{rel}}=p_\mathrm{I}$ in this case, by \autoref{commuteBoolean}.  Indeed, these relative versions using $[p]_p$ are really more natural for annihilators, as using $[p]$ instead might result in the rather awkward situation that an annihilator $B$ could be modular in the larger C*-algebra $A$ even though it is not modular in itself (i.e. $[B]$ could be modular lattice even when $[B]_B$ is not), or vice versa.  Although we could avoid this problem by using the even smaller type-ideal of $\mathbb{P}$ given by \[\{p\in\mathbb{P}:[p]_p=[p]\in\mathbf{M}\}\subseteq\mathbb{P}_{\mathbf{M}\mathrm{rel}}\cap\mathbb{P}_\mathbf{M}.\]  In fact, this is just the intersection of $\mathbb{P}_\mathbf{M}$ (or $\mathbb{P}_{\mathbf{M}\mathrm{rel}}$) with the equality type-ideal $\mathbb{P}_=$ defined in \eqref{eqtypeideal}.  Taking intersections of the various resulting type-ideals we get from these type-classes would yield more type-ideals leading to even finer type decompositions.  Although, whether all these potential types are actually realized by certain C*-algebras, we do not know.

\subsection{Boolean Elements}

The following terminology comes from \cite{Chevalier1991}

\begin{dfn}
An ortholattice $\mathbb{P}$ has the \emph{relative centre property} if, for all $p\in\mathbb{P}$, \[\mathrm{c}_p[p]_p=\{p\wedge q:q\in\mathrm{c}\mathbb{P}\}.\]
\end{dfn}

This is equivalent to saying $\mathrm{c}(q)\wedge p=\mathrm{c}_p(q)$ whenever $q\in[p]_p$.

\begin{dfn}\label{gcdef}
A relation $\precsim$ on a complete ortholattice $\mathbb{P}$ satisfies \emph{generalized comparison} if, for any $q,r\in\mathbb{P}$, there exists $p\in\mathrm{c}\mathbb{P}$ with \[p\wedge q\precsim p\wedge r\quad\textrm{and}\quad p^\perp\wedge r\precsim p^\perp\wedge q.\]
\end{dfn}

\begin{prp}\label{simgcprp}
If $\sim$ is a symmetric type relation and $\precsim_\mathrm{rel}$ is defined as in \eqref{precsimreldef} then $\precsim_\mathrm{rel}$ satisfies generalized comparison if and only if, for all $q,r\in\mathbb{P}$,
\begin{equation}\label{simgc}
\exists u\in[q]_q\exists v\in[r]_r(u\sim v\textrm{ while }q\wedge u^\perp\textrm{ is very orthogonal to }r\wedge v^\perp).
\end{equation}
\end{prp}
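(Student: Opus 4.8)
The plan is to prove both directions by reducing everything to one principle about type relations: they \emph{restrict to}, and \emph{glue along}, central cuts. Concretely, if $p\in\mathrm{c}\mathbb{P}$ and $x\sim y$, then writing $x=(p\wedge x)\vee(p^\perp\wedge x)$ and $y=(p\wedge y)\vee(p^\perp\wedge y)$, the pairs $(p\wedge x)\vee(p\wedge y)\le p$ and $(p^\perp\wedge x)\vee(p^\perp\wedge y)\le p^\perp$ are very orthogonal, so \eqref{treq} forces $p\wedge x\sim p\wedge y$ and $p^\perp\wedge x\sim p^\perp\wedge y$; conversely, if $a\sim b$, $c\sim d$ with $a\vee b\le p$ and $c\vee d\le p^\perp$, then $a\vee c\sim b\vee d$. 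The second structural ingredient I would set up first is that for $p\in\mathrm{c}\mathbb{P}$ the element $q\wedge p$ is central in $[q]_q$ by \eqref{c_q}, so \autoref{centralequiv} yields a canonical isomorphism $[q]_q\cong[q\wedge p]_{q\wedge p}\times[q\wedge p^\perp]_{q\wedge p^\perp}$ under which $w\in[q]_q$ corresponds to $(w\wedge p,w\wedge p^\perp)$, with $w\wedge p\in[q\wedge p]_{q\wedge p}$, and the relative complement $q\wedge w^\perp=w^{\perp_q}$ corresponds to the coordinatewise complement. I would also record that $[q\wedge p]_{q\wedge p}\subseteq[q]_q$ by \autoref{[p]_p}.

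For the forward implication I would take $q,r\in\mathbb{P}$ and a central $p$ witnessing generalized comparison for $\precsim_\mathrm{rel}$, so that there are $v_1\in[p\wedge r]_{p\wedge r}$ with $p\wedge q\sim v_1$ and $u_2\in[p^\perp\wedge q]_{p^\perp\wedge q}$ with $p^\perp\wedge r\sim u_2$. I would then set $u=(p\wedge q)\vee u_2\in[q]_q$ and $v=v_1\vee(p^\perp\wedge r)\in[r]_r$ (their images in the respective product decompositions being $(p\wedge q,u_2)$ and $(v_1,p^\perp\wedge r)$), glue $p\wedge q\sim v_1$ with $u_2\sim p^\perp\wedge r$ using symmetry of $\sim$ and the gluing principle to get $u\sim v$, and finally observe that in the product decompositions $q\wedge u^\perp$ lies below $p^\perp$ while $r\wedge v^\perp$ lies below $p$, so $\mathrm{c}(q\wedge u^\perp)\le p^\perp$ and $\mathrm{c}(r\wedge v^\perp)\le p$ are orthogonal, which is exactly \eqref{simgc}.

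For the reverse implication I would start from $u,v$ as in \eqref{simgc} and take $p=\mathrm{c}(q\wedge u^\perp)^\perp\in\mathrm{c}\mathbb{P}$. Since $p\wedge(q\wedge u^\perp)=\mathbf{0}$, the first coordinate of $u^{\perp_q}=q\wedge u^\perp$ in the decomposition of $[q]_q$ vanishes, which forces $p\wedge q=p\wedge u$; since $\mathrm{c}(r\wedge v^\perp)\le p$ by very orthogonality, the same argument with $p^\perp$ gives $p^\perp\wedge r=p^\perp\wedge v$. Restricting $u\sim v$ to the central cuts $p$ and $p^\perp$ gives $p\wedge u\sim p\wedge v$ and $p^\perp\wedge u\sim p^\perp\wedge v$; since $p\wedge v\in[p\wedge r]_{p\wedge r}$ and $p^\perp\wedge u\in[p^\perp\wedge q]_{p^\perp\wedge q}$, these read $p\wedge q\precsim_\mathrm{rel}p\wedge r$ and $p^\perp\wedge r\precsim_\mathrm{rel}p^\perp\wedge q$, so $p$ witnesses generalized comparison.

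The delicate part is not any single computation but the bookkeeping inside the relative sublattices: one must consistently identify ``$q\wedge u^\perp$'' with the relative orthocomplement $u^{\perp_q}$ in the ortholattice $[q]_q$, confirm that the glued elements $u$ and $v$ genuinely lie in $[q]_q$ and $[r]_r$ (which is where \autoref{[p]_p} is needed), and check that the joins and complements used agree with those taken in $\mathbb{P}$ along the canonical product decomposition coming from \autoref{centralequiv}. Once that is pinned down, both directions amount to one ``restrict'' and one ``glue'' application of the type relation axiom \eqref{treq}, with symmetry of $\sim$ used only to swap the sides of $\sim$; in particular neither separativity nor completeness beyond binary operations in $[q]_q$ is needed.
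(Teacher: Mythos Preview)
Your proof is correct and follows essentially the same approach as the paper: glue the two comparison witnesses along the central element $p$ for the forward direction, and for the converse choose a central $p$ separating $\mathrm{c}(q\wedge u^\perp)$ from $\mathrm{c}(r\wedge v^\perp)$, then restrict $u\sim v$ by $p$ and $p^\perp$. The only cosmetic difference is that the paper picks $p=\mathrm{c}(r\wedge v^\perp)$ rather than your $p=\mathrm{c}(q\wedge u^\perp)^\perp$, and derives $p\wedge q=p\wedge u$ directly via $p\wedge q\le u^{\perp_q\perp_q}=u$ instead of invoking the full product decomposition of $[q]_q$; both routes are equivalent.
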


\begin{proof}
If $\precsim_\mathrm{rel}$ satisfies generalized comparison then, for any $q,r\in\mathbb{P}$ we have $p\in\mathrm{c}\mathbb{P}$, $s\in[p\wedge r]_r$, $t\in[p^\perp\wedge q]_q$ such that $p\wedge q\sim s$ and $p^\perp\wedge r\sim t$.  If $\sim$ is a symmetric type relation then $u=(p\wedge q)\vee_qt=(p\wedge q)\vee t\sim s\vee(p^\perp\wedge r)=s\vee_r(p^\perp\wedge r)=v$ while $q\wedge u^\perp\leq p^\perp$ and $r\wedge v^\perp\leq p$.

Conversely, given such a $u$ and $v$, setting $p=\mathrm{c}(r\wedge v^\perp)$ gives $p\wedge q\leq u^{\perp_q\perp_q}=u$ and hence \[p\wedge q=p\wedge u\sim p\wedge v\leq p\wedge r\] and, likewise, $p^\perp\wedge r=p^\perp\wedge v\sim p^\perp\wedge u\leq p^\perp\wedge q$.
\end{proof}


As noted after \autoref{tr}, $p\sim q\Leftrightarrow c(p)=c(q)$ defines a (proper reflexive symmetric) type relation on any complete ortholattice $\mathbb{P}$.  To see that $\precsim_\mathrm{rel}$ then satisfies generalized comparison, simply take any $q,r\in\mathbb{P}$, set $u=q\wedge c(r)$ and $v=r\wedge c(q)$ and note that $c(u)=c(q)\wedge c(r)=c(v)$ while $q\wedge u^\perp\leq c(r)^\perp$ and $r\wedge v^\perp\leq c(q)^\perp$.  The next result shows that, under suitable extra hypotheses, it is the only such relation on $\mathbb{P}_{\mathbf{D}\mathrm{rel}}$.  In fact it shows that, if $\sim$ is a proper type relation on a complete Boolean algebra $\mathbb{P}$ then generalized comparison is equivalent to reflexivity, i.e. in this case generalized comparison just says $\sim$ is equality.

\begin{thm}\label{Boolthm}
If $\mathbb{P}$ is a complete ortholattice with the relative centre property, $\sim$ is a proper symmetric type relation and $\precsim_\mathrm{rel}$ satisfies generalized comparison then
\begin{enumerate}
\item\label{Boolthm1} For $p\in\mathbb{P}_{\mathbf{D}\mathrm{rel}}$ and $q\in\mathbb{P}$, $\mathrm{c}(p)\leq\mathrm{c}(q)\Leftrightarrow p\precsim_\mathrm{rel}q$.
\item\label{Boolthm2} For $p,q\in\mathbb{P}_{\mathbf{D}\mathrm{rel}}$, $\mathrm{c}(p)=\mathrm{c}(q)\Leftrightarrow p\sim q$.
\item\label{Boolthm3} When $p,q\in\mathbb{P}_{\mathbf{D}\mathrm{rel}}$ and $p\sim q$, $\sim$ is an orthoisomorphism on $[p]_p\times[q]_q$.
\end{enumerate}
\end{thm}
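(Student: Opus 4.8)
The plan is to treat the three assertions essentially in order, with the bulk of the work going into the nontrivial direction of the first one. The two ``$\Leftarrow$'' implications in \eqref{Boolthm1} and \eqref{Boolthm2} need nothing new: if $p\precsim_\mathrm{rel} q$, so $p\sim r$ for some $r\in[q]_q$ by \eqref{precsimreldef}, then $r\leq q$ and the computation recorded in \eqref{cpleqcq} (valid for any proper type relation) gives $\mathrm{c}(p)\leq\mathrm{c}(r)\leq\mathrm{c}(q)$; and if $p\sim q$ this gives $\mathrm{c}(p)\leq\mathrm{c}(q)$, which symmetry of $\sim$ improves to $\mathrm{c}(p)=\mathrm{c}(q)$. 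So the real content is the forward direction of \eqref{Boolthm1}, and I would derive \eqref{Boolthm2} and \eqref{Boolthm3} from it.

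For ``$\Rightarrow$'' in \eqref{Boolthm1}, assume $\mathrm{c}(p)\leq\mathrm{c}(q)$ with $[p]_p$ distributive, hence Boolean. I would apply \autoref{simgcprp} to the pair $p,q$ to obtain $u\in[p]_p$ and $v\in[q]_q$ with $u\sim v$ and with $u^{\perp_p}=p\wedge u^\perp$ very orthogonal to $v^{\perp_q}=q\wedge v^\perp$; the target is $u=p$, which immediately gives $p=u\sim v\in[q]_q$, i.e.\ $p\precsim_\mathrm{rel} q$. The crucial point is that $u^{\perp_p}$, as an element of the Boolean algebra $[p]_p$, is central in $[p]_p$, so the relative centre property promotes it to an honest central element of $\mathbb{P}$: writing $z:=\mathrm{c}(u^{\perp_p})$ we get $u^{\perp_p}=\mathrm{c}_p(u^{\perp_p})=z\wedge p$, and hence $u=u^{\perp_p\perp_p}=z^\perp\wedge p$. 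Now suppose for contradiction that $u^{\perp_p}=z\wedge p>\mathbf{0}$. Then $\mathrm{c}(z\wedge p)=z\wedge\mathrm{c}(p)\leq z\wedge\mathrm{c}(q)=\mathrm{c}(z\wedge q)$ by \eqref{c(pwedgeq)}, so $z\wedge q>\mathbf{0}$. Decomposing $u\sim v$ along the central $z$ — legitimate by \autoref{centralequiv} together with the type-relation axiom — gives $z\wedge u\sim z\wedge v$; but $z\wedge u=z\wedge z^\perp\wedge p=\mathbf{0}$, so $z\wedge v=\mathbf{0}$ by symmetry and properness. Very orthogonality of $u^{\perp_p}$ and $v^{\perp_q}$ reads $z\leq\mathrm{c}(v^{\perp_q})^\perp$, so $\mathrm{c}(z\wedge v^{\perp_q})=z\wedge\mathrm{c}(v^{\perp_q})=\mathbf{0}$ by \eqref{c(pwedgeq)}, i.e.\ $z\wedge v^{\perp_q}=\mathbf{0}$. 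Finally $z\wedge q\in[q]_q$ is central there by \eqref{c_q}, so it commutes with $v\in[q]_q$ and \eqref{com2} (read inside $[q]_q$) yields $z\wedge q=((z\wedge q)\wedge v)\vee_q((z\wedge q)\wedge v^{\perp_q})=\mathbf{0}$, contradicting $z\wedge q>\mathbf{0}$. Hence $u^{\perp_p}=\mathbf{0}$, $u=p$, and we are done.

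Granting \eqref{Boolthm1}, part \eqref{Boolthm2} follows quickly: from $\mathrm{c}(p)=\mathrm{c}(q)$ we get $r\in[q]_q$ with $p\sim r$, hence $\mathrm{c}(r)=\mathrm{c}(p)=\mathrm{c}(q)$ by the easy direction, and since $[q]_q$ is Boolean, $r$ is central there, so the relative centre property forces $r=\mathrm{c}_q(r)=\mathrm{c}(r)\wedge q=\mathrm{c}(q)\wedge q=q$; thus $p\sim q$. For part \eqref{Boolthm3}, with $p\sim q$ and both in $\mathbb{P}_{\mathbf{D}\mathrm{rel}}$ (so $\mathrm{c}(p)=\mathrm{c}(q)$), I would show that $\sim$ restricted to $[p]_p\times[q]_q$ is exactly the graph of $\phi\colon a\mapsto\mathrm{c}(a)\wedge q$. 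For fullness of the graph over $[p]_p$: each $a\in[p]_p$ is central in $[p]_p$, so $a=\mathrm{c}_p(a)=\mathrm{c}(a)\wedge p$, and decomposing $p\sim q$ along the central $\mathrm{c}(a)$ gives $a=\mathrm{c}(a)\wedge p\sim\mathrm{c}(a)\wedge q\in[q]_q$. For single-valuedness: any $b\in[q]_q$ with $a\sim b$ has $\mathrm{c}(b)=\mathrm{c}(a)$ and is central in $[q]_q$, so $b=\mathrm{c}_q(b)=\mathrm{c}(b)\wedge q=\phi(a)$. That $\phi$ is an orthoisomorphism is then routine: its inverse is $b\mapsto\mathrm{c}(b)\wedge p$ (same computation, using $\mathrm{c}(p)=\mathrm{c}(q)$), it is visibly order preserving, and $\phi(a^{\perp_p})=\phi(a)^{\perp_q}$ follows from a coordinatewise computation in $[\mathrm{c}(a)]\times[\mathrm{c}(a)^\perp]$ using $a=\mathrm{c}(a)\wedge p$.

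I expect the main obstacle to be the contradiction argument inside \eqref{Boolthm1}. The delicate idea there is that distributivity of $[p]_p$ is exactly what lets us replace the purely relative object $u^{\perp_p}$ by a genuine central element $z$ of $\mathbb{P}$; once that is done, the relative centre property (to see $z\wedge q$ as a central element of $[q]_q$) and the type-relation axioms combine to annihilate $z\wedge q$. The rest is bookkeeping, though it is worth verifying carefully that each use of \eqref{c(pwedgeq)} and \eqref{c_q} rests only on centrality of $z$ and not on any orthomodularity of $\mathbb{P}$, since $\mathbb{P}$ is not assumed orthomodular here.
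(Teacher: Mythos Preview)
Your argument is correct and follows essentially the same route as the paper's proof: apply \autoref{simgcprp} to get $u\sim v$ with $u^{\perp_p}$ very orthogonal to $v^{\perp_q}$, use Booleanness of $[p]_p$ plus the relative centre property to make $z=\mathrm{c}(u^{\perp_p})$ do the work, and derive a contradiction from $z\wedge q>\mathbf{0}$. If anything, you are more explicit than the paper at the step where $z\wedge t=\mathbf{0}$ is promoted to $z\wedge q=\mathbf{0}$ (you invoke very orthogonality to kill $z\wedge v^{\perp_q}$ and then centrality of $z\wedge q$ in $[q]_q$; the paper compresses this). Your treatments of \eqref{Boolthm2} and \eqref{Boolthm3} are organized slightly differently---you re-apply \eqref{Boolthm1} and then argue $r=q$, whereas the paper runs the same $s,t$ from \eqref{Boolthm1} symmetrically; and for \eqref{Boolthm3} you produce $a\sim\phi(a)$ by decomposing $p\sim q$ along $\mathrm{c}(a)$, whereas the paper appeals to \eqref{Boolthm2}---but these are equivalent and equally short.
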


\begin{proof}\
\begin{enumerate}
\item The $\Leftarrow$ part is immediate from the comments after \eqref{cpleqcq}.  For the $\Rightarrow$ part, note that, by generalized comparison, we have $s\in[p]_p$ and $t\in[q]_q$ with $s\sim t$.  As $\mathbb{P}$ has the relative centre property and every element of a Boolean algebra is central, we have $\mathrm{c}(p\wedge s^\perp)\wedge p=\mathrm{c}_p(p\wedge s^\perp)=p\wedge s^\perp$.  As $\sim$ is a type relation, $\mathbf{0}=\mathrm{c}(p\wedge s^\perp)\wedge s\sim \mathrm{c}(p\wedge s^\perp)\wedge t$.  But $\sim$ is also proper and hence $\mathrm{c}(p\wedge s^\perp)\wedge t=\mathbf{0}$ so $\mathrm{c}(p\wedge s^\perp)\wedge q=\mathbf{0}$ and $\mathrm{c}(p\wedge s^\perp)\wedge\mathrm{c}(q)=\mathbf{0}$.  As $\mathrm{c}(p)\leq\mathrm{c}(q)$, we must have $p\wedge s^\perp=\mathbf{0}$ and therefore $p=s$, i.e. $p\sim t\leq q$.
\item A symmetric argument yields $q=t$ too, i.e. $p\sim q$.
\item Say we had $r,s\in[p]_p$ and $t\in[q]_q$ with $r\sim t$ and $s\sim t$.  We would then have $\mathrm{c}(r)=\mathrm{c}(t)=\mathrm{c}(s)$ and hence $r=\mathrm{c}_p(r)=\mathrm{c}(r)\wedge p=\mathrm{c}(s)\wedge p=\mathrm{c}_p(s)=s$.  Also, for any $s\in[p]_p$, $\mathrm{c}(\mathrm{c}(s)\wedge q)=\mathrm{c}(s)\wedge\mathrm{c}(q)=\mathrm{c}(s)\wedge\mathrm{c}(p)=\mathrm{c}(s)$ and hence $s\sim\mathrm{c}(s)\wedge q\in[q]_q$.  Thus $\sim$ restricted to $[p]_p\times[q]_q$ is the function $s\mapsto\mathrm{c}(s)\wedge q$.  It is clearly order preserving and a symmetric argument shows that the same is true of the inverse function $t\mapsto\mathrm{c}(t)\wedge p$.  Finally note that, as $[p]_p$ and $[q]_q$ are Boolean algebras, any order isomorphism is actually an orthoisomorphism.
\end{enumerate}
\end{proof}

\section{Annihilators and Projections}\label{AvsP}

\subsection{Annihilators}\label{annsec}

Throughout, $A$ denotes a C*-algebra with positive elements $A_+=\{aa^*:a\in A\}$, self-adjoint elements $A_\mathrm{sa}=\{a=a^*:a\in A\}$ (or $A_\mathrm{sa}=\{a-b:a,b\in A_+\}$), unit ball $A^1=\{a:||a||\leq1\}$ and projections $\mathcal{P}(A)=\{a\in A:a=a^*=a^2\}$, where $p\leq q$ means $p=pq$, for $p,q\in\mathcal{P}(A)$.

Consider the following relations on $A$.
\begin{eqnarray*}
aLb &\Leftrightarrow& ab^*=0.\\
a\bot b &\Leftrightarrow& aLb\textrm{ and }aLb^*.\\
a\top b &\Leftrightarrow& a\bot b\textrm{ and }a^*\bot b.
\end{eqnarray*}
Taking the orthocompletion of $A$ w.r.t. any of these relations amounts to the same thing.  To see why, first note that $\{a\}^\perp=\{a^*a\}^\top$ and $\{a\}^\top=\{a,a^*\}^\perp$ so \[[A]^\perp=[A]^\top\subseteq\{S\subseteq A:S=S^*\}\] and, for any $S\subseteq A$ with $S=S^*(=\{s^*:s\in S\})$, we have $S^\perp=S^\top$.  As $\top$ is a preorthogonality relation, it follows from \S\ref{TheCompletion} that \[[A]^\perp\textrm{ is a complete ortholattice.}\]  In fact, all we have used here is the fact that $A$ is a *-ring with proper involution.  Also $L$ is a preorthogonality relation on $A$ and the map \[B\mapsto B\cap B^*\] is an orthoisomorphism from $[A]^L$ to $[A]^\perp$.  Every element of $[A]^L$ is clearly a closed left ideal and so every element of $[A]^\perp$ is a hereditary C*-subalgebra of $A$ (see \cite{Pedersen1979} Theorem 1.5.2).  As C*-algebras, rather than their left ideals, are our primary object of study, and the equivalence in \S\ref{Equivalence} is slightly easier to define with $\bot$ rather than $\top$, we shall focus on the relation $\bot$.  The elements of $[A]^\perp$ will be called \emph{annihilators} of $A$.

Another thing to note is that the above relations all agree on $A_\mathrm{sa}$.  The fact that $\{a\}^\perp=\{a^*a\}^\perp$, shows that $A_+$ is join-dense in $A$, w.r.t. to the preorder induced by $\perp$, and thus \[[A]^\perp\cong[A_+]^\perp.\]  Indeed, it will often be convenient in proofs to use the elements of $A_+$ rather than $A$.  If $A$ has real rank zero, then every hereditary C*-subalgebra contains an approximate unit of projections, so every annihilator will be an annihilator of subset of projections, i.e. $\mathcal{P}(A)$ will be join-dense in $A$ and \[[A]^\perp\cong[\mathcal{P}(A)]^\perp.\]  We generalize this observation in \autoref{SP}.

One other thing worth pointing out is that we have something extra on $[A]^\perp$ that we do not have for an arbitrary ortholattice.  Specifically, we actually have a function $||\cdot\cdot||$ from $[A]^\perp\times[A]^\perp$ to $[0,1]$ which quantifies the degree of orthogonality of $B,C\in[A]^\perp$ given by
\begin{equation}\label{||BC||}
||BC||=\sup_{b\in B^1_+,c\in C^1_+}||bc||.
\end{equation}
The important properties of this function are that, for $B,C,D\in[A]^\perp$,
\begin{eqnarray*}
||BC|| &=& ||CB||,\\
B\neq\{0\} &\Leftrightarrow& ||BB||=1,\\
B\perp C &\Leftrightarrow& ||BC||=0,\textrm{ and}\\
C\subseteq D &\Rightarrow& ||BC||\leq||BD||.
\end{eqnarray*}
Indeed, these properties could be derived from the relevant properties of $||\cdot\cdot||$ on $A^1_+$ and the the fact that $a\perp b\Leftrightarrow||ab||=0$, for $a,b\in A^1_+$ (and $[A]^\perp\cong[A^1_+]^\perp$).  If, further, $||BC^\perp||$ satisfies the triangle inequality, i.e. for all $B,C,D\in[A]^\perp$, \[||BD^\perp||\leq||BC^\perp||+||CD^\perp||,\] then we naturally call $||\cdot\cdot||$ an \emph{orthonorm}.  We do not know if \eqref{||BC||} always yields an orthonorm on $[A]^\perp$, although we show it does for certain C*-algebras in \autoref{nsubcor}\eqref{nsubcor2}.  We can even use $||\cdot\cdot||$ to define the \emph{orthospectrum} of $B,C\in[A]^\perp$ by
\begin{equation*}
\sigma(BC)=\overline{\{||BD||^2:D\in[C]\textrm{ and }((D\vee B^\perp)\wedge B)\perp((D^{\perp_B}\vee B^\perp)\wedge B)\}},
\end{equation*}
except when $B=C=A$, in which case we define $\sigma(AA)=\{1\}$ (see \autoref{orthospecpq}).

\subsection{Non-Commutative Topology}\label{NCT}

From this point on, it will be convenient to assume that $A$ is concretely represented faithfully and non-degenerately on some Hilbert space $H$.  This is, of course, valid, thanks to the standard GNS construction.  One canonical choice would be the universal representation, which has the advantage that its projections distinguish all hereditary C*-subalgebras of $A$.  The same is also true for the atomic representation, by by \cite{Pedersen1979} Proposition 4.3.13 and Theorem 4.3.15.  However, we are primarily concerned with annihilators, and the projections in any faithful representation of $A$ still distinguish the annihilators (see \autoref{annproj}).  So, as long as we fix it throughout, any faithful non-degenerate representation will do.

However, we will still use standard non-commutative topology terminology (see \cite{Akemann1969}, \cite{Akemann1970} and \cite{Pedersen1979} 3.11.10) which, traditionally, has only be used with reference to projections in $A^{**}$.  This restriction is certainly convenient for many of the proofs, and necessary for some of the results, but many of the results themselves are valid also for arbitrary representations, thanks to the following property of $A^{**}$.

\begin{thm}\label{A**}
Any representation $\pi$ of $A$ has a normal extension to $A^{**}$.
\end{thm}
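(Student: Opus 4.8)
Any representation $\pi$ of $A$ has a normal extension to $A^{**}$.

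The plan is to invoke the universal property of the enveloping von Neumann algebra $A^{**}$. First I would recall the standard fact (see \cite{Pedersen1979} 3.7.7–3.7.9, or \cite{Takesaki}) that $A^{**}$, identified with the double dual of $A$ as a Banach space, carries a canonical von Neumann algebra structure in which $A$ sits weak*-densely, and that it is $*$-isomorphic (as a von Neumann algebra) to the weak operator closure of $A$ in the universal representation. The key universal property is that for any $*$-representation $\pi\colon A\to B(H_\pi)$, the bidual map $\pi^{**}\colon A^{**}\to (B(H_\pi))^{**}$ composed with the canonical normal conditional-expectation-type projection onto $\pi(A)''$ gives the desired extension; more concretely, one uses that $\pi$ extends to a weak*-weak* continuous (hence normal) unital $*$-homomorphism $\bar\pi\colon A^{**}\to \pi(A)''\subseteq B(H_\pi)$.

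The key steps, in order, are: (i) replace $\pi$ by its normal closure, i.e.\ note that $\pi(A)''$ is a von Neumann algebra on $H_\pi$; (ii) observe that the predual inclusion $\pi(A)''_*\hookrightarrow A^*$ (restricting normal functionals on $\pi(A)''$ to $A$) dualizes to a weak*-continuous map $A^{**}\to (\pi(A)''_*)^*=\pi(A)''$; (iii) check this map is a unital $*$-homomorphism, which follows because it is weak*-continuous, agrees with $\pi$ on the weak*-dense subalgebra $A$, and multiplication and involution are separately weak*-continuous on bounded sets of von Neumann algebras; (iv) verify normality, which is automatic since the map is weak*-continuous by construction and weak*-continuity of a $*$-homomorphism between von Neumann algebras is precisely normality. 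Then $\bar\pi$ followed by the inclusion $\pi(A)''\hookrightarrow B(H_\pi)$ is the sought normal extension of $\pi$.

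I expect the main obstacle to be purely expository rather than mathematical: namely being careful that ``normal'' in the statement is interpreted correctly (a representation of $A^{**}$ on $H_\pi$ whose restriction to the unit ball is continuous from the weak* topology to the weak operator topology), and that the extension really does restrict to $\pi$ on $A$ under the canonical embedding $A\hookrightarrow A^{**}$. One should also remark that the extension need not be faithful or non-degenerate even if $\pi$ is, but that is not claimed. Since all of this is contained in standard references, the proof in the paper is presumably a one- or two-line pointer; accordingly I would write: ``This is standard; see \cite{Pedersen1979} 3.7.7 and 3.8.1, noting that $\pi$ extends by weak*-continuity to a normal $*$-homomorphism $A^{**}\to\pi(A)''$.''
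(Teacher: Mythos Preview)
Your proposal is correct and matches the paper's approach exactly: the paper's proof consists solely of the line ``See \cite{Pedersen1979} Theorem 3.7.7,'' which is precisely the reference you identified. Your expanded outline of why that reference works is accurate, and your anticipation that the paper gives only a one-line pointer is spot on.
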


\begin{proof}
See \cite{Pedersen1979} Theorem 3.7.7.
\end{proof}

\begin{dfn}
We call $p\in\mathcal{P}(\mathcal{B}(H))$ \emph{open} if $p=\sup(p_\alpha)$, for some increasing net $(p_\alpha)\subseteq A^1_+$, and \emph{closed} if $p^\perp$ is open.
\end{dfn}

Equivalently, a projection $p$ is closed if and only if $p=\inf(p_\alpha)$ for some decreasing net $(p_\alpha)\subseteq 1-A^1_+$.  And the supremum/infimum of an increasing/decreasing net in $A$ coincides with the limit of that net in the strong (or weak) topology, so open and closed projections always lie in the double commutant $A''$ of $A$.  We will denote the sets of open and closed projections by $\mathcal{P}(A'')^\circ$ and $\overline{\mathcal{P}(A'')}$ respectively.  Also note that the identity operator $1$ is open, as we are dealing with a non-degenerate representation.

If $B$ is a C*-subalgebra of $A$, then $\{b\in B_+:||b||<1\}$ is directed (see \cite{Pedersen1979} Theorem 1.4.2) and hence has a supremum $p_B=\sup(B^1_+)\in\mathcal{P}(A'')^\circ$.  Conversely, if $p\in\mathcal{P}(A'')^\circ$, then $B=pAp\cap A$ is a hereditary C*-subalgebra of $A$ with $p=\sup(B^1_+)=p_B$. So a projection is open precisely when it is the supremum of $B^1_+$ for some hereditary C*-subalgebra $B$ of $A$.\footnote{Incidentally, the closed projections of $A$ can similarly be characterized as the infimums of norm filters (see \cite{Bice2011} Corollary 3.4), although we will not have further occasion to refer to these.}  The open and closed projections can also be characterized as the limits of increasing and decreasing sequences respectively in $\widetilde{A}^1_+=(A+\mathbb{C}1)^1_+$ (or even $\widetilde{A}^1_\mathrm{sa}$), a surprisingly non-trivial fact when $A$ is not unital (and hence $A\neq\widetilde{A}$).  Another important fact is that the collection of open projections $\mathcal{P}(A'')^\circ$ is norm closed (see \cite{Pedersen1979} Proposition 3.11.9).

\begin{dfn}
We define the \emph{interior} $p^\circ$ of $p\in\mathcal{P}(\mathcal{B}(H))$ by $p^\circ=\sup(pAp\cap A)^1_+$ and the \emph{closure} by $\overline{p}=p^{\perp\circ\perp}$.  We call $p$ \emph{topologically regular}\footnote{As far as we know, such projections have not been considered or named before.  They are the analog of regular open subsets of a topological space, although we are averse to simply calling them regular, as this is already a standard term meaning something different (see \cite{Akemann1969} Definition II.11 and the discussion at the start of \S\ref{SvsO}).} if $p=\overline{p}^\circ$ and denote the collection of all topologically regular open projections by $\overline{\mathcal{P}(A'')}^\circ$.
\end{dfn}

It follows that $p^\circ$ is the largest open projection satisfying $p^\circ\leq p$ and $\overline{p}$ is the smallest closed projection satisfying $p\leq\overline{p}$.  The existence of such projections also follows from the fact that the supremum of a collection of open projections is open and the infimum of a collection of closed projections is closed (see \cite{Akemann1969} Proposition II.5 and combine it with \autoref{A**} to obtain the result for arbitrary representations).  Also note that the interior of \emph{any} closed projection is in fact topologically regular.  For if $p=q^\circ$, for some closed $q\in\mathcal{P}(\mathcal{B}(H))$, then $\overline{p}\leq q$ so
\begin{equation}\label{intclosedtopreg}
\overline{p}^\circ\leq q^\circ=p\leq\overline{p}^\circ.
\end{equation}

One more important thing to note is the difference between complements of annihilators and their corresponding projections.  Specifically, given $B\in[A]^\perp$, we may have $p_{B^\perp}\neq p_B^\perp$, and one could view the complications of extending projection results to annihilators as all stemming from this fact.  Indeed, $p_{B^\perp}$ is open while $p_B^\perp$ is closed, so they could not be equal unless they were clopen.  This occurs when these projections, or their complements, lie in $A$ itself, i.e. if $B=pAp$ for some $p\in A$.  In fact, if $A''=A^{**}$ (i.e. if we are dealing with the universal representation of $A$) and $1\in A$ then the clopen projections are precisely those in $A$ (see \cite{Akemann1969} Proposition II.18).  However, $B^\perp=p_B^\perp Ap_B^\perp\cap A$ so, by definition, we do always have
\begin{equation}\label{perpseq}
p_{B^\perp}=p_B^{\perp\circ}.
\end{equation}

\begin{thm}\label{annproj}
There are order isomorphisms between $[A]^\perp$ and $\overline{\mathcal{P}(A'')}^\circ$ given by \[B\mapsto p_B\qquad\textrm{and}\qquad p\mapsto pAp\cap A\]
\end{thm}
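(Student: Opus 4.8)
The plan is to start from the standard bijection between hereditary C*-subalgebras of $A$ and open projections in $A''$ recalled in the discussion preceding the statement, and then cut it down to the subposets in question. First I would note that $B\mapsto p_B=\sup(B^1_+)$ and $p\mapsto pAp\cap A$ are mutually inverse bijections between the hereditary C*-subalgebras of $A$ and $\mathcal{P}(A'')^\circ$, both order preserving: $B\subseteq C$ yields $B^1_+\subseteq C^1_+$ and hence $p_B\leq p_C$, while $p\leq q$ (so $pq=qp=p$) yields $a=pap=qaq$ for $a\in pAp\cap A$, whence $pAp\cap A\subseteq qAq\cap A$. So these are order isomorphisms, and since every annihilator is a hereditary C*-subalgebra (\S\ref{annsec}), it suffices to show that $B\mapsto p_B$ carries $[A]^\perp$ exactly onto $\overline{\mathcal{P}(A'')}^\circ$. (The open/closed projection calculus used here is available in the arbitrary faithful non-degenerate representation we have fixed, via \autoref{A**}.)

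The engine is \eqref{perpseq}. I would first observe that it holds for \emph{every} hereditary C*-subalgebra $B$, not just for annihilators: the identity $B^\perp=\{a\in A:p_Ba=ap_B=0\}=p_B^\perp Ap_B^\perp\cap A$ stated before the theorem uses only $p_B=\sup B^1_+$, and then $p_{B^\perp}=\sup(B^\perp)^1_+=p_B^{\perp\circ}$ is immediate from the definition of the interior. Since $B^\perp$ is again a hereditary C*-subalgebra, iterating \eqref{perpseq} and recalling $\overline{p}=p^{\perp\circ\perp}$ gives, for any hereditary C*-subalgebra $B$,
\[p_{B^{\perp\perp}}=p_{B^\perp}^{\perp\circ}=(p_B^{\perp\circ})^{\perp\circ}=p_B^{\perp\circ\perp\circ}=\overline{p_B}^\circ.\]

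Now the two inclusions drop out. If $B\in[A]^\perp$ then $B=B^{\perp\perp}$, so the displayed identity reads $p_B=\overline{p_B}^\circ$, i.e.\ $p_B$ is topologically regular; thus $B\mapsto p_B$ sends $[A]^\perp$ into $\overline{\mathcal{P}(A'')}^\circ$. Conversely, given $p\in\overline{\mathcal{P}(A'')}^\circ$, put $B=pAp\cap A$, a hereditary C*-subalgebra with $p_B=p$; then $p_{B^{\perp\perp}}=\overline{p_B}^\circ=\overline{p}^\circ=p=p_B$ by the displayed identity, and since $B\subseteq B^{\perp\perp}$ and these two hereditary C*-subalgebras have the same open projection, they coincide, so $B=B^{\perp\perp}\in[A]^\perp$. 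Together with the fact that $B\mapsto p_B$ and $p\mapsto pAp\cap A$ are already inverse order isomorphisms between the ambient posets, this shows they restrict to mutually inverse order isomorphisms between $[A]^\perp$ and $\overline{\mathcal{P}(A'')}^\circ$. The only point needing care is the extension of \eqref{perpseq} to all hereditary C*-subalgebras together with the bookkeeping of the interior and complement operations — exactly the distinction between $p_B^\perp$ (closed) and $p_{B^\perp}$ (open) flagged just before the statement; everything else is the standard dictionary.
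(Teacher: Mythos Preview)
Your proof is correct and takes essentially the same approach as the paper: both arguments pivot on \eqref{perpseq}, applied twice, to relate $B^{\perp\perp}$ with $\overline{p_B}^\circ$. Your packaging is slightly cleaner---you first invoke the ambient bijection between hereditary C*-subalgebras and open projections and then restrict it via the single identity $p_{B^{\perp\perp}}=\overline{p_B}^\circ$, whereas the paper verifies injectivity and the two inclusions more directly---but the content is the same.
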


\begin{proof}
For $B\in[A]^\perp$, $p_B=p_{B^{\perp\perp}}=p_{B^\perp}^{\perp\circ}$, by \eqref{perpseq}.  But $p_{B^\perp}=\sup(B^\perp)^1_+$ is open so $p^\perp_{B^\perp}$ is closed and hence its interior, $p_B$, is a topologically regular projection.  Also, if $p_B=p=p_C$, for $B,C\in[A]^\perp$, then $B^\perp=p^\perp Ap^\perp\cap A=C^\perp$ and hence $B=B^{\perp\perp}=C^{\perp\perp}=C$, so the first map is injective.

For topologically regular $p$, \[(p^{\perp\circ}Ap^{\perp\circ}\cap A)^\perp=p^{\perp\circ\perp}Ap^{\perp\circ\perp}\cap A=p^{\perp\circ\perp\circ}Ap^{\perp\circ\perp\circ}\cap A=\overline{p}^\circ A\overline{p}^\circ\cap A=pAp\cap A,\] so $pAp\cap A$ is indeed an annihilator.  Also $p_{pAp\cap A}=\sup(pAp\cap A)^1_+=p$, as $p$ is open.  This, combined with the injectivity of the first map, shows that these maps are indeed inverse to each other which, as they are immediately seen to be order preserving, means they are also order isomorphisms.
\end{proof}

When making order theoretic statements about projections we must now always take care to note whether they are with respect to $\mathcal{P}(A'')$ or $\overline{\mathcal{P}(A'')}^\circ$, and we shall adopt the convention that, by default, it is the order structure of $\mathcal{P}(A'')$ being referred to unless otherwise specified, with subscripts for example.  For while it follows from \autoref{annproj} that $\overline{\mathcal{P}(A'')}^\circ$ is a complete lattice, it is not a sublattice of $\mathcal{P}(A'')$.  When $p,q\in\overline{\mathcal{P}(A'')}^\circ$ satisfy $pq=qp$, we do have $p\wedge q=p\wedge_{\overline{\mathcal{P}(A'')}^\circ}q=pq$, thanks to \cite{Akemann1969} Theorem II.7, so infimums do at least agree in this case, but even commutativity does not imply that supremums agree.  Moreover, as mentioned above, the orthocomplement functions in the two structures are different.

However, there are relations between some order theoretic concepts in $\mathcal{P}(A'')$ and the corresponding concepts in $\overline{\mathcal{P}(A'')}^\circ$.  For example, the concept of centrality coincides in the two structures (see \autoref{centralannihilators} and the comments that follow) and the following result shows that commutativity is usually stronger in $\mathcal{P}(A'')$ than in $\overline{\mathcal{P}(A'')}^\circ$ (and it can be strictly stronger \textendash\, see the comments after \autoref{commuteclosure2}).

\begin{prp}\label{commutativityimplication}
If $\overline{\mathcal{P}(A'')}^\circ$ is orthomodular then, for $p,q\in\overline{\mathcal{P}(A'')}^\circ$, 
\[\exists r\in\{p,\overline{p},p^\perp,\overline{p}^\perp\}\exists s\in\{q,\overline{q},q^\perp,\overline{q}^\perp\}(rs=sr)\quad\Rightarrow\quad p\textrm{ and }q\textrm{ commute in }\overline{\mathcal{P}(A'')}^\circ.\]
\end{prp}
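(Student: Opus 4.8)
The plan is to reduce the asserted implication to a single, cleanly stated commutation fact in $\mathcal{P}(A'')$ and then transfer it through the order isomorphism of \autoref{annproj}. First I would recall that in $\mathcal{P}(A'')$, two projections $r,s$ commute as operators if and only if they commute in the ortholattice sense of \autoref{centredef}, i.e. $r=(r\wedge s)\vee(r\wedge s^\perp)$ and the various other instances of \eqref{distributive} hold; this is standard (and is exactly the content invoked around \eqref{com1}--\eqref{com2}). The key observation is that operator commutation is preserved under taking orthocomplements in $\mathcal{P}(A'')$ and also, crucially, is inherited by interiors and closures: if $r$ commutes with $s$ then $\overline{r}$, $r^\circ$ and their complements all commute with $s$, and likewise on the other side. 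The reason is that $\overline{r}=r^{\perp\circ\perp}$ is built from $r$ by operations (orthocomplementation in $\mathcal{P}(A'')$, and interior) that each land in the commutant of any operator commuting with $r$ — interiors because $r^\circ$ is a strong limit of elements of $\overline{rAr}$, hence in $\{s\}'$ whenever $r\in\{s\}'$, using that $s$ normalizes $rAr$.

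So the first step would be: from the hypothesis we have some $r\in\{p,\overline p,p^\perp,\overline p^\perp\}$ and $s\in\{q,\overline q,q^\perp,\overline q^\perp\}$ with $rs=sr$ in $\mathcal{P}(A'')$. By the inheritance remarks above, applied once on each side, we upgrade this to $pq=qp$ in $\mathcal{P}(A'')$ (passing from $r$ back to $p=\overline{p^\circ}$ or $p=\overline{p}^\circ$ as appropriate, since $p$ is topologically regular, and similarly from $s$ back to $q$). Second step: $pq=qp$ in $\mathcal{P}(A'')$ means $p$ and $q$ commute in the ortholattice $\mathcal{P}(A'')$, so all eight elements $p\wedge q$, $p\wedge q^\perp$, $p^\perp\wedge q$, $p^\perp\wedge q^\perp$ (computed in $\mathcal{P}(A'')$) are simply the operator products $pq$, $pq^\perp$, etc., and these lie in $\overline{\mathcal{P}(A'')}^\circ$ because a product of commuting topologically regular open projections is again topologically regular and open (the relevant infimum in $\overline{\mathcal{P}(A'')}^\circ$ agrees with $pq$ by \cite{Akemann1969} Theorem II.7, as noted in the paragraph before \autoref{commutativityimplication}). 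Third step: translating to $\overline{\mathcal{P}(A'')}^\circ$, this says $q = (q\wedge_\circ p)\vee_\circ(q\wedge_\circ p^{\perp_\circ})$ where the meets agree with $pq$ and $p^{\perp_\circ}q$ — here I would need the join in $\overline{\mathcal{P}(A'')}^\circ$ of the two orthogonal commuting pieces to recover $q$, which follows since $pq + (1-p)q = q$ and both summands are topologically regular open, so their supremum in $\overline{\mathcal{P}(A'')}^\circ$ is at least $q$ and at most $q$. By \autoref{centralequiv}\eqref{centralequiv1} applied within $\overline{\mathcal{P}(A'')}^\circ$ — or rather its local analog, since we only claim $p,q$ commute, not that $p$ is central — actually one wants the ortholattice commutation criterion directly: having $q=(q\wedge p)\vee(q\wedge p^\perp)$ together with the symmetric identity with $p,q$ swapped, plus orthomodularity of $\overline{\mathcal{P}(A'')}^\circ$, yields commutation of $p$ and $q$ in $\overline{\mathcal{P}(A'')}^\circ$ by \cite{Kalmbach1983} \S3 Lemma 3 and Proposition 8, as cited after \eqref{com2}.

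The main obstacle I expect is the inheritance step — verifying rigorously that $r\in\{s\}'$ implies $\overline r, r^\circ\in\{s\}'$ when $s$ is itself one of $q,\overline q,q^\perp,\overline q^\perp$. The subtlety is that $r^\circ = \sup(rAr\cap A)^1_+$ is a strong limit of a net in $rAr\cap A$, and one must check $s$ commutes with every element of $rAr$: this holds because $s\in\{r\}'$ implies $s\,rar\,s^* = r\,(sas^*)\,r$... more carefully, $s$ commuting with the projection $r$ means $s$ commutes with $r A r$ elementwise only if $s$ also commutes with $A$, which it need not. The correct argument is instead that $s\in\{r\}'$ forces $s$ to commute with $r^\circ$ directly because $r^\circ$ is characterized order-theoretically (largest open projection below $r$) and $s r^\circ s^*$ is again an open projection below $s r s^* = r$ when $s$ is a projection, hence $\leq r^\circ$; running this with $s$ and $s^\perp$, or with $\overline s$ etc., and using that all of $q,\overline q, q^\perp, \overline q^\perp$ commute with each other (since $q$ is topologically regular), pins down $r^\circ \in \{s\}'$. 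Handling this bookkeeping — tracking which of the eight generated projections commute with which, and confirming the orbit is closed under the needed operations — is where the real care is required; everything after $pq=qp$ in $\mathcal{P}(A'')$ is then routine given \cite{Akemann1969} Theorem II.7 and the orthomodularity hypothesis.
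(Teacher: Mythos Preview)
Your proposal has a genuine gap at the ``inheritance'' step, and the attempted repair does not work.

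The claim that operator commutation of $r$ with $s$ passes to $\overline{r}$ and $r^\circ$ is false. The paper's own \autoref{commuteclosure1} is a direct counterexample: there $p<q$ in $\overline{\mathcal{P}(A'')}^\circ$, so $pq=qp$, yet $\overline{p}q\neq q\overline{p}$. Your backup argument, that $sr^\circ s^*$ is an open projection below $r$ and hence $\leq r^\circ$, is circular: for a projection $s$ the element $sr^\circ s$ is a projection only when $s$ and $r^\circ$ already commute, which is what you are trying to prove. So you never reach $pq=qp$ from the general hypothesis.

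There is a second gap downstream. Even granting $pq=qp$, your ``third step'' conflates the operator complement $p^\perp$ with the orthocomplement in $\overline{\mathcal{P}(A'')}^\circ$, which is $p^{\perp\circ}=\overline{p}^\perp$. The identity $pq+(1-p)q=q$ uses $p^\perp$, but $p^\perp q$ need not lie in $\overline{\mathcal{P}(A'')}^\circ$ (it is the product of a closed and an open projection), and there is no reason $\overline{p}^\perp$ should commute with $q$ as an operator just because $p$ does --- again \autoref{commuteclosure1}. So the claim that $q\wedge_\circ p^{\perp_\circ}$ equals an operator product, and that these two pieces rejoin to $q$, is unsupported.

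The paper handles both points differently. For the reduction it does not use any inheritance of operator commutation; instead it observes that the hypothesis set $\{p,\overline{p},p^\perp,\overline{p}^\perp\}$ is closed under the operator complement and is sent to itself by the substitution $p\mapsto p^{\perp_\circ}=\overline{p}^\perp$, while the conclusion is invariant under the same substitution because $\overline{\mathcal{P}(A'')}^\circ$ is assumed orthomodular. Combining these two symmetries one may assume $r=p$ and $s=q$. After that, rather than trying to decompose $q$ via $p^\perp$, the paper uses orthomodularity to produce $r\in\overline{\mathcal{P}(A'')}^\circ$ with $r\leq p$, $pqr=0$ and $p=pq\vee_\circ r$, and then checks $qr=pqr+p^\perp qr=0+qp^\perp r=0$ using only $pq=qp$; this forces $r\leq p\wedge_\circ q^{\perp_\circ}$ and hence $p=(p\wedge_\circ q)\vee_\circ(p\wedge_\circ q^{\perp_\circ})$.
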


\begin{proof}
For any projections $p$ and $q$ in a C*-algebra, $pq=qp\Leftrightarrow pq^\perp=q^\perp p$, and any $p$ and $q$ in an orthomodular lattice commute if and only if $p$ and $q^\perp$ commute.  Thus, without loss of generality, we may assume that $pq=qp$ and hence $pq=p\wedge_{\overline{\mathcal{P}(A'')}^\circ}q$, by the comments above.  As $\overline{\mathcal{P}(A'')}^\circ$ is orthomodular, we have $r\in\overline{\mathcal{P}(A'')}^\circ$ with $pqr=0$, $r\leq p$ and \[p=pq\vee_{\overline{\mathcal{P}(A'')}^\circ}r=\overline{pq\vee r}^\circ.\]  But then $p^\perp qr=qp^\perp r=0$ and hence $qr=0$.  So $r\leq p\wedge_{\overline{\mathcal{P}(A'')}^\circ}q^{\perp_{\overline{\mathcal{P}(A'')}^\circ}}$ and hence \[p=pq\vee_{\overline{\mathcal{P}(A'')}^\circ}r\leq(p\wedge_{\overline{\mathcal{P}(A'')}^\circ}q)\vee_{\overline{\mathcal{P}(A'')}^\circ}(p\wedge_{\overline{\mathcal{P}(A'')}^\circ}q^{\perp_{\overline{\mathcal{P}(A'')}^\circ}})\leq p.\]
As $\overline{\mathcal{P}(A'')}^\circ$ is orthomodular, we are done, by the comments after \autoref{centredef}.
\end{proof}

In fact, the following result shows that orthomodularity itself is only an issue when $q<p$ and $p\overline{q}\neq\overline{q}p$ (which is indeed possible, by \autoref{commuteclosure1}).  Combining this argument, the proof of \autoref{commutativityimplication} and \cite{Kalmbach1983} \S3 Lemma 3, we get that, even without orthomodularity, if $p,q\in\overline{\mathcal{P}(A'')}^\circ$ then
\[\forall r,s\in\{p,\overline{p}^\perp,q,\overline{q}^\perp\}(rs=sr\textrm{ and }r\overline{rs}=\overline{rs}r)\quad\Rightarrow\quad p\textrm{ and }q\textrm{ commute in }\overline{\mathcal{P}(A'')}^\circ.\]

\begin{prp}
If $p,q\in\overline{\mathcal{P}(A'')}^\circ$, $q<p$ and $p\overline{q}=\overline{q}p$ then $p=q\vee_{\overline{\mathcal{P}(A'')}^\circ}\overline{q}^\perp p$.
\end{prp}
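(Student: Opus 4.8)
\emph{Plan.} Write $\mathbb{P}=\overline{\mathcal{P}(A'')}^\circ$ for brevity. The plan is to work inside the complete lattice $\mathbb{P}$ and reduce the claim to an elementary computation with the closure operation; the hypothesis $p\overline{q}=\overline{q}p$ will be used only to put $\overline{q}$, $\overline{q}^\perp$ and $p$ into a common distributive sublattice of $\mathcal{P}(A'')$. First I would handle the inequality $q\vee_{\mathbb{P}}\overline{q}^\perp p\leq p$. Here $\overline{q}^\perp=q^{\perp\circ}$ is topologically regular open, being the interior of the closed projection $q^\perp$ (see \eqref{intclosedtopreg}), and it commutes with $p$; so by the consequence of \cite{Akemann1969} Theorem II.7 quoted after \autoref{annproj} we have $\overline{q}^\perp\wedge_{\mathbb{P}}p=\overline{q}^\perp p$, and in particular $\overline{q}^\perp p\in\mathbb{P}$ with $\overline{q}^\perp p\leq p$. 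Since also $q\leq p$, the element $p$ is an upper bound in $\mathbb{P}$ of $\{q,\overline{q}^\perp p\}$, whence $w:=q\vee_{\mathbb{P}}\overline{q}^\perp p\leq p$.

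For the reverse inequality I would use that $w\in\mathbb{P}$ is topologically regular, that is $w=\overline{w}^\circ$, so it is enough to prove $p\leq\overline{w}$; then, the interior being order preserving, $p=p^\circ\leq\overline{w}^\circ=w$. Now $\overline{w}$ is a closed projection with $\overline{w}\geq w\geq q$, so $\overline{w}\geq\overline{q}$ because $\overline{q}$ is the smallest closed projection above $q$; and $\overline{w}\geq w\geq\overline{q}^\perp p=\overline{q}^\perp\wedge p$. Hence $\overline{w}\geq\overline{q}\vee(\overline{q}^\perp\wedge p)$. At this point the commutativity hypothesis enters: $\overline{q}$, $\overline{q}^\perp$ and $p$ pairwise commute, so they generate a commutative von Neumann subalgebra of $A''$, inside which the distributive law applies and gives $\overline{q}\vee(\overline{q}^\perp\wedge p)=(\overline{q}\vee\overline{q}^\perp)\wedge(\overline{q}\vee p)=\overline{q}\vee p\geq p$. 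Therefore $\overline{w}\geq p$, and combined with $w\leq p$ this gives $w=p$.

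I do not anticipate a genuine obstacle; the only thing needing care is bookkeeping between the two lattice orders, since joins in $\mathbb{P}$ are not the joins of $\mathcal{P}(A'')$ and the orthocomplement of $q$ that is relevant in $\mathbb{P}$ is $\overline{q}^\perp$, not $q^\perp$. In particular one must be sure that $\overline{q}^\perp p$ actually lies in $\mathbb{P}$ before forming $q\vee_{\mathbb{P}}\overline{q}^\perp p$, which is precisely what \autoref{annproj}, \eqref{intclosedtopreg} and the cited form of \cite{Akemann1969} Theorem II.7 supply. Once those identifications are in place the argument is purely lattice-theoretic, uses no approximate units, and in particular does not invoke orthomodularity of $\mathbb{P}$ (which may fail).
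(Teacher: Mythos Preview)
Your proof is correct and follows essentially the same route as the paper's. Both arguments reduce to showing that the closure of $q\vee\overline{q}^\perp p$ dominates $p$, using that this closure dominates both $\overline{q}$ and $\overline{q}^\perp p$, and then invoking commutativity of $p$ with $\overline{q}$ to conclude $\overline{q}\vee\overline{q}^\perp p\geq p$; the paper's one-line computation $\overline{q}\vee\overline{q}^\perp p\geq\overline{q}p\vee\overline{q}^\perp p=p$ is just a terser version of your distributive-law step, and your proof spells out the bookkeeping (in particular that $\overline{q}^\perp p\in\mathbb{P}$ and the easy inequality $w\leq p$) that the paper leaves implicit.
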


\begin{proof}
Note $\overline{q\vee\overline{q}^\perp p}\geq\overline{q}\vee\overline{q}^\perp p\geq\overline{q}p\vee\overline{q}^\perp p=p$ so $p\leq\overline{q\vee\overline{q}^\perp p}^\circ=q\vee_{\overline{\mathcal{P}(A'')}^\circ}\overline{q}^\perp p$.
\end{proof}

We also note some elementary facts about ideals and their associated projections.

\begin{prp}\label{centralideals}
$p\in\mathcal{P}(A'')^\circ$ is central if and only if $pAp\cap A$ is an ideal.
\end{prp}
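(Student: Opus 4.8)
The plan is to work with the hereditary C*-subalgebra $B=pAp\cap A$ attached to the open projection $p$, for which $p=\sup(B^1_+)=p_B$, and to use the standard fact that a projection lying in $A''$ is central exactly when it commutes with every element of $A$ (this is the algebraic form of centrality matching the order-theoretic one via \autoref{centralequiv} and the identification of order- and algebraic commutativity for projections). The one analytic ingredient, already recorded in \S\ref{NCT}, is that $p$ is the strong-operator limit of an increasing net $(e_\lambda)\subseteq B^1_+$.

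\textbf{Centrality $\Rightarrow$ ideal.} First I would note that if $p$ commutes with $A$ then $pap=p^{2}a=pa=ap$ for all $a\in A$, so $pAp=pA$ and hence $B=pA\cap A$; in particular $pb=b=bp$ for every $b\in B$. Given $a\in A$ and $b\in B$, the computations $p(ab)=(pa)b=(ap)b=a(pb)=ab$ and $p(ba)=(pb)a=ba$ show that $ab$ and $ba$ both lie in $pA\cap A=B$, so $B$ is a two-sided ideal; being a hereditary C*-subalgebra it is norm closed, so it is a closed ideal of $A$.

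\textbf{Ideal $\Rightarrow$ centrality.} Conversely, suppose $B$ is an ideal and fix $a\in A$. For every $b\in B$ we have $ab\in B\subseteq pAp$, hence $p(ab)=ab$, i.e. $(pa-a)b=0$; symmetrically $ba\in B\subseteq pAp$ gives $(ba)p=ba$, i.e. $b(ap-a)=0$. Now let $b$ range over the increasing net $(e_\lambda)\subseteq B^1_+$ with strong limit $p$. Since left multiplication by the fixed operator $pa-a$ is strong-operator continuous, $(pa-a)p=\lim_\lambda (pa-a)e_\lambda=0$, so $pap=ap$; evaluating $e_\lambda(ap-a)=0$ at an arbitrary vector of $H$ and passing to the strong limit gives $p(ap-a)=0$, so $pap=pa$. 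Hence $ap=pap=pa$ for every $a\in A$, so $p$ commutes with $A$; as $p\in A''$, this places $p$ in the centre of $A''$, i.e. $p$ is central.

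The only step that is not routine algebra, and the one I would flag as the main obstacle, is the passage from ``$(pa-a)b=0$ for all $b\in B$'' to ``$(pa-a)p=0$'' (and its mirror image for $ap-a$): one genuinely needs that $p$ is the \emph{strong limit} of elements of $B$, not merely that $p$ dominates $B^1_+$ in $\mathcal{P}(A'')$, together with strong-operator continuity of the relevant one-sided multiplications on bounded sets. Once that is set up, the remainder is bookkeeping with the identities $pb=b=bp$ for $b\in B$ and the defining description $B=pAp\cap A$.
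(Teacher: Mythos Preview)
Your proof is correct and follows essentially the same approach as the paper. The paper's argument for the ideal $\Rightarrow$ central direction is slightly more compressed: it observes that the weak closure of the ideal $B$ is an ideal in $A''$ with $p$ as its unit, so $ap\in\overline{B}^w$ forces $pap=ap$ (and symmetrically $pap=pa$); your approximate-unit computation is exactly what unpacks this statement, so the two arguments are the same in substance.
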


\begin{proof}
If $pAp\cap A$ is an ideal in $A$ then its weak closure is an ideal in $A''$ containing $p$ as its unit.  Thus, for any $a\in A$ we have $ap=pap=pa$ so $p\in A'$.  On the other hand, if $p$ is central then, for any $a\in A$ and $b\in pAp\cap A$, we have $ab=apbp=pabp\in pAp\cap A$ and $ba=pbpa=pbap\in pAp\cap A$.
\end{proof}

\begin{prp}\label{centralprojections}
If $p\in\mathcal{P}(A'')$ is central then so are both $\overline{p}$ and $p^\circ$.
\end{prp}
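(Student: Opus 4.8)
The plan is to reduce the statement for $\overline{p}$ to the statement for $p^\circ$, and to prove the latter via \autoref{centralideals}. First note that if $p$ is central then so is $p^\perp=\mathbf{1}-p$ (the complement of a central projection in the von Neumann algebra $A''$ is central), and $\overline{p}=p^{\perp\circ\perp}$; so once we know that the interior of a central projection is central, applying this to $p^\perp$ and complementing again shows $\overline{p}$ is central. Hence it suffices to prove that $p^\circ$ is central.

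Since $p^\circ$ is open, by \autoref{centralideals} it is enough to show that the hereditary C*-subalgebra $B:=p^\circ Ap^\circ\cap A$ is a two-sided ideal of $A$. The strategy is to identify $B$ explicitly as
\[B=I:=\{a\in A: pa=a=ap\},\]
and $I$ is manifestly a two-sided ideal when $p$ is central: for $a\in I$ and $b\in A$ we have $p(ab)=(pa)b=ab$ and $(ab)p=a(bp)=a(pb)=(ap)b=ab$ (using $p\in A'$), and similarly $ba\in I$. The inclusion $B\subseteq I$ is immediate, since $p^\circ\leq p$ gives $pp^\circ=p^\circ$, so $a=p^\circ ap^\circ$ forces $pa=a=ap$. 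For the reverse inclusion it suffices to check $I_+\subseteq B$, because $I$ is a C*-subalgebra (hence the closed linear span of $I_+$) and $B$ is a linear subspace. Given $a\in I_+$, let $C=\overline{aAa}$ be the hereditary C*-subalgebra generated by $a$ and $q=\sup(C^1_+)$ its open projection. Approximating an arbitrary $c\in C$ in norm by elements $axa$ with $x\in A$ and using $pa=a=ap$ shows $c=pcp$ for all $c\in C$, so $C\subseteq pA''p$ and hence $q\leq p$. As $q$ is open and $q\leq p$, we get $q\leq p^\circ$; and since $a\in C\subseteq qA''q$ we have $a=qaq=p^\circ ap^\circ\in B$. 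Thus $B=I$ is an ideal, so \autoref{centralideals} gives that $p^\circ$ is central, completing the proof.

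The only genuinely non-routine step is the reverse inclusion $I\subseteq B$, and its crux is the observation that the open projection of the hereditary C*-subalgebra generated by an element $a$ with $pa=a=ap$ must sit below $p$ in $\mathcal{P}(A'')$, hence below the \emph{largest} open projection $p^\circ$ under $p$. Once this is established, everything else is routine bookkeeping with the order isomorphism between open projections and hereditary C*-subalgebras, together with the elementary fact that complements of central projections are central.
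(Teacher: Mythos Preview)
Your proof is correct and follows essentially the same approach as the paper: both reduce to \autoref{centralideals} by showing the relevant hereditary C*-subalgebra is an ideal, and both handle the other projection by complementation. The paper's version is slightly more streamlined: it works with $\{a\in A:ap=0\}$, which is immediately an ideal (one line: $ap=0\Rightarrow abp=apb=0=bap$) and whose open projection is directly $p^{\perp\circ}$; your dual version with $I=\{a:pa=a=ap\}$ requires the extra step of pushing $I_+$ under $p^\circ$ via the open projection of $\overline{aAa}$, but the underlying idea is identical.
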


\begin{proof}
If $a\in A$ and $ap=0$ then $abp=apb=0=bap$, for any $b\in A$, so $\{p\}^\perp$ is an ideal.  Thus $p_{\{p\}^\perp}=p^{\perp\circ}$ is central, by \autoref{centralideals}.  As $p$ is central if and only if $p^\perp$ is, we are done.
\end{proof}


In particular, if $I$ is an ideal in $A$ then $p_{I^\perp}=\overline{p}_I^\perp$ is central and hence $I^\perp$ is also an ideal (although this can also be proved by elementary algebraic means).

Of course, in the commutative case, all the concepts in this subsection correspond to their usual topological counterparts.  Specifically, if $A=C_0(X)$ for some locally compact topological space then the atomic representation identifies every element of $C_0(X)$ with the multiplication operator on $l^2(X)$ it defines.  Then $A''$ is the set of all bounded multiplicaiton operators on $l^2(X)$, which can naturally be identified with $l^\infty(X)$ in the same way.  Under this identification, projections are just characteristic functions $\chi_S$ of subsets $S$ of $X$, where $\chi_S$ is open, closed or topologically regular if and only if $S$ is, in the topology of $X$ (and note that the characteristic function of an open (closed) set is lower (upper) semicontinuous, a fact which will be generalized in \S\ref{MVF}).  In particular, any open subset $S$ of $X$ that is open but not (topologically) regular corresponds to an open projection $\chi_S$ that is not topologically regular, which itself corresponds to a hereditary C*-subalgebra that is not an annihilator, contradicting \cite{Arzikulov2013} Lemma 16.1 (with $e=f=\chi_S$).  For example, we could have $X=[-1,1]$ and $S=[-1,0)\cup(0,1]$.

Incidentally, the atomic representation will usually not be the same as the universal representation, even in the commutative case considered in the previous paragraph, illustrating that the universal representation may not always be the best to work with.  Above, we could also consider the subrepresentation on $l^2(Y)$, for some $Y\subseteq X$, which will still be faithful as long as $Y$ is dense in $X$.  This is sometimes nicer in some sense, for example if $A=C^b(\mathbb{N})\cong C(\beta\mathbb{N})$ where $\beta\mathbb{N}$ is the Stone-\v{C}ech compactification of $\mathbb{N}$, we can consider the faithful subrepresentation on $l^2(\mathbb{N})$, which has the advantage that all projections in $A''$ are clopen (as all subsets of $\mathbb{N}$ are).  These facts provide some justification for our choice to work with arbitrary representations.

\subsection{Spectral Projections}\label{specsec}

Some quite useful open and closed projections are the spectral projections of self-adjoint operators corresponding to open and closed intervals of $\mathbb{R}$.  First, we define continuous functions $f_{r,s}$ on $\mathbb{R}$, for $r<s$, like so
\[f_{r,s}(t) =
\begin{dcases}
0 & \text{for } t\in(-\infty,r)\\
\frac{t-r}{s-r} & \text{for }t\in[r,s]\\
1 & \text{for } t\in(s,\infty).
\end{dcases}\]
Also, for future reference, define $f_\delta=f_{\delta/2,\delta}$, for $\delta>0$.  Using the continuous functional calculus and the fact we can take infimums and supremums of monotone nets in $A''_+$, we further define, for any $a\in A_\mathrm{sa}$,
\begin{eqnarray*}
a_{[s,\infty)} &=& \inf_{r<s}f_{r,s}(a),\textrm{ and}\\
a_{(s,\infty)} &=& \sup_{r>s}f_{s,r}(a).\\
\end{eqnarray*}
We can similarly define spectral projections $a_S$ for any open or closed (even Borel) subset $S\subseteq\mathbb{R}$.  As weak/strong limits of elements that commute with $a$, spectral projections also commute with $a$ so $a_{(s,\infty)}a=aa_{(s,\infty)}=a_{(s,\infty)}aa_{(s,\infty)}\in A''_\mathrm{sa}$, and likewise for $a_{[s,\infty)}$.  We also have the following important operator inequalities. \[aa_{(-\infty,s]}\leq sa_{(-\infty,s]}\qquad\textrm{and}\qquad aa_{[s,\infty)}\geq sa_{[s,\infty)}.\]  We also define $[a]=(aa^*)_{(0,\infty)}=$ the projection onto $\overline{\mathcal{R}(a)}$, the norm closure of the range of $a$.  Note $[a^*]^\perp=(a^*a)_{(-\infty,0]}$ is the projection onto $\mathcal{N}(a)$, the kernel of $a$.

In fact, these inequalities (almost) uniquely define the spectral projections.  Specifically, for any $a\in\mathcal{B}(H)_+$, $a_{(-\infty,s]}$ and $a_{(s,\infty)}$ are the unique complementary orthogonal projections in $\mathcal{B}(H)$ such that $\langle av,v\rangle\leq s$, for all unit $v\in\mathcal{R}(a_{(-\infty,s]})$, and $\langle av,v\rangle>s$, for all unit $v\in\mathcal{R}(a_{(s,\infty)})$.  Using this fact we obtain the following result, which will be useful later on.

\begin{prp}\label{[aa*a]}
For any $s,t$ with $0\leq s<t$ and $a\in\mathcal{B}(H)$,
\[(aa^*)_{(s,t]}=[a(a^*a)_{(s,t]}].\]
\end{prp}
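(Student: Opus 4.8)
The plan is to reduce the statement to the intertwining relation $a(a^{*}a)_{(s,t]}=(aa^{*})_{(s,t]}\,a$ and then finish with a short piece of bookkeeping with ranges and kernels.

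First I would establish that intertwining relation. From the trivial identity $a\cdot a^{*}a=aa^{*}\cdot a$, an easy induction gives $a\,p(a^{*}a)=p(aa^{*})\,a$ for every polynomial $p$; approximating continuous functions uniformly by polynomials on a compact interval containing $\sigma(a^{*}a)\cup\sigma(aa^{*})$, we get $a\,g(a^{*}a)=g(aa^{*})\,a$ for every continuous $g$. Now both $(a^{*}a)_{(s,t]}$ and $(aa^{*})_{(s,t]}$ are strong limits of $g_{n}(a^{*}a)$ and $g_{n}(aa^{*})$ respectively, for a common uniformly bounded sequence of continuous functions $g_{n}$ converging pointwise to $\chi_{(s,t]}$ (this is exactly how the spectral projections in this subsection are built out of the $f_{r,s}$). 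Since left and right multiplication by the fixed operator $a$ are strongly continuous, passing to the limit in $a\,g_{n}(a^{*}a)=g_{n}(aa^{*})\,a$ yields $a(a^{*}a)_{(s,t]}=(aa^{*})_{(s,t]}\,a$. Write $q=(a^{*}a)_{(s,t]}$ and $r=(aa^{*})_{(s,t]}$, so this reads $aq=ra$.

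Next, set $b=a(a^{*}a)_{(s,t]}=aq$, so $[a(a^{*}a)_{(s,t]}]=[b]$ by definition. For any operator $b$ one has $[b]=[bb^{*}]$, since $\mathcal{N}(b^{*})=\mathcal{N}(bb^{*})$; and here, using $q=q^{*}=q^{2}$ together with $aq=ra$, $bb^{*}=aq\cdot qa^{*}=aqa^{*}=(ra)a^{*}=r(aa^{*})$. So it suffices to prove $[\,r(aa^{*})\,]=r$. Put $c=aa^{*}$; then $rc=cr$ is self-adjoint, so $\overline{\mathcal{R}(rc)}^{\perp}=\mathcal{N}(rc)$, and $\overline{\mathcal{R}(rc)}\subseteq\mathcal{R}(r)$ is clear from $rcH\subseteq rH=\mathcal{R}(r)$. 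Conversely, if $rcx=0$ then $y:=rx$ satisfies $cy=crx=rcx=0$, so $y\in\mathcal{N}(c)\cap\mathcal{R}(r)$; but $\mathcal{N}(c)=\mathcal{N}(aa^{*})=\mathcal{R}((aa^{*})_{(0,\infty)})^{\perp}$, while $0\le s$ forces $(s,t]\subseteq(0,\infty)$ and hence $\mathcal{R}(r)\subseteq\mathcal{R}((aa^{*})_{(0,\infty)})$, so $y=0$, i.e. $x\in\mathcal{N}(r)$. Thus $\mathcal{N}(rc)\subseteq\mathcal{N}(r)$, whence $\mathcal{R}(r)=\mathcal{N}(r)^{\perp}\subseteq\mathcal{N}(rc)^{\perp}=\overline{\mathcal{R}(rc)}$, giving $[rc]=r$ and finishing the proof.

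I expect the only real obstacle to be the first paragraph: one must be slightly careful that the elementary polynomial identity genuinely survives the two-step (norm-limit, then strong-limit) passage to the spectral projections defining $(a^{*}a)_{(s,t]}$ and $(aa^{*})_{(s,t]}$. Once the intertwining relation is available, everything else is routine, the only substantive point being the hypothesis $0\le s$, which is precisely what keeps $\mathcal{R}(r)$ away from $\mathcal{N}(aa^{*})$.
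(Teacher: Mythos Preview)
Your proof is correct, and it takes a genuinely different route from the paper's.

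The paper does not use the intertwining relation $a\,g(a^{*}a)=g(aa^{*})\,a$ at all. Instead it first observes that $p\mapsto[ap]$ sends orthogonal spectral projections of $a^{*}a$ (for disjoint Borel subsets of $(0,\infty)$) to orthogonal projections, and then proves the one-sided identities $(aa^{*})_{(0,s]}=[a(a^{*}a)_{(0,s]}]$ and $(aa^{*})_{(s,\infty)}=[a(a^{*}a)_{(s,\infty)}]$ by a direct Hilbert-space computation: it checks $\langle a^{*}a(a^{*}a-s)v,v\rangle\le 0$ on $\mathcal{R}((a^{*}a)_{(0,s]})$ and $>0$ on $\mathcal{R}((a^{*}a)_{(s,\infty)})$, then invokes the numerical-range characterization of spectral projections stated just before the proposition. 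The half-open interval $(s,t]$ is recovered by combining the two one-sided results via the orthogonality-preservation step.

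Your argument is more streamlined: once you have $aq=ra$ from functional calculus, the problem collapses to $[r\,aa^{*}]=r$, which is a clean kernel/range computation using only $0\le s$. What the paper's approach buys is that it avoids any appeal to approximation of $\chi_{(s,t]}$ by continuous functions and the attendant strong-limit bookkeeping, working instead entirely with the inequality characterization of spectral projections already set up in that subsection; what your approach buys is a shorter, more conceptual proof that makes transparent why the hypothesis $0\le s$ is exactly what is needed.
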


\begin{proof}  First note that the map $p\mapsto[ap]$ preserves the orthogonality of spectral projections of $a^*a$.  Specifically, if $S$ and $T$ are disjoint Borel subsets of $\mathbb{R}_+\setminus\{0\}$ then, as $[a^*a(a^*a)_S]\leq(a^*a)_S\perp(a^*a)_T$, we have \[[a(a^*a)_S]\perp[a(a^*a)_T].\]  In particular, this holds for $S=(0,s]$ and $T=(s,\infty]$ and the result will follow if we can show that \[(aa^*)_{(0,s]}=[a(a^*a)_{(0,s]}]\quad\textrm{and}\quad (aa^*)_{(s,\infty]}=[a(a^*a)_{(s,\infty]}].\]

Note that $(aa^*)_{\{0\}}=[a]^\perp$ so, by the comments above, we just need to show $\langle aa^*av,av\rangle\leq s\langle av,av\rangle$, for all $v\in\mathcal{R}((a^*a)_{(0,s]})$, and $\langle aa^*av,av\rangle>s\langle av,av\rangle$, for all non-zero $v\in\mathcal{R}((a^*a)_{(s,\infty]})$.  But we immediately have $\langle a^*a(a^*a-s)v,v\rangle\leq0$, for all $v\in\mathcal{R}((a^*a)_{(0,s]})$, while also $\langle a^*a(a^*a-s)v,v\rangle>0$, for non-zero $v\in\mathcal{R}((a^*a)_{(s,\infty]})$, so we are done.
\end{proof}

\autoref{specann} below indicates why spectral projections are a convenient tool when dealing with annihilators.  It also gives an idea of how plentiful they are.  For example, assume $A$ is infinite dimensional so we have $a\in A_+$ with $\sigma(a)$ infinite.  Then define a sequence $(f_n)$ of continuous functions from $\sigma(a)$ to $[0,1]$ with the sets $(f_n^{-1}(0,1])$ disjoint and non-empty.  These give rise to infinitely many orthogonal annihilators $\{f_n(a)\}^{\perp\perp}$ in $A$.  In fact, if we let $g_N=\sum_{n\in N}2^{-n}f_n$, for $N\subseteq\mathbb{N}$, then we get continuum many annihilators $B_N=\{g_N(a)\}^{\perp\perp}$ that, while no longer orthogonal, are still far apart in the sense that $||p_{B_N}-p_{B_M}||=1$, for all distinct $M,N\subseteq\mathbb{N}$.

First, though, we prove the following simple, but useful, algebraic lemma.

\begin{lem}\label{xab}
For any $x\in A$, $a,b\in A_+$ and $\alpha,\beta,\gamma>0$, \[xa^\alpha b^\beta a^\gamma=0\quad\Leftrightarrow\quad xa^\alpha b=0.\]
\end{lem}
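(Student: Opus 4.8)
The plan is to reduce the whole statement to a single auxiliary fact about cancelling positive powers: for $c\in A_+$, $w\in A$ and $\mu>0$,
\[c^\mu w=0\quad\Longleftrightarrow\quad cw=0,\]
and hence, taking adjoints, $wc^\mu=0\Leftrightarrow wc=0$; in particular $c^\mu w=0\Leftrightarrow c^\nu w=0$ for all $\mu,\nu>0$. Granting this, the two directions are short. For $\Leftarrow$: if $xa^\alpha b=0$, apply the auxiliary fact with $c=b$ on the right (to $w=xa^\alpha$) to get $xa^\alpha b^\beta=0$, then right-multiply by $a^\gamma$. For $\Rightarrow$: from $xa^\alpha b^\beta a^\gamma=0$ pass to adjoints, $a^\gamma b^\beta a^\alpha x^*=0$, and use the auxiliary fact with $c=a$ on the left to replace the exponent $\gamma$ by $\alpha$, giving $a^\alpha b^\beta a^\alpha x^*=0$; left-multiplying by $x$ and splitting $b^\beta=b^{\beta/2}b^{\beta/2}$ yields
\[xa^\alpha b^\beta a^\alpha x^*=(xa^\alpha b^{\beta/2})(xa^\alpha b^{\beta/2})^*=0,\]
so the C*-identity gives $xa^\alpha b^{\beta/2}=0$, and a final application of the auxiliary fact ($c=b$, right) gives $xa^\alpha b=0$.

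So the work is in the auxiliary fact. The implication $cw=0\Rightarrow c^\mu w=0$ is elementary: from $cw=0$ we get $(c^{1/2}w)^*(c^{1/2}w)=w^*cw=0$, so $c^{1/2}w=0$; iterating with $c^{1/2},c^{1/4},\dots$ in place of $c$ gives $c^{2^{-n}}w=0$ for every $n$, whence $c^\mu w=c^{\mu-2^{-n}}(c^{2^{-n}}w)=0$ as soon as $2^{-n}\le\mu$. For the converse $c^\mu w=0\Rightarrow cw=0$ I would recover $c$ from $c^\mu$ by functional calculus: for $\delta>0$ the function $h_\delta(t)=t/(t^\mu+\delta)$ is continuous on $[0,\infty)$ with $h_\delta(0)=0$, so $h_\delta(c)\in A$, and since $h_\delta(t)t^\mu-t=-\delta t/(t^\mu+\delta)$ one has $\|h_\delta(c)c^\mu-c\|=\sup_{t\in\sigma(c)}\delta t/(t^\mu+\delta)\to0$ as $\delta\to0^+$; therefore $cw=\lim_{\delta\to0^+}h_\delta(c)(c^\mu w)=0$.

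The only step carrying real content is this converse half --- cancelling a positive power of $c$ off the left of $w$ --- which genuinely needs $\mu>0$ and is precisely why the lemma is stated with the factor $a^\alpha$ present on both sides (one cannot simply cancel $a^\gamma$ from $a^\gamma b^\beta a^\alpha x^*$). Everything else is routine: the adjoint bookkeeping, the $uu^*$ manoeuvre via the C*-identity, and the elementary norm estimate on $\sigma(c)$.
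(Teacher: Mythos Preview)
Your proof is correct. The main difference from the paper is in how you establish the key implication $c^\mu w=0\Rightarrow cw=0$ of your auxiliary fact. The paper does this purely with the ``halving'' move you already use for the other direction: from $c^\mu w=0$ one has $c^\mu w=(c^{\mu/2})(c^{\mu/2})^*w$, hence $c^{\mu/2}w=0$ by the $ycc^*=0\Rightarrow yc=0$ trick; iterate until the exponent drops below $1$ and then left-multiply by the missing power of $c$. Thus the paper's argument stays entirely inside the $*$-ring structure plus square roots (and the paper explicitly remarks afterwards that the lemma holds in any $*$-ring with proper involution admitting positive square roots). Your route via the approximant $h_\delta(c)=c(c^\mu+\delta)^{-1}$ is clean and short, but it genuinely uses the continuous functional calculus, so it is specific to the C*-setting. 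Otherwise the two proofs share the same skeleton: reduce the outer $a$-exponent to $\alpha$, use $xa^\alpha b^\beta a^\alpha x^*=(xa^\alpha b^{\beta/2})(xa^\alpha b^{\beta/2})^*$ to strip the $b$, and then normalize the $b$-exponent. Your packaging of everything into a single auxiliary ``power-cancellation'' lemma is a nice organizational choice.
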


\begin{proof}
Note that $ycc^*=0\Leftrightarrow yc=0$, for all $y,c\in a$, as $ycc^*=0$ gives $0=ycc^*y^*=(yc)(yc)^*0$ and hence $yc=0$.  Applying this to $y=xa^\alpha b^\beta$ and $c=a^{\gamma/2}$ we see that $xa^\alpha b^\beta a^\gamma=0$ implies $xa^\alpha b^\beta a^{\gamma/2}=0$.  We may continue to halve the last exponent in this way until it gets below $\alpha$, and then multiply on the right by a suitable exponent of $a$ to make it actually equal $\alpha$.  Then applying the note again, this time with $y=x$ and $c=a^\alpha b^{\beta/2}$, we get $xa^\alpha b^{\beta/2}=0$.  By reducing the last exponent again, and increasing it at the end if necessary, we finally get $xa^\alpha b=0$.  The converse is similar.
\end{proof}

This lemma holds even if $A$ is just an arbitrary *-ring with proper involution, as long as you can also take positive square roots.  Note that by iterating it we also get the same result for arbitrarily long sequences of (powers of) $a$'s and $b$'s.  If $x=1$ then we actually get the slightly better result \[a^\alpha b^\beta a^\gamma=0\quad\Leftrightarrow\quad ab=0.\]  However, for arbitrary $x$, we must keep the $\alpha$ exponent (e.g. if $A=M_2\cong\mathcal{B}(\mathbb{C}^2)$, $x$ is the projection onto $\mathbb{C}(2,-1)$, $a=\begin{bmatrix} 1 & 0 \\ 0 & 2 \end{bmatrix}$ and $b$ is the projection onto $\mathbb{C}(1,1)$ then $xa^\alpha b=0$ if and only if $\alpha=1$).

\begin{prp}\label{prp1}
If $a\in A^1_+$, $S\subseteq A_+$ and $as=s$, for all $s\in S$, then $ab=b$, for all $b\in S^{\perp\perp}$.
\end{prp}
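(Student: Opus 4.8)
The plan is to trap the annihilator $S^{\perp\perp}$ inside a concrete annihilator on which $a$ acts as the identity, using the dictionary between annihilators and topologically regular open projections from \autoref{annproj}. The key object will be a closed spectral projection of $a$ associated to the eigenvalue $1$.

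First I would form $q:=\inf_n a^n$ in $A''$; this is legitimate because $\|a\|\le1$ forces $0\le a\le1$ and hence $a^{n+1}\le a^n$. One checks in the standard way that $q$ is a projection (it is the spectral projection $a_{\{1\}}=a_{[1,\infty)}$), and it is \emph{closed}: its complement $q^\perp=\sup_n(1-a^n)$ is the supremum of an increasing sequence in $\widetilde A^1_+$, so $q^\perp$ is open by the characterisation of open projections in \S\ref{NCT}. Moreover $aq=\lim_n a^{n+1}=q=qa$. Now the hypothesis $as=s$ gives $a^ns=s$ for every $n\ge1$ by an immediate induction, so $qs=\lim_n a^ns=s$ and, taking adjoints, $sq=s$; thus $s=qsq$ for each $s\in S$.

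The heart of the argument is to recognise $B:=qAq\cap A$ as an honest annihilator. By the very definition of the interior, $B$ is the hereditary C*-subalgebra of $A$ with open projection $q^\circ$, so $B=q^\circ Aq^\circ\cap A$; and since $q$ is closed, $q^\circ$ is topologically regular by \eqref{intclosedtopreg}, so $B\in[A]^\perp$ by \autoref{annproj}. Each $s\in S$ lies in $B$ since $s=qsq\in qAq\cap A$, hence $S\subseteq B$ and therefore $S^{\perp\perp}\subseteq B^{\perp\perp}=B$, the last equality because ${}^\perp$ is involutive on $[A]^\perp$. Finally, for $b\in S^{\perp\perp}\subseteq B$ we have $b=q^\circ bq^\circ$, so $qbq=b$ (as $q^\circ\le q$), and then $ab=a(qbq)=(aq)bq=qbq=b$, as required.

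The main obstacle is exactly the distinction the paper keeps emphasising: one cannot prove this by writing an element of $S^{\perp\perp}$ as a norm limit of ``generators'' built from $S$ and extending $ab=b$ multiplicatively, because $S^{\perp\perp}$ is the \emph{annihilator} generated by $S$, which is in general strictly larger than the hereditary C*-subalgebra it generates. The whole content is to produce an annihilator $B\supseteq S$ with $a|_B=\mathrm{id}$, and the single subtle input making $B=qAq\cap A$ an annihilator rather than merely a hereditary C*-subalgebra is that $q$ is closed, so its interior is topologically regular. The remaining ingredients — the monotone spectral calculus for $a$ and the elementary identities $aq=q$, $qsq=s$ — are routine.
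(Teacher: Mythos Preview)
Your argument is correct, but it takes a genuinely different route from the paper's.  The paper gives a short, purely algebraic proof: for any $b\in A$ one has $(1-a)b(1-a)\in S^\perp$ (since $(1-a)s=0$ for $s\in S_+$), so if $b\in S^{\perp\perp}_+$ then $(1-a)b(1-a)b=0$, and \autoref{xab} then forces $(1-a)b=0$.  No representation of $A$, no spectral projections and none of the non-commutative topology of \S\ref{NCT} are needed---just the *-ring identity in \autoref{xab}.

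Your approach instead identifies the ``fixed-point set'' of $a$ with the concrete annihilator $B=qAq\cap A$ via the closed spectral projection $q=a_{\{1\}}$ and the dictionary of \autoref{annproj}, and then uses that $S^{\perp\perp}\subseteq B^{\perp\perp}=B$.  This is more machinery-heavy (it presupposes the faithful representation and the $\widetilde A^1_+$ characterisation of open projections, which the paper flags as ``surprisingly non-trivial'' in the non-unital case), but it has the advantage of making explicit exactly \emph{which} annihilator is the largest one on which $a$ acts as the identity---namely $q^\circ Aq^\circ\cap A$.  The paper's proof, by contrast, never names this annihilator; it just shows directly that $1-a$ kills $S^{\perp\perp}$.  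Both are fine; the paper's buys brevity and independence from \S\ref{NCT}, yours buys a clearer geometric picture.
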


\begin{proof}
If we had $1\in A$ then, as $as=s\Leftrightarrow(1-a)s=0$, it would immediately follow that $1-a\in S^\perp$ and hence $(1-a)b=0$, for all $b\in S^{\perp\perp}$.  Even if $1\notin A$, we still have $(1-a)b(1-a)\in S^\perp$, for any $b\in A$.  Thus, if $b\in S^{\perp\perp}_+$, then $(1-a)b(1-a)b=0$ and hence $b(1-a)=0=(1-a)b$, by \autoref{xab}, i.e. $ab=b$.  As any C*-algebra is generated by its positive elements, this completes the proof.
\end{proof}

One immediate consequence of \autoref{prp1} is that, even if $[B]_B$ is strictly contained in $[B]$, it is still order-dense in $[B]$ (see \eqref{densedef}).  In fact, for all $b\in B_+$, we can find $D\in[B]_B$ contained $\overline{bAb}$, the hereditary C*-subalgebra generated by $b$.  Just let $D=\{f_{2\delta}(b)\}^{\perp_B\perp_B}$, for $\delta<||b||$, and note that, as $f_\delta(b)f_{2\delta}(b)=f_{2\delta}(b)$, we have $d=f_\delta(b)d=f_\delta(b)df_\delta(b)\in\overline{bAb}$, for all $d\in D_+$.

\begin{prp}\label{specann}
If $a\in A^1_+$ and $f(a)\in A$, for some $f:[0,1]\rightarrow[0,1]$, then \[\{f(a)\}^{\perp\perp}\subseteq a_{\overline{f^{-1}(0,1]}}Aa_{\overline{f^{-1}(0,1]}}.\]
\end{prp}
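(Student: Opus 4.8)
The plan is to push everything through \autoref{annproj}: since $\{f(a)\}^{\perp\perp}$ is an annihilator and $a_{\overline{f^{-1}(0,1]}}$ will be seen to be a closed projection, it suffices to show that the topologically regular open projection $p:=p_{\{f(a)\}^{\perp\perp}}$ satisfies $p\leq a_{\overline{f^{-1}(0,1]}}$. To compute $p$, write $[f(a)]=(f(a))_{(0,\infty)}$ for the projection onto $\overline{\mathcal R(f(a))}$, and observe that a positive $s\in A^1$ lies in $\{f(a)\}^\perp$ exactly when $f(a)s=0$, i.e.\ when $[f(a)]s=0$; hence $p_{\{f(a)\}^\perp}=\sup\{s\in A^1_+:\mathcal R(s)\subseteq[f(a)]^\perp H\}$ is the largest open projection below $[f(a)]^\perp$, that is $p_{\{f(a)\}^\perp}=[f(a)]^{\perp\circ}$. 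Applying \eqref{perpseq} to the annihilator $\{f(a)\}^\perp$,
\[p=p_{\{f(a)\}^{\perp\perp}}=\bigl(p_{\{f(a)\}^\perp}\bigr)^{\perp\circ}=[f(a)]^{\perp\circ\perp\circ}=\overline{[f(a)]}^{\,\circ},\]
using the definition $\overline{q}=q^{\perp\circ\perp}$.

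Next I would locate $[f(a)]$ within the spectral calculus of $a$: since $f$ maps $[0,1]$ into $[0,1]$, composing the functional calculus of $f(a)$ with that of $a$ gives $[f(a)]=(f(a))_{(0,\infty)}=a_{f^{-1}((0,\infty))}=a_{f^{-1}(0,1]}$. Because $f^{-1}(0,1]\subseteq\overline{f^{-1}(0,1]}$, monotonicity of spectral projections yields $[f(a)]\leq a_{\overline{f^{-1}(0,1]}}$, and $a_{\overline{f^{-1}(0,1]}}$ is a closed projection (being the spectral projection of a closed set), so by the definition of the closure $\overline{[f(a)]}\leq a_{\overline{f^{-1}(0,1]}}$, hence $p=\overline{[f(a)]}^{\,\circ}\leq a_{\overline{f^{-1}(0,1]}}$.

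Finally, put $q=a_{\overline{f^{-1}(0,1]}}$. By \autoref{annproj}, $\{f(a)\}^{\perp\perp}=pAp\cap A$, so each $b\in\{f(a)\}^{\perp\perp}$ satisfies $b=pbp$, and then $p\leq q$ forces $qbq=q(pbp)q=pbp=b$; thus $\{f(a)\}^{\perp\perp}\subseteq qAq=a_{\overline{f^{-1}(0,1]}}Aa_{\overline{f^{-1}(0,1]}}$. The only delicate point is the identification of $p$: one must not take $p=[f(a)]$ (the open projection of the hereditary C*-subalgebra $\overline{f(a)Af(a)}$ generated by $f(a)$), since $[f(a)]$ need not be topologically regular; the correct projection is its regularization $\overline{[f(a)]}^{\,\circ}$, which is exactly the annihilator-versus-hereditary-subalgebra distinction stressed in the paper. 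Everything else is monotonicity of the functional calculus. Alternatively one can avoid \autoref{annproj} and show directly that $a_{[0,1]\setminus\overline{f^{-1}(0,1]}}b=0$ for all $b\in\{f(a)\}^{\perp\perp}_+$ by approximating that open spectral projection by elements $g(a)\in A\cap\{f(a)\}^\perp$ with $g$ supported off $\overline{f^{-1}(0,1]}$; but when $A$ is non-unital this requires a separate treatment of the spectral mass of $a$ at $0$, handled via \autoref{prp1} applied to $f_{\delta/2,\delta}(a)$, so the route above is cleaner.
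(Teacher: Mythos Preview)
Your proof is correct and takes a genuinely different route from the paper's. You go through the projection correspondence of \autoref{annproj}: compute $p_{\{f(a)\}^{\perp\perp}}=\overline{[f(a)]}^{\,\circ}$, identify $[f(a)]=a_{f^{-1}(0,1]}$ via the Borel calculus identity $(\chi_{(0,\infty)}\circ f)(a)=\chi_{f^{-1}(0,\infty)}(a)$, observe that $a_{\overline{f^{-1}(0,1]}}$ is closed (its complement $a_{[0,1]\setminus\overline{f^{-1}(0,1]}}$ is an open spectral projection, being a sup of elements of $\widetilde A^1_+$), and conclude by monotonicity of closure. The paper instead argues directly inside $A$: it builds a single continuous $g$ on $[0,1]$ with $g(a)\in A$ and either $g(a)\in\{f(a)\}^\perp$ with $g^{-1}\{0\}=\overline{f^{-1}(0,1]}$, or $g(a)f(a)=f(a)$ with $g^{-1}\{1\}=\overline{f^{-1}(0,1]}$ (invoking \autoref{prp1} in the latter case), splitting into two cases according to whether $0$ lies in $\overline{[0,1]\setminus f^{-1}(0,1]}$ so that $g(0)=0$ can be arranged. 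Your approach is more conceptual and avoids the unital/non-unital case split entirely, at the cost of invoking \autoref{annproj} and the Borel composition rule; the paper's argument is more self-contained, staying with continuous functional calculus and the bare definition of $\perp$, which is exactly the ``alternative'' you sketch in your final sentence. Both are fine; your remark about the necessity of regularizing $[f(a)]$ is apt and matches the paper's emphasis.
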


\begin{proof}
If $0\in\overline{\mathbb{R}\backslash f^{-1}(0,1]}$, take continuous $g$ on $[0,1]$ with $g^{-1}\{0\}=\overline{\mathbb{R}\backslash f^{-1}(0,1]}$, and hence $g(0)=0$ so $g(a)\in A$.  Also, $g(a)\in\{f(a)\}^\perp$, so \[\{f(a)\}^{\perp\perp}\subseteq\{g(a)\}^\perp\subseteq g(a)_{\{0\}}Ag(a)_{\{0\}}=a_{\overline{f^{-1}(0,1]}}Aa_{\overline{f^{-1}(0,1]}}.\]  On the other hand, if $0\notin\overline{\mathbb{R}\backslash f^{-1}\{0\}}$ then take continuous $g$ on $[0,1]$ with $g^{-1}\{1\}=\overline{\mathbb{R}\backslash f^{-1}\{0\}}$ and $g(0)=0$, so $g(a)\in A$ and $g(a)b=b$.  By \autoref{prp1}, $g(a)c=c$, for all $c\in\{b\}^{\perp\perp}$, and hence $\{b\}^{\perp\perp}\subseteq g(a)_{\{1\}}Ag(a)_{\{1\}}=a_{\overline{f^{-1}(0,1]}}Aa_{\overline{f^{-1}(0,1]}}$.
\end{proof}

\subsection{Projection Properties}\label{psec}

We now point out some basic properties of projections, useful in their own right, but also good to keep in mind as results that might admit generalization to the annihilators in some way.  Indeed, \S\ref{SvsO} is devoted to proving some of these generalizations.

The projections in any C*-algebra are orthomodular (if $A$ is not unital then $q^\perp$ is not well-defined, but we can still interpret $p\wedge q^\perp$ as denoting the largest projection below $q$ that annihilates $p$, when such a projection exists).  In fact, we have the following order theoretic characterizations of commutatitivity, of which \eqref{pq=qp1}$\Rightarrow$\eqref{pq=qp2} implies orthomodularity (we should mention that the projections in a C*-algebra do not always form a lattice, so \eqref{pq=qp3} and \eqref{pq=qp2} should be interpretted as saying the given supremums and infimums actually exist \emph{and} satisfy the given identity).

\begin{prp}\label{pq=qp}
For $p,q\in\mathcal{P}(A)$, the following are equivalent.
\begin{eqnarray}
pq &=& qp.\label{pq=qp1}\\
p\wedge q &=& p\wedge(p^\perp\vee q).\label{pq=qp3}\\
p &=& (p\wedge q)\vee(p\wedge q^\perp).\label{pq=qp2}
\end{eqnarray}
\end{prp}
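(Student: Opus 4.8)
The plan is to establish $\eqref{pq=qp1}\Rightarrow\eqref{pq=qp3}$ and $\eqref{pq=qp1}\Rightarrow\eqref{pq=qp2}$ directly from the calculus of commuting projections, and then to close the cycle with $\eqref{pq=qp3}\Rightarrow\eqref{pq=qp1}$ and $\eqref{pq=qp2}\Rightarrow\eqref{pq=qp1}$, both of which run on the same idea: each of \eqref{pq=qp3}, \eqref{pq=qp2} forces $p$ to split as a sum of two orthogonal subprojections, one lying below $q$ and one annihilating $q$, whence $pq=qp$ is immediate. Throughout I would assume $A$ unital, passing to $\widetilde A$ if necessary so that $q^\perp$ is literal (this affects none of the three conditions), and I would treat \eqref{pq=qp3} and \eqref{pq=qp2} as hypotheses that the displayed joins and meets \emph{exist} in $\mathcal{P}(A)$ --- the only property of an existing meet $p\wedge t$ I will need is that it dominates every projection of $A$ lying below both $p$ and $t$.

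For $\eqref{pq=qp1}\Rightarrow\eqref{pq=qp3},\eqref{pq=qp2}$: if $pq=qp$ then $pq$ and $p-pq=pq^\perp$ are projections, and the one-line argument ``$e\le p,q\Rightarrow e=epq\le pq$'' identifies them as $p\wedge q$ and $p\wedge q^\perp$; being orthogonal with sum $p$, this gives \eqref{pq=qp3}. Moreover $p$ commutes with $p^\perp$ and with $q$, hence with $p^\perp\vee q=p^\perp+q-p^\perp q$, so $p\wedge(p^\perp\vee q)=p(p^\perp\vee q)=pq$, which together with $pq=p\wedge q$ is \eqref{pq=qp2}.

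For $\eqref{pq=qp3}\Rightarrow\eqref{pq=qp1}$: with $r=p\wedge q$ and $s=p\wedge q^\perp$ we have $rq=r$ and $qs=0$, so $rs=(rq)s=r(qs)=0$; hence $r\vee s=r+s$, and \eqref{pq=qp3} reads $p=r+s$, giving $pq=rq+sq=r=qr+qs=qp$. For $\eqref{pq=qp2}\Rightarrow\eqref{pq=qp1}$: set $r=p\wedge q$ and $t=p^\perp\vee q$, so \eqref{pq=qp2} says $p\wedge t=r$; then $t\ge p^\perp$ gives $t^\perp\le p$ and $t\ge q$ gives $qt^\perp=0$ (whence $rt^\perp=rqt^\perp=0$). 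Thus $r+t^\perp$ is a projection with $r+t^\perp\le p$, and $e:=p-(r+t^\perp)$ is a projection with $e\le p$, $er=0$ and $et^\perp=0$; the last relation forces $e\le t$, so $e\le p\wedge t=r$, and combined with $er=0$ this gives $e=0$. Hence $p=r+t^\perp$ and $pq=rq+t^\perp q=r=qr+qt^\perp=qp$.

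I do not expect a serious obstacle: the mathematics is short. The one thing that needs care --- and which I would regard as the main point to get right --- is the bookkeeping forced by the fact that $\mathcal{P}(A)$ need not be a lattice: each step must be phrased so that it invokes only the defining property of a meet or join that the hypothesis guarantees to exist, and the non-unital case must be routed cleanly through $\widetilde A$.
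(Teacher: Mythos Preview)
Your mathematics is correct and runs on the same elementary projection calculus as the paper's proof; the paper differs only in routing $\eqref{pq=qp3}\Rightarrow\eqref{pq=qp2}\Rightarrow\eqref{pq=qp1}$ rather than proving $\eqref{pq=qp3}\Rightarrow\eqref{pq=qp1}$ directly as you do.

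There is, however, a systematic labelling slip you should fix: you have interchanged the references \eqref{pq=qp2} and \eqref{pq=qp3} throughout. In the displayed statement the \emph{second} equation, $p\wedge q = p\wedge(p^\perp\vee q)$, carries the label \eqref{pq=qp3}, while the \emph{third}, $p = (p\wedge q)\vee(p\wedge q^\perp)$, carries the label \eqref{pq=qp2}. So when you write ``being orthogonal with sum $p$, this gives \eqref{pq=qp3}'' you actually mean \eqref{pq=qp2}; when you write ``$p\wedge(p^\perp\vee q)=pq$, which together with $pq=p\wedge q$ is \eqref{pq=qp2}'' you mean \eqref{pq=qp3}; your paragraph headed ``$\eqref{pq=qp3}\Rightarrow\eqref{pq=qp1}$'' (using $r=p\wedge q$, $s=p\wedge q^\perp$ and ``\eqref{pq=qp3} reads $p=r+s$'') is really $\eqref{pq=qp2}\Rightarrow\eqref{pq=qp1}$; and your paragraph headed ``$\eqref{pq=qp2}\Rightarrow\eqref{pq=qp1}$'' (using $t=p^\perp\vee q$ and ``\eqref{pq=qp2} says $p\wedge t=r$'') is really $\eqref{pq=qp3}\Rightarrow\eqref{pq=qp1}$. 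Once the labels are swapped back, the proof reads correctly.
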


\begin{proof}\
\begin{itemize}
\item[\eqref{pq=qp1}$\Rightarrow$\eqref{pq=qp2}] Note that $p\wedge q=pq$ when \eqref{pq=qp1} holds so $p=pq+pq^\perp=(p\wedge q)\vee(p\wedge q^\perp)$.
\item[\eqref{pq=qp2}$\Rightarrow$\eqref{pq=qp1}] As $p\wedge q\leq q$, $p\wedge q$ commutes with $q$.  Likewise, $p\wedge q^\perp$ commutes with $q^\perp$ and hence $q$ so, if \eqref{pq=qp2} holds, $p=(p\wedge q)+(p\wedge q^\perp)$ commutes with $q$ too. 
\item[\eqref{pq=qp1}$\Rightarrow$\eqref{pq=qp3}] By \eqref{pq=qp1}, $p\wedge(p^\perp\vee q)=p\wedge(p\wedge q^\perp)^\perp=p(pq^\perp)^\perp=p-pq^\perp=pq=p\wedge q$.
\item[\eqref{pq=qp3}$\Rightarrow$\eqref{pq=qp2}] As $p\wedge q^\perp\leq p$, we have $p\wedge(p^\perp\vee q)=p\wedge(p\wedge q^\perp)^\perp=p-p\wedge q^\perp$ which, if \eqref{pq=qp3} holds, means $p=p\wedge q+p\wedge q^\perp=(p\wedge q)\vee(p\wedge q^\perp)$.
\end{itemize}
\end{proof}

So, by \autoref{pq=qp}, if $A$ is a commutative C*-algebra then every projection is central in $\mathcal{P}(A)$ (see \autoref{centredef}) and hence $\mathcal{P}(A)$ is a Boolean algebra (see the comments before \autoref{cenuniqcomp}).  Conversely, if $A$ is non-commutative C*-algebra that is generated by its projections then $pq\neq qp$ for some $p,q\in\mathcal{P}(A)$ and hence $\mathcal{P}(A)$ is not a Boolean algebra, again by \autoref{pq=qp}.  In particular, if $A$ is a von Neumann (or AW*-) algebra and $p\in\mathcal{P}(A)$ then $[p](=[p]_p$, as $\mathcal{P}(A)$ is orthomodular) is a Boolean algebra precisely when $pAp$ is commutative, i.e. precisely when $p$ is an abelian projection.  Thus if $p_\mathrm{I}$ is obtained as in \eqref{pI} (with $\mathbb{P}=\mathcal{P}(A)$) then $p_\mathrm{I}A$ is indeed the type I part of $A$, in the classical von Neumann algebra type decomposition.

Roughly speaking, the following result says that, for a projection $p$, being far from $q^\perp$ (as in (\ref{pnearq1})) is equivalent to being close to a subprojection of $q$ (as in (\ref{pnearq2}) and (\ref{pnearq3})), where $\lambda$ here quantifies this distance.

\begin{prp}\label{pnearq}
For $p,q\in\mathcal{P}(A)$ and $\lambda\in[0,1]$, the following are equivalent.
\begin{enumerate}
\item\label{pnearq1} $||pq^\perp||^2\leq\lambda$.
\item\label{pnearq2} $pqp\geq(1-\lambda)p$.
\item\label{pnearq3} There exists $r\in\mathcal{P}(A)$ with $r\leq q$ and $||r-p||^2\leq\lambda$.
\end{enumerate}
\end{prp}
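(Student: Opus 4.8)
The plan is to prove $(1)\Leftrightarrow(2)$ by a direct computation, the implication $(3)\Rightarrow(1)$ in one line, and $(2)\Rightarrow(3)$ via the polar decomposition of $qp$. For $(1)\Leftrightarrow(2)$: since $q^\perp$ is a projection, $\|pq^\perp\|^2=\|(pq^\perp)(pq^\perp)^*\|=\|pq^\perp p\|=\|p-pqp\|$ (reading $q^\perp=1-q$ in $\widetilde A$ when $A$ is non-unital; the element $p-pqp=p(1-q)p$ still lies in $A$). As $p-pqp$ is a positive element of the corner $pAp$, whose unit is $p$, we have $\|p-pqp\|\le\lambda\iff p-pqp\le\lambda p\iff pqp\ge(1-\lambda)p$. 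For $(3)\Rightarrow(1)$: if $r\le q$ then $rq^\perp=0$, so $pq^\perp=(p-r)q^\perp$ and hence $\|pq^\perp\|\le\|p-r\|\le\sqrt{\lambda}$.

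The substance is $(2)\Rightarrow(3)$. If $\lambda=1$ or $p=0$ one takes $r=0$, so assume $0\le\lambda<1$ and $p\ne0$ and set $a=pqp$. Condition (2) says $(1-\lambda)p\le a\le p$, so $p$ is the support projection of $a$ and $\sigma(a)\subseteq\{0\}\cup[1-\lambda,1]$, with $0$ an isolated point of the spectrum if it occurs at all. This isolation lets me form the pseudo-inverse $b=g(a)\in A$, where $g(t)=t^{-1/2}$ on $[1-\lambda,1]$ and $g\equiv0$ near $0$; since such a $g$ extends continuously to $\sigma(a)$ and vanishes at $0$, we get $b\in A$. Put $v=qpb\in A$, the partial isometry in the polar decomposition of $qp$. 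Routine functional-calculus identities give $v^*v=bab=p$ and $vp=v$, so $r:=vv^*=qp\,a^{-1}pq$ is a projection in $A$, and $qr=rq=r$ gives $r\le q$. Thus $r\in\mathcal{P}(A)$ with $r\le q$, and it only remains to bound $\|p-r\|$.

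For the estimate I will use the elementary fact that, for any two projections $p,r\in\mathcal{B}(H)$, the operator $(p-r)^2$ commutes with $p$ and acts as $p-prp$ on $pH$ and as $(1-p)r(1-p)$ on $(1-p)H$, so that $\|p-r\|^2=\max\bigl(\|p-prp\|,\|r-rpr\|\bigr)$. Here $prp=p(qp\,a^{-1}pq)p=a\,a^{-1}a=a=pqp$, so $\|p-prp\|=\|p-pqp\|\le\lambda$ by (2); and $v^*pv=ba^2b=a$ together with $vp=v$ gives $r-rpr=vpv^*-vav^*=v(p-pqp)v^*$, whence $\|r-rpr\|\le\|v\|^2\|p-pqp\|\le\lambda$. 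Therefore $\|p-r\|^2\le\lambda$, completing the cycle of implications.

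The only genuinely delicate point — the expected main obstacle — is that (3) requires $r\in\mathcal{P}(A)$, not merely a projection in the enveloping von Neumann algebra; this is precisely where the quantitative hypothesis (2) is used, as it is the resulting spectral gap in $\sigma(pqp)$ that forces the range projection $r=[qp]$ to lie in $A$. It also explains why the naive choice $r=q$ fails: $q\le q$ trivially, but $\|q-p\|$ need not be small (take $q=1$). Everything else is routine bookkeeping with the polar decomposition and the two-projection norm identity.
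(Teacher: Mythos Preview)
Your proof is correct and follows essentially the same route as the paper: the equivalence $(1)\Leftrightarrow(2)$ and the implication $(3)\Rightarrow(1)$ are handled identically, and for $(2)\Rightarrow(3)$ both arguments take $r=[qp]$, using the spectral gap $\sigma(pqp)\subseteq\{0\}\cup[1-\lambda,1]$ to place $r$ in $A$. The paper simply asserts $r=[qp]\in A$ and $\|r-p\|^2\le\lambda$ without further comment, whereas you explicitly construct $r$ via the polar decomposition $v=qp(pqp)^{-1/2}$ and verify the norm bound by the two-projection identity $\|p-r\|^2=\max(\|p-prp\|,\|r-rpr\|)$; this is a welcome elaboration rather than a different argument.
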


\begin{proof}\
\begin{itemize}
\item[(\ref{pnearq1})$\Leftrightarrow$(\ref{pnearq2})] Just note $||pq^\perp||^2\leq\lambda\quad\Leftrightarrow\quad pq^\perp p\leq\lambda p\quad\Leftrightarrow\quad(1-\lambda)p\leq pqp$.
\item[(\ref{pnearq2})$\Rightarrow$(\ref{pnearq3})] If $\lambda=1$ let $r=q$.  Otherwise $\inf(\sigma(qpq)\backslash\{0\})\geq1-\lambda>0$ so $r=[qp]\in A$ and $||r-p||^2\leq\lambda$.
\item[(\ref{pnearq2})$\Leftarrow$(\ref{pnearq3})] Given such an $r$, simply note that $||pq^\perp||^2\leq||pr^\perp||^2\leq||p-r||^2\leq\lambda$.
\end{itemize}
\end{proof}

Similarly, the following says that a non-zero projection $p$ can not be simultaneously far from both another projection and its complement.  In fact, \eqref{Pythag} is still valid for $p\in A_+$ with $||p||=1$ (although $q$ still has to be a projection).

\begin{prp}
For all $p,q\in\mathcal{P}(A)$ with $p\neq0$,
\begin{equation}\label{Pythag}
||pq||^2+||pq^\perp||^2\geq1.
\end{equation}
Moreover, $||pq||^2+||pq^\perp||^2=1$ if and only if $\sigma_{pAp}(pqp)$ is a singleton.
\end{prp}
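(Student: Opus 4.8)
The plan is to reduce both quantities to the single positive element $pqp$. Since $q=q^*=q^2$, the C*-identity gives
\[
\|pq\|^2=\|(pq)(pq)^*\|=\|pq^2p\|=\|pqp\|
\]
and, interpreting $q^\perp=1-q$, similarly $\|pq^\perp\|^2=\|(pq^\perp)(pq^\perp)^*\|=\|p(1-q)p\|=\|p-pqp\|$. Moreover $0\leq q\leq 1$ forces $0\leq pqp\leq p$. The first thing I would then observe is that, when $p\neq 0$, we have $\|p\|=1$, so the triangle inequality immediately yields the inequality:
\[
\|pq\|^2+\|pq^\perp\|^2=\|pqp\|+\|p-pqp\|\geq\|pqp+(p-pqp)\|=\|p\|=1 .
\]
The same computation handles the remark that \eqref{Pythag} persists for $p\in A_+$ with $\|p\|=1$: there $\|pq\|^2=\|pqp\|$ and $\|pq^\perp\|^2=\|p^2-pqp\|$ still hold (only $q=q^*=q^2$ is used), and $\|pqp\|+\|p^2-pqp\|\geq\|p^2\|=\|p\|^2=1$ by the C*-identity.

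For the equality statement I would pass to the unital C*-subalgebra $B=pAp$, whose unit is $p$ (here $p\in A$, as $p\in\mathcal{P}(A)$), and compute the two norms spectrally there. Because norms agree on C*-subalgebras, $\|pqp\|=\max\sigma_B(pqp)$; since $0\leq pqp\leq p$ we have $\sigma:=\sigma_B(pqp)\subseteq[0,1]$, and applying the spectral mapping theorem to $t\mapsto 1-t$ gives $\sigma_B(p-pqp)=\{1-\lambda:\lambda\in\sigma\}\subseteq[0,1]$, whence $\|p-pqp\|=1-\min\sigma$. Therefore
\[
\|pq\|^2+\|pq^\perp\|^2=\max\sigma+1-\min\sigma=1+\bigl(\max\sigma-\min\sigma\bigr),
\]
and this equals $1$ exactly when $\max\sigma=\min\sigma$, i.e. when the (nonempty, since $B\neq\{0\}$) compact set $\sigma=\sigma_{pAp}(pqp)$ is a singleton.

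I do not anticipate a serious obstacle: the argument is a short spectral computation once one has the two reductions $\|pq\|^2=\|pqp\|$ and $\|pq^\perp\|^2=\|p-pqp\|$. The only points needing care are bookkeeping ones — that $p$, not $1$, is the identity of $B=pAp$, so that $p-pqp$ is literally the identity of $B$ minus $pqp$ and the spectral mapping step applies, and that taking the spectrum relative to $pAp$ rather than $A$ or $\widetilde{A}$ is harmless because the norm (hence also, for positive elements, the spectral radius) of an element is unchanged in any C*-subalgebra containing it.
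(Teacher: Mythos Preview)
Your argument is correct. It differs from the paper's in both halves, though the ideas are close in spirit.

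For the inequality, the paper works on the Hilbert space: picking a nonzero $v\in\mathcal{R}(p)$ and using the Pythagorean identity $\|v\|^2=\|qv\|^2+\|q^\perp v\|^2\leq(\|qp\|^2+\|q^\perp p\|^2)\|v\|^2$. You instead stay inside the C*-algebra, reducing to $\|pqp\|+\|p-pqp\|\geq\|p\|$ via the triangle inequality. Your route is slightly more algebraic (no concrete representation needed) and, as you note, transparently yields the extension to $p\in A_+$ with $\|p\|=1$. For the equality characterization, the paper appeals to the preceding proposition (the equivalence $\|pq^\perp\|^2\leq\lambda\Leftrightarrow pqp\geq(1-\lambda)p$) to force $pqp=\lambda p$; you get the same conclusion by the spectral-mapping identity $\|pqp\|+\|p-pqp\|=\max\sigma+(1-\min\sigma)$ in $pAp$, which is arguably more direct and self-contained.
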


\begin{proof}
For \eqref{Pythag}, simply take $v\in\mathcal{R}(p)\backslash\{0\}$ and note that \[||v||^2=||qv||^2+||q^\perp v||^2\leq(||qp||^2+||q^\perp p||^2)||v||^2.\]  If $||pq||^2+||pq^\perp||^2=1$ then, setting $\lambda=1-||pq^\perp||^2$, we have $||pq||^2=\lambda$ and hence $pqp\leq\lambda p\leq pqp$ (using \autoref{pnearq} (\ref{pnearq1})$\Rightarrow$(\ref{pnearq2}) for the last inequality), i.e. $pqp=\lambda p$ and hence $\sigma_{pAp}(pqp)=\{\lambda\}$.  Conversely, if $\sigma_{pAp}(pqp)=\{\lambda\}$ for some $\lambda\in[0,1]$ then $pqp=\lambda p$ and $pq^\perp p=(1-\lambda)p$ so $||pq||^2+||pq^\perp||^2=\lambda+(1-\lambda)=1$.
\end{proof}

\begin{prp}\label{sigmapq}
For $p,q\in\mathcal{P}(A)$, \[\sigma(pq^\perp)\setminus\{0,1\}=1-\sigma(pq)\setminus\{0,1\}=\sigma(p^\perp q)\setminus\{0,1\}.\]
\end{prp}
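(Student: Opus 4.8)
The plan is to reduce the whole statement to two completely elementary facts: the spectral permutation identity $\sigma(xy)\setminus\{0\}=\sigma(yx)\setminus\{0\}$, valid for any $x,y$ in a unital Banach algebra (applied in $\widetilde A$), together with a single direct resolvent computation that governs the replacement of $q$ by $q^\perp$. Note at the outset that $pq$, $pq^\perp=p-pq$ and $p^\perp q=q-pq$ all lie in $A$ (both $p$ and $q$ are in $A$), so every spectrum below is computed in $\widetilde A$, and $pqp$, $qpq$, $pq^\perp p=p(1-q)p=p-pqp$ and $qp^\perp q=q-qpq$ lie in $A$ as well.

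First I would peel off the corners. Applying $\sigma(xy)\setminus\{0\}=\sigma(yx)\setminus\{0\}$ with $x=p$, $y=pq$ gives $\sigma(pq)\setminus\{0\}=\sigma(pqp)\setminus\{0\}$ (since $p\cdot pq=pq$ and $pq\cdot p=pqp$); with $x=p$, $y=pq^\perp$ it gives $\sigma(pq^\perp)\setminus\{0\}=\sigma(pq^\perp p)\setminus\{0\}=\sigma(p-pqp)\setminus\{0\}$; and with $x=p^\perp$, $y=q$ it gives $\sigma(p^\perp q)\setminus\{0\}=\sigma(qp^\perp)\setminus\{0\}$. So it only remains to compare $\sigma(pqp)$ with $\sigma(p-pqp)$.

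The key step will be the following resolvent identity. Fix $\lambda\in\mathbb{C}$ with $\lambda\neq0,1$. Since $\sigma(p)\subseteq\{0,1\}$, the element $p-\lambda$ is invertible in $\widetilde A$, with inverse $\tfrac1{1-\lambda}p-\tfrac1\lambda(1-p)$. Because $(1-p)\,pqp=0$, this gives $(p-\lambda)^{-1}pqp=\tfrac1{1-\lambda}\,pqp$, hence the factorisation
\[p-pqp-\lambda=(p-\lambda)\bigl(1-\tfrac1{1-\lambda}\,pqp\bigr),\]
so that $p-pqp-\lambda$ is invertible exactly when $\tfrac1{1-\lambda}\,pqp-1$ is, i.e. exactly when $pqp-(1-\lambda)$ is, i.e. exactly when $1-\lambda\notin\sigma(pqp)$. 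Therefore, for $\lambda\neq0,1$, $\lambda\in\sigma(p-pqp)$ if and only if $1-\lambda\in\sigma(pqp)$, which is to say $\sigma(p-pqp)\setminus\{0,1\}=(1-\sigma(pqp))\setminus\{0,1\}$.

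To assemble: combining the two previous steps with the fact that $\sigma(pqp)$ and $\sigma(pq)$ agree off $0$ yields $\sigma(pq^\perp)\setminus\{0,1\}=\sigma(p-pqp)\setminus\{0,1\}=(1-\sigma(pqp))\setminus\{0,1\}=(1-\sigma(pq))\setminus\{0,1\}$, the first asserted equality, valid for every pair of projections. Applying it with $p$ and $q$ interchanged gives $\sigma(qp^\perp)\setminus\{0,1\}=(1-\sigma(qp))\setminus\{0,1\}$; since $\sigma(p^\perp q)\setminus\{0\}=\sigma(qp^\perp)\setminus\{0\}$ and $\sigma(qp)$ and $\sigma(pq)$ agree off $0$ (both by the permutation identity), this rearranges to $\sigma(p^\perp q)\setminus\{0,1\}=(1-\sigma(pq))\setminus\{0,1\}$, completing the proof. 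I expect the only nontrivial ingredient to be the factorisation above — in particular noticing that $(1-p)\,pqp=0$ kills the cross term — while everything else is bookkeeping of the exceptional points $0$ and $1$, each instance of which is precisely what the $\setminus\{0,1\}$ in the statement is there to absorb.
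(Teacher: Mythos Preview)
Your proof is correct and follows the same route as the paper: reduce to corners via $\sigma(xy)\setminus\{0\}=\sigma(yx)\setminus\{0\}$, then use that $p-pqp$ and $pqp$ have spectra related by $\lambda\mapsto 1-\lambda$ away from $\{0,1\}$. The paper dispatches this last step in a single line of ``elementary spectral theory'' (implicitly working in the corner $p\widetilde A p$, where $p-pqp$ is just $1-pqp$), whereas you give an explicit resolvent factorisation; both are fine and amount to the same thing.
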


\begin{proof}
From elementary spectral theory, we have $\sigma(pq)=\sigma(ppq)=\sigma(pqp)$ and $1-\sigma(pqp)\setminus\{0,1\}=\sigma(p-pqp)\setminus\{0,1\}=\sigma(pq^\perp p)\setminus\{0,1\}$.
\end{proof}

Note that $||pq^\perp||$ satisfies the triangle inequality, i.e. for $p,q,r\in\mathcal{P}(A)$, \[||pr^\perp||=||p(q+q^\perp)r^\perp||\leq||pq^\perp||+||qr^\perp||.\]  Also $||pq^\perp||=0=||qp^\perp||\Leftrightarrow p=q$ so \[\max(||pq^\perp||,||qp^\perp||)\] defines a metric on $\mathcal{P}(A)$.  the next result shows that it in fact coincides with the usual metric on $\mathcal{P}(A)$, i.e. $||p-q||$.  In particular, $\mathcal{P}(A)$ is complete in this metric.

\begin{prp}\label{||p-q||}
For $p,q\in\mathcal{P}(A)$, \[||p-q||=\max(||pq^\perp||,||p^\perp q||).\]  In fact, $||p-q||<1\Rightarrow||pq^\perp||=||p-q||=||qp^\perp||$.
\end{prp}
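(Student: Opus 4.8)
The plan is to reduce everything to the single algebraic identity
\[(p-q)^2=q^\perp pq^\perp+qp^\perp q,\]
which is verified by expanding both sides using only $p^2=p$, $q^2=q$ and $q^\perp=1-q$ (the unitization is used only to make sense of $q^\perp$; both sides in fact lie in $A$, as one sees after expanding). The two summands on the right are positive and mutually orthogonal: $q^\perp pq^\perp\in q^\perp Aq^\perp$, $qp^\perp q\in qAq$, and their product is $0$ since $q^\perp q=0$. Hence they generate a commutative C*-subalgebra in which they correspond to functions with disjoint supports, so $||q^\perp pq^\perp+qp^\perp q||=\max(||q^\perp pq^\perp||,||qp^\perp q||)$. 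Now $||q^\perp pq^\perp||=||(pq^\perp)^*(pq^\perp)||=||pq^\perp||^2$ and likewise $||qp^\perp q||=||p^\perp q||^2$, while $||(p-q)^2||=||p-q||^2$ by the C*-identity since $p-q$ is self-adjoint. Taking square roots gives $||p-q||=\max(||pq^\perp||,||p^\perp q||)$, the first assertion. (The easy inequality $\max(||pq^\perp||,||p^\perp q||)\leq||p-q||$ also drops out directly from $pq^\perp=p(p-q)q^\perp$ and $p^\perp q=p^\perp(q-p)q$, but the identity above makes that observation redundant.)

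For the second assertion, I would first dispose of the degenerate case: if $pq^\perp=0=p^\perp q$ then $p=q$ and all three norms are $0$; if, say, $pq^\perp=0$ but $p^\perp q\neq0$ then $p<q$, so $q-p=p^\perp q$ is a non-zero projection and $||p-q||=1$, contrary to hypothesis. So one may assume $pq^\perp\neq0\neq p^\perp q$. Applying the standard fact $\sigma(xy)\setminus\{0\}=\sigma(yx)\setminus\{0\}$ with $x=q^\perp$, $y=pq^\perp$ gives $\sigma(q^\perp pq^\perp)\setminus\{0\}=\sigma(pq^\perp)\setminus\{0\}$; since $q^\perp pq^\perp\geq0$ this forces $\sigma(pq^\perp)\subseteq[0,1]$ and, because $pq^\perp\neq0$, $||pq^\perp||^2=||q^\perp pq^\perp||=\max\sigma(pq^\perp)$, and symmetrically $||p^\perp q||^2=\max\sigma(p^\perp q)$. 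The hypothesis $||p-q||<1$ together with the first assertion gives $||pq^\perp||<1$ and $||p^\perp q||<1$, i.e. $1\notin\sigma(pq^\perp)\cup\sigma(p^\perp q)$. Then \autoref{sigmapq} yields $\sigma(pq^\perp)\setminus\{0\}=\sigma(pq^\perp)\setminus\{0,1\}=\sigma(p^\perp q)\setminus\{0,1\}=\sigma(p^\perp q)\setminus\{0\}$, whence $\max\sigma(pq^\perp)=\max\sigma(p^\perp q)$ and so $||pq^\perp||=||p^\perp q||=||qp^\perp||$; by the first assertion this common value is $||p-q||$.

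The only real subtlety lies in the second part: one must route the norm of $pq^\perp$ through the positive element $q^\perp pq^\perp$ (since $pq^\perp$ itself need not be normal, so its spectral radius need not be its norm), and one must check that $1$ is excluded from both spectra before invoking \autoref{sigmapq}. The first part is essentially just the orthogonal decomposition $(p-q)^2=q^\perp pq^\perp+qp^\perp q$ and presents no difficulty once that identity is written down.
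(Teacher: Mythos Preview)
Your proof is correct and follows essentially the same route as the paper. The paper writes $p-q=pq^\perp-p^\perp q$ and notes $(pq^\perp)(p^\perp q)^*=0=(pq^\perp)^*(p^\perp q)$, which upon squaring gives exactly your identity $(p-q)^2=q^\perp pq^\perp+qp^\perp q$; both then invoke \autoref{sigmapq} for the second assertion, though you are more careful about the degenerate cases and about routing the norm through a positive element.
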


\begin{proof}
As $p-q=pq^\perp+pq-pq-p^\perp q=pq^\perp-p^\perp q$, and $pq^\perp(p^\perp q)^*=0=(pq^\perp)^*p^\perp q$, we have $||p-q||=\max(||pq^\perp||,||p^\perp q||)$.  So if $||pq^\perp||,||p^\perp q||<1$ then, by \autoref{sigmapq}, $||pq^\perp||^2=\max(\sigma(pq^\perp))=\max(\sigma(p^\perp q))=||p^\perp q||^2$.
\end{proof}

The following result shows that the Sasaki projection (see \cite{Kalmbach1983} \S7 Lemma 13) defined by $q$ takes any $p$ to the range projection of $qp$.

\begin{prp}\label{[qp]}
For $p,q\in\mathcal{P}(\mathcal{B}(H))$, $[qp]=(p\vee q^\perp)\wedge q$
\end{prp}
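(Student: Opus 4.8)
The plan is to show both inclusions $[qp] \leq (p\vee q^\perp)\wedge q$ and $[qp] \geq (p\vee q^\perp)\wedge q$ directly in $\mathcal{P}(\mathcal{B}(H))$, working at the level of vectors. First note that $[qp]$ is the projection onto $\overline{\mathcal{R}(qp)}$, and since $\mathcal{R}(qp)\subseteq\mathcal{R}(q)$, we have $[qp]\leq q$ immediately. To see $[qp]\leq p\vee q^\perp$, observe that $p\vee q^\perp$ is the projection onto $\overline{\mathcal{R}(p)+\mathcal{R}(q^\perp)}$, which contains $\mathcal{R}(p)$; and for any $v$, $qpv = pv - q^\perp p v$, so $qpv\in\mathcal{R}(p)+\mathcal{R}(q^\perp)$. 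Hence $\mathcal{R}(qp)\subseteq\mathcal{R}(p)+\mathcal{R}(q^\perp)$, giving $[qp]\leq p\vee q^\perp$ after taking closures. Combining the two, $[qp]\leq(p\vee q^\perp)\wedge q$.

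For the reverse inclusion, I would show that the orthogonal complement of $\mathcal{R}(qp)$ inside $\mathcal{R}(q)$ is contained in $\mathcal{R}((p\vee q^\perp)^\perp)$. Concretely, suppose $v\in\mathcal{R}(q)$ with $v\perp\mathcal{R}(qp)$, i.e. $\langle v, qpw\rangle = 0$ for all $w\in H$. Since $qv = v$, this reads $\langle v, pw\rangle = \langle qv, pw\rangle = \langle v, qpw\rangle = 0$ for all $w$, so $v\perp\mathcal{R}(p)$, i.e. $v\in\mathcal{R}(p^\perp)$. But we also have $v\in\mathcal{R}(q)$, so $v\perp\mathcal{R}(q^\perp)$. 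Thus $v\perp\mathcal{R}(p)$ and $v\perp\mathcal{R}(q^\perp)$, hence $v\perp\overline{\mathcal{R}(p)+\mathcal{R}(q^\perp)} = \mathcal{R}(p\vee q^\perp)$, so $v\in\mathcal{R}((p\vee q^\perp)^\perp)$. In other words, $q - [qp] = q\wedge[qp]^\perp \leq (p\vee q^\perp)^\perp$, so $q\wedge(p\vee q^\perp)\leq[qp]$. Together with the first paragraph this gives the claimed equality.

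I do not expect a serious obstacle here; the one point requiring a little care is the manipulation $\langle v, pw\rangle = \langle v, qpw\rangle$, which uses $v = qv$ and self-adjointness of $q$, and the passage from orthogonality to a subspace sum to orthogonality to its closure (harmless since $(S)^\perp = (\overline{S})^\perp$ for any subset $S$). One could alternatively phrase the whole argument algebraically: $[qp]^\perp$ is the projection onto $\mathcal{N}((qp)^*) = \mathcal{N}(pq)$, and $\mathcal{N}(pq)\cap\mathcal{R}(q) = \mathcal{N}(p)\cap\mathcal{R}(q) = \mathcal{R}(p^\perp)\cap\mathcal{R}(q)$, which is exactly $p^\perp\wedge q$; then $[qp] = q - (p^\perp\wedge q)$, and a short orthomodularity-style computation in $\mathcal{P}(\mathcal{B}(H))$ identifies $q - (p^\perp\wedge q)$ with $(p\vee q^\perp)\wedge q$ (note $(p^\perp\wedge q)^\perp = p\vee q^\perp$, and $q$ commutes with $p^\perp\wedge q$ since the latter lies below $q$). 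Either route is routine; I would present the vector-space version as the cleanest.
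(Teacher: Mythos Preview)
Your proof is correct and the core computation---using $qv=v$ to rewrite $\langle v,qpw\rangle$ as $\langle v,pw\rangle$ and thereby identify $[qp]^\perp\wedge q$ with $p^\perp\wedge q$---is exactly the paper's argument; the paper simply invokes orthomodularity to pass from $p^\perp\wedge q=[qp]^\perp\wedge q$ to the stated equality rather than spelling out both inclusions. Your alternative sketch at the end (compute $\mathcal{N}(pq)\cap\mathcal{R}(q)=p^\perp\wedge q$, then use orthomodularity) is in fact precisely the paper's proof.
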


\begin{proof}
As $\mathcal{P}(\mathcal{B}(H))$ is orthomodular, this is equivalent to $p^\perp\wedge q=[qp]^\perp\wedge q$. To see this note that, for any $v\in\mathcal{R}(p)$ and $w\in\mathcal{R}(q)$,
\begin{equation}\label{vqw}
\langle v,w\rangle=\langle v,qw\rangle=\langle qv,w\rangle.
\end{equation}
As $w\in\mathcal{R}(p^\perp\wedge q)$ if and only if the left expression is $0$, for all $v\in\mathcal{R}(p)$, and $w\in\mathcal{R}([qp]^\perp\wedge q)$ if and only if the right expression is $0$, for all $v\in\mathcal{R}(p)$, we are done.
\end{proof}

The previous results of this subsection, while rarely stated explicitly, are no doubt well known.  The next result, however, might not be.  It characterizes the spectrum of a product of projections purely in terms of the norm and the ortholattice structure of $\mathcal{P}(\mathcal{B}(H))$.

\begin{thm}\label{orthospecpq}
For $p,q\in\mathcal{P}(\mathcal{B}(H))$, 
\begin{equation}\label{specpqeq}
\sigma(pq)\cup\{0\}=\overline{\{||qr||^2:r\leq p\textrm{ and }((r\vee q^\perp)\wedge q)\perp((r^{\perp_p}\vee q^\perp)\wedge q)\}}.
\end{equation}
\end{thm}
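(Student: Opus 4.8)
The plan is to push everything onto the single positive operator $c=pqp$ and then reduce \eqref{specpqeq} to a routine spectral computation. First I would rewrite the side condition. Since $\mathcal{P}(\mathcal{B}(H))$ is orthomodular, $r\le p$ gives $r^{\perp_p}=r^\perp\wedge p=p-r$, and \autoref{[qp]} identifies $(r\vee q^\perp)\wedge q=[qr]$ and $(r^{\perp_p}\vee q^\perp)\wedge q=[q(p-r)]$. Two range projections $[a]$ and $[b]$ are orthogonal exactly when $a^*b=0$, so the condition appearing in \eqref{specpqeq} is simply $rq(p-r)=0$. Using $rp=pr=r$ one computes $rc=rqp$, $cr=pqr$ and $rcr=rqr$, so $rq(p-r)=rc-rcr$; hence $rq(p-r)=0\Leftrightarrow rc=rcr$, and taking adjoints shows this is in turn equivalent to $rc=cr$. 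In other words, the admissible $r$ are precisely the projections below $p$ that commute with $c$.

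Next I would rewrite the quantity being measured. We have $\|qr\|^2=\|(qr)^*qr\|=\|rqr\|=\|rcr\|$, and when $r$ commutes with $c$ this equals $\|rc\|$; since then $rc=c^{1/2}rc^{1/2}\ge\mathbf{0}$, it equals $\max\sigma(rc)$. Combining this with the previous paragraph, and with $\sigma(pq)\cup\{0\}=\sigma(c)\cup\{0\}$ (see \autoref{sigmapq}, or simply $\sigma(ab)\setminus\{0\}=\sigma(ba)\setminus\{0\}$), the identity \eqref{specpqeq} becomes
\[\sigma(c)\cup\{0\}=\overline{\{\max\sigma(rc):r\le p\text{ and }rc=cr\}}.\]

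For the inclusion $\supseteq$, note that if $r$ commutes with $c$ then both $rH$ and $(1-r)H$ reduce $c$, so $rc$ acts as $c|_{rH}$ on $rH$ and as $\mathbf{0}$ on $(1-r)H$; hence $\sigma(rc)\subseteq\sigma(c)\cup\{0\}$, so $\max\sigma(rc)$ lies in the closed set $\sigma(c)\cup\{0\}$, and therefore so does the whole closure on the right. For the inclusion $\subseteq$, the value $0$ is attained at $r=\mathbf{0}$; and given $\lambda\in\sigma(c)$ with $\lambda>0$ and $0<\varepsilon<\lambda$, I would take the spectral projection $r_\varepsilon=c_{(\lambda-\varepsilon,\lambda+\varepsilon)}$. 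This is nonzero because $\lambda\in\sigma(c)$, it commutes with $c$, and it satisfies $r_\varepsilon\le p$: indeed $c$ kills $(1-p)H$, so $c_{\{0\}}\ge 1-p$ and hence $r_\varepsilon\le c_{(0,\infty)}=1-c_{\{0\}}\le p$ (this is where it matters that the interval stays bounded away from $0$). On $r_\varepsilon H$ the operator $c$ has spectrum inside $[\lambda-\varepsilon,\lambda+\varepsilon]$ and, since $\lambda$ belongs to that spectrum, $\lambda\le\max\sigma(r_\varepsilon c)\le\lambda+\varepsilon$. Letting $\varepsilon\downarrow 0$ puts $\lambda$ in the closure.

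I expect the only real obstacle to be the first step — recognising that the opaque-looking condition $((r\vee q^\perp)\wedge q)\perp((r^{\perp_p}\vee q^\perp)\wedge q)$ is nothing but the commutation relation $rc=cr$ for $c=pqp$. Once that translation is in place, the statement is just the elementary fact that the spectrum of a positive operator is recovered, up to $\{0\}$, as the closure of the set of spectral radii of its restrictions to reducing subspaces coming from subprojections of $p$, and the remainder is bookkeeping with the Borel functional calculus.
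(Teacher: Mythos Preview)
Your proof is correct and follows essentially the same strategy as the paper: both arguments hinge on the observation that the orthogonality condition $[qr]\perp[q(p-r)]$ is equivalent to $r$ commuting with $c=pqp$, and both then use spectral projections of $c$ to produce admissible $r$'s. Your presentation is a bit cleaner---you isolate the commutation equivalence upfront and then read off both inclusions directly from $\sigma(rc)\subseteq\sigma(c)\cup\{0\}$, whereas the paper derives commutation only inside the $\supseteq$ argument and finishes via contradiction (together with \autoref{[aa*a]} for the $\subseteq$ direction).
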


\begin{proof}
If $\theta\in\sigma(pq)\setminus\{0\}$ then, for any $\lambda>\theta$, $||q(pqp)_{(0,\lambda]}||^2\in[\theta,\lambda]$, while \[((pqp)_{(0,\lambda]}\vee q^\perp)\wedge q=(qpq)_{(0,\lambda]}\qquad\textrm{and}\qquad ((pqp)_{(0,\lambda]}^{\perp_p}\vee q^\perp)\wedge q=(pqp)_{(\lambda,1]},\] by \autoref{[aa*a]} (with $a=qp$) and \autoref{[qp]} (noting that we have $(pqp)_{(0,\lambda]}^{\perp_p}=(pqp)_{(\lambda,1]}\vee(p\wedge q^\perp)$).

Conversely, say $r\leq p$ and $(r\vee q^\perp)\wedge q=[qr]$ is orthogonal to $(r^{\perp_p}\vee q^\perp)\wedge q=[qr^{\perp_p}]$.  Thus $[qr]\perp r^{\perp_p}$ (see \eqref{vqw}) so $[pqr]=([qr]^\perp\wedge p)^{\perp_p}\leq r$ and this means $(pqp)r=r(pqp)r=r(pqp)$.  Thus $r$ commutes with every spectral projection of $pqp$.  Thus $r\leq(pqp)_{(0,||qr||^2]}$, otherwise $s=r(pqp)_{(\lambda,1]}\neq0$, for some $\lambda>||qr||^2$, and $||q^\perp s||^2\leq||q^\perp(pqp)_{(\lambda,1]}||^2\leq1-\lambda$ so $sqs\geq\lambda s$, by \autoref{pnearq}, which gives $||qr||^2\geq||qs||^2=||sqs||\geq\lambda>||qr||^2$, a contradiction.  On the other hand, if $\theta<||qr||$ then $r(pqp)_{(\theta,1]}\neq0$, otherwise we would have $r\leq(pqp)_{(0,\theta]}$ and hence $||qr||^2\leq||q(pqp)_{(0,\theta]}||^2\leq\theta<||qr||^2$, a contradiction.  But then $r(pqp)_{(\theta,||qr||^2]}=r(pqp)_{(\theta,1]}\neq0$ so $(\theta,||qr||^2]\cap\sigma(pq)\neq\emptyset$, for all $\theta<||qr||^2$, i.e. $||qr||^2\in\sigma(pq)$.
\end{proof}

\subsection{Annihilator Separativity}\label{SvsO}

The most fundamental difference between annihilators and projections is that, as shown in \autoref{nonorthoxpl},
\begin{center}
\textbf{the annihilators in an arbitrary C*-algebra may not be orthomodular.}
\end{center}
Nonetheless, we can show that $[A]^\perp$ is always separative, for arbitary C*-algebra $A$, which will suffice to allow us to apply the theory in \S\ref{OrderTheory}.  In fact, we will prove a strong version of separativity that is quite close to orthomodularity.

To see what this strong version is, note that with annihilators we can naturally quantify the degree of separativity.  Specifically, for $\epsilon\in[0,1]$, call $[A]^\perp$ \emph{$\epsilon$-separative} if, for all $B,C\in[A]^\perp$, \[B\subsetneqq C\Rightarrow\exists D\in[A]^\perp(\{0\}<D\subseteq C\textrm{ and }||BD||\leq\epsilon).\]  One immediately sees that if $[A]^\perp$ is $\epsilon$-separative, for any $\epsilon<1$, then it is separative and, by \autoref{orthoequiv}, $0$-separativity is equivalent to orthomodularity.  So \autoref{epsep} really is as close as we can get to orthomodularity without actually verifying it.

We can also use annihilators to separate arbitrary C*-subalgebras $B,C\subseteq A$ with $B\subseteq C$.  Specifically, define the \emph{separation} of $B$ from $C$ by \[\mathrm{sep}_C(B)=\inf_{D\in[C]\setminus\{\{0\}\}}||BD||.\]  By the comments after \autoref{prp1}, we could replace $[C]$ with $[C]_C$ here without changing the value of $\mathrm{sep}_C(B)$, so we might as well get rid of $C$, just assume we have a C*-subalgebra $B$ of $A$ and write $\mathrm{sep}(B)$ for $\mathrm{sep}_A(B)$.  Let us also assume $B=\overline{bAb}$, for $b\in B^1_+$, and define \[||x||_b=\sup_{n\geq0}||xb^{1/n}x^*||^{1/2}\qquad\textrm{and}\qquad\gamma(b)=\inf_{x\neq0}||x||_b/||x||,\] as in \cite{AkemannEilers2002}.  As $||xb^{1/n}x^*||^{1/2}=||b^{1/(2n)}x^*xb^{1/(2n)}||^{1/2}=||\sqrt{x^*x}b^{1/n}\sqrt{x^*x}||^{1/2}$, we actually have $\gamma(b)=\inf_{a\in A^1_+}||a||_b$.  And for any $a\in A^1_+$, \[||a||_b=\sup_{n\geq0}||ab^{1/n}||\geq||\{f_{1-\delta,1}(a)\}^{\perp\perp}B||-\delta,\] and hence $\gamma(b)\geq\mathrm{sep}(B)$.  While for any $C\in[A]^\perp$ and $c\in C^1_+$, we certainly have $||c||_b\leq||CB||$ so $\gamma(b)\leq\mathrm{sep}(B)$, i.e. \[\gamma(b)=\mathrm{sep}(B).\]

It is immediate that $\mathrm{sep}(B)=0$ whenever $B$ is not essential in $A$ or, equivalently, when $b$ is a zero-divisor.  Whether it is still possible to have $\mathrm{sep}(B)<1$ when $B$ is essential in $A$ was an open problem (see \cite{PeligradZsido2000}), usually phrased as asking whether all open dense projections $p$ ($B$ is essential means $p_B$ is dense in that $\overline{p}_B=1$) are \emph{regular}\footnote{This concept was first introduced and investigated in \cite{Tomita1959} Chapter 2 \S2 under a somewhat different, but equivalent, definition.} in the sense that $||ap||=||a\overline{p}||(=||a||$ when $p$ is dense), for all $a\in A$.  This was answered in the negative in \cite{AkemannEilers2002} Proposition 3.4 where it was shown that $\gamma(b)\leq4/5$ for a particular non-zero divisor $b$.  We improve on this in \autoref{0sep}, showing that $\mathrm{sep}(B)$ is, in fact, always $1$ or $0$, i.e. open dense projections are always either regular or very non-regular.

In order to prove these results, we first need the spectral projection inequalities contained in the lemmas below.  Note that if $c\in\mathcal{B}(H)_+$ and $p\in\mathcal{P}(\mathcal{B}(H))$ then $c\leq p$ implies $p^\perp cp^\perp\leq0$ and hence $p^\perp c=0$, i.e. $c=pc$.

\begin{lem}\label{lem1}
For $\epsilon,\lambda>0$, there exists $\delta>0$ such that, whenever $b,c\in\mathcal{B}(H)^1_+$, $c\leq q\in\mathcal{P}(\mathcal{B}(H))$ and $||bq||^2\leq\lambda+\delta$, we have
\begin{equation}\label{lem1eq}
||c_{[0,1-\epsilon]}(cb^2c)_{[\lambda-\delta,1]}||\leq\epsilon.
\end{equation}
\end{lem}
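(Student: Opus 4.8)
The plan is to choose $\delta$ explicitly and estimate $\|c_{[0,1-\epsilon]}(cb^2c)_{[\lambda-\delta,1]}\|$ directly. Write $p=c_{[0,1-\epsilon]}$ and $r=(cb^2c)_{[\lambda-\delta,1]}$. The only facts needed about these two spectral projections, all immediate from the continuous functional calculus, are: $p$ commutes with $c$ and $cp\leq(1-\epsilon)p$; and, since $\|cb^2c\|\leq\|bc\|^2\leq1$ so that the interval $[\lambda-\delta,1]$ catches the whole top of the spectrum of $cb^2c$, every unit $v\in\mathcal{R}(r)$ satisfies $\|bcv\|^2=\langle cb^2c\,v,v\rangle\geq\lambda-\delta$ (here I will insist $\delta<\lambda$ so that $\lambda-\delta>0$).

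The first step is to exploit $c\leq q$. By the remark preceding the lemma this gives $c=qc=cq$, so $bcv=(bq)(cv)$ for every $v$, whence $\|bcv\|^2\leq\|bq\|^2\|cv\|^2\leq(\lambda+\delta)\|cv\|^2\leq(\lambda+\delta)\langle cv,v\rangle$, the last inequality because $c^2\leq c$ for a positive contraction. Combined with $\|bcv\|^2\geq\lambda-\delta$ for unit $v\in\mathcal{R}(r)$, this produces the lower bound $\langle cv,v\rangle\geq\tfrac{\lambda-\delta}{\lambda+\delta}$.

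The second step is an upper bound for the same quantity. Fix a unit $v\in\mathcal{R}(r)$ and split $v=pv+p^\perp v$. Since $p$ commutes with $c$ the cross terms vanish, so $\langle cv,v\rangle=\langle cpv,pv\rangle+\langle cp^\perp v,p^\perp v\rangle\leq(1-\epsilon)\|pv\|^2+\|p^\perp v\|^2=1-\epsilon\|pv\|^2$, using $cp\leq(1-\epsilon)p$, $c\leq1$, and $\|pv\|^2+\|p^\perp v\|^2=1$. Comparing the two bounds gives $\epsilon\|pv\|^2\leq1-\tfrac{\lambda-\delta}{\lambda+\delta}=\tfrac{2\delta}{\lambda+\delta}$, hence $\|pr\|^2=\sup\{\|pv\|^2:v\in\mathcal{R}(r),\ \|v\|=1\}\leq\tfrac{2\delta}{\epsilon(\lambda+\delta)}$. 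Taking $\delta=\epsilon^3\lambda/2$ (which is $<\lambda$, so the earlier requirement holds) makes the right-hand side equal $\tfrac{\epsilon^2}{1+\epsilon^3/2}<\epsilon^2$, so $\|pr\|<\epsilon$, as desired.

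The whole argument is just two Cauchy–Schwarz/spectral estimates glued together, so there is no genuine obstacle; the only points requiring mild care are recording $\|cb^2c\|\leq1$ at the outset so that $(cb^2c)_{[\lambda-\delta,1]}$ dominates $(\lambda-\delta)$ times itself, and making the explicit choice of $\delta$ simultaneously satisfy $\delta<\lambda$ and $2\delta\leq\epsilon^3(\lambda+\delta)$, both of which $\delta=\epsilon^3\lambda/2$ handles.
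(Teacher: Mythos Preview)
Your proof is correct and essentially identical to the paper's: both sandwich $\|bcv\|^2$ for unit $v\in\mathcal{R}(r)$ between $\lambda-\delta$ and $(\lambda+\delta)(1-\epsilon\|pv\|^2)$ and arrive at the same choice $\delta=\lambda\epsilon^3/2$ (the paper bounds $\|cv\|^2$ directly rather than detouring through $\langle cv,v\rangle$ via $c^2\leq c$, but this is cosmetic). One small point: your requirement $\delta<\lambda$ forces $\epsilon<2^{1/3}$ for your explicit choice, so you should note, as the paper does, that the case $\epsilon\geq1$ is trivial.
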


\begin{proof}
If $\epsilon\geq1$ then \eqref{lem1eq} holds trivially, so assume $\epsilon<1$.  For any $v\in H$,
\begin{eqnarray}
||cv||^2 &=& ||cc_{[0,1-\epsilon]}v||^2+||cc_{(1-\epsilon,1]}v||^2\nonumber\\
&\leq& (1-\epsilon)^2||c_{[0,1-\epsilon]}v||^2+||c_{(1-\epsilon,1]}v||^2\nonumber\\
&\leq& ||v||^2-\epsilon(2-\epsilon)||c_{[0,1-\epsilon]}v||^2\nonumber\\
&\leq& ||v||^2-\epsilon||c_{[0,1-\epsilon]}v||^2\qquad\textrm{(as $\epsilon\leq1$)}.\nonumber
\end{eqnarray}
Now for $v\in\mathcal{R}((cb^2c)_{[\lambda-\delta,1]})$,
\begin{eqnarray*}
(\lambda-\delta)||v||^2 &\leq& \langle cb^2c v,v\rangle\\
&=& ||bcv||^2\\
&\leq& ||bq||^2||cv||^2\\
&\leq& (\lambda+\delta)(||v||^2-\epsilon||c_{[0,1-\epsilon]}v||^2),\textrm{ so}\\
(\lambda+\delta)\epsilon||c_{[0,1-\epsilon]}v||^2 &\leq& 2\delta||v||^2\textrm{ and}\\
||c_{[0,1-\epsilon]}v||^2 &\leq& 2\delta||v||^2/(\lambda\epsilon),
\end{eqnarray*}
which immediately yields \eqref{lem1eq}, for $\delta\leq\lambda\epsilon^3/2$.
\end{proof}

\begin{cor}\label{cor1}
For $\epsilon,\lambda>0$, there exists $\delta>0$ such that, whenever $b,c\in\mathcal{B}(H)^1_+$, $c\leq q\in\mathcal{P}(\mathcal{B}(H))$ and $||bq||^2\leq\lambda+\delta$, we have
\[||(1-c)(cb^2c)_{[\lambda-\delta,1]}||\leq\epsilon.\]
\end{cor}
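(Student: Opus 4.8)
The plan is to obtain the corollary directly from \autoref{lem1} by splitting $1-c$ across the spectral decomposition of the positive contraction $c$ at the threshold $1-\epsilon/2$. If $\epsilon\geq2$ the statement is trivial, since every operator appearing in it is a contraction, so I may assume $\epsilon<2$ (this is the analogue of the trivial case $\epsilon\geq1$ dealt with at the start of the proof of \autoref{lem1}). Given $\epsilon,\lambda>0$, I would first apply \autoref{lem1} with $\epsilon/2$ in place of $\epsilon$ and the same $\lambda$, producing a $\delta>0$; this is the $\delta$ I take for the corollary. Then the hypotheses $c\leq q$ and $||bq||^2\leq\lambda+\delta$ are exactly what \autoref{lem1} requires, so
\[||c_{[0,1-\epsilon/2]}(cb^2c)_{[\lambda-\delta,1]}||\leq\epsilon/2.\]

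Next I would write $1=c_{[0,1-\epsilon/2]}+c_{(1-\epsilon/2,1]}$, these being complementary spectral projections of $c$ whose sum is $1$ because $\sigma(c)\subseteq[0,1]$, and decompose
\[(1-c)(cb^2c)_{[\lambda-\delta,1]}=(1-c)c_{[0,1-\epsilon/2]}(cb^2c)_{[\lambda-\delta,1]}+(1-c)c_{(1-\epsilon/2,1]}(cb^2c)_{[\lambda-\delta,1]}.\]
For the first summand, $||1-c||\leq1$ combined with the bound from \autoref{lem1} gives norm at most $\epsilon/2$. For the second summand, $(1-c)c_{(1-\epsilon/2,1]}=g(c)$ where $g(t)=(1-t)\chi_{(1-\epsilon/2,1]}(t)$ satisfies $||g||_\infty\leq\epsilon/2$, so $||(1-c)c_{(1-\epsilon/2,1]}||\leq\epsilon/2$, and multiplying on the right by the contraction $(cb^2c)_{[\lambda-\delta,1]}$ keeps the norm at most $\epsilon/2$. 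The triangle inequality then yields $||(1-c)(cb^2c)_{[\lambda-\delta,1]}||\leq\epsilon$, as required.

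There is essentially no obstacle here; the proof is a one-line triangle inequality once the decomposition is set up. The only point demanding a little care is the bookkeeping of the factor $2$: one must feed $\epsilon/2$ into \autoref{lem1} from the outset and use the resulting $\delta$ verbatim, so that the interval $[\lambda-\delta,1]$ and the bound $||bq||^2\leq\lambda+\delta$ in the corollary match those in the lemma exactly, with no further monotonicity argument needed.
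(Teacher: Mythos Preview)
Your proof is correct and follows essentially the same approach as the paper: both split $1-c$ via the spectral projections of $c$ at a threshold near $1$ and invoke \autoref{lem1} on the low part. The only cosmetic difference is that the paper works vectorwise and uses the Pythagorean identity (feeding $\epsilon/\sqrt{2}$ into \autoref{lem1}), whereas you work at the operator-norm level with the triangle inequality (feeding in $\epsilon/2$); either bookkeeping is fine.
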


\begin{proof}
Replacing $\epsilon$ with $\epsilon/\sqrt{2}$ in \autoref{lem1}, we obtain $\delta>0$ such that, for any $v\in\mathcal{R}((cb^2c)_{[\lambda-\delta,1]})$,
\begin{eqnarray*}
||(1-c)v||^2 &=& ||(1-c)c_{[0,1-\epsilon/\sqrt{2})}v||^2+||(1-c)c_{[1-\epsilon/\sqrt{2},1]}v||^2\\
&\leq& ||c_{[0,1-\epsilon/\sqrt{2})}v||^2+\epsilon^2||c_{[1-\epsilon/\sqrt{2},1]}v||^2/2\\
&\leq& \epsilon^2||v||^2/2+\epsilon^2||v||^2/2.
\end{eqnarray*}
\end{proof}

The following result generalizes \cite{Bice2009} Lemma 5.3.

\begin{lem}\label{lem2}
For $\epsilon,\lambda>0$, there exists $\delta>0$ such that, whenever $b,c\in\mathcal{B}(H)^1_+$, $c\leq q\in\mathcal{P}(\mathcal{B}(H))$ and $||bq||^2\leq\lambda+\delta$, we have
\begin{equation}\label{lem2eq}
||b_{[0,\sqrt{\delta}]}(cb^2c)_{[\lambda-\delta,1]}||^2\leq1-\lambda+\epsilon.
\end{equation}
\end{lem}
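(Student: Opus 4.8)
The plan is to reduce the operator-norm estimate to an estimate on single unit vectors. Write $e=(cb^2c)_{[\lambda-\delta,1]}$ and $p=b_{[0,\sqrt\delta]}$. Since $p$ and $e$ are projections, $\|pe\|^2$ equals the square of the norm of $p$ restricted to $\mathcal{R}(e)$, that is, $\|pe\|^2=\sup\{\|pv\|^2:v\in\mathcal{R}(e),\ \|v\|\leq1\}$; so it suffices to bound $\|pv\|^2$ for a unit vector $v\in\mathcal{R}(e)$ by $1-\lambda+\epsilon$, once $\delta$ is chosen small enough.

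First I would record an elementary inequality coming purely from $p$ being a spectral projection of $b$. Then $p$ commutes with $b$, the operator $bp$ has norm at most $\sqrt\delta$ on $\mathcal{R}(p)$, and for any unit vector $w$ the vectors $bpw=pbw$ and $bp^\perp w=p^\perp bw$ are orthogonal, so
\[\|bw\|^2=\|bpw\|^2+\|bp^\perp w\|^2\leq\delta\,\|pw\|^2+\|p^\perp w\|^2=1-(1-\delta)\|pw\|^2,\]
and hence $\|pw\|^2\leq(1-\|bw\|^2)/(1-\delta)$.

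Next I would bound $\|bv\|$ from below for a unit $v\in\mathcal{R}(e)$. From $ev=v$ we get $\|bcv\|^2=\langle cb^2c\,v,v\rangle\geq\lambda-\delta$. To replace $bcv$ by $bv$ I would invoke \autoref{cor1}: for a preassigned $\epsilon_1>0$ and a suitably small $\delta$ we may assume $\|(1-c)v\|\leq\|(1-c)e\|\leq\epsilon_1$, whence, using $\|b\|\leq1$, $\|bv\|\geq\|bcv\|-\|(1-c)v\|\geq\sqrt{\lambda-\delta}-\epsilon_1$. Feeding this into the inequality of the previous paragraph gives
\[\|pv\|^2\leq\frac{1-(\sqrt{\lambda-\delta}-\epsilon_1)^2}{1-\delta},\]
and the right-hand side tends to $1-\lambda$ as $\delta,\epsilon_1\to0$; so fixing $\epsilon_1$ and then $\delta$ small enough makes it at most $1-\lambda+\epsilon$. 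Taking the supremum over $v$ then yields \eqref{lem2eq}.

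The point I expect to need the most care is the bookkeeping of the quantifier on $\delta$. One should apply \autoref{cor1} with the fixed target $\epsilon_1$ to obtain some $\delta_1>0$, then take the final $\delta\leq\min(\delta_1,\lambda)$ and observe that the conclusion of \autoref{cor1} persists for this smaller $\delta$: indeed $(cb^2c)_{[\lambda-\delta,1]}\leq(cb^2c)_{[\lambda-\delta_1,1]}$, the quantity $\|(1-c)\,\cdot\,\|$ is monotone in the projection, and the hypothesis $\|bq\|^2\leq\lambda+\delta$ only becomes stronger as $\delta$ decreases. One also needs $\epsilon_1<\sqrt{\lambda-\delta}$ (possible since $\lambda>0$ is fixed) so that the step $1-\|bv\|^2\leq1-(\sqrt{\lambda-\delta}-\epsilon_1)^2$ is legitimate.
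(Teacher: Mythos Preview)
Your proposal is correct and follows essentially the same line as the paper's proof. Both arguments (i) invoke \autoref{cor1} to make $v$ close to $cv$ on $\mathcal{R}((cb^2c)_{[\lambda-\delta,1]})$, (ii) use the spectral decomposition $b=bp+bp^\perp$ to get $\|bv\|^2\leq 1-(1-\delta)\|pv\|^2$, and (iii) bound $\|bcv\|^2\geq\lambda-\delta$ from the definition of the spectral projection. The only cosmetic difference is that the paper passes from $\langle b^2cv,cv\rangle$ to $\langle b^2v,v\rangle$ via the sesquilinear form (picking up an additive error $\epsilon/2$), whereas you pass from $\|bcv\|$ to $\|bv\|$ via the triangle inequality (picking up $\epsilon_1$); your version then requires the harmless extra observation $\epsilon_1<\sqrt{\lambda-\delta}$, which you noted. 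Your bookkeeping on shrinking $\delta$ below the $\delta_1$ from \autoref{cor1} is also handled correctly.
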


\begin{proof}  Let $\delta>0$ be that obtained in \autoref{cor1} from replacing $\epsilon$ with $\epsilon/4$.  If necessary, replace $\delta$ with a smaller non-zero number so that we also have \[(1-\lambda+\delta+\epsilon/2)/(1-\delta)\leq1-\lambda+\epsilon.\]  Then, for all $v\in\mathcal{R}((cb^2c)_{[\lambda-\delta,1]})$,
\begin{eqnarray*}
(\lambda-\delta)||v||^2 &\leq& \langle cb^2cv,v\rangle\\
&=& \langle b^2cv,cv\rangle\\
&\leq& \langle b^2v,v\rangle+\epsilon||v||^2/2,\textrm{ by \autoref{cor1}},\\
&=& \langle b^2b_{[0,\sqrt{\delta}]}v,v\rangle+\langle b^2b_{(\sqrt{\delta},1]}v,v\rangle+\epsilon||v||^2/2\\
&\leq& \delta\langle b_{[0,\sqrt{\delta}]}v,v\rangle+\langle b_{(\sqrt{\delta},1]}v,v\rangle+\epsilon||v||^2/2\\
&=& \delta||b_{[0,\sqrt{\delta}]}v||^2+(||v||^2-||b_{[0,\sqrt{\delta}]}v||^2)+\epsilon||v||^2/2,\textrm{ so}\\
\quad(1-\delta)||b_{[0,\sqrt{\delta}]}v||^2 &\leq& (1-\lambda+\delta+\epsilon/2)||v||^2,\textrm{ and hence}\\
||b_{[0,\sqrt{\delta}]}v||^2 &\leq& (1-\lambda+\epsilon)||v||^2.
\end{eqnarray*}
\end{proof}

With \autoref{lem2}, we can already prove that $[A]^\perp$ is separative (see the proof of \autoref{epsep}, ignoring the last line).  However, for $\epsilon$-separativity, for arbitrary $\epsilon>0$, we need a couple more results.

\begin{lem}\label{lem3}
For $\epsilon,\lambda>0$, there exists $\delta>0$ such that, whenever $b,c\in\mathcal{B}(H)^1_+$, $p,q\in\mathcal{P}(\mathcal{B}(H))$, $b\leq p$, $c\leq q$ and $||pq||^2\leq\lambda+\delta$, we have
\begin{equation}\label{lem3eq}
||p(cb^2c)_{[\lambda-\delta,1]}||^2\leq\lambda+\epsilon.
\end{equation}
\end{lem}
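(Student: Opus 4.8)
The plan is to get \eqref{lem3eq} directly out of \autoref{cor1} by a short triangle-inequality argument, with no further spectral analysis; in particular \autoref{lem2} does not seem to be needed for this lemma (it is presumably used later, in the proof of \autoref{epsep}). Write $r=(cb^2c)_{[\lambda-\delta,1]}$. Since $||pr||^2=\sup\{||pv||^2:v\in\mathcal{R}(r),\ ||v||=1\}$ (the case $r=0$ being trivial), it suffices to bound $||pv||^2$ for a fixed unit vector $v\in\mathcal{R}(r)$; and we may assume $\lambda<1$, since otherwise $||pr||^2\leq1\leq\lambda\leq\lambda+\epsilon$ and there is nothing to prove.

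First I would isolate the two consequences of the hypotheses $c\leq q$ and $b\leq p$ that actually get used. From $c\leq q$ we have $c=qc$, so $pcv=(pq)(cv)$ and hence $||pcv||\leq||pq||\leq\sqrt{\lambda+\delta}$. From $0\leq b\leq 1$ together with $b\leq p$ we get the operator inequality $b^2\leq b\leq p$; so for any unit vector $w$ we have $||bqw||^2=\langle qw,b^2qw\rangle\leq\langle qw,pqw\rangle=||pqw||^2$, whence $||bq||^2\leq||pq||^2\leq\lambda+\delta$. This is the one point worth flagging: $||bq||\leq||pq||$ is not completely trivial, since $bqb\leq pqp$ is false in general, but the weaker inequality $b^2\leq b$ is exactly what makes it work, and it is precisely what lets us feed $b$ and $q$ into \autoref{cor1}.

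Then, given $\epsilon>0$ and $\lambda<1$, I would choose an auxiliary $\epsilon_1>0$ small enough that $2\sqrt{2}\,\epsilon_1+\epsilon_1^2\leq\epsilon/2$, apply \autoref{cor1} to the pair $(\epsilon_1,\lambda)$ to obtain $\delta_0>0$, and take $\delta=\min(\delta_0,\epsilon/2,1)$. Since $||bq||^2\leq\lambda+\delta\leq\lambda+\delta_0$ and $r=(cb^2c)_{[\lambda-\delta,1]}\leq(cb^2c)_{[\lambda-\delta_0,1]}$, \autoref{cor1} gives $||(1-c)r||\leq\epsilon_1$, and therefore $||(1-c)v||\leq\epsilon_1$ for our unit $v\in\mathcal{R}(r)$.

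Finally I would assemble the estimate: writing $v=cv+(1-c)v$,
\[||pv||\leq||pcv||+||p(1-c)v||\leq||pq||+||(1-c)v||\leq\sqrt{\lambda+\delta}+\epsilon_1,\]
so $||pv||^2\leq(\sqrt{\lambda+\delta}+\epsilon_1)^2=\lambda+\delta+2\epsilon_1\sqrt{\lambda+\delta}+\epsilon_1^2\leq\lambda+\delta+2\sqrt{2}\,\epsilon_1+\epsilon_1^2\leq\lambda+\epsilon$ by the choices of $\epsilon_1$ and $\delta$. Taking the supremum over unit $v\in\mathcal{R}(r)$ yields \eqref{lem3eq}. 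The main obstacle is really just the bookkeeping of the two roles played by $\delta$ — the tolerance in the hypothesis $||pq||^2\leq\lambda+\delta$ and the left endpoint of the spectral interval — which is handled by keeping $\delta$ below the $\delta_0$ that \autoref{cor1} returns; beyond that, everything rests on the elementary observation $b^2\leq b\leq p$ and the triangle inequality.
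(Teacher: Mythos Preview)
Your proof is correct and follows essentially the same approach as the paper: apply \autoref{cor1} to control $||(1-c)v||$, then bound $||pv||$ via $||pcv||+||p(1-c)v||$ using $c=qc$. The paper's version is terser and leaves the verification $||bq||\leq||pq||$ (from $b^2\leq b\leq p$) implicit; your write-up makes that step explicit and is slightly more careful with the constants, but the argument is the same.
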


\begin{proof}
Let $\delta>0$ be that obtained in \autoref{cor1} with $\epsilon$ replaced with $\epsilon/4$.  If necessary, decrease $\delta$ so that $\delta\leq\epsilon/2$.  Then, for $v\in\mathcal{R}((cb^2c)_{[\lambda-\delta,1]})$,
\begin{eqnarray*}
||pv||^2 &=& \langle pv,pv\rangle\\
&\leq& \langle pcv,pcv\rangle+\epsilon||v||^2/2\\
&\leq& (||pc||^2+\epsilon/2)||v||^2\\
&\leq& (\lambda+\delta+\epsilon/2)||v||^2.\\
&\leq& (\lambda+\epsilon)||v||^2.
\end{eqnarray*}
\end{proof}

\begin{thm}\label{septhm}
If we have $\epsilon>0$ and C*-subalgebras $B$ and $C\neq\{0\}$ of $A$ satisfying $||BC||^2=\lambda<1$, then there exists $D\in[A]^\perp$ with \[||BD||\leq\epsilon\qquad\textrm{and}\qquad||CD||^2\geq1-\lambda-\epsilon.\]
\end{thm}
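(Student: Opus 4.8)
The plan is to realise $D$ as an annihilator of the form $\{x\}^{\perp\perp}$ for a carefully chosen $x\in A_+$ attached to a near-optimal pair $b\in B^1_+$, $c\in C^1_+$, and to read the two estimates off the spectral lemmas \autoref{lem1}--\autoref{lem3}; since every such $D$ lies automatically in $[A]^\perp$, no topological-regularity bookkeeping is needed for that part of the conclusion. First the degenerate case: if $\lambda=0$ then $B\perp C$, and $D=C^{\perp\perp}$ works, being orthogonal to $B$ while $\|CD\|=1$ since $C\neq\{0\}$; so assume $0<\lambda<1$. Given $\epsilon$, feed suitably small fractions of $\epsilon$ into \autoref{lem2} and \autoref{lem3} to obtain one $\delta>0$ witnessing both conclusions, shrunk so that $0<\delta<\lambda$ and $\delta\leq\epsilon^2$. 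Because $\|BC\|^2=\lambda$, pick $b\in B^1_+$ and $c\in C^1_+$ with $\lambda-\delta\leq\|bc\|^2=\|cb^2c\|\leq\lambda$, so that $e:=(cb^2c)_{[\lambda-\delta,1]}\neq0$; and set $p=p_B=\sup B^1_+$, $q=p_C=\sup C^1_+$, so that $b\leq p$, $c\leq q$, $\|pq\|^2=\|BC\|^2=\lambda$ and $\|bq\|^2\leq\lambda$. Thus \autoref{lem1}, \autoref{cor1}, \autoref{lem2} and \autoref{lem3} all apply to $b,c,p,q$ with this $\lambda$ and $\delta$.

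For $x$ I would take a cut-off that keeps only the part of $C$ lying where $b$ is small: using that $A$ is an ideal in its unitisation, $x=h(b)\,c\,h(b)\in A_+$ for any continuous $h\colon[0,1]\to[0,1]$ supported in a small interval $[0,\eta]$ and equal to $1$ near $0$, and the support projection of $D=\{x\}^{\perp\perp}$ is then dominated, in $A''$, by the spectral projection of $b$ over $[0,\eta]$, so the single element $b$ satisfies $\|bd\|\leq\eta$ for every $d\in D^1_+$; choosing $\eta\leq\epsilon$ disposes of that element. The bound $\|CD\|^2\geq1-\lambda-\epsilon$ is the assertion that this $D$ still captures the bulk of $C$: Pythagoras \eqref{Pythag} already gives that the part of $C$ orthogonal to $B$ has relative size at least $1-\lambda$, and combining \autoref{lem2}'s estimate $\|b_{[0,\sqrt\delta]}e\|^2\leq1-\lambda+\epsilon$ with \autoref{cor1} (which forces $c$ to within $\epsilon$ of $1$ on the range of $e$), and invoking \autoref{specann}/\autoref{prp1} to pin down where $D$ sits, should show that passing to the concrete $x\in A$ costs at most a further $\epsilon$.

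The hard part will be $\|BD\|\leq\epsilon$, i.e.\ passing from ``$b$ acts negligibly on $D$'' to ``\emph{all} of $B$ acts negligibly on $D$'': some other positive element of $B$ need not be small on $D$ a priori, so one must control the whole support projection $p=p_B$ against the support of $D$, which is exactly the content of \autoref{lem3}, bounding $\|p\,(cb^2c)_{[\lambda-\delta,1]}\|$. The verification should amount to arranging, via the choice of $h$, that the support projection of $D$ is dominated in $A''$ by the spectral projection of $cb^2c$ handled by \autoref{lem3}, and then transferring the resulting operator inequality to $\|BD\|=\sup_{b'\in B^1_+,\,d\in D^1_+}\|b'd\|$ using that each $d\in D^1_+$ is dominated by that support projection. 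The remaining points — that $x$ generates the intended annihilator, that the various constants survive the $\epsilon$-bookkeeping between the lemmas, and the finite adjustments of $h$ and $\delta$ — are routine estimates of the kind already carried out in the proofs of \autoref{lem1}--\autoref{lem3}.
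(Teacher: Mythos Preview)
Your plan contains a genuine gap in the step you correctly flag as ``the hard part'', and it stems from a misreading of \autoref{lem3}.  That lemma only gives
\[
\|p_B\,(cb^2c)_{[\lambda-\delta,1]}\|^2\leq\lambda+\epsilon,
\]
a bound of order $\lambda$, \emph{not} of order $\epsilon$.  So even if you could arrange $p_D\leq(cb^2c)_{[\lambda-\delta,1]}$ (and there is no reason your $D=\{h(b)ch(b)\}^{\perp\perp}$, supported where $b$ is small, should sit under a spectral projection of $cb^2c$ cut near its \emph{top}), you would only conclude $\|BD\|^2\lesssim\lambda$, which is useless.  In the paper \autoref{lem3} is used for a different purpose entirely: to show the auxiliary element $a$ has norm $\geq1-\lambda-\mu$, which feeds into the \emph{lower} bound $\|CD\|^2\geq1-\lambda-\epsilon$, not the upper bound on $\|BD\|$.

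What you are missing is a cancellation argument.  The paper does not take $D$ supported merely where $b$ is small; it first replaces $c$ by $c'=f_{\lambda-\delta,\lambda-\delta/2}(cb^2c)\in C$, a spectral cut on which $b$ is \emph{uniformly large}: \autoref{lem2} (via \autoref{pnearq}) gives $[c']\,b_{[\delta,1]}\,[c']\geq(\lambda-\mu)[c']$.  Then for an \emph{arbitrary} $b'\in B^1_+$ approximating the unit of $B$, the global hypothesis $\|BC\|^2=\lambda$ forces $c'b'c'\leq\lambda c'^2$, while $b'f_\delta(b)\approx f_\delta(b)$ forces $c'\,b'f_\delta(b)b'\,c'\gtrsim(\lambda-\mu)c'^2$.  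Subtracting, $c'\,b'(1-f_\delta(b))b'\,c'$ has norm $O(\mu)$, and this is what makes $\|b'a'b'\|$ small for $a=(1-f_\delta(b))c'^2(1-f_\delta(b))$.  Your construction $x=h(b)ch(b)$ has no such mechanism: knowing that the \emph{particular} $b$ is small on $D$ says nothing about other $b'\in B$, and the global bound $\|b'c\|^2\leq\lambda$ alone cannot be improved to $\epsilon$ without the matching lower bound on $c'bc'$ to cancel against.
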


\begin{proof} Choose $\delta>0$ small enough that it satisfies \autoref{lem2} and \autoref{lem3} with $\epsilon$ replaced by some $\mu>0$, to be determined later.  Take $c\in C^1_+$ and $b\in B^1_+$ with $||bc||^2>\lambda-\delta/2$.  Let $c'=f_{\lambda-\delta,\lambda-\delta/2}(cb^2c)\in C$ and $a=(1-f_\delta(b))c'^2(1-f_\delta(b))$, so
\begin{eqnarray}
||a|| &=& ||(1-f_\delta(b))c'||^2\nonumber\\
&\geq& ||b_{\{0\}}(cb^2c)_{[\lambda-\delta/2,1]}||^2\nonumber\\
&\geq& 1-||[b](cb^2c)_{[\lambda-\delta,1]}||^2,\textrm{ by \eqref{Pythag}}\nonumber\\
&\geq& 1-\lambda-\mu,\textrm{ by \eqref{lem3eq}.}\label{||s||}
\end{eqnarray}
In particular, $||a||>0$ as long as $\mu<1-\lambda$, and we may define $a'=||a||^{-1}a$.

By \eqref{lem2eq}, we have
\begin{equation}\label{anothereq}
||b_{[0,\sqrt{\delta}]}[c']||^2\leq||b_{[0,\sqrt{\delta}]}(cb^2c)_{[\lambda-\delta,1]}||^2\leq1-\lambda+\mu
\end{equation}
and so, by \autoref{pnearq},
\begin{equation}\label{BDeq}
[c']b_{[\delta,1]}[c']\geq[c']b_{[\sqrt{\delta},1]}[c']\geq(\lambda-\mu)[c'].
\end{equation}
Now take $b'\in B^1_+$ with 
\begin{equation}\label{qdef}
||(1-b')f_{\delta}(b))||\leq\mu,
\end{equation}
and note that, as $||BC||^2=\lambda$,
\begin{equation}\label{r^2}
c'b'c'\leq c'[c'][b'][c']c'\leq\lambda c'^2.
\end{equation}

Putting all this together, we have
\begin{eqnarray*}
(1-\lambda-\mu)||b'a'b'|| &\leq& ||a||||b'a'b'||,\textrm{ by \eqref{||s||}}\\
&=& ||b'ab'||\\
&=& ||b'(1-f_\delta(b))c'^2(1-f_\delta(b))b'||\\
&\leq& ||(b'-b'f_\delta(b)b')c'||^2+2\mu,\textrm{ by \eqref{qdef}},\\
&=& ||c'(b'-b'f_\delta(b)b')^2c'||+2\mu\\
&\leq& ||c'(b'-b'f_\delta(b)b')c'||+2\mu\\
&\leq& ||c'(\lambda-b'b_{[\delta,1]}b')c'||+2\mu,\textrm{ by \eqref{r^2}},\\
&\leq& ||c'(\lambda-[c']b_{[\delta,1]}[c'])c'||+4\mu,\textrm{ again by \eqref{qdef}},\\
&\leq& ||c'(\lambda-\lambda+\mu)c'||+4\mu,\textrm{ by \eqref{BDeq} (and $||BC||^2=\lambda$)}\\
&\leq& 5\mu,\textrm{ and hence},\\
||b'a'||^2 &\leq& ||b'\sqrt{a'}||^2\\
&\leq& 5\mu/(1-\lambda-\mu).
\end{eqnarray*}
Thus this inequality holds for all $b'$ in an approximate unit for $B$, and hence for all $b'\in B^1_+$.  If we let $D=\{f_{1-2\mu,1}(a')\}^{\perp\perp}\in[A]^\perp$ then, for any $d\in D^1_+$, we have $||d-a'd||\leq2\mu$ and hence, for any $b'\in B^1_+$, \[||b'd||^2\leq||b'a'd||^2+4\mu\leq||b'a'||^2+4\mu\leq5\mu/(1-\lambda-\mu)+4\mu.\]  Thus, so long as we choose $\mu>0$ at the start sufficiently small, this immediately gives $||BD||\leq\epsilon$.

Also, $||[1-f_\delta(b)]c'||^2\leq||b_{[0,\sqrt{\delta}]}[c']||^2\leq1-\lambda+\mu$, by \eqref{anothereq}, so as long as we chose $\mu$ at least half as small as the $\delta$ obtained in \autoref{lem2} (from the given $\epsilon$), we can also apply \eqref{lem2eq} with $c$ and $b$ replaced by $1-f_\delta(b)$ and $c'$ respectively to get
\[||c'_{[0,\sqrt{\mu}]}a'_{[1-\mu,1]}||^2\leq||c'_{[0,\sqrt{2\mu}]}((1-f_\delta(b))c'^2(1-f_\delta(b)))_{1-\lambda-2\mu}||^2\leq\lambda+\epsilon,\]
and hence $||CD||^2\geq||f_{\sqrt{\mu}}(c')f_{1-2\mu,1-\mu}(a')||^2\geq||c'_{[\sqrt{\mu},1]}a'_{[1-\mu,1]}||^2\geq1-\lambda-\epsilon$.
\end{proof}

Note that \autoref{septhm} is a modulo-$\epsilon$ generalization of \eqref{Pythag} to annihilators.  Essentially just rephrasing the first part also immediately yields the following.

\begin{thm}\label{0sep}
If $B$ is a C*-subalgebra of $A$ with $\mathrm{sep}(B)<1$ then $\mathrm{sep}(B)=0$.
\end{thm}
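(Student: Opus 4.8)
The plan is to deduce this immediately from \autoref{septhm}, the point being that \autoref{septhm} already contains essentially all the analytic content; here we just need to package it correctly and be careful that the annihilator it produces is nonzero.

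First I would unpack the hypothesis. Recall $\mathrm{sep}(B)=\mathrm{sep}_A(B)=\inf_{D}||BD||$, the infimum taken over nonzero $D$ in the relevant down-set. So $\mathrm{sep}(B)<1$ simply means there is \emph{some} nonzero C*-subalgebra $C\subseteq A$ with $||BC||^2=\lambda$ for a fixed $\lambda<1$. This is the only use of the hypothesis, and any such $C$ will serve as the input to \autoref{septhm}.

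Next, to show $\mathrm{sep}(B)=0$, I would verify that for every $\eta>0$ there is a nonzero $D\in[A]^\perp$ with $||BD||\leq\eta$. Given $\eta>0$, set $\epsilon=\min\!\big(\eta,\tfrac{1-\lambda}{2}\big)>0$ and apply \autoref{septhm} to $B$, $C$ and this $\epsilon$ (legitimate since $C\neq\{0\}$ and $||BC||^2=\lambda<1$). This yields $D\in[A]^\perp$ with
\[
||BD||\leq\epsilon\leq\eta\qquad\text{and}\qquad ||CD||^2\geq 1-\lambda-\epsilon\geq\tfrac{1-\lambda}{2}>0.
\]
The second inequality forces $D\neq\{0\}$: if $D=\{0\}$ then $C\perp D$, so $||CD||=0$, contradicting $||CD||^2>0$ (using the property $B\perp C\Leftrightarrow||BC||=0$ of $||\cdot\cdot||$ recorded in \S\ref{annsec}). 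Hence $D$ is a nonzero annihilator with $||BD||\leq\eta$. As $\eta>0$ was arbitrary, $\mathrm{sep}(B)=0$.

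There is no real obstacle here — the only subtlety is the nonvanishing of $D$, which is exactly why one needs the two-sided conclusion of \autoref{septhm} rather than a bare $\epsilon$-separativity statement: the lower bound $||CD||^2\geq 1-\lambda-\epsilon$ is what guarantees, once $\epsilon$ is chosen small relative to $1-\lambda$, that we have not merely produced the zero annihilator. All the genuine difficulty (the spectral projection estimates of \autoref{lem1}–\autoref{lem3}) has already been absorbed into the proof of \autoref{septhm}.
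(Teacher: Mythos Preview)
Your argument is correct and is exactly the approach the paper takes: it simply notes that the first part of \autoref{septhm} immediately yields the result. You have supplied the one detail the paper leaves implicit, namely that the lower bound $||CD||^2\geq 1-\lambda-\epsilon$ ensures the annihilator produced is nonzero once $\epsilon$ is chosen small relative to $1-\lambda$.
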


With a little more work, \autoref{septhm} also gives us $\epsilon$-separativity.

\begin{thm}\label{epsep}
$[A]^\perp$ is $\epsilon$-separative, for all $\epsilon>0$.
\end{thm}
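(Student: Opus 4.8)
The plan is this. Fix annihilators $B\subsetneqq C$ in $[A]^\perp$; we must produce $D\in[A]^\perp$ with $\{0\}<D\subseteq C$ and $\|BD\|\le\epsilon$. If $B^\perp\wedge C\neq\{0\}$ we are done immediately by taking $D=B^\perp\wedge C$: this is an annihilator contained in $C$, it is nonzero by hypothesis, and $\|BD\|=0$ since $D\subseteq B^\perp$. So assume henceforth that $B^\perp\wedge C=\{0\}$; by \eqref{perpseq} and \autoref{annproj} this says precisely that $p_B<p_C$ while $p_C-p_B$ contains no nonzero open projection, which by \autoref{orthoequiv} means $[A]^\perp$ is not orthomodular here (this is the genuinely new phenomenon).

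The first and, I expect, hardest step is to extract from $B\subsetneqq C$ a nonzero C*-subalgebra $C''$ of $C$ with $\|BC''\|^2=\lambda<1$ --- equivalently, to show $\mathrm{sep}_C(B)<1$. Pick $c\in C^1_+$ with $c\notin B=p_BAp_B\cap A$, so that $[c]\nleq p_B$. Then there is a unit vector $w\in\mathcal{R}(p_C)$ with $p_Bw=0$, and since $p_C=\sup C^1_+$ there is $c_0\in C^1_+$ with $c_0w$ as close to $w$ as we like. Consequently $\langle gw,w\rangle$ is bounded below, where $g:=c_0^2-c_0b_0c_0\in C_+$ for a suitable $b_0\in B^1_+$, so a spectral cut-off $C''$ of $g$ inside $C$ is nonzero and, by estimates in the spirit of \autoref{lem1}--\autoref{cor1}, has $\|BC''\|$ bounded away from $1$. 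The delicate point --- the main obstacle --- is that $p_C-p_B$ need not contain any open projection, so one cannot merely pass to a geometric complement; one has to convert the purely Hilbert-space datum $p_Bw=0$ into an honest open (hereditary) piece of $C$ on which $B$ is quantitatively small.

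Given such a $C''$, we finish in one of two (equivalent) ways. Applying \autoref{0sep} with $C$ in place of $A$ upgrades $\mathrm{sep}_C(B)\le\|BC''\|<1$ to $\mathrm{sep}_C(B)=0$, so that some nonzero $D\in[C]\subseteq[A]^\perp$ has $\|BD\|<\epsilon$, which is the required $D$. Alternatively, and more in the spirit of \autoref{septhm}, one re-runs the proof of that theorem with the pair $B,C''$ and with $\epsilon$ there replaced by a sufficiently small $\mu>0$: since $b\in B\subseteq C$ forces $f_\delta(b)\in C$ and (in the notation of that proof) $c'\in C''\subseteq C$, the element $a=(1-f_\delta(b))c'^2(1-f_\delta(b))$ lies in $C$, whence $D=\{f_{1-2\mu,1}(a')\}^{\perp\perp}\subseteq C^{\perp\perp}=C$, and the estimates there give $\|BD\|\le\epsilon$ together with $D\neq\{0\}$. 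Either way, choosing the auxiliary parameter small enough at the outset completes the proof.
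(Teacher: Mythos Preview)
Your second step is exactly right and matches the paper: once you have a nonzero $C''\subseteq C$ with $\|BC''\|<1$, either \autoref{0sep} (applied inside $C$) or a direct re-run of \autoref{septhm} finishes the job, and your observation that $a=(1-f_\delta(b))c'^2(1-f_\delta(b))\in C$ (since $b\in B\subseteq C$ and $c'\in C''\subseteq C$) is precisely what ensures the resulting $D$ lies in $C$; the paper uses this implicitly.

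The gap is in your first step. Your construction $g=c_0(1-b_0)c_0$ with $b_0\in B^1_+$ only controls one element of $B$: a spectral cut-off $C''$ of $g$ may well satisfy $\|b_0C''\|<1$ without satisfying $\|BC''\|<1$, since other elements of $B$ need not be small on $C''$. The vector $w$ with $p_Bw=0$ is pure Hilbert-space data and does not obviously transfer to a uniform bound over all of $B^1_+$; the lemmas you invoke (\autoref{lem1}, \autoref{cor1}) give estimates in terms of a fixed pair $b,c$ and a projection $q\geq c$, not in terms of $p_B$.

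The paper's device for this step is short and worth internalizing: since $c\in C\setminus B$ and $B=B^{\perp\perp}$, there is $b\in B^\perp_+$, $\|b\|=1$, with $bc\neq0$. The point of choosing $b\in B^\perp$ (rather than $b_0\in B$) is that then $p_B\leq b_{\{0\}}\leq b_{[0,\sqrt\delta]}$, so bounding a single spectral projection of $b$ bounds all of $B$ at once. Setting $q=[c]$, $\lambda=\|bq\|^2>0$, choosing $\delta$ from \autoref{lem2}, and taking $D_0=\{f_{s,s'}(cb^2c)\}^{\perp\perp}\subseteq C$ for $s<s'$ in $(\lambda-\delta,\|bc\|^2)$, one gets
\[
\|BD_0\|^2\leq\|b_{[0,\sqrt\delta]}(cb^2c)_{[\lambda-\delta,1]}\|^2\leq 1-\lambda+\epsilon<1
\]
directly from \autoref{lem2}. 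This is the $C''$ you were looking for, and your second step then applies verbatim.
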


\begin{proof}
Take $B,C\in[A]^\perp$ with $B\subsetneqq C$, so we have $c\in C^1_+\setminus B$.  This means we have $b\in B^\perp_+$ with $||b||=1$ and $bc\neq0$, and hence $b[c]\neq0$.  Set $q=[c]$, $\lambda=||bq||^2$, take positive $\epsilon<\lambda$ and let $\delta>0$ be that obtained in \autoref{lem2}.  Note that we may now assume that $||bc||^2>\lambda-\delta$ by replacing $c$ with $f_\mu(c)$ for sufficiently small $\mu$.  Take $s,s'\in(\lambda-\delta,||bc||^2)$ with $s<s'$ and set $D=\{f_{s,s'}(cb^2c)\}^{\perp\perp}$.  Then we see that $p_D\leq(cb^2c)_{[s,1]}\leq (cb^2c)_{[\lambda-\delta,1]}$ and $p_B\leq b_{\{0\}}\leq b_{[0,\sqrt{\delta}]}$ so, by \autoref{lem2}, \[||BD||^2\leq||b_{[0,\sqrt{\delta}]}(cb^2c)_{[\lambda-\delta,1]}||^2\leq1-\lambda+\epsilon<1.\]  Now simply apply \autoref{septhm} to get another $D\in[A]^\perp$ with $||BD||\leq\epsilon$.
\end{proof}

\begin{cor}\label{SP}
If $A$ has property (SP) then $[A]^\perp\cong[\mathcal{P}(A)]^\perp$.
\end{cor}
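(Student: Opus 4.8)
The plan is to prove that $\mathcal{P}(A)$ is join-dense in $A$ with respect to the preorder $\dashv$ induced by $\bot$; granting this, \eqref{jdorthoiso} at once supplies the orthoisomorphism $[A]^\perp\cong[\mathcal{P}(A)]^\perp$ (with inverse $U\mapsto U\cap\mathcal{P}(A)$). Throughout I use that property (SP) means every nonzero hereditary C*-subalgebra of $A$ contains a nonzero projection, and that $\{a\}^\perp=\{a^*a\}^\perp$, so it is enough to treat positive $a$.

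First I would establish the weaker fact that $\mathcal{P}(A)$ is \emph{order}-dense in $A$ with respect to $\dashv$ (see \eqref{densedef}). Given $a\in A_+$ with $a\neq0$, the hereditary C*-subalgebra $\overline{aAa}$ is nonzero, so by (SP) it contains a nonzero projection $p$; write $p=\lim_n ax_na$ with $x_n\in A$. Since $a=a^*$, every $b\in\{a\}^\perp$ satisfies $ab=ba=0$, and hence $pb=\lim_n ax_n(ab)=0$ and $bp=\lim_n(ba)x_na=0$, so $b\in\{p\}^\perp$. Thus $\{a\}^\perp\subseteq\{p\}^\perp$, i.e. $p\dashv a$, and as $p\neq0$ forces $\mathbf{0}<p$, this is exactly order-density.

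The heart of the matter — and the reason this appears as a corollary of \autoref{epsep} rather than an immediate consequence of (SP) — is the upgrade from order-density to join-density, which is precisely what separativity delivers via \autoref{jdod}. By \autoref{epsep} the complete ortholattice $[A]^\perp$ is separative, and this passes to the preorder $(A,\dashv)$: if $a\not\dashv b$ then $\{a\}^{\perp\perp}\nsubseteq\{b\}^{\perp\perp}$, so separativity of $[A]^\perp$ yields a nonzero annihilator $D\subseteq\{a\}^{\perp\perp}$ with $D\wedge\{b\}^{\perp\perp}=\{0\}$; any nonzero $c\in D_+$ then has $c\dashv a$, while no nonzero $s\in A$ can satisfy both $s\dashv c$ and $s\dashv b$, since that would force $\{s\}^{\perp\perp}\subseteq\{c\}^{\perp\perp}\cap\{b\}^{\perp\perp}\subseteq D\wedge\{b\}^{\perp\perp}=\{0\}$ and hence $s=0$. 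So \autoref{jdod} turns the order-dense set $\mathcal{P}(A)$ into a join-dense one, and \eqref{jdorthoiso} completes the proof. Equivalently, one can argue join-density head-on: if, for an annihilator $B$, the join $E\in[A]^\perp$ of the projections contained in $B$ were proper in $B$, separativity of $[A]^\perp$ would produce a nonzero $D\subseteq B$ with $D\wedge E=\{0\}$, yet (SP) places a nonzero projection $p$ of $B$ inside $D$, forcing $\{0\}\neq\{p\}^{\perp\perp}\subseteq D\wedge E=\{0\}$. Everything apart from invoking separativity is routine continuous functional calculus together with the formal machinery of \S\ref{TheCompletion}, so I expect the transfer of separativity from $[A]^\perp$ to $(A,\dashv)$ to be the only place requiring real care.
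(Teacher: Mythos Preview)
Your proof is correct and follows essentially the same route as the paper's: establish order-density of $\mathcal{P}(A)$ in $(A,\dashv)$ from property (SP), invoke separativity (via \autoref{epsep}) to upgrade to join-density through \autoref{jdod}, and conclude with \eqref{jdorthoiso}. The paper compresses the transfer of separativity from $[A]^\perp$ to $(A,\dashv)$ into the phrase ``$[A]^\perp$ and hence $A$ is separative,'' whereas you spell it out; but the argument is the same.
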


\begin{proof}
Property (SP) means that every hereditary C*-subalgebra of $A$ contains a non-zero projection (see \cite{Blackadar1994} Definition 6.1.1) which, by the comments after \autoref{prp1}, is equivalent to saying every annihilator contains a non-zero projection.  This, in turn, is equivalent to saying $\mathcal{P}(A)$ is order-dense in $A$ in the induced preorder.  As $[A]^\perp$ and hence $A$ is separative, this is equivalent to saying $\mathcal{P}(A)$ is join-dense in $A$, by \autoref{jdod}.  Now $[A]^\perp\cong[\mathcal{P}(A)]^\perp$, by \eqref{jdorthoiso}.
\end{proof}

Note that $[\mathcal{P}(A)]^\perp$ is just the canonical completion by cuts of the orthomodular partial order $\mathcal{P}(A)$.  However, this does not necessarily mean that $[A]^\perp$ is orthomodular for $A$ with property (SP), as orthomodularity is not necessarily preserved by completions.  In fact, an example is even given in \cite{Adams1969} of a modular lattice such that its completion is not orthomodular, and \autoref{nonorthoxpl}, with $X$ replaced by the Cantor space, even yields a real rank zero $A$ such that $[A]^\perp$ is not orthomodular.

Replacing projections above with any essential ideal (for any C*-algebra now), we get the same result (and more can be said in this case \textendash\, see \autoref{simsub}).

\begin{prp}\label{ess}
If $B$ contains an essential ideal in $A$ then $[A]^\perp\cong[B]^\perp$.
\end{prp}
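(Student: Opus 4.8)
The plan is to imitate the proof of \autoref{SP}: find an order-dense subset of $A$ (with respect to the preorder $\dashv$ induced by $\perp$) that sits inside $B$, promote order-density to join-density using separativity, and then quote \eqref{jdorthoiso}.

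So first I would fix an ideal $I\trianglelefteq A$ with $I\subseteq B$ and $I^{\perp\perp}=A$; since $I^\perp=I^{\perp\perp\perp}=A^{\perp}=\{0\}$, the hypothesis just says that $B$ contains an essential ideal $I$ of $A$. The heart of the argument is to show that $I$ is order-dense in $A$ for $\dashv$. Because $\{a\}^\perp=\{a^*a\}^\perp$ for every $a\in A$, it suffices to handle $a\in A_+$ with $a\neq0$. Here I would use that $I$ is a (two-sided, self-adjoint) ideal: $aIa\subseteq I$, and if $aIa=\{0\}$ then for every $j\in I$ we get $a j^*j a\in aIa=\{0\}$, so $\|ja\|^2=\|aj^*ja\|=0$; thus $Ia=\{0\}$, and hence $aI=(Ia)^*=\{0\}$, so $a$ annihilates $I$, i.e.\ $a\in I^\perp=\{0\}$, contradicting $a\neq0$. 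Therefore there is $j\in I_+$ with $0\neq aja\in I$, and since $0\le aja\le\|j\|a^2$ while $\{a\}^{\perp\perp}$ is a hereditary C*-subalgebra of $A$ containing $a$ (hence $a^2$), we get $aja\in\{a\}^{\perp\perp}$, i.e.\ $aja\dashv a$. So every nonzero $a\in A_+$ dominates a nonzero element of $I$ in $\dashv$, which gives order-density of $I$, and a fortiori of $B\supseteq I$, in $A$.

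To conclude: $[A]^\perp$ is separative by \autoref{epsep}, hence so is $A$ in the preorder $\dashv$ (as used in the proof of \autoref{SP}), so \autoref{jdod} upgrades order-density of $B$ in $A$ to join-density of $B$ in $A$ for $\dashv$. Then \eqref{jdorthoiso}, applied with $S=A$ and $T=B$, produces an orthoisomorphism $[A]^\perp\cong[B]^\perp$, where $[B]^\perp$, computed from the restriction of $\perp$ to $B$, is precisely the orthocompletion of the C*-algebra $B$ in its own right.

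The only step with any content is the order-density claim, and within it the small computation $aIa=\{0\}\Rightarrow a\in I^\perp$; this is exactly where the ideal hypothesis is used (an essential \emph{hereditary} subalgebra would not be enough, since one could not conclude $aja\in I$). Everything else is a direct citation. Alternatively one could observe that $I$ is essential in $B$ as well (as $I^{\perp}\cap B=\{0\}$) and run the order-density argument separately in $A$ and in $B$ to obtain $[A]^\perp\cong[I]^\perp\cong[B]^\perp$, but the one-step version above is shorter.
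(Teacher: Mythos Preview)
Your proof is correct and follows essentially the same route as the paper's: show that the essential ideal $I\subseteq B$ is order-dense in $A$ with respect to $\dashv$, then invoke separativity (\autoref{epsep}) and \eqref{jdorthoiso} exactly as in the proof of \autoref{SP}. The only cosmetic difference is that the paper exhibits $ba\in I$ (for some $b\in I_+$ with $ba\neq 0$) as the nonzero element below $a$, whereas you use $aja$; both work for the same reason, namely that $I$ is a two-sided ideal and $\{a\}^{\perp\perp}$ is hereditary.
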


\begin{proof}
Take $a\in A_+$.  If $B$ is an essential ideal, there exists $b\in B_+$ with $0\neq ba\in B$ and hence $\{ba\}^{\perp\perp}\subseteq\{a\}^{\perp\perp}$, i.e. $ba$ is below $a$ in the induced preorder.  As $a$ was arbitrary, $B$ is order-dense in $A$ in the induced preorder.  The result now follows as in the proof of \autoref{SP}.
\end{proof}

So internally, essential ideals are the same as the C*-algebra itself, as far as the annihilators are concerned.  The following shows that the same is true externally.

\begin{prp}\label{essidann}
If $C$ is an essential ideal in a hereditary C*-subalgebra $B$ of $A$ then $B^\perp=C^\perp$.
\end{prp}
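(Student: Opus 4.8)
Since $C\subseteq B$, antitonicity of ${}^\perp$ (see \eqref{perp1}) immediately gives $B^\perp\subseteq C^\perp$, so the whole content is the reverse inclusion $C^\perp\subseteq B^\perp$, which I would establish by a direct computation. As $B$ and $C$ are $*$-closed we have $B^\perp=B^\top$ and $C^\perp=C^\top$, and unravelling $\top$ one sees that $a\in B^\perp$ precisely when $ab=ab^*=a^*b=a^*b^*=0$ for all $b\in B$; moreover $C^\perp$, being an annihilator, is $*$-closed, so $a^*\in C^\perp$ whenever $a\in C^\perp$. Since $B_+$ spans $B$ and its elements are self-adjoint, $a\in B^\perp$ as soon as $ab^{1/2}=0$ and $a^*b^{1/2}=0$ for every $b\in B_+$; combining this with the remark about $a^*$, the whole statement reduces to the single claim: if $a\in C^\perp$ and $b\in B_+$, then $ab^{1/2}=0$.

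For the claim I would look at $d=b^{1/2}a^*ab^{1/2}=(ab^{1/2})^*(ab^{1/2})$, so $d=0$ iff $ab^{1/2}=0$ by the C*-identity. Because $0\le d\le\|a\|^2b$ and $B$ is a hereditary C*-subalgebra of $A$, we have $d\in B_+$. Next I would show $d$ annihilates $C$ from inside $B$: for $c\in C$ the element $b^{1/2}c^*$ lies in $C$ (as $C$ is an ideal of $B$ and $b^{1/2}\in B$), so $a(b^{1/2}c^*)=0$ since $a\in C^\perp$, and taking adjoints $cb^{1/2}a^*=0$; hence $cd=(cb^{1/2}a^*)(ab^{1/2})=0$, and since $d=d^*$ this says $d\in C^{\perp_B}$, the annihilator of $C$ for the restriction of $\perp$ to $B$. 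But $C$ essential in $B$ means exactly $C^{\perp_B}=\{0\}$: from the definition $C^{\perp_B\perp_B}=B$ it follows that $C^{\perp_B}$ annihilates itself and so vanishes (this is also the classical reading of ``essential ideal''). Therefore $d=0$, so $ab^{1/2}=0$, as claimed.

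I do not foresee any genuine difficulty; the proof is short. The only things needing a little care are bookkeeping: translating the symmetric relations $\perp$ and $\top$ into the two-sided vanishing conditions defining the annihilators, and verifying that $b^{1/2}a^*ab^{1/2}$ genuinely lands back in $B$ --- which is exactly where the hereditary property of $B$ is used and is what lets us invoke the essentiality of $C$.
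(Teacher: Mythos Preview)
Your proof is correct and follows essentially the same route as the paper's. The paper restricts to $a\in C^\perp_+$ and shows $bab\in C^\perp\cap B=\{0\}$ (hence $ab=0$), while you keep $a\in C^\perp$ arbitrary and show $b^{1/2}a^*ab^{1/2}\in C^{\perp_B}=\{0\}$; both arguments sandwich to land in $B$ via hereditarity and then invoke essentiality, so the difference is purely cosmetic.
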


\begin{proof}
For $a\in C^\perp_+$, $b\in B_+$ and $c\in C_+$ we have $bc\in C$ and hence $babc=0$.  As $c\in C_+$ was arbitrary, $bab\in C^\perp\cap B=\{0\}$ and hence $ab=0$.  As $b\in B_+$ was arbitrary, $a\in B^\perp$, i.e. $C^\perp\subseteq B^\perp$, while the reverse inclusion is immediate.
\end{proof}

Note that if \autoref{essidann} were true for all hereditary C*-subalgebras of a particular C*-algebra $A$, not just ideals, then $[A]^\perp$ would have to be orthomodular.

\subsection{Annihilator Ideals}\label{annideals}

Now that we know the annihilators in an arbitrary C*-algebra are separative, we immediately have the type decompositions given in \S\ref{tdsec}.  The only question that remains is whether these types can also be characterized algebraically in the same way as the projections appearing in the classical type decomposition of a von Neumann algebra.

Our first task is to identify the central annihilators.

\begin{thm}\label{centralannihilators}
$B\in[A]^\perp$ is central if and only if $B$ is an ideal.
\end{thm}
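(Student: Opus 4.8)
The plan is to transport everything through the order isomorphism $[A]^\perp\cong\overline{\mathcal{P}(A'')}^\circ$ of \autoref{annproj} and reduce to \autoref{centralideals}, which says an open projection $p$ lies in $A'$ (equivalently is central in $\mathcal{P}(A'')$) exactly when $pAp\cap A$ is an ideal. So it is enough to prove that $B\in[A]^\perp$ is central in the ortholattice $[A]^\perp$ if and only if $p_B\in A'$.

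For the ``if'' direction, suppose $B$ is an ideal. Then $p_B\in A'$ by \autoref{centralideals}, $\overline{p_B}$ is central in $\mathcal{P}(A'')$ by \autoref{centralprojections}, so $p_{B^\perp}=\overline{p_B}^\perp=p_B^{\perp\circ}$ is central too and $B^\perp$ is an ideal by \autoref{centralideals} (the remark just after \autoref{centralprojections}). Hence $B\oplus B^\perp$, an honest internal direct sum of C*-algebras since $B\perp B^\perp$, is an ideal of $A$, and it is essential because $(B\oplus B^\perp)^\perp=B^\perp\cap B^{\perp\perp}=B\wedge B^\perp=\mathbf{0}$. By \autoref{ess} the map $U\mapsto U\cap(B\oplus B^\perp)$ is an orthoisomorphism of $[A]^\perp$ onto $[B\oplus B^\perp]^\perp$, and since orthogonality in a direct sum of C*-algebras is coordinatewise, $[B\oplus B^\perp]^\perp\cong[B]^\perp\times[B^\perp]^\perp$. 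Under these identifications $B$ corresponds to $(\mathbf{1},\mathbf{0})$, which is central in a product ortholattice; as orthoisomorphisms preserve centrality and the displayed isomorphism sends $B\in[A]^\perp$ to $B\cap(B\oplus B^\perp)=B$, we conclude $B$ is central in $[A]^\perp$.

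For the ``only if'' direction, suppose $B$ is central in $[A]^\perp$. By \autoref{centralequiv} we have $C=(C\wedge B)\vee(C\wedge B^\perp)$ for every $C\in[A]^\perp$, with meets being intersections. Fix $a\in A_+$ and apply this with $C=\{a\}^{\perp\perp}$, whose projection is the range projection $[a]=p_C$. Every element of the subalgebra $(C\wedge B)+(C\wedge B^\perp)$ commutes with $p_B$ (elements of $C\wedge B$ satisfy $p_Bx=x=xp_B$, those of $C\wedge B^\perp$ satisfy $p_Bx=0=xp_B$), so $p_B$ commutes with $p_{C\wedge B}+p_{C\wedge B^\perp}$; I then need to promote this to the assertion that $p_B$ commutes with the topologically regular open projection $p_C=\overline{p_{C\wedge B}+p_{C\wedge B^\perp}}^\circ=[a]$. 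Granting that, $p_B$ commutes with the range projection of every element of $A_+$; since $a_{(s,\infty)}$ is the range projection of $(a-s)_+\in A$ for $s>0$ and $a$ is a (weak) integral of such spectral projections, $p_B$ commutes with all of $A$, i.e. $p_B\in A'$, so $B=p_BAp_B\cap A$ is an ideal by \autoref{centralideals}.

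The main obstacle is exactly the promotion step in the last paragraph — passing from the purely lattice-theoretic centrality of $B$ to the operator identity $p_Ba=ap_B$. This is where the failure of orthomodularity bites, since it amounts to showing that a projection commuting with an open projection $r$ also commutes with its regularization $\overline{r}^\circ$; I expect to handle it using the explicit description of joins in $\overline{\mathcal{P}(A'')}^\circ$ together with \autoref{commutativityimplication}, applied with $r=p_{C\wedge B}+p_{C\wedge B^\perp}$, which is dense in $p_C$ and is split by $p_B$ and $p_B^\perp$.
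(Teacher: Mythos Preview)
Your ``if'' direction is correct; reducing to the product $[A]^\perp\cong[B]^\perp\times[B^\perp]^\perp$ via \autoref{ess} is a valid alternative to the paper's direct verification that $D=(D\cap B)\vee(D\cap B^\perp)$ for every $D\in[A]^\perp$.

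The ``only if'' direction has a genuine gap precisely where you flag it, and your proposed fix does not work. You correctly observe that $p_B$ commutes algebraically with $r=p_{C\cap B}+p_{C\cap B^\perp}$, hence with $\overline r$ by passing to strong limits; but the step to $\overline{r}^\circ=p_C$ is the whole difficulty, and \autoref{commutativityimplication} cannot bridge it: that proposition assumes orthomodularity of $\overline{\mathcal{P}(A'')}^\circ$ (which can fail, see \autoref{nonorthoxpl}) and in any case runs the wrong way, deducing lattice commutativity from algebraic commutativity rather than the reverse. The remark just after \autoref{centralannihilators} in the paper stresses that commutativity in $[A]^\perp$ and in $\mathcal{P}(A'')$ genuinely differ, so there is no cheap passage from lattice centrality of $B$ to $p_B\in A'$. (A small side slip: $p_{\{a\}^{\perp\perp}}=\overline{[a]}^\circ$, not $[a]$, in general.)

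The paper sidesteps the promotion problem entirely by arguing contrapositively with an explicit witness. If $B$ is not an ideal, pick $a\in A_+^1$ and $b\in B_+^1$ with $ab\notin B$; hereditarity gives $ab^2a\notin B$, and for small $\delta$ one sets $C=\{f_\delta(ab^2a)\}^{\perp\perp}$. Then $C\not\subseteq B$, while any nonzero $c\in C$ satisfies $cc^*\le\lambda\,ab^2a$ for some $\lambda>0$, so $c=abd$ for some $d$ by Douglas factorisation, and then $bc=babd\neq0$ by \autoref{xab}; thus $C\cap B^\perp=\{0\}$ and $(C\cap B)\vee(C\cap B^\perp)=C\cap B\subsetneq C$, so $B$ is not central. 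This concrete construction is the key idea your approach is missing.
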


\begin{proof}
We first show that, whenever $B,D\in[A]^\perp$ and $B$ is an ideal, \[(D\cap B)^\perp\cap(D\cap B^\perp)^\perp\subseteq D^\perp.\]  To see this, take any $e\notin D^\perp$, so there is $d\in D$ such that $ed\neq0$.  But then there must also be $b\in B$ or $b\in B^\perp$ such that $edb\neq0$ and hence $edbd\neq0$.  But then $dbd\in D\cap B$ and hence $e\notin (D\cap B)^\perp$, or $dbd\in D\cap B^\perp$ and hence $e\notin (D\cap B^\perp)^\perp$.  So $e\notin(D\cap B)^\perp\cap(D\cap B^\perp)^\perp$ and the inclusion is proved.  The reverse inclusion is immediate and thus $B\in\mathrm{c}[A]^\perp$, by \autoref{centralequiv}.

Conversely, if $B$ is not an ideal then we have $a\in A^1_+$ and $b\in B^1_+$ with $ab\notin B$.  As $ba^2b\in B$ we must have $ab^2a\notin B$ (because $B$ is hereditary so $x\in B\Leftrightarrow x^*x\in B$ and $xx^*\in B$ \textendash\, see \cite{Pedersen1979} Theorem 1.5.2).  If $cc^*\leq\lambda ab^2a$ for some $\lambda\geq0$ then $c=abd$, for some $d\in\mathcal{B}(H)$, by \cite{Douglas1966} (alternatively, one could obtain a similar factorization in $A$ with \cite{Pedersen1979} Proposition 1.4.5).  Then $bc=babd=0\Leftrightarrow c=abd=0$, by \autoref{xab}, and hence $c\notin B^\perp$ as long as $c\neq0$.  So, if we set $C=\{f_\delta(ab^2a)\}^{\perp\perp}$ for $\delta>0$ sufficiently small that $f_\delta(ab^2a)\notin B$, then $C$ is not contained in $B$ and $cc^*\leq2||c||^2\delta^{-1}ab^2a$, for all $c\in C$, and hence $C\cap B^\perp=\{0\}$.  So we have $(B\wedge C)\vee(B^\perp\wedge C)=B\cap C\subsetneqq C$ and hence $B$ is not central. 
\end{proof}

In particular, $B$ is central (in $[A]^\perp$) if and only if $p_B$ is (in $\mathcal{P}(A'')$), by \autoref{centralideals}. This is perhaps a little surprising, considering that commutativity itself is not the same in $[A]^\perp$ and $\mathcal{P}(A'')$, as shown by the examples in \S\ref{Examples}.

Another important thing to note is the difference between central covers in $[A]^\perp$ and the central covers defined in \cite{Pedersen1979} 2.6.2.  There, the central cover $\mathrm{c}(a)$ of an element $a\in A''_\mathrm{sa}$ is defined to be the smallest $b\in(A'\cap A'')_\mathrm{sa}$ with $a\leq b$.  If $p\in\mathcal{P}(A'')$, this means that $\mathrm{c}(p)=\mathrm{c}_{\mathcal{P}(A'')}(p)$, i.e. central covers in this algebraic sense are the same for projections as central covers in the order theoretic sense \emph{with respect to} $\mathcal{P}(A'')$ (not $\overline{\mathcal{P}(A'')}^\circ$).  In general we have $\mathrm{c}(p_B)\leq p_{\mathrm{c}(B)}$, for $B\in[A]^\perp$, but this inequality can be strict.  We will primarily be concerned with central covers in $[A]^\perp$ rather than $A''_\mathrm{sa}$, although we would be remiss not to point out the following connections.

\begin{prp}\label{centralclosures}
If $B$ is a C*-subalgebra of $A$ then $p_{\mathrm{c}(B)}=\overline{\mathrm{c}(p_B)}^\circ$.
\end{prp}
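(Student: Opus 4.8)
The plan is to establish the two inclusions $p_{\mathrm{c}(B)}\leq\overline{\mathrm{c}(p_B)}^\circ$ and $\overline{\mathrm{c}(p_B)}^\circ\leq p_{\mathrm{c}(B)}$ separately, where $\mathrm{c}(p_B)$ is the algebraic central cover of the open projection $p_B=\sup(B^1_+)$ in $A''$ and $\mathrm{c}(B)=\mathrm{c}(B^{\perp\perp})$ is the central cover of $B$ in the separative complete ortholattice $[A]^\perp$. The whole argument is a matter of translating "central cover taken in $[A]^\perp$'' into "central cover taken in $A''$'', the bridge being that $p\mapsto\overline{p}^\circ$ turns closed projections into topologically regular open projections (by \eqref{intclosedtopreg}) and that centrality is preserved by both closure and interior (by \autoref{centralprojections}).

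For the first inclusion, set $c=\mathrm{c}(p_B)$, which lies in $A'\cap A''$. Then $\overline{c}$ is central by \autoref{centralprojections}, hence so is $\overline{c}^\circ$ by a second application of \autoref{centralprojections}; and $\overline{c}^\circ$, being the interior of the closed projection $\overline{c}$, is topologically regular by \eqref{intclosedtopreg}. Consequently $D:=\overline{c}^\circ A\overline{c}^\circ\cap A$ is an annihilator with $p_D=\overline{c}^\circ$ (by \autoref{annproj}), it is an ideal (by \autoref{centralideals}, since $\overline{c}^\circ$ is central and open), and so it is a central element of $[A]^\perp$ (by \autoref{centralannihilators}). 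Since $p_B$ is open and $p_B\leq c\leq\overline{c}$, we get $p_B=p_B^\circ\leq\overline{c}^\circ=p_D$, and a routine heredity argument ($b\in B_+$ with $b\leq p_D$ gives $b=p_Dbp_D\in D$) shows $B\subseteq D$, hence $B^{\perp\perp}\subseteq D^{\perp\perp}=D$. As $\mathrm{c}(B)$ is the smallest central annihilator containing $B$, we conclude $\mathrm{c}(B)\subseteq D$, and therefore $p_{\mathrm{c}(B)}\leq p_D=\overline{\mathrm{c}(p_B)}^\circ$.

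For the reverse inclusion, I would exploit that $\mathrm{c}(B)$ is itself a central annihilator. Being an annihilator, its open projection $p_{\mathrm{c}(B)}$ is topologically regular by \autoref{annproj}, i.e. $p_{\mathrm{c}(B)}=\overline{p_{\mathrm{c}(B)}}^\circ$; being an ideal (by \autoref{centralannihilators}), $p_{\mathrm{c}(B)}$ is central in $A''$ by \autoref{centralideals}. From $B\subseteq\mathrm{c}(B)$ we get $p_B\leq p_{\mathrm{c}(B)}$, so $p_{\mathrm{c}(B)}$ is a central projection of $A''$ dominating $p_B$; minimality of the algebraic central cover then gives $\mathrm{c}(p_B)\leq p_{\mathrm{c}(B)}$. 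Applying closure and then interior (both monotone) yields $\overline{\mathrm{c}(p_B)}^\circ\leq\overline{p_{\mathrm{c}(B)}}^\circ=p_{\mathrm{c}(B)}$, which is the desired inequality; combining the two gives equality.

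I do not expect a genuine obstacle here: the only points needing care are the upgrade from $p_B\leq p_D$ to $B\subseteq D$ (which uses heredity of $D$) and the legitimacy of treating $\mathrm{c}(B)$ as the infimum of the central annihilators containing $B$ — this uses that $[A]^\perp$ is a separative complete ortholattice, so that $\mathrm{c}[A]^\perp$ is a complete sublattice by \eqref{centresublattice}, which has already been secured via \autoref{epsep}. Everything else is the bookkeeping described above.
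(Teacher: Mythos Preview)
Your proof is correct and follows essentially the same route as the paper's own argument; the two inequalities are established by the same mechanisms (centrality preserved under closure/interior via \autoref{centralprojections}, \autoref{centralideals}, \autoref{centralannihilators}, and topological regularity via \eqref{intclosedtopreg} and \autoref{annproj}), merely presented in the opposite order and with a few more details spelled out.
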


\begin{proof}
As $\mathrm{c}(B)$ is an ideal, $p_{\mathrm{c}(B)}$ is a central projection and $B\subseteq\mathrm{c}(B)$ so $p_B\leq p_{\mathrm{c}(B)}$ and hence $\mathrm{c}(p_B)\leq p_{\mathrm{c}(B)}$ which, in turn, gives $\overline{\mathrm{c}(p_B)}^\circ\leq\overline{p}_{\mathrm{c}(B)}^\circ=p_{\mathrm{c}(B)}$.

For the reverse inequality, first note that $\overline{\mathrm{c}(p_B)}^\circ$ is central, by \autoref{centralprojections}.  So $\overline{\mathrm{c}(p_B)}^\circ A\overline{\mathrm{c}(p_B)}^\circ\cap A$ is an annihilator ideal, by \autoref{centralideals}, which certainly contains $B$ and hence also $\mathrm{c}(B)$.  Thus $p_{\mathrm{c}(B)}\leq\overline{\mathrm{c}(p_B)}^\circ$.
\end{proof}

Even if $p_{\mathrm{c}(B)}\neq\mathrm{c}(p_B)$, the analogous notion of `very orthogonal' is equivalent.

\begin{cor}\label{vo}
For $B,C\in[A]^\perp$, the following are equivalent.
\begin{enumerate}
\item\label{vo1} $B$ and $C$ are very orthogonal.
\item\label{vo2} $\mathrm{c}(p_B)\mathrm{c}(p_C)=0$
\item\label{vo3} $BAC=\{0\}$.
\end{enumerate}
\end{cor}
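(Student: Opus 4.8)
The plan is to introduce the closed two-sided ideals $J_B=\overline{ABA}$ and $J_C=\overline{ACA}$ of $A$ generated by $B$ and $C$, and to show that each of \eqref{vo1}, \eqref{vo2} and \eqref{vo3} is equivalent to the single algebraic condition $J_BJ_C=\{0\}$. Everything hinges on two identifications I would establish at the outset. The first is $\mathrm{c}(B)=J_B^{\perp\perp}$ (central cover in $[A]^\perp$): the set $J_B^{\perp\perp}$ is an annihilator, it is an ideal since $J_B$ is (apply the remark following \autoref{centralprojections} twice), hence it is central by \autoref{centralannihilators}; and it is the smallest such, because any central --- equivalently, ideal --- annihilator containing $B$ is a norm-closed ideal, hence contains $J_B$, hence, being an annihilator, contains $J_B^{\perp\perp}$. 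The second is that $p_{J_B}$ is central in $A''$ with $\mathrm{c}(p_B)=p_{J_B}$: centrality follows from \autoref{centralideals}, since $p_{J_B}Ap_{J_B}\cap A=J_B$ is an ideal ($J_B$ being hereditary); then $p_B\le p_{J_B}$ gives $\mathrm{c}(p_B)\le p_{J_B}$, while $\{a\in A:\mathrm{c}(p_B)a=a\}$ is a closed ideal containing $B$, hence $J_B$, which forces $p_{J_B}\le\mathrm{c}(p_B)$. (Only $\mathrm{c}(p_B)\le p_{J_B}$ and the definitional $\mathrm{c}(p_B)\ge p_B$ are actually used below, and symmetrically for $C$.)

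For \eqref{vo3}$\,\Leftrightarrow\,J_BJ_C=\{0\}$: if $BAC=\{0\}$ then on generators $(a_1ba_2)(a_3ca_4)=a_1\big(b(a_2a_3)c\big)a_4\in A(BAC)A=\{0\}$, so $(ABA)(ACA)=\{0\}$ and hence $J_BJ_C=\{0\}$ by continuity of multiplication; conversely, if $J_BJ_C=\{0\}$, then for $b\in B$, $a\in A$, $c\in C$ we have $b\in J_B$ and $ac\in J_C$, so $bac=b(ac)\in J_BJ_C=\{0\}$, i.e. $BAC=\{0\}$.

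For \eqref{vo1}$\,\Leftrightarrow\,J_BJ_C=\{0\}$: by the first identification, \eqref{vo1} reads $J_B^{\perp\perp}\perp J_C^{\perp\perp}$, i.e. $J_C^{\perp\perp}\subseteq(J_B^{\perp\perp})^\perp=J_B^\perp$ (using ${}^{\perp\perp\perp}={}^\perp$); since $J_C\subseteq J_C^{\perp\perp}$ and, conversely, $J_C\subseteq J_B^\perp$ implies $J_C^{\perp\perp}\subseteq J_B^{\perp\perp\perp}=J_B^\perp$, this is equivalent to $J_C\subseteq J_B^\perp$, i.e. $J_B\perp J_C$, and for closed ideals $J_B\perp J_C\Leftrightarrow J_BJ_C=\{0\}$ (one direction is trivial; for the other, $x\perp y$ with $x\in J_B$, $y\in J_C$ includes $xy=0$, and $y^*\in J_C$ gives $xy^*=0$ too). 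For \eqref{vo2}$\,\Leftrightarrow\,J_BJ_C=\{0\}$: if $J_BJ_C=\{0\}$ then $J_B\perp J_C$, so $p_{J_B}p_{J_C}=0$, whence $\mathrm{c}(p_B)\mathrm{c}(p_C)=\mathrm{c}(p_B)p_{J_B}p_{J_C}\mathrm{c}(p_C)=0$; conversely, if $\mathrm{c}(p_B)\mathrm{c}(p_C)=0$ then, since $\mathrm{c}(p_B)\ge p_B$ forces $\mathrm{c}(p_B)b=b$ for $b\in B$ and likewise $c\mathrm{c}(p_C)=c$ for $c\in C$, we get $bac=\mathrm{c}(p_B)(bac)\mathrm{c}(p_C)=(bac)\mathrm{c}(p_B)\mathrm{c}(p_C)=0$ by centrality, so $BAC=\{0\}$, hence $J_BJ_C=\{0\}$ by the previous paragraph.

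I expect the only genuinely delicate point to be the identification $\mathrm{c}(p_B)=p_{J_B}$, which forces one to keep the $\mathcal{P}(A'')$-central cover $\mathrm{c}(p_B)$ carefully distinct from $p_{\mathrm{c}(B)}=\overline{\mathrm{c}(p_B)}^\circ$ (these genuinely differ, as the text stresses); this is exactly why \eqref{vo2} is phrased with $\mathrm{c}(p_B)$ rather than $p_{\mathrm{c}(B)}$. The remaining steps are routine manipulations of the orthocomplementation identities on $[A]^\perp$ and of products of ideals and of central projections.
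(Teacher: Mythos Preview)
Your proof is correct and takes a genuinely different route from the paper's. The paper argues the cycle \eqref{vo1}$\Rightarrow$\eqref{vo2}$\Rightarrow$\eqref{vo1} and \eqref{vo2}$\Leftrightarrow$\eqref{vo3} directly; its \eqref{vo2}$\Rightarrow$\eqref{vo1} step is the delicate one, relying on non-commutative topology: from $\mathrm{c}(p_B)\mathrm{c}(p_C)=0$ it deduces $p_B\le\mathrm{c}(p_C)^{\perp\circ}$, argues via \autoref{centralprojections} that $\mathrm{c}(p_C)$ is open so that $\mathrm{c}(p_C)^{\perp\circ}$ is topologically regular and central, and only then obtains $p_{\mathrm{c}(B)}\le\mathrm{c}(p_C)^{\perp\circ}$ and hence $\mathrm{c}(B)\cap\mathrm{c}(C)=\{0\}$. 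You bypass this entirely by first proving the clean algebraic identification $\mathrm{c}(B)=J_B^{\perp\perp}$ (which the paper never states explicitly, though it is implicit in \autoref{centralclosures} combined with your $\mathrm{c}(p_B)=p_{J_B}$), after which \eqref{vo1}$\Leftrightarrow J_BJ_C=\{0\}$ becomes a pure orthocomplementation exercise. Your \eqref{vo2}$\Leftrightarrow$\eqref{vo3} arguments are essentially the same as the paper's. The net effect is that your argument is more elementary and more self-contained, trading the interior/closure machinery for a direct description of the lattice-theoretic central cover; the paper's route, by contrast, stays within the projection-level non-commutative topology developed in \S\ref{NCT} and illustrates how $\mathrm{c}(p_B)$ and $p_{\mathrm{c}(B)}$ interact.
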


\begin{proof}\
\begin{itemize}
\item[(\ref{vo1})$\Rightarrow$(\ref{vo2})] As $\mathrm{c}(p_B)\leq p_{\mathrm{c}(B)}$, $||\mathrm{c}(p_B)\mathrm{c}(p_C)||\leq||p_{\mathrm{c}(B)}p_{\mathrm{c}(C)}||\leq||\mathrm{c}(B)\mathrm{c}(C)||=0$.
\item[(\ref{vo2})$\Rightarrow$(\ref{vo1})] If $\mathrm{c}(p_B)\mathrm{c}(p_C)=0$ then $\mathrm{c}(p_B)\leq\mathrm{c}(p_C)^\perp$ so $p_B=p_B^\circ\leq\mathrm{c}(p_B)^\circ\leq \mathrm{c}(p_C)^{\perp\circ}$.  By \autoref{centralprojections}, $\mathrm{c}(p_C)^{\perp\circ}$ is central.  Also $p_C\leq\mathrm{c}(p_C)^\circ$, and this latter projection is central, again by \autoref{centralprojections}.  Thus we must in fact have $\mathrm{c}(p_C)^\circ=\mathrm{c}(p_C)$, i.e. $\mathrm{c}(p_C)$ is open so $\mathrm{c}(p_C)^\perp$ is closed and $\mathrm{c}(p_C)^{\perp\circ}$ is topologically regular, by \eqref{intclosedtopreg}.  So we have $p_{\mathrm{c}(B)}\leq\mathrm{c}(p_C)^{\perp\circ}$ and hence $p_{\mathrm{c}(B)}\mathrm{c}(p_C)=0$.  Now $p_{\mathrm{c}(B)}p_{\mathrm{c}(C)}=0$ follows, by the same argument applied to $p_C$, and thus $\mathrm{c}(B)\cap\mathrm{c}(C)=\{0\}$.
\item[(\ref{vo2})$\Rightarrow$(\ref{vo3})] For $a\in A$, $b\in B$ and $c\in C$, $bac=b\mathrm{c}(p_B)a\mathrm{c}(p_C)c=\mathrm{c}(p_B)\mathrm{c}(p_C)bac=0$.
\item[(\ref{vo3})$\Rightarrow$(\ref{vo2})] If $BAC=\{0\}$ then if $I=ABA$ is the ideal generated by $B$ we have $IC=\{0\}$ and hence $\overline{I}C=\{0\}$, so $||\mathrm{c}(p_B)p_C||\leq||p_{\overline{I}}p_C||=0$.  But $\mathrm{c}(p_B)p_C=0$ means $p_C\leq\mathrm{c}(p_B)^\perp$ and hence $\mathrm{c}(p_C)\leq\mathrm{c}(p_B)^\perp$ so $\mathrm{c}(p_B)\mathrm{c}(p_C)=0$.
\end{itemize}
\end{proof}

The next result, applied to $B\in[A]^\perp$, shows $[A]^\perp$ has the relative centre property.

\begin{cor}\label{c_B}
If $B$ is a hereditary C*-subalgebra of $A$ then
\[\mathrm{c}_B[B]^\perp=\{B\cap C:C\in\mathrm{c}[A]^\perp\}.\]
\end{cor}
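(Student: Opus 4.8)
The plan is to reduce everything, via \autoref{centralannihilators}, to a statement about ideals. Applying that theorem to $B$ in place of $A$ identifies $\mathrm{c}_B[B]^\perp$ with the set of ideals of $B$ lying in $[B]^\perp$, and applying it to $A$ identifies $\mathrm{c}[A]^\perp$ with the annihilator ideals of $A$; so the corollary becomes the assertion that these ideals of $B$ are exactly the sets $B\cap C$ with $C$ an annihilator ideal of $A$. I will write $S^{\perp_B}=B\cap S^\perp$ for $S\subseteq B$, so that every $D\in[B]^\perp$ satisfies $D=D^{\perp_B\perp_B}=B\cap(B\cap D^\perp)^\perp$, and every such $D$ corresponds to an open projection $p_D$ of $B$ (acting on $p_BH$) which, when $D\in\mathrm{c}_B[B]^\perp$, is central in $B''=p_BA''p_B$ (centrality being the same in $[B]^\perp$ and $\mathcal{P}(B'')$ by the comments after \autoref{centralannihilators}).

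For the inclusion $\supseteq$ (that each $B\cap C$ is a central annihilator of $B$), fix an annihilator ideal $C$ of $A$. Then $B\cap C$ is visibly an ideal of $B$, and $C^\perp$ is again an ideal of $A$ (remark after \autoref{centralprojections}), so the content is that $B\cap C\in[B]^\perp$, i.e.\ $(B\cap C)^{\perp_B\perp_B}\subseteq B\cap C$. Since $C^\perp\subseteq(B\cap C)^\perp$, this follows once we show that any $a\in B$ with $a\perp(B\cap C^\perp)$ lies in $C$. Now $(B\cap C^\perp)^\perp$ is a hereditary C*-subalgebra, hence $*$-closed, so $a^*$ also annihilates $B\cap C^\perp$; and for $b\in C^\perp_+$ we have $aba^*\in B\cap C^\perp$ by heredity of $B$ together with $C^\perp$ being an ideal. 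Since $a$ and $a^*$ both annihilate $aba^*$ we get $a^*aba^*=0$; substituting $a^*ba$ for $b$, multiplying on the left by $a$, and applying \autoref{xab} yields $aa^*b=0$ for all $b\in C^\perp_+$, so $aa^*\in C^{\perp\perp}=C$. The same argument with $a^*$ in place of $a$ gives $a^*a\in C$, whence $a\in C$ by heredity of $C$.

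For the inclusion $\subseteq$, fix $D\in\mathrm{c}_B[B]^\perp$ and let $C=(\overline{ADA})^{\perp\perp}$, the smallest annihilator ideal of $A$ containing $D$ (it is an annihilator ideal because $\overline{ADA}$, and hence $\overline{ADA}^\perp$ and $\overline{ADA}^{\perp\perp}$, are ideals of $A$). Clearly $D\subseteq B\cap C$, and since $D=B\cap(B\cap D^\perp)^\perp$ the reverse inclusion will follow once we prove $B\cap D^\perp\subseteq C^\perp=\overline{ADA}^\perp$, i.e.\ that $w:=yad$ vanishes for every $y\in(B\cap D^\perp)_+$, $a\in A$ and $d\in D$. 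Heredity of $D$ and of $B$ give $w^*w=d^*(a^*y^*ya)d\in D$ and $ww^*=y(add^*a^*)y^*\in B$, and since $y\in D^\perp$ one checks $ww^*\perp D$, so in fact $ww^*\in B\cap D^\perp=D^{\perp_B}$; these two memberships give $wp_D=w$ and $p_{D^{\perp_B}}w=w$. Finally $w=p_Bwp_B\in B''$ and $p_D$ is central in $B''$, so $w=p_{D^{\perp_B}}wp_D=p_{D^{\perp_B}}p_Dw=0$ since $p_{D^{\perp_B}}p_D=0$ (as $D^{\perp_B}\perp D$). The one genuinely load-bearing step is this last one: the naive choice $C=D^{\perp\perp}$ need not be an ideal of $A$ (already when $B$ is a rank-one corner $pAp$ of $A=M_2$), so $D$ must first be enlarged to the ideal it generates, and the required orthogonality $B\cap D^\perp\perp\overline{ADA}$ is then extracted by passing to the von Neumann algebra $B''$ and exploiting the centrality of $p_D$ there; the remaining manipulations are routine heredity bookkeeping and applications of \autoref{xab}.
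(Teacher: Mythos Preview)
Your proof is correct. Both arguments identify $\mathrm{c}_B[B]^\perp$ and $\mathrm{c}[A]^\perp$ with annihilator ideals via \autoref{centralannihilators}, and your treatment of the inclusion $\supseteq$ is essentially the paper's (the paper compresses your computation to the single observation that $\sqrt{b}c\sqrt{b}\in B\cap C^\perp$, then invokes \autoref{xab}).

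For the inclusion $\subseteq$ the two proofs reach the same goal---showing that $D$ and $D^{\perp_B}$ are very orthogonal in $[A]^\perp$, i.e.\ $D^{\perp_B}AD=\{0\}$---but by different devices. The paper uses the elementary factorization
\[
cad=\sqrt{c}\,(\sqrt{c}\,a\,\sqrt{d})\,\sqrt{d},
\]
observing that the middle factor lies in $B$ by heredity, so the whole product lies in $CBC^{\perp_B}$, which vanishes by \autoref{vo} applied inside $B$; then \autoref{vo} applied in $A$ gives $D=\mathrm{c}(D)\cap B$. You instead pass to the weak closure $p_BA''p_B$, use that $p_D$ is central there (because $D$ is an ideal of $B$), and kill $w=yad$ via $w=p_{D^{\perp_B}}p_Dw=0$. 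Your route is slightly longer and leans on the representation, while the paper's square-root trick stays purely algebraic; on the other hand your argument makes transparent \emph{why} the orthogonality propagates from $B$ to $A$ (centrality of $p_D$ in $B''$), which the factorization somewhat hides. Note also that your choice $C=(\overline{ADA})^{\perp\perp}$ is exactly $\mathrm{c}(D)$, so the two proofs produce the same witnessing ideal.
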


\begin{proof}
For one inclusion, take $C\in\mathrm{c}[A]^\perp$ and $b\in(B_+\cap C^\perp)^{\perp_B}$.  Then, for any $c\in C^\perp_+$, we have $\sqrt{b}c\in B\cap C^\perp_+$ and hence $bc=0$, i.e. $b\in C^{\perp\perp}=C$.  As $C$ is an ideal in $A$, $B\cap C$ is an ideal in $B$, and thus $B\cap C=(B\cap C^\perp)^{\perp_B}\in\mathrm{c}[B]^\perp$.

For the reverse inclusion, say we have $C\in\mathrm{c}_B[B]^\perp$ which, by \autoref{vo} \eqref{vo3}, means that $CBC^{\perp_B}=\{0\}$.  But, for any $c\in C_+$, $d\in C^{\perp_B}_+$ and $a\in A_+$, we have \[cad=\sqrt{c}(\sqrt{c}a\sqrt{d})\sqrt{d}\in CBC^{\perp_B}=\{0\}.\]  Thus $C$ and $C^{\perp_B}$ are very orthogonal (in $[A]^\perp$) so $\mathrm{c}(C)\cap B=C$.
\end{proof}

We also have something of an algebraic substitute for \autoref{ordertd}.

\begin{prp}\label{algebratd}
Given hereditary C*-subalgebras $(B_\alpha)$ with $\mathrm{c}(p_{B_\alpha})\mathrm{c}(p_{B_\beta})=0$, whenever $\alpha\neq\beta$, the C*-subalagebra $\bigoplus B_\alpha$ they generate is also hereditary.
\end{prp}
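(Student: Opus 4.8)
The plan is to show that $B:=\bigoplus B_\alpha$, the C*-subalgebra generated by the $B_\alpha$, satisfies the defining property of a hereditary C*-subalgebra: if $a\in A$, $b\in B$ and $0\le a\le b$, then $a\in B$. First I would pin down the internal structure of $B$. By \autoref{vo}, $\alpha\ne\beta$ gives $B_\alpha A B_\beta=\{0\}$, so (multiplying by an approximate unit of $A$) $b_\alpha b_\beta=0=b_\alpha^*b_\beta$ for $b_\alpha\in B_\alpha$, $b_\beta\in B_\beta$; hence finite sums $\sum_{\alpha\in F}b_\alpha$ form a $*$-subalgebra, their summands have orthogonal supports so $\|\sum_{\alpha\in F}b_\alpha\|=\max_{\alpha\in F}\|b_\alpha\|$, and $B$ is just the $c_0$-direct sum $\{(b_\alpha):b_\alpha\in B_\alpha,\ \|b_\alpha\|\to0\}$.

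Next I would introduce the projections $z_\alpha:=p_{\mathrm{c}(B_\alpha)}\in\mathcal{P}(A'')$. Since $\mathrm{c}(B_\alpha)$ is an ideal, $z_\alpha$ is central by \autoref{centralideals}, and by \autoref{vo} the hypothesis $\mathrm{c}(p_{B_\alpha})\mathrm{c}(p_{B_\beta})=0$ is equivalent to $z_\alpha z_\beta=0$ for $\alpha\ne\beta$. As $z_\alpha$ is the identity of the weak closure of $\mathrm{c}(B_\alpha)$, we have $z_\alpha b_\alpha=b_\alpha$ for $b_\alpha\in B_\alpha$ while $z_\alpha b_\beta=z_\alpha z_\beta b_\beta=0$ for $\beta\ne\alpha$; so for $b=(b_\alpha)\in B$ we get $z_\alpha b=b_\alpha$ and, writing $z_F=\sum_{\alpha\in F}z_\alpha$ for finite $F$, $z_Fb=\sum_{\alpha\in F}b_\alpha$.

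Then I would analyze the $\alpha$-component of $a$. Conjugating $0\le a\le b$ by the central projection $z_\alpha$ yields $0\le z_\alpha a=z_\alpha a z_\alpha\le z_\alpha b z_\alpha=b_\alpha$. The crux is to show $z_\alpha a\in B_\alpha$, even though a priori $z_\alpha a$ only lies in $A''$. For this I would take an approximate unit $(e_i)\subseteq B_\alpha$: since $B_\alpha$ is hereditary, $e_iae_i\in B_\alpha AB_\alpha\subseteq B_\alpha$, and because $e_i\in\mathrm{c}(B_\alpha)$ gives $e_iz_\alpha=e_i$ and $e_i(1-z_\alpha)=0$, the decomposition $a=z_\alpha a+(1-z_\alpha)a$ forces $e_iae_i=e_i(z_\alpha a)e_i$. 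Setting $t=(z_\alpha a)^{1/2}\in A''_+$ and applying Douglas's lemma (\cite{Douglas1966}) to $z_\alpha a\le b_\alpha$ produces a contraction $d\in A''$ with $t=b_\alpha^{1/2}d$, so $\|(1-e_i)t\|\le\|(1-e_i)b_\alpha^{1/2}\|\to0$ since $b_\alpha^{1/2}\in B_\alpha$; this gives $e_i(z_\alpha a)e_i\to z_\alpha a$ in norm, and as $B_\alpha$ is closed, $z_\alpha a\in B_\alpha$. I expect this step — promoting $z_\alpha a$ from $A''$ back into $A$ (in fact into $B_\alpha$) via the hereditary structure together with the Douglas factorization — to be the main obstacle; the rest is bookkeeping.

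Finally I would reassemble: $\|z_\alpha a\|\le\|b_\alpha\|\to0$, so $(z_\alpha a)_\alpha$ defines an element of $\bigoplus B_\alpha=B$, and for finite $F$,
\[\|a-\textstyle\sum_{\alpha\in F}z_\alpha a\|=\|(1-z_F)a\|=\|(1-z_F)a(1-z_F)\|\le\|(1-z_F)b(1-z_F)\|=\max_{\alpha\notin F}\|b_\alpha\|\to0,\]
so $a=\lim_F\sum_{\alpha\in F}z_\alpha a\in B$, completing the proof that $B$ is hereditary.
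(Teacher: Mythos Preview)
Your proof is correct, but you have taken a substantially longer route than the paper. The paper uses the characterization (from \cite{Blackadar2006} II.5.3.9) that a C*-subalgebra $B$ is hereditary if and only if $bab\in B$ for all $a\in A$ and $b\in B$. With this in hand the whole argument collapses to a single computation: writing $z_\alpha=\mathrm{c}(p_{B_\alpha})$ (central in $A''$, pairwise orthogonal, and acting as the identity on $B_\alpha$), one gets
\[
\Big(\sum b_\alpha\Big)a\Big(\sum b_\beta\Big)=\sum_{\alpha,\beta} b_\alpha z_\alpha a z_\beta b_\beta=\sum_\alpha b_\alpha a b_\alpha\in\bigoplus B_\alpha,
\]
since the cross terms vanish by orthogonality and each $b_\alpha a b_\alpha\in B_\alpha$ by hereditarity. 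No Douglas factorization, no approximate units, no promotion of $z_\alpha a$ from $A''$ back to $A$ is needed. Your approach via the order-theoretic definition $0\le a\le b\Rightarrow a\in B$ is perfectly valid and self-contained, and the step you flagged as the main obstacle (showing $z_\alpha a\in B_\alpha$) is handled correctly; but the $bab$ criterion is tailor-made for this situation and yields a much shorter proof.
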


\begin{proof}
A C*-subalgebra $B$ is hereditary if and only if $bab\in B$ for all $a\in A$ and $b\in B$ (see \cite{Blackadar2006} II.5.3.9).  But given $a\in A$ and $\sum b_\alpha\in\bigoplus B_\alpha$ we have 
\begin{eqnarray*}
(\sum b_\alpha)a(\sum b_\alpha) &=& (\sum b_\alpha\mathrm{c}(p_{B_\alpha}))a(\sum\mathrm{c}(p_{B_\alpha})b_\alpha)\\
&=& \sum b_\alpha\mathrm{c}(p_{B_\alpha})a\mathrm{c}(p_{B_\alpha})b_\alpha,\quad\textrm{by centrality and orthogonality,}\\
&=& \sum b_\alpha ab_\alpha\in\bigoplus B_\alpha,\quad\textrm{by hereditarity of the }(B_\alpha).
\end{eqnarray*}
\end{proof}

\subsection{Equivalence}\label{Equivalence}

Here we study the basic properties of the following relation, which we believe to the be the natural analog for annihilators in C*-algebras of Murray-von Neumann equivalence.
\begin{dfn}\label{equidef}
$B,C\in[A]^\perp$ are \emph{equivalent}, written $B\sim C$, if $\{a\}^{\perp\perp}=B$ and $\{a^*\}^{\perp\perp}=C$, for some $a\in A$.
We write $B\precsim C$ if $B\sim D\subseteq C$, for some $D\in[A]^\perp$.
\end{dfn}

For $p,q\in\mathcal{P}(A)$, let us write $p\sim_\mathrm{MvN}q$ for the usual Murray-von Neumann equivalence notion, i.e. if there is a (partial isometry) $u\in A$ with $u^*u=p$ and $uu^*=q$.  Likewise, $p\precsim_\mathrm{MvN}q$ means $p\sim_\mathrm{MvN}r\leq q$ for some $r\in\mathcal{P}(A)$.  Also, we shall say a $A$ has \emph{polar decomposition} if, for all $a\in A$, there is a partial isometry $u\in A$ with $a=u|a|$, where $|a|=\sqrt{a^*a}$.  Von Neumann algebras certainly have polar decomposition, but so too do AW*-algebras, Rickart C*-algebras and all C*-algebra quotients of these (e.g. the Calkin algebra).  The following result shows that, for any such C*-algebra, the relation $\sim$ defined above really is a completely consistent extension of Murray-von Neumann equivalence.

\begin{prp}
If $A$ has polar decomposition then, for $p,q\in\mathcal{P}(A)$, \[p\sim_\mathrm{MvN}q\quad\Leftrightarrow\quad pAp\sim qAq\qquad\qquad\textrm{and}\qquad\qquad p\precsim_\mathrm{MvN}q\quad\Leftrightarrow\quad pAp\precsim qAq.\]
\end{prp}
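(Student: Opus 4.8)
The plan is to prove the four implications separately: the two ``$\Leftarrow$'' directions are easy and use no polar decomposition, while the two ``$\Rightarrow$'' directions are exactly where the hypothesis is needed.

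For ``$\Leftarrow$'', if $u\in A$ is a partial isometry with $u^*u=p$ and $uu^*=q$ I would simply take $a:=u$. Since $\{x\}^\perp=\{x^*x\}^\perp$ for every $x\in A$, this gives $\{u\}^{\perp\perp}=\{u^*u\}^{\perp\perp}=\{p\}^{\perp\perp}=pAp$ and likewise $\{u^*\}^{\perp\perp}=\{uu^*\}^{\perp\perp}=\{q\}^{\perp\perp}=qAq$, using that for a projection $r\in\mathcal{P}(A)$ the annihilator $\{r\}^{\perp\perp}$ is just $rAr$ (its associated open projection $r$ is clopen, hence topologically regular). Thus $pAp\sim qAq$. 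If instead $p\sim_\mathrm{MvN}r\leq q$ with $r\in\mathcal{P}(A)$, the same computation gives $pAp\sim rAr$, and $rAr\subseteq qAq$ since $r=qrq$, so $pAp\precsim qAq$.

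For ``$\Rightarrow$'' of the first equivalence I would argue as follows. Suppose $\{a\}^{\perp\perp}=pAp$ and $\{a^*\}^{\perp\perp}=qAq$. As $\perp$ is symmetric, $a\in\{a\}^{\perp\perp}=pAp$, so $a=pap$ and, $pAp$ being a C*-subalgebra, $|a|=(a^*a)^{1/2}\in pAp$; hence $p|a|=|a|$ and $[a^*a]\leq p$, and symmetrically $[aa^*]\leq q$. Projections in a C*-algebra are closed, and combining this with the order isomorphism $[A]^\perp\cong\overline{\mathcal{P}(A'')}^\circ$ of \autoref{annproj} (using $\{a\}^{\perp\perp}=\{a^*a\}^{\perp\perp}$ and the fact, \eqref{intclosedtopreg}, that the interior of a closed projection is topologically regular) one gets $p\leq\overline{[a^*a]}$ and $q\leq\overline{[aa^*]}$. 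Now take a polar decomposition $a=u|a|$ with $u\in A$ a partial isometry. From $a^*a=|a|\,u^*u\,|a|=|a|^2$ we obtain $u^*u\,|a|=|a|$, so $u^*u\geq[a^*a]$; since $u^*u\in\mathcal{P}(A)$ is closed this forces $u^*u\geq\overline{[a^*a]}\geq p$. Hence $w:=up\in A$ satisfies $w^*w=pu^*up=p$, so $w$ is a partial isometry, and $w|a|=u|a|=a$. Put $r:=ww^*\in\mathcal{P}(A)$, so $p\sim_\mathrm{MvN}r$; from $a=w|a|$ one computes $[aa^*]=w[a^*a]w^*\leq ww^*=r$, and as $r$ is closed, $r\geq\overline{[aa^*]}\geq q$. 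It remains to show $r=q$. If not, $t:=r-q$ is a nonzero projection of $A$ with $t\leq r$ and $tq=0$, hence $t[aa^*]=0$; conjugating, $e:=w^*tw$ is a nonzero projection of $A$ with $e\leq w^*w=p$, and since $a^*a=w^*(aa^*)w$ we get $e\,a^*a=0$. But $\{a\}^{\perp\perp}=pAp$ gives $pAp\cap(pAp)^\perp=\{0\}$, so every projection of $A$ below $p$ annihilating $a^*a$ is zero; thus $e=0$ and then $t=w(w^*tw)w^*=0$, a contradiction. Hence $r=q$ and $w$ witnesses $p\sim_\mathrm{MvN}q$. The ``$\Rightarrow$'' for $\precsim$ runs the same construction starting from $pAp\sim D\subseteq qAq$: one again produces $w=up\in A$ with $w^*w=p$ and checks $ww^*\leq q$, giving $p\precsim_\mathrm{MvN}q$.

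The hardest point will be managing the persistent gap between the open projection $p_B$ of an annihilator $B$ — which in general is only topologically regular, and typically not an element of $A$ — and honest projections such as $p,q\in\mathcal{P}(A)$, compounded by the fact that polar decomposition as defined only supplies $a=u|a|$ with no control over $u^*u$ or $uu^*$. The two observations above are what neutralise this: $u^*u$, being a \emph{closed} projection, automatically dominates $\overline{[a^*a]}$ and hence $p$; and $\{a\}^{\perp\perp}=pAp$ leaves no room for a nonzero projection of $A$ below $p$ orthogonal to $a^*a$. Together these keep the whole argument inside $A$, so that one never has to pass to the enveloping von Neumann algebra.
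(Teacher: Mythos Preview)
Your argument for the main equivalence is sound in spirit and takes a genuinely different route from the paper, but there are two points to fix.

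First, a small slip: the relation $\bot$ on $A$ is \emph{not} symmetric (only $\top$ is, and $\bot$ agrees with $\top$ only on self-adjoint elements), so in general $a\notin\{a\}^{\perp\perp}$. For instance, in $M_2$ with $a=e_{12}$ one has $\{a\}^{\perp\perp}=\{a^*a\}^{\perp\perp}=\mathbb{C}e_{22}$, which does not contain $a$. Fortunately you never actually need $a\in pAp$: what you use is $|a|\in pAp$ and $[aa^*]\leq q$, and these follow directly from $a^*a\in\{a\}^{\perp\perp}=pAp$ and $aa^*\in\{a^*\}^{\perp\perp}=qAq$.

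Second, and more substantively, your treatment of the $\precsim$ direction is too quick. With $\{a^*\}^{\perp\perp}=D\subseteq qAq$ you still get $w=up$ with $w^*w=p$, but ``checks $ww^*\leq q$'' requires an argument: nothing so far bounds $upu^*$ above. The missing step is this. Since $aa^*\in qAq$ we have $qa=a$, hence $q^\perp u|a|=0$, whence $|a|(u^*q^\perp u)|a|=0$ and so $u^*q^\perp u\in\{|a|\}^\perp=\{a\}^\perp=(pAp)^\perp$. Thus $p(u^*q^\perp u)p=0$, giving $q^\perp up=0$ and therefore $ww^*=upu^*\leq q$. (Equivalently, one can argue via closures: $q^\perp u[|a|]=0$ with $u^*q^\perp u\in A_+$ forces $q^\perp u\,\overline{[|a|]}=0$, and $p\leq\overline{[|a|]}$.) With this filled in, your proof is complete.

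As for the comparison: the paper first shows $uu^*\geq q$ from $\{u^*\}^\perp\subseteq\{a^*\}^\perp$, replaces $u$ by $qu$ to force $uu^*=q$, and then proves $u^*u=p$ by an algebraic chain using $|a|=u^*a$ and the equality $\{u^*\}^\perp=\{a^*\}^\perp$. You instead normalize on the $p$ side first, via the pleasant observation that $u^*u\in\mathcal{P}(A)$ is closed and hence dominates $\overline{[a^*a]}\geq p$; then you settle the $q$ side by a conjugation-contradiction. Your route leans more on non-commutative topology (closed projections, the isomorphism $[A]^\perp\cong\overline{\mathcal{P}(A'')}^\circ$), while the paper's is more purely algebraic inside $A$; both are short, and yours makes the role of the ``polar decomposition only gives $a=u|a|$, no control on $u^*u$'' issue especially transparent. (Also, your arrows are labelled backwards: what you call ``$\Leftarrow$'' is the implication from $p\sim_\mathrm{MvN}q$ to $pAp\sim qAq$.)
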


\begin{proof}
It suffices to prove the first equivalence.  If $p\sim_\mathrm{MvN}q$ then the partial isometry witnessing this will also witness $pAp\sim qAq$.  Conversely, say we have $a\in A$ with $\{a\}^{\perp\perp}=pAp$ and $\{a^*\}^{\perp\perp}=qAq$ and take a partial isometry $u\in A$ with $a=u|a|$.  Then we immediately see that the left annihilator of $u$ is contained in the left annihilator of $a$, i.e. $\{u^*\}^\perp\subseteq\{a^*\}^\perp$ and hence $qAq=\{a^*\}^{\perp\perp}\subseteq\{u^*\}^{\perp\perp}=uAu^*$, which gives $uu^*\geq q$.  By replacing $u$ with $qu$ if necessary we may assume that $uu^*=q$ and hence $\{u^*\}^{\perp\perp}=\{a^*\}^{\perp\perp}$.  We also have $|a|^2=a^*a=|a|u^*u|a|$ and hence $|a|(1-u^*u)|a|=0$, which means $(1-u^*u)|a|=0$, i.e. $|a|=u^*u|a|=u^*a=a^*u$.  This immediately gives $\{a\}^{\perp\perp}\subseteq\{u\}^{\perp\perp}$, while if $0=ab$ then $0=\sqrt{aa^*}ab=a\sqrt{a^*a}b=a|a|b=aa^*ub$ which, as $\{a^*\}^\perp=\{u^*\}^\perp$, gives $0=u^*ub$, i.e. $b\in\{u\}^\perp$.  As $b\in\{a\}^\perp$ was arbitrary, $\{a\}^\perp\subseteq\{u\}^\perp$ and hence $\{u\}^{\perp\perp}\subseteq\{a\}^{\perp\perp}$, i.e. $u^*Au=\{u\}^{\perp\perp}=\{a\}^{\perp\perp}=pAp$ so $u^*u=p$ too.
\end{proof}

\begin{lem}\label{simlem}
If $a,b\in A$ and $\{b^*\}^{\perp\perp}\subseteq\{a\}^{\perp\perp}$ then $\{ab\}^{\perp\perp}=\{b\}^{\perp\perp}$.
\end{lem}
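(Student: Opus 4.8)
The plan is to reduce everything to the single equality of orthogonal complements $\{ab\}^\perp=\{b\}^\perp$, since applying ${}^\perp$ to this gives $\{ab\}^{\perp\perp}=\{b\}^{\perp\perp}$ (and, more generally, $\{ab\}^\perp\subseteq\{b\}^\perp$ already yields $\{b\}^{\perp\perp}\subseteq\{ab\}^{\perp\perp}$ by antitonicity of ${}^\perp$). Unwinding the relation $\bot$, a given $c\in A$ lies in $\{ab\}^\perp$ exactly when $abc=0$ and $abc^*=0$, and in $\{b\}^\perp$ exactly when $bc=0$ and $bc^*=0$. The inclusion $\{b\}^\perp\subseteq\{ab\}^\perp$ is then trivial (left-multiply by $a$), so all the content is in proving $\{ab\}^\perp\subseteq\{b\}^\perp$, and this is where the hypothesis must enter.

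First I would rewrite the hypothesis in a usable form: applying ${}^\perp$ to $\{b^*\}^{\perp\perp}\subseteq\{a\}^{\perp\perp}$ and using $T^{\perp\perp\perp}=T^\perp$ gives $\{a\}^\perp\subseteq\{b^*\}^\perp$; and since for a \emph{self-adjoint} element $y$ the conditions ``$a\bot y$'' and ``$b^*\bot y$'' collapse respectively to $ay=0$ and $b^*y=0$, the hypothesis says precisely that $ay=0\Rightarrow b^*y=0$ for every self-adjoint $y$. Now fix $c$ with $abc=0=abc^*$. The key move is to apply this to the positive test element $y=(bc)(bc)^*=bcc^*b^*$: it is self-adjoint and $ay=(abc)(bc)^*=0$, so the hypothesis gives $b^*y=0$, i.e.\ $b^*(bc)(bc)^*=0$; then $b^*bc=0$ (using $xdd^*=0\Rightarrow xd=0$, the elementary fact already behind \autoref{xab}) and hence $bc=0$ (since $b^*bc=0\Rightarrow(bc)^*(bc)=0$). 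Running the identical argument with $c^*$ in place of $c$, which is legitimate because we also have $abc^*=0$, yields $bc^*=0$. Thus $c\in\{b\}^\perp$, which is the missing inclusion.

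Combining the two inclusions gives $\{ab\}^\perp=\{b\}^\perp$, and hence $\{ab\}^{\perp\perp}=\{b\}^{\perp\perp}$. The whole argument is short and elementary; the only point requiring any thought is selecting $(bc)(bc)^*$ as the self-adjoint element to feed into the hypothesis, so I do not anticipate a genuine obstacle.
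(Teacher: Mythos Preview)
Your argument is correct and follows essentially the same route as the paper. The paper restricts to positive $c$ and uses the test element $bcb^*$, noting that $bcb^*\in\{a\}^\perp\cap\{b^*\}^{\perp\perp}\subseteq\{a\}^\perp\cap\{a\}^{\perp\perp}=\{0\}$; your version with general $c$ and test element $bcc^*b^*=b(cc^*)b^*$ is the same idea (just with the hypothesis dualized to $\{a\}^\perp\subseteq\{b^*\}^\perp$), at the cost of running the argument once for $c$ and once for $c^*$ rather than invoking that annihilators are determined by their positive parts.
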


\begin{proof}
If $c\in A_+$ and $abc=0$ then $abcb^*=0$ so \[bcb^*\in\{a\}^\perp\cap\{b^*\}^{\perp\perp}\subseteq\{a\}^\perp\cap\{a\}^{\perp\perp}=\{0\}.\]  Thus $\{ab\}^\perp\subseteq\{b\}^\perp$, while $\{b\}^\perp\subseteq\{ab\}^\perp$ is immediate, so $\{ab\}^\perp=\{b\}^\perp$.
\end{proof}

\begin{cor}\label{simtran}
$\sim$ and $\precsim$ are transitive relations.
\end{cor}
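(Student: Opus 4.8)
The plan is to exploit \autoref{simlem}, together with the elementary inclusion $\{y\}^\perp\subseteq\{xy\}^\perp$ valid for all $x,y\in A$ (this is the ``immediate'' half of the proof of \autoref{simlem}), and to realize both relations via the \emph{same} witnessing product, namely $ba$. First I would treat $\sim$: suppose $B\sim C$ and $C\sim E$, and pick $a,b\in A$ with $\{a\}^{\perp\perp}=B$, $\{a^*\}^{\perp\perp}=C=\{b\}^{\perp\perp}$ and $\{b^*\}^{\perp\perp}=E$. Since $(ba)^*=a^*b^*$, it suffices to show $\{ba\}^{\perp\perp}=B$ and $\{a^*b^*\}^{\perp\perp}=E$.

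For the first, apply \autoref{simlem} to the pair $(b,a)$ (playing the roles of its $a,b$): the hypothesis $\{a^*\}^{\perp\perp}\subseteq\{b\}^{\perp\perp}$ holds because $\{a^*\}^{\perp\perp}=C=\{b\}^{\perp\perp}$, so the lemma gives $\{ba\}^{\perp\perp}=\{a\}^{\perp\perp}=B$. For the second, apply \autoref{simlem} to the pair $(a^*,b^*)$: the hypothesis $\{(b^*)^*\}^{\perp\perp}=\{b\}^{\perp\perp}\subseteq\{a^*\}^{\perp\perp}$ holds because $\{b\}^{\perp\perp}=C=\{a^*\}^{\perp\perp}$, so the lemma gives $\{a^*b^*\}^{\perp\perp}=\{b^*\}^{\perp\perp}=E$. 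Hence $ba$ witnesses $B\sim E$.

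For $\precsim$, suppose $B\precsim C$ and $C\precsim E$, so there are $a,b\in A$ with $\{a\}^{\perp\perp}=B$, $\{a^*\}^{\perp\perp}=D\subseteq C$, $\{b\}^{\perp\perp}=C$ and $\{b^*\}^{\perp\perp}=D'\subseteq E$. Now $\{a^*\}^{\perp\perp}=D\subseteq C=\{b\}^{\perp\perp}$, so \autoref{simlem} applied to $(b,a)$ again yields $\{ba\}^{\perp\perp}=\{a\}^{\perp\perp}=B$; and $\{(ba)^*\}^{\perp\perp}=\{a^*b^*\}^{\perp\perp}\subseteq\{b^*\}^{\perp\perp}=D'\subseteq E$, this time using only the easy inclusion $\{b^*\}^\perp\subseteq\{a^*b^*\}^\perp$. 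Since $\{a^*b^*\}^{\perp\perp}$ lies in $[A]^\perp$ automatically, $ba$ witnesses $B\sim\{a^*b^*\}^{\perp\perp}\subseteq E$, i.e. $B\precsim E$.

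Once \autoref{simlem} is available this is essentially bookkeeping, so I do not expect a genuine obstacle; the only point demanding care is tracking which element plays which role in \autoref{simlem} — in particular that the composite is $ba$ and not $ab$, since $\sim$ is defined with $\{a\}^{\perp\perp}$ on the ``source'' side and $\{a^*\}^{\perp\perp}$ on the ``range'' side, so composition of witnesses behaves like composition of partial isometries. It is also worth noting that no extra structural hypothesis on $A$ (polar decomposition, etc.) enters, since everything reduces to the two purely algebraic inclusions above.
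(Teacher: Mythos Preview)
Your proof is correct and follows essentially the same approach as the paper: both arguments use \autoref{simlem} to show that the product of two witnesses is a witness for the composed relation, with the easy inclusion handling the $\precsim$ side. Your careful tracking of the order of multiplication (using $ba$ rather than $ab$) is in fact more precise than the paper's terse write-up, but the underlying idea is identical.
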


\begin{proof}
If $a$ witnesses $B\precsim C$ and $b$ witnesses $C\precsim D$, then $ab$ witnesses $B\precsim D$, by \autoref{simlem}.  If $\precsim$ was actually $\sim$ here then one more application of \autoref{simlem} shows that $ab$ witnesses $B\succsim D$ and hence, as $\precsim$ and $\succsim$ are witnessed by the same element $ab$ of $A$, $B\sim D$.
\end{proof}

\begin{dfn}
$A$ is \emph{anniseparable} if $B\in[A]^\perp\Rightarrow B=S^\perp$ for countable $S\subseteq A$.  We call $B\in[A]^\perp$ \emph{principal} if there exists $b\in B$ with $\{b\}^{\perp\perp}=B$.  We say $A$ is \emph{orthoseparable} if every pairwise orthogonal subset of $A_+$ is countable.
\end{dfn}

We can see immediately that \[\textrm{anniseparability}\quad\Leftrightarrow\quad\textrm{every annihilator is principal}\quad\Leftrightarrow\quad\sim\textrm{ is reflexive.}\]  For, given $(s_n)\subseteq A^1_+$ such that $(s_n)^{\perp\perp}=B$, we can simply let $b=\sum2^{-n}s_n$ to get $\{b\}^{\perp\perp}$.  It then immediately follows from \autoref{simtran} that $\sim$ is an equivalence relation and $\precsim$ is a preorder.

Likewise, if we have orthogonal $(B_n)\subseteq[A]^\perp$, orthogonal $(C_n)\subseteq[A]^\perp$ and $(a_n)\subseteq A$ witnesses their equivalence, i.e. $\{a_n\}^{\perp\perp}=B_n$ and $\{a^*_n\}^{\perp\perp}=C_n$, for all $n\in\mathbb{N}$, then $a=\sum2^{-n}a_n$ witnesses the equivalence of $B=\bigvee B_n$ and $C=\bigvee C_n$, i.e. $\sim$ is countably additive.  This yields the second implicaiton in \[\textrm{separability}\quad\Rightarrow\quad\textrm{orthoseparability}\quad\Rightarrow\quad\textrm{additivity}.\]

Most C*-algebras one encounters are anniseparable.

\begin{prp}\label{annisep}
Each of the following conditions implies $A$ is anniseparable.
\begin{enumerate}
\item\label{annisep1} $A$ is an AW*-algebra (e.g. a von Neumann algebra).
\item $A$ is (norm) separable.
\item $A$ is orthoseparable and $[A]^\perp$ is orthomodular.
\end{enumerate}
\end{prp}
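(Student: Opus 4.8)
The plan is to reduce all three parts to one task: writing an arbitrary $B\in[A]^\perp$ as $\{s_n:n\in\mathbb{N}\}^{\perp\perp}$ for a \emph{countable} family $(s_n)\subseteq A^1_+$. This suffices because, by the equivalences established just before the statement, $A$ is anniseparable exactly when every annihilator is principal, and because once such an $(s_n)$ is in hand the element $b=\sum_n2^{-n}s_n$ lies in $A^1_+$ (the series is norm convergent, $\|b\|\leq\sum_n2^{-n}=1$) and satisfies $\{b\}^{\perp\perp}=B$. Indeed, for $c\in A$, using that $cac^*=0\Leftrightarrow ca=0$ for positive $a$ and that $cbc^*=\sum_n2^{-n}cs_nc^*$ is a norm-convergent sum of positive operators, the conditions $cb=0$, $cbc^*=0$, ``$cs_nc^*=0$ for all $n$'' and ``$cs_n=0$ for all $n$'' are all equivalent, so $\{b\}^\perp=\{s_n:n\}^\perp$ and hence $\{b\}^{\perp\perp}=B$.

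Granting this reduction, condition \eqref{annisep1} and the separability condition are quick. If $A$ is an AW*-algebra, the defining Baer $*$-ring property gives that the $L$-annihilator of any subset of $A$ is a principal left ideal $Ae$ with $e$ a projection (see \cite{Berberian1972}); hence every $B\in[A]^\perp$ has the form $eAe=\{e\}^{\perp\perp}$ and is already principal, von Neumann algebras being a special case. If $A$ is norm separable, so are its C*-subalgebra $B$ and the subset $B^1_+$; picking a norm-dense sequence $(b_n)\subseteq B^1_+$ and using that $\bot$ is preserved under norm limits (if $c\bot b_n$ for all $n$ and $b_n\to b$ then $c\bot b$), together with the fact that $\bot$-ness is preserved under scalar multiples and sums so that $(B^1_+)^\perp=B^\perp$, we get $\{b_n:n\}^\perp=(B^1_+)^\perp=B^\perp$ and hence $\{b_n:n\}^{\perp\perp}=B$.

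The substantive case is the orthoseparable, orthomodular one, which I would treat by a maximal orthogonal family argument. Given $B\in[A]^\perp$, use Zorn's lemma to fix a maximal pairwise orthogonal family $(b_i)_{i\in I}\subseteq B^1_+\setminus\{0\}$. As $(b_i)$ is a pairwise orthogonal subset of $A_+$, orthoseparability forces $I$ to be countable. Put $C=\{b_i:i\in I\}^{\perp\perp}$, noting $C\subseteq B$ since each $b_i\in B$. If $C\neq B$, then applying orthomodularity of $[A]^\perp$ to $C\leq B$ (see \autoref{orthomod}) gives $B=C\vee(B\wedge C^\perp)$ with $B\wedge C^\perp\neq\{0\}$; this meet is a non-zero hereditary C*-subalgebra, so it contains a positive element $b$ of norm one, and $b\in C^\perp$ yields $b\bot b_i$ for all $i\in I$ (as $b_i\in C$), so $(b_i)_{i\in I}\cup\{b\}$ is a strictly larger pairwise orthogonal subfamily of $B^1_+\setminus\{0\}$, contradicting maximality. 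Hence $C=B$, exhibiting $B=\{b_i:i\in I\}^{\perp\perp}$ with $(b_i)$ countable.

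The one point deserving care is how the two hypotheses of the third condition each enter: orthoseparability is precisely what makes the Zorn family countable, while orthomodularity is precisely what forces a maximal orthogonal family inside $B$ to generate $B$. Without orthomodularity, $C\vee(B\wedge C^\perp)$ need not exhaust $B$, so the argument genuinely relies on that hypothesis; everything else is routine annihilator bookkeeping and the elementary identity $ca=0\Leftrightarrow cac^*=0$ for $a\in A_+$.
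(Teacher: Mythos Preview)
Your proof is correct and follows essentially the same approach as the paper's: the AW*-case via the Baer $*$-ring property, the separable case via a countable dense subset of $B$, and the orthoseparable/orthomodular case via a maximal pairwise orthogonal family in $B_+$ whose countability comes from orthoseparability and whose sufficiency comes from orthomodularity. You supply more detail (the explicit verification that $\{\sum 2^{-n}s_n\}^\perp=\{s_n\}^\perp$, and the orthomodular identity $B=C\vee(B\wedge C^\perp)$), but the skeleton is the same.
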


\begin{proof}\
\begin{enumerate}
\item An AW*-algebra is, by definition, a Baer *-ring (see \cite{Berberian1972} \S4 Definition 1 and 2, and also Proposition 1 to see how this is equivalent to other definitions sometimes given for AW*-algebras, like the one in \cite{Pedersen1979} 3.9.2), meaning that, for every $S\subseteq A$, $R(S)=pA$ for some $p\in\mathcal{P}(A)$, and hence $S^\perp=pAp=\{p\}^{\perp\perp}$.  To see that von Neumann algebras are AW*-algebras, simply note that, for any $S\subseteq A$, $R(S)$ is a weakly closed right ideal and hence of the form $pA$, for some $p\in\mathcal{P}(A)$, by \cite{Pedersen1979} Proposition 2.5.4.
\item For any $B\in[A]^\perp$, $B=S^{\perp\perp}$, where $S$ is any countable dense subset of $B$.
\item For any $B\in[A]^\perp$, let $S$ be a maximal pairwise orthogonal subset of $B_+$.  If we had $S^{\perp\perp}<B$ then, by orthomodularity, we would have $b\in B_+\cap S^\perp$, contradicting the maximality of $S$.
\end{enumerate}
\end{proof}

\begin{prp}\label{posp'}
If $B,C\in[A]^\perp$ are principal and semiorthoperspective, $B\sim C$.
\end{prp}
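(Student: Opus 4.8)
The plan is to exhibit a single element of $A$ that witnesses the equivalence directly. First, using principality, I would pass to positive generators: pick $b_0\in B$ with $\{b_0\}^{\perp\perp}=B$ and set $b=b_0^*b_0\in B_+$, so that $\{b\}^{\perp\perp}=\{b_0\}^{\perp\perp}=B$ since $\{x\}^\perp=\{x^*x\}^\perp$; likewise choose $c\in C_+$ with $\{c\}^{\perp\perp}=C$. The candidate witness is $a=cb$, so $a^*=bc$, and the goal is to prove $\{a\}^{\perp\perp}=B$ and $\{a^*\}^{\perp\perp}=C$.

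The inclusions $\{cb\}^{\perp\perp}\subseteq B$ and $\{bc\}^{\perp\perp}\subseteq C$ are immediate, since $\{b\}^\perp\subseteq\{cb\}^\perp$ and $\{c\}^\perp\subseteq\{bc\}^\perp$ and ${}^\perp$ is antitone. For the reverse inclusions I would first unpack semiorthoperspectivity: $B$ and $C^\perp$ being complementary means $B\wedge C^\perp=B\cap C^\perp=\mathbf{0}=\{0\}$ and $B\vee C^\perp=\mathbf{1}=A$; applying the ortholattice De Morgan law to the latter (equivalently, using that $p^\perp$ and $q$ are complementary whenever $p$ and $q^\perp$ are) yields $B^\perp\cap C=\{0\}$ as well. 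Now for $\{cb\}^{\perp\perp}\supseteq B$ it suffices to show $\{cb\}^\perp\subseteq\{b\}^\perp$. Take $x\in\{cb\}^\perp$, i.e.\ $cbx=0=cbx^*$. The key manoeuvre is that $bxx^*b=(bx)(bx)^*$ lies in $B$ (it is self-adjoint and dominated by $\|x\|^2b^2\in B$, and $B$ is hereditary) and also in $C^\perp=\{c\}^\perp$ (because $c\cdot bxx^*b=(cbx)(x^*b)=0$ and it is self-adjoint); hence $bxx^*b\in B\cap C^\perp=\{0\}$, so $bx=0$, and the same argument using $cbx^*=0$ gives $bx^*=0$, whence $x\in\{b\}^\perp$. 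Thus $\{a\}^{\perp\perp}=B$. The proof that $\{a^*\}^{\perp\perp}=C$ is the mirror image: starting from $x\in\{bc\}^\perp$ one shows $cxx^*c\in C\cap B^\perp=\{0\}$, this time invoking the complement $B^\perp\cap C=\{0\}$ obtained above, so $cx=0=cx^*$ and $x\in\{c\}^\perp$. Then $a=cb\in A$ witnesses $B\sim C$ by definition.

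I do not anticipate a genuine obstacle; the only subtlety is bookkeeping — ensuring that \emph{both} halves of the complementarity of $B$ and $C^\perp$ get used (the meet $B\cap C^\perp=\{0\}$ governs $\{a\}^{\perp\perp}$, and the join, rephrased as $B^\perp\cap C=\{0\}$, governs $\{a^*\}^{\perp\perp}$) — together with the routine verifications that $B$ absorbs $bxx^*b$ downward and that $yy^*=0$ forces $y=0$. Everything needed has already appeared: principality to obtain the generators, $\{x\}^\perp=\{x^*x\}^\perp$, hereditarity of annihilators, and $T^\perp=T^{\perp\perp\perp}$ to identify $C^\perp$ with $\{c\}^\perp$.
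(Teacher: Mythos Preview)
Your proof is correct and follows essentially the same route as the paper's: pick positive generators $b,c$, use $bc$ (or its adjoint $cb$) as the witness, and show the relevant annihilator identities by sandwiching to land in $B\cap C^\perp=\{0\}$ (respectively $C\cap B^\perp=\{0\}$). The only cosmetic difference is that the paper restricts to positive $d\in\{bc\}^\perp$ and forms $cdc$, whereas you take a general $x$ and form $bxx^*b$; both versions exploit hereditarity and the trivial intersections in exactly the same way.
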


\begin{proof}
Take $b,c\in A_+$ with $\{b\}^{\perp\perp}=B$ and $\{c\}^{\perp\perp}=C$.  Given $d\in A_+$ with $bcd=0$ we have $cdc\in C\cap B^\perp=\{0\}$ and hence $cd=0$, i.e. $\{bc\}^{\perp}=\{c\}^\perp$ and hence $\{bc\}^{\perp\perp}=\{c\}^{\perp\perp}=C$.  A symmetric argument gives $\{cb\}^{\perp\perp}=\{b\}^{\perp\perp}=B$ and hence $bc$ witnesses $B\sim C$.
\end{proof}

\begin{cor}\label{posp'cor}
If $A$ is anniseparable, $\sim$ is weaker than perspectivity.
\end{cor}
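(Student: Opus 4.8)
The plan is to reduce perspectivity to semiorthoperspectivity and then apply \autoref{posp'}, using anniseparability only to guarantee that the annihilators involved are principal. So suppose $B,C\in[A]^\perp$ are perspective, and let $D\in[A]^\perp$ be a common complement of $B$ and $C$. I would first observe that $B\sim_\mathrm{sop}D^\perp$: by definition this asserts that $B$ and $(D^\perp)^\perp$ are complementary, and since ${}^\perp$ is an orthocomplementation (in particular involutive) on $[A]^\perp$ we have $(D^\perp)^\perp=D$, so this is precisely the hypothesis that $B$ and $D$ are complementary. The identical computation gives $C\sim_\mathrm{sop}D^\perp$. This is just the remark in \S\ref{persec} that perspectivity and semiorthoperspectivity have the same transitive closure, made explicit for the pair at hand.

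Next I would invoke anniseparability. As noted in the discussion following the definition of anniseparable, it is equivalent to every annihilator being principal; in particular $B$, $C$ and $D^\perp$ are all principal elements of $[A]^\perp$. Hence \autoref{posp'} applies to each of the principal, semiorthoperspective pairs $(B,D^\perp)$ and $(C,D^\perp)$, yielding $B\sim D^\perp$ and $C\sim D^\perp$.

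To finish, note that $\sim$ is symmetric --- if $a\in A$ witnesses $B\sim C$ then $a^*$ witnesses $C\sim B$ --- and transitive by \autoref{simtran}. Therefore from $B\sim D^\perp$ and $D^\perp\sim C$ we obtain $B\sim C$, which is exactly the assertion that $\sim$ is weaker than perspectivity.

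I do not expect a substantive obstacle. The whole argument is the short order-theoretic passage from a common complement $D$ to the semiorthoperspectivity of $B$ and $C$ with $D^\perp$, followed by \autoref{posp'}. The one point needing a little care is that the common complement $D$ supplied by perspectivity is an arbitrary member of the ortholattice $[A]^\perp$ and is in general neither $B^\perp$ nor $C^\perp$ (indeed $[A]^\perp$ need not be orthomodular); but this is harmless, since \autoref{posp'} needs only semiorthoperspectivity together with principality, both of which are available.
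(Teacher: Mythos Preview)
Your argument is correct and is exactly the derivation the paper intends: reduce perspectivity to two instances of semiorthoperspectivity via the common complement $D$, apply \autoref{posp'} to each principal pair, and conclude using symmetry and the transitivity from \autoref{simtran}. There is nothing to add.
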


\begin{cor}\label{simBsim}
If $A$ is anniseparable and $B\in[A]^\perp$ then $\sim_B$ is $\sim$ on $[B]_B$.
\end{cor}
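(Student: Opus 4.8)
The plan is to show, for $C,D\in[B]_B$, that $C\sim_B D$ and $C\sim D$ are equivalent, by producing a suitable witness in each direction. The one standing observation I would use throughout is that for $a\in B$ the annihilator of $\{a\}$ inside $B$ is $\{a\}^{\perp_B}=\{a\}^\perp\cap B$, so $\{a\}^{\perp_B\perp_B}=(\{a\}^\perp\cap B)^\perp\cap B$; since $\{a\}^\perp\cap B\subseteq\{a\}^\perp$, taking orthocomplements gives $\{a\}^{\perp\perp}\subseteq\{a\}^{\perp_B\perp_B}$, and the right-hand side is exactly the relative double complement $(\{a\}^{\perp\perp})^{\perp_B\perp_B}$ formed with the operation $r\mapsto r^\perp\wedge B$ on $[A]^\perp$ of \autoref{[p]_p} (note $\{a\}^{\perp\perp}\subseteq B$ since $a\in B\in[A]^\perp$). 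Hence \autoref{orthoperpequiv} \eqref{orthoperpequiv3}, applied to orthoperspectivity (which it satisfies, by the remark after that proposition) with $p=B$ and $q=\{a\}^{\perp\perp}$, says that $\{a\}^{\perp\perp}$ and $\{a\}^{\perp_B\perp_B}$ are orthoperspective in $[A]^\perp$.

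For $C\sim D\Rightarrow C\sim_B D$ I would start from $a\in A$ with $\{a\}^{\perp\perp}=C$ and $\{a^*\}^{\perp\perp}=D$ and pass to $a'=a|a|$, $|a|=\sqrt{a^*a}$. Then $a'^*a'=(a^*a)^2$ and $a'a'^*=(aa^*)^2$, so, since $\{x^2\}^\perp=\{x\}^\perp$ for $x\in A_+$ (a routine consequence of \autoref{xab}) and $\{a\}^\perp=\{a^*a\}^\perp$, we get $\{a'\}^{\perp\perp}=C$ and $\{a'^*\}^{\perp\perp}=D$. Moreover $a'^*a'=(a^*a)^2\in C\subseteq B$ and $a'a'^*=(aa^*)^2\in D\subseteq B$, so $a'\in B$ by the hereditary-subalgebra characterisation of \cite{Pedersen1979} Theorem 1.5.2. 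Now $\{a'\}^{\perp_B\perp_B}$ is the least element of $[B]^\perp=[B]_B$ containing $a'$, so it is contained in $C$ (as $C\in[B]_B$ contains $a'$); combined with $C=\{a'\}^{\perp\perp}\subseteq\{a'\}^{\perp_B\perp_B}$ from the previous paragraph this gives $\{a'\}^{\perp_B\perp_B}=C$, and likewise $\{a'^*\}^{\perp_B\perp_B}=D$. Thus $a'\in B$ witnesses $C\sim_B D$.

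For $C\sim_B D\Rightarrow C\sim D$, take $a\in B$ with $\{a\}^{\perp_B\perp_B}=C$ and $\{a^*\}^{\perp_B\perp_B}=D$. By the first paragraph, $\{a\}^{\perp\perp}$ is orthoperspective to $C$ in $[A]^\perp$; orthoperspective elements share a common orthogonal, in particular a common, complement, hence are perspective, so, $A$ being anniseparable, \autoref{posp'cor} yields $\{a\}^{\perp\perp}\sim C$, and symmetrically $\{a^*\}^{\perp\perp}\sim D$. Since $a$ itself witnesses $\{a\}^{\perp\perp}\sim\{a^*\}^{\perp\perp}$ in $A$ and $\sim$ is symmetric and transitive (\autoref{simtran}), chaining $C\sim\{a\}^{\perp\perp}\sim\{a^*\}^{\perp\perp}\sim D$ gives $C\sim D$.

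The main obstacle will be the bookkeeping of the first paragraph: verifying that the intrinsic double annihilator of $a\in B$ inside $B$ really is the relative double complement $(\{a\}^{\perp\perp})^{\perp_B\perp_B}$ in $[A]^\perp$, so that \autoref{orthoperpequiv} \eqref{orthoperpequiv3} is applicable, and keeping the two meanings of $\perp_B$ straight. Once that is in place the forward direction is just the algebraic passage $a\mapsto a|a|$ together with \cite{Pedersen1979} Theorem 1.5.2, and the reverse direction is a short appeal to the order theory of \S\ref{OrderTheory}. Anniseparability of $A$ enters only in the reverse direction, through \autoref{posp'cor}.
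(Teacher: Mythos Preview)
Your proof is correct and follows essentially the same route as the paper: both directions hinge on the identity $\{a\}^{\perp_B\perp_B}=(\{a\}^{\perp\perp})^{\perp_B\perp_B}$ for $a\in B$, and the backward direction is handled identically via orthoperspectivity $\Rightarrow$ perspectivity $\Rightarrow$ $\sim$ (through \autoref{posp'cor}) and transitivity.

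The one place you work harder than necessary is the forward direction. You replace a witness $a$ for $C\sim D$ by $a'=a|a|$ in order to force $a'\in B$, but this detour is not needed: since $C,D\in[B]_B\subseteq[B]$, we already have $a^*a\in\{a\}^{\perp\perp}=C\subseteq B$ and $aa^*\in\{a^*\}^{\perp\perp}=D\subseteq B$, so $a\in B$ directly by the hereditary criterion of \cite{Pedersen1979} Theorem 1.5.2. The paper simply notes ``necessarily $b\in B$'' and then, because $C=\{a\}^{\perp\perp}$ is already assumed to lie in $[B]_B$, the involutivity of ${}^{\perp_B}$ on $[B]_B$ gives $\{a\}^{\perp_B\perp_B}=\{a\}^{\perp\perp\perp_B\perp_B}=\{a\}^{\perp\perp}=C$ in one line. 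Your containment argument for $\{a'\}^{\perp_B\perp_B}=C$ is fine, but it is doing the same thing with an extra layer.
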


\begin{proof}
If $\{b\}^{\perp\perp},\{b^*\}^{\perp\perp}\in[B]_B$, for some $b\in A$ (necessarily with $b\in B$), then $\{b\}^{\perp_B\perp_B}=\{b\}^{\perp\perp\perp_B\perp_B}=\{b\}^{\perp\perp}$ and, likewise, $\{b^*\}^{\perp_B\perp_B}=\{b^*\}^{\perp\perp}$.  Thus $\sim$ restricted to $[B]_B$ is stronger than $\sim_B$.  Conversely, if $[A]^\perp$ is anniseparable then, as $\sim$ is weaker than perspectivity, which is itself weaker than orthoperspectivity, we have $\{b\}^{\perp_B\perp_B}\sim\{b\}^{\perp\perp}\sim\{b^*\}^{\perp\perp}\sim\{b^*\}^{\perp_B\perp_B}$, for any $b\in B$.
\end{proof}

From \autoref{posp'cor}, we see that the only thing stopping $\sim$ from being a dimensional equivalence relation (for anniseparable $A$) according to \cite{Kalmbach1983} \S11 Definition 1, is the potential lack of finite (orthogonal) divisibility.  Although we can prove that $\sim$ satisfies a non-orthogonal version of divisibility.

\begin{prp}\label{nonorthodiv}
For any $a\in A$, the map $B\mapsto(Ba)^{\perp\perp}$ is order and supremum preserving on $[A]^\perp$.  Also $(Ba)^{\perp\perp}\sim B$, for all princpal $B\in[\{a^*\}^{\perp\perp}]$.
\end{prp}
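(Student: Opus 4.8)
The plan is to treat the three claims separately, with the following \emph{upgrading principle} doing most of the work: if $y\in A$, $S\subseteq A_+$ and $sy=0$ for all $s\in S$, then $ty=0$ for all $t\in S^{\perp\perp}$. This is proved much as \autoref{prp1}: $sy=0$ gives $syy^*=0$, hence $yy^*s=0$, so $yy^*\in S^\perp$; then for $t\in S^{\perp\perp}$ we have $yy^*\perp t$, in particular $yy^*t^*=0$, and left multiplication by $t$ gives $(ty)(ty)^*=tyy^*t^*=0$, i.e.\ $ty=0$. Order preservation is then immediate without any of this: $B\subseteq C$ gives $Ba\subseteq Ca$, and antitonicity of ${}^\perp$ (hence of ${}^{\perp\perp}$) yields $(Ba)^{\perp\perp}\subseteq(Ca)^{\perp\perp}$.

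For supremum preservation, let $(B_\alpha)\subseteq[A]^\perp$ and put $B=\bigvee_\alpha B_\alpha=(\bigcup_\alpha B_\alpha)^{\perp\perp}$. Since $B_\alpha a\subseteq Ba$, order preservation gives $\bigvee_\alpha(B_\alpha a)^{\perp\perp}\subseteq(Ba)^{\perp\perp}$. For the reverse inclusion I would use $\bigvee_\alpha(B_\alpha a)^{\perp\perp}=(\bigcup_\alpha B_\alpha a)^{\perp\perp}$ and reduce to showing $(\bigcup_\alpha B_\alpha a)^\perp\subseteq(Ba)^\perp$; since both of these are hereditary C*-subalgebras (being of the form $T^\perp$), it is enough to check this inclusion on positive elements. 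So let $c\in(\bigcup_\alpha B_\alpha a)^\perp$ be positive, so that $s(ac)=0$ for every $s$ in the positive set $S=\bigcup_\alpha(B_\alpha)_+$, which satisfies $S^{\perp\perp}=B$ (the positive parts generate each $B_\alpha$). The upgrading principle with $y=ac$ then gives $t(ac)=0$ for all $t\in B$, i.e.\ $c\in(Ba)^\perp$, as required.

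For the equivalence claim, let $B\in[\{a^*\}^{\perp\perp}]$ be principal and fix a positive $b\in B$ with $\{b\}^{\perp\perp}=B$ (if $b_1$ is any generator in $B$, then $b_1^*b_1$ works, since $\{b_1^*b_1\}^\perp=\{b_1\}^\perp$). First I claim $(Ba)^{\perp\perp}=\{ba\}^{\perp\perp}$: ``$\supseteq$'' holds because $ba\in Ba$, and for ``$\subseteq$'' one takes a positive $c$ with $bac=0$ and applies the upgrading principle with $S=\{b\}$, $y=ac$ to obtain $b_0ac=0$ for every $b_0\in B$, i.e.\ $c\in(Ba)^\perp$. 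Next, \autoref{simlem} applied to $a^*$ and $b$ (its hypothesis being $\{b^*\}^{\perp\perp}=B\subseteq\{a^*\}^{\perp\perp}$) gives $\{a^*b\}^{\perp\perp}=\{b\}^{\perp\perp}=B$. Hence $z=ba$ satisfies $\{z\}^{\perp\perp}=(Ba)^{\perp\perp}$ and $\{z^*\}^{\perp\perp}=\{a^*b\}^{\perp\perp}=B$, which is exactly $(Ba)^{\perp\perp}\sim B$.

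I expect the only real obstacle to be the supremum-preservation step --- getting from annihilation against a generating set to annihilation against its full bi-annihilator (the upgrading principle), together with the passage to positive elements. The order-preservation and equivalence parts are then short, once one is careful about the asymmetry of $\perp$ in the manipulations above.
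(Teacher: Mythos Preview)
Your proof is correct and follows essentially the same route as the paper. The only cosmetic difference is in packaging: where you isolate an ``upgrading principle'' ($sy=0$ for $s\in S\subseteq A_+$ implies $ty=0$ for $t\in S^{\perp\perp}$, via $yy^*\in S^\perp$), the paper does the same step inline by observing that for positive $c$ one has $c\in(Ta)^\perp\Leftrightarrow aca^*\in T^\perp$, and then uses $(\bigcup\mathcal{B})^\perp=(\bigvee\mathcal{B})^\perp$. Both arguments reduce to the same positivity trick, and your equivalence argument ($\{ba\}^{\perp\perp}=(Ba)^{\perp\perp}$ together with \autoref{simlem}) matches the paper's exactly.
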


\begin{proof}
The given map is certainly order preserving, even for arbitrary subsets $B$ of $A$.  If we have $\mathcal{B}\subseteq[A]^\perp$ then \[\bigvee_{B\in\mathcal{B}}(Ba)^{\perp\perp}=(\bigcap_{B\in\mathcal{B}}(Ba)^\perp)^\perp=((\bigcup\mathcal{B})a)^{\perp\perp}\subseteq((\bigvee\mathcal{B})a)^{\perp\perp}.\] To see that this last inclusion can be reversed, take $c\in((\bigcup\mathcal{B})a)^\perp_+$.  This means $aca^*\in(\bigcup\mathcal{B})^\perp=(\bigvee\mathcal{B})^\perp$ and hence $c\in((\bigvee\mathcal{B})a)^\perp$, i.e. $((\bigcup\mathcal{B})a)^\perp\subseteq((\bigvee\mathcal{B})a)^\perp$ and hence $((\bigvee\mathcal{B})a)^{\perp\perp}\subseteq((\bigcup\mathcal{B})a)^{\perp\perp}$.  Lastly, note that if $B=\{b\}^{\perp\perp}\subseteq\{a^*\}^{\perp\perp}$, for some $b\in B_+$, then $\{a^*b\}^{\perp\perp}=B$, by \autoref{simlem}, and \[c\in\{ba\}^\perp\quad\Leftrightarrow\quad aca^*\in\{b\}^\perp=B^\perp\quad\Leftrightarrow\quad c\in(Ba)^\perp,\] i.e. $\{ba\}^{\perp\perp}=(Ba)^{\perp\perp}$ and hence $ba$ witnesses $B\sim(Ba)^{\perp\perp}$.
\end{proof}

Note, however, that the map $B\mapsto(Ba)^{\perp\perp}$ may not preserve infimums, as the following example shows.  Specifically, consider $A=\mathcal{B}(H)$, where $H$ is a separable infinite dimensional Hilbert space with basis $(e_n)$.  Define $a\in\mathcal{B}(H)_+$ by $ae_n=\frac{1}{n^2}e_n$, so $\overline{\mathcal{R}(a)}=H$ and hence $\{a\}^{\perp\perp}=A$, and let $c$ be the projection onto $\mathbb{C}v$, where $v=\sum\frac{1}{n}e_n$.  Then $b=c^\perp\perp c$, and hence $B=bAb=\{b\}^{\perp\perp}\perp\{c\}^{\perp\perp}=cAc=C$ even though we still have $\overline{\mathcal{R}(ab)}=H$ and hence $(Ba)^{\perp\perp}=\{ba\}^{\perp\perp}=\{a\}^{\perp\perp}=A$, while $(Ca)^{\perp\perp}\neq\{0\}$.

\begin{prp}\label{simtr}
If $A$ is anniseparable and orthoseparable, $\sim$ is a type relation.
\end{prp}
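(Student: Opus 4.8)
The plan is to check the two implications of the biconditional \eqref{treq} directly, after first reducing to a countable index set. So let $(B_\alpha),(C_\alpha)\subseteq[A]^\perp$ with the $B_\alpha\vee C_\alpha$ pairwise very orthogonal. Being in particular pairwise orthogonal, orthoseparability (pick a nonzero positive element of each nonzero $B_\alpha\vee C_\alpha$) shows all but countably many of the $B_\alpha\vee C_\alpha$ vanish, and a vanishing $B_\alpha\vee C_\alpha$ forces $B_\alpha=C_\alpha=\{0\}$, which contributes nothing to the joins and satisfies $B_\alpha\sim C_\alpha$ via the witness $0$; so we may take the index set to be $\mathbb{N}$. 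For the forward direction, given $B_n\sim C_n$ pick witnesses $a_n\in A^1$ (rescale so $\|a_n\|\le 1$) with $\{a_n\}^{\perp\perp}=B_n$ and $\{a_n^*\}^{\perp\perp}=C_n$, and set $a=\sum 2^{-n}a_n\in A$. Since $a_m\in\{a_m\}^{\perp\perp}=B_m\subseteq B_m\vee C_m$ and $a_m^*\in C_m\subseteq B_m\vee C_m$, pairwise orthogonality gives $a_m^*a_n=0=a_ma_n^*$ for $m\neq n$, so $a^*a=\sum_n 4^{-n}a_n^*a_n$ and $aa^*=\sum_n 4^{-n}a_na_n^*$; as a norm-convergent sum of positive elements vanishes only if each term does, $ab=0\Leftrightarrow a_nb=0$ for all $n$ and $ab^*=0\Leftrightarrow a_nb^*=0$ for all $n$, whence $\{a\}^\perp=\bigcap_n\{a_n\}^\perp=(\bigvee_nB_n)^\perp$ and likewise $\{a^*\}^\perp=(\bigvee_nC_n)^\perp$. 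Thus $\bigvee_nB_n\sim\bigvee_nC_n$; this is just the countable-additivity observation already made before \autoref{annisep}.

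For the reverse direction, suppose $\bigvee_nB_n\sim\bigvee_nC_n$, witnessed by $a\in A$; write $B=\{a\}^{\perp\perp}$, $C=\{a^*\}^{\perp\perp}$ and $I_n=\mathrm{c}(B_n\vee C_n)$. By \autoref{centralannihilators} each $I_n$ is an annihilator ideal, and the $I_n$ are pairwise very orthogonal (indeed $I_n\perp I_m$ for $n\ne m$ since $(B_n\vee C_n)$ is very orthogonal, and each $I_n$ is central), so \eqref{bigveepalpha} gives an order isomorphism $[J]_J\cong\prod_n[I_n]_{I_n}$, where $J=\bigvee_nI_n$, sending $X\mapsto(X\cap I_n)_n$; here an $X\le I_m$ goes to the tuple with $X$ in coordinate $m$ and $\{0\}$ elsewhere, and joins go to coordinatewise joins. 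Now apply the order- and supremum-preserving map $\Phi(X)=(Xa^*)^{\perp\perp}$ of \autoref{nonorthodiv}. Routine manipulations of $\perp$ give $a\in\{a\}^{\perp\perp}=B$, $xa^*\in\{a^*\}^{\perp\perp}$ for all $x\in A$, and $\{aa^*\}^{\perp\perp}=\{a^*\}^{\perp\perp}$; hence $\Phi(B)=C$ (since $aa^*\in Ba^*\subseteq C$), while for each $n$ both $\Phi(B_n)\subseteq C$ (since $B_na^*\subseteq\{a^*\}^{\perp\perp}$) and $\Phi(B_n)\subseteq I_n$ (since $B_n\subseteq I_n$, an ideal). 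By anniseparability each $B_n$ is principal, so \autoref{nonorthodiv} also yields $\Phi(B_n)\sim B_n$. Supremum-preservation gives $\bigvee_n\Phi(B_n)=\Phi(B)=C=\bigvee_nC_n$; since $\Phi(B_n),C_n\le I_n$, reading this equality coordinatewise in $\prod_n[I_n]_{I_n}$ forces $\Phi(B_n)=C\cap I_n=C_n$ for every $n$. Therefore $B_n\sim\Phi(B_n)=C_n$, completing the reverse implication and the proof.

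The delicate part is the reverse implication, specifically extracting from the single witness $a$ of $\bigvee_nB_n\sim\bigvee_nC_n$ witnesses for the individual $B_n\sim C_n$. One cannot simply ``restrict $a$ to the $n$-th block'', because $\bigvee_nI_n$ need not equal the norm closure of $\bigoplus_nI_n$ — the latter can be a proper, non-annihilator ideal — so $a$ need not decompose as a sum of pieces lying in the $I_n$. The device that sidesteps this is the division map $\Phi$ of \autoref{nonorthodiv}: it carries each principal $B_n\le B$ to an annihilator $\sim B_n$ sitting inside $C\cap I_n$, it is supremum-preserving so its values on the $B_n$ join up to all of $C$, and separativity — in the guise of the product decomposition \eqref{bigveepalpha} of the segment below $\bigvee_n\mathrm{c}(B_n\vee C_n)$ — then pins those values down to be exactly the coordinates $C_n$. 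Everything else (the elementary identities $a\in\{a\}^{\perp\perp}$, $\{aa^*\}^{\perp\perp}=\{a^*\}^{\perp\perp}$, $xa^*\in\{a^*\}^{\perp\perp}$, and the block-diagonalisation of $a^*a$ and $aa^*$ in the forward direction) is a routine unwinding of the definition of $\perp$.
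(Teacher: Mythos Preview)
Your proof is correct and follows essentially the same approach as the paper: both use the division map of \autoref{nonorthodiv} to carry subannihilators of $\bigvee B_n$ to equivalent annihilators inside the appropriate central blocks, then invoke the product decomposition coming from centrality to identify these images with the $C_n$. The paper phrases the reverse implication as ``$B\sim C$ and $D$ central imply $B\cap D\sim C\cap D$'' (splitting into just the two pieces $D$ and $D^\perp$), whereas you handle all the $I_n=\mathrm{c}(B_n\vee C_n)$ simultaneously, but the underlying mechanism is identical.
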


\begin{proof}
Orthoseparability yields the $\Rightarrow$ part of \eqref{treq}.  For the $\Leftarrow$ part, say we have $B,C\in[A]^\perp$, $B\sim C$ and $D\in c[A]^\perp$.  By \autoref{nonorthodiv}, we have $E,F\in[C]$ with $B\cap D\sim E$, $B^\perp\cap D\sim F$ and $E\vee F=C$.  As the closed ideal generated by any $a\in A$ is the same as that generated by $a^*$, we have $c(E)=c(B\cap D)\subseteq B$ and $c(F)=c(B^\perp\cap D)\subseteq B^\perp$.  This means $E=B\cap C$ and $F=B^\perp\cap C$.
\end{proof}

In particular, we get the type-decompositions in \autoref{tdcor} coming from the type ideals of $\sim$-finite and $\sim$-orthofinite annihilators, by \autoref{trti}.  Whether these agree with the type decompositions you get from the type ideals of modular or relatively modular annihilators (see \autoref{tctd} and \eqref{Mdef}), we do not know.  However, it does follow from \autoref{permod} that if $\sim$ is finite on $[A]^\perp$ then $[A]^\perp$ is modular.  So if $[A]^\perp_{\sim\perp\mathrm{Fin}}$ is the type ideal of $\sim$-orthofinite annihilators in $[A]^\perp$ then $[A]^\perp_{\sim\perp\mathrm{Fin}}\cap[A]^\perp_{\mathbf{O}\mathrm{rel}}$ (see \eqref{Odef}) is the type ideal consisting of those $B\in[A]^\perp$ for which $\sim$ is finite on $[B]_B$ and \[[A]^\perp_{\sim\perp\mathrm{Fin}}\cap[A]^\perp_{\mathbf{O}\mathrm{rel}}\quad\subseteq\quad[A]^\perp_{\mathbf{M}\mathrm{rel}}.\]

\autoref{nonorthodiv} also yields a Cantor-Schroeder-Bernstein theorem.

\begin{thm}
If $A$ is anniseparable and $B,C\in[A]^\perp$ then \[B\precsim C\precsim B\quad\Rightarrow\quad B\sim C.\]
\end{thm}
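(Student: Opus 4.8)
The plan is to adapt the Banach-style telescoping proof of the Cantor--Schr\"oder--Bernstein theorem, with $\sim$ in the role of equipotence and the order- and supremum-preserving, $\sim$-preserving maps $X\mapsto(Xa)^{\perp\perp}$ of \autoref{nonorthodiv} in the role of injections.

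First I would unwind the hypotheses. As $A$ is anniseparable, $\sim$ is an equivalence relation and $\precsim$ a preorder (\autoref{simtran}), and every annihilator is principal. Now $B\precsim C$ gives $D\in[A]^\perp$ with $B\sim D\subseteq C$; fix $u\in A$ with $\{u\}^{\perp\perp}=B$, $\{u^*\}^{\perp\perp}=D$, and put $\psi(X):=(Xu^*)^{\perp\perp}$. By \autoref{nonorthodiv} and principality, $\psi$ is order- and supremum-preserving on $[A]^\perp$, maps $[B]$ into $[D]$, and has $\psi(X)\sim X$ for $X\in[B]$. Also $C\precsim B$ gives $E\subseteq B$ with $C\sim E$, so $\psi(E)\subseteq D\subseteq C$ and, by \autoref{simtran}, $\psi(E)\sim E\sim C$. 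Hence it suffices to prove the ``sandwich'' statement: if $Y\subseteq Z\subseteq X$ are annihilators with $X\sim Y$ then $X\sim Z$ --- applied to $\psi(E)\subseteq D\subseteq C$ it gives $C\sim D$, and then $B\sim D\sim C$.

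For the sandwich statement I would fix $w\in A$ witnessing $X\sim Y$ and again set $\psi(U):=(Uw^*)^{\perp\perp}$, order- and supremum-preserving with $\psi(U)\sim U$ on $[X]$ and $\psi(X)=Y$. Telescoping, as in the classical proof, along the decreasing chain
\[
X\ \supseteq\ Z\ \supseteq\ Y=\psi(X)\ \supseteq\ \psi(Z)\ \supseteq\ \psi(Y)=\psi^{2}(X)\ \supseteq\ \cdots,
\]
with $R:=\bigwedge_{n}\psi^{n}(X)$, one would split $X$ and $Z$ into $R$ together with the ``gaps'' $\psi^{k}(X)\wedge\psi^{k}(Z)^{\perp}$ and $\psi^{k}(Z)\wedge\psi^{k+1}(X)^{\perp}$; since $\psi$ shifts the first family up by one index while preserving $\sim$, the join $P$ of the gaps $\psi^{k}(X)\wedge\psi^{k}(Z)^{\perp}$ ($k\ge0$) would be $\sim$-equivalent to the join $P'$ of those with $k\ge1$. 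Writing $Q$ for the join of the remaining gaps, one would get orthogonal decompositions $X=R\vee Q\vee P$, $Z=R\vee Q\vee P'$ with $R\sim R$, $Q\sim Q$, $P\sim P'$; countable additivity of $\sim$ (valid since $A$ is anniseparable), with \autoref{simlem} to recognise the $\psi$-images, then yields a single $w'\in A$ witnessing $X\sim Z$. When $A$ has polar decomposition this runs through verbatim, since then $w$ factors through a partial isometry that genuinely preserves orthogonality.

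The main obstacle --- what the conditional phrasing above is flagging --- is that $[A]^\perp$ need not be orthomodular (\autoref{nonorthoxpl}): for $V\subseteq W$ the identity $W=V\vee(W\wedge V^{\perp})$ may fail, so the gaps need not assemble into orthogonal decompositions of $X$ and $Z$, and, worse, a map $U\mapsto(Uw^{*})^{\perp\perp}$ preserves only very orthogonality, not orthogonality (cf.\ \autoref{vo}), so even granting the decompositions it need not carry a gap onto a gap. I expect dealing with this to be the technical heart of the proof. The toolkit I would draw on is: the relative ortholattices $[X]_{X}$ and $[Z]_{Z}$, in which every element genuinely has an orthogonal complement, namely ${}^{\perp_{X}}$ resp.\ ${}^{\perp_{Z}}$ (\autoref{[p]_p}); the fact that these are order-dense, hence --- by separativity (\autoref{epsep}) and \autoref{jdod} --- join-dense in $[X]$ and $[Z]$ (so one may freely replace chain elements by their relative biorthocomplements, cf.\ the discussion after \autoref{prp1}); that $\sim$ is unchanged under this relativisation (\autoref{simBsim}) and is countably additive; and the lemmas \autoref{prp1} and \autoref{simlem}. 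Pushing the telescoping through these relative structures --- choosing the successive complements so that $\psi$ respects them and transferring suprema between $[X]_{X}$ and $[A]^\perp$ as needed --- is where essentially all of the work lies.
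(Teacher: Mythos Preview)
Your proposal takes the Banach telescoping route; the paper instead uses the Tarski fixed-point version of Schr\"oder--Bernstein, and the difference is exactly what resolves the obstacle you correctly identify.

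In the paper one fixes $b,c\in A$ with $\{b\}^{\perp\perp}=B$, $\{b^*\}^{\perp\perp}\subseteq C$, $\{c\}^{\perp\perp}=C$, $\{c^*\}^{\perp\perp}\subseteq B$, and considers the order-preserving self-map $D\mapsto((Db^*)^{\perp_C}c^*)^{\perp_B}$ on the complete lattice $[B]$. A fixed point $F$ exists by Knaster--Tarski, and then one only needs a \emph{single} two-piece split: $F$ and $F^{\perp_B}$ on the $B$ side, matched via \autoref{nonorthodiv} (and one application of orthoperspectivity-implies-$\sim$) with $(Fb^*)^{\perp_C\perp_C}$ and $(Fb^*)^{\perp_C}$ on the $C$ side. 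Crucially, $F\vee F^{\perp_B}$ need not equal $B$, but it is orthoperspective to $B$ (since $(F\vee F^{\perp_B})^{\perp_B}=\{0\}$), hence $\sim B$ by \autoref{posp'cor}; similarly on the $C$ side. So non-orthomodularity is absorbed by a single use of ``$\sim$ is weaker than orthoperspectivity'' rather than by assembling an infinite orthogonal decomposition.

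Your telescoping scheme, by contrast, genuinely requires $X=R\vee Q\vee P$ and $Z=R\vee Q\vee P'$ as orthogonal joins, and requires $\psi$ to carry one gap family onto a shifted copy \emph{orthogonally}. You are right that neither holds in a non-orthomodular ortholattice, and the toolkit you list (relativising to $[X]_X$, join-density via separativity, \autoref{simBsim}) does not obviously close the gap: passing to $[X]_X$ repairs complements, but $\psi$ has no reason to respect the relative orthocomplementation, and iterating relativisation at each level of the telescope introduces a moving target. As written, the proposal stops precisely where the difficulty begins. The fixed-point reformulation sidesteps all of this by collapsing the infinite telescope into one equation and one pair of orthogonal pieces on each side.
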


\begin{proof}
Take $b,c\in A$ with $\{b\}^{\perp\perp}=B$, $\{b^*\}^{\perp\perp}\subseteq C$, $\{c\}^{\perp\perp}=C$ and $\{c^*\}^{\perp\perp}\subseteq B$.  We will apply Tarski's fixed point theorem, as in \cite{Berberian1972} \S1 Theorem 1.  Specifically, note that the map on $[B]$ defined by \[D\mapsto((Db^*)^{\perp_C}c^*)^{\perp_B}\] is order-preserving so, as $[B]$ is a complete lattice, it has a fixed point $F$.  By \autoref{nonorthodiv},
\begin{eqnarray*}
F^{\perp_B}=((Fb^*)^{\perp_C}c^*)^{\perp_B\perp_B} &\sim& (Fb^*)^{\perp_C},\textrm{ and}\\
F &\sim& (Fb^*)^{\perp_C\perp_C},\textrm{ so}\\
B\sim F\vee F^{\perp_B} &\sim& (Fb^*)^{\perp_C\perp_C}\vee(Fb^*)^{\perp_C}\sim C.
\end{eqnarray*}
\end{proof}

The next result shows $\precsim$ satisfies generalized comparison (see \autoref{gcdef}).

\begin{thm}\label{BDCDthm}
If $A$ is anniseparable and orthoseparable then, for all $B,C\in[A]^\perp$, there exists $D\in\mathrm{c}[A]^\perp$ with
\begin{equation}\label{BDCD}
B\cap D\precsim C\cap D\qquad\textrm{and}\qquad C\cap D^\perp\precsim B\cap D^\perp.
\end{equation}
\end{thm}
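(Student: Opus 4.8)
The plan is to reduce the statement to the combinatorial criterion \eqref{simgc} via \autoref{simgcprp}. First I would record the properties of $\sim$ that make this available: under the hypotheses $\sim$ is reflexive (anniseparability makes every annihilator principal; see \autoref{annisep}), symmetric, transitive (\autoref{simtran}), countably additive, a type relation (\autoref{simtr}), and weaker than orthoperspectivity. The last point is where I would spend a sentence: if $B$ and $C$ have a common orthogonal complement $E$, then $E$ is an ordinary complement of each, so $B$ and $C$ are each semiorthoperspective to $E^\perp$; since all annihilators are principal, \autoref{posp'} gives $B\sim E^\perp$ and $C\sim E^\perp$, and transitivity gives $B\sim C$. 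Because $\sim$ is transitive and weaker than orthoperspectivity, the relation $\precsim_\mathrm{rel}$ of \eqref{precsimreldef} coincides with $\precsim$ (and in any case $\precsim_\mathrm{rel}$ is the stronger of the two, so generalized comparison for $\precsim_\mathrm{rel}$ implies it for $\precsim$), and by \autoref{simgcprp} it is exactly \eqref{simgc} that must be verified.

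So, fix $B,C\in[A]^\perp$. I would apply Zorn's lemma to the poset of families of pairs $(U',V')$ with $\{0\}\neq U'\in[B]_B$, $\{0\}\neq V'\in[C]_C$, $U'\sim V'$, the $U'$'s pairwise orthogonal and the $V'$'s pairwise orthogonal, ordered by inclusion (chains have upper bounds, as pairwise orthogonality is preserved under unions). Fix a maximal family; orthoseparability forces it to be countable, say $\{(U_n,V_n):n\}$, and I set $U=\bigvee_{[B]_B}U_n$ and $V=\bigvee_{[C]_C}V_n$, the suprema taken in the ortholattices $[B]_B$ and $[C]_C$.

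The two things to prove are $U\sim V$ and that $U^{\perp_B}$ is very orthogonal to $V^{\perp_C}$. For the first, let $W=\bigvee_{[A]^\perp}U_n$ and $W'=\bigvee_{[A]^\perp}V_n$; normalizing the witnessing elements and applying countable additivity to $\sum 2^{-n}a_n$ yields $W\sim W'$, while a direct computation with double relative complements gives $U=W^{\perp_B\perp_B}$ and $V=W'^{\perp_C\perp_C}$, so \autoref{orthoperpequiv} \eqref{orthoperpequiv3} (valid since $\sim$ is weaker than orthoperspectivity) gives $W\sim U$ and $W'\sim V$, hence $U\sim V$ by transitivity. For the second, suppose $U^{\perp_B}$ and $V^{\perp_C}$ are not very orthogonal; by \autoref{vo} this produces $b\in(U^{\perp_B})_+$, $a\in A$, $c\in(V^{\perp_C})_+$ with $x:=bac\neq0$. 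Then $xx^*=b(ac^2a^*)b$ is positive in the hereditary C*-subalgebra $U^{\perp_B}$ and $x^*x=c(a^*b^2a)c$ is positive in $V^{\perp_C}$; since $\{z\}^\perp=\{z^*z\}^\perp$ and since the annihilator generated by a positive element of a hereditary C*-subalgebra is the same whether computed in that subalgebra or in $A$, the annihilators $U'=\{x^*\}^{\perp\perp}$ and $V'=\{x\}^{\perp\perp}$ are nonzero, lie in $[U^{\perp_B}]_{U^{\perp_B}}\subseteq[B]_B$ and $[V^{\perp_C}]_{V^{\perp_C}}\subseteq[C]_C$ (via \autoref{[p]_p}), and satisfy $U'\sim V'$ with witness $x$. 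Since $U'\subseteq U^{\perp_B}\subseteq U_n^\perp$ and $V'\subseteq V^{\perp_C}\subseteq V_n^\perp$ for every $n$, adjoining $(U',V')$ enlarges the family, contradicting maximality. Thus \eqref{simgc} holds and \autoref{simgcprp} finishes the proof.

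The main obstacle is precisely the interplay between joins in $[A]^\perp$ and joins in the relative structures $[B]_B$, $[C]_C$: because $[A]^\perp$ need not be orthomodular, a supremum of pairwise orthogonal relative annihilators of $B$ computed in $[A]^\perp$ need not be a relative annihilator of $B$, which is why the exhaustion argument must be staged inside $[B]_B$ and $[C]_C$ and why the identity $W\sim W^{\perp_B\perp_B}$ — i.e. weakness under orthoperspectivity — is indispensable in identifying $U\sim V$. The secondary technical point to be careful about, and the reason the ``pieces'' $U'$ and $V'$ genuinely live in the relative structures, is the insensitivity of $\{e\}^{\perp\perp}$ to whether it is formed in $A$ or in a hereditary C*-subalgebra containing the positive element $e$.
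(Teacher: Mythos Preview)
Your approach is essentially the paper's: a maximality argument producing equivalent pieces inside $B$ and $C$, orthoseparability forcing countability, a single witness $a=\sum 2^{-n}a_n$ via countable additivity, then \autoref{orthoperpequiv}\eqref{orthoperpequiv3} to pass to the relative double complements in $[B]_B$ and $[C]_C$, and finally \autoref{simgcprp} together with \autoref{vo}. The paper runs the maximality argument directly on elements $a_n\in A^1$ with $a_n^*a_n\in B$, $a_na_n^*\in C$ and cross-orthogonality, which sidesteps the $[B]$-versus-$[B]_B$ bookkeeping you take on; otherwise the two arguments match.

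There is, however, a genuine gap in your contradiction step. The assertion that ``the annihilator generated by a positive element of a hereditary C*-subalgebra is the same whether computed in that subalgebra or in $A$'' is false in general: for $D\in[A]^\perp$ and $d\in D_+$ one always has $\{d\}^{\perp\perp}\subseteq\{d\}^{\perp_D\perp_D}$, but equality is exactly the statement $\{d\}^{\perp\perp}\in[D]_D$, which can fail when $[A]^\perp$ is not orthomodular (take $D$ to be the larger of the two annihilators in \autoref{nonorthoxpl} and $d$ a generator of the smaller one). So your $U'=\{x^*\}^{\perp\perp}$ need not lie in $[B]_B$, and you cannot adjoin $(U',V')$ to your family as written. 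The repair is painless: either relax the Zorn family to require only $U'\in[B]$ and $V'\in[C]$ (the orthogonality $U'\perp U_n$ already follows from $U'\subseteq U^{\perp_B}$, and $U=W^{\perp_B\perp_B}$ lands in $[B]_B$ regardless of whether the $U_n$ do), or keep the family as is and instead adjoin $\bigl((\{x^*\}^{\perp\perp})^{\perp_B\perp_B},\,(\{x\}^{\perp\perp})^{\perp_C\perp_C}\bigr)$, using \autoref{orthoperpequiv}\eqref{orthoperpequiv3} once more to preserve $U'\sim V'$. This is precisely the point you flag in your final paragraph about the interplay of joins in $[A]^\perp$ and $[B]_B$; it just bites one step earlier than you anticipated.
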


\begin{proof}
Let $(a_n)$ be a maximal subset of $A^1$ such that, for all distinct $m,n\in\mathbb{N}$, \[a_n^*a_n\in B,\quad a_na_n^*\in C,\quad\textrm{and}\quad a_ma_n^*=0=a_m^*a_n.\]  Let $a=\sum_n2^{-n}a_n$, $E=\{a\}^{\perp_B\perp_B}\subseteq B$ and $F=\{a^*\}^{\perp_C\perp_C}\subseteq C$.  By \autoref{orthoperpequiv} and \autoref{posp'cor}, $E\sim\{a\}^{\perp\perp}\sim\{a^*\}^{\perp\perp}\sim F$.  By maximality, $(B\cap E^\perp)A(C\cap F^\perp)=\{0\}$ and \eqref{BDCD} now follows from \autoref{simgcprp} and \autoref{vo}.
\end{proof}

Next we show that $\sim$ is the same in any sufficienlty large hereditary C*-subalgebra $B$ of $A$.  By \autoref{ess}, it applies to any $B$ containing an essential ideal of $A$.

\begin{prp}\label{simsub}
If $B$ is an anniseparable hereditary C*-subalgebra of $A$ and $C\mapsto B\cap C$ is an injective map from $[A]^\perp$ to $[B]^\perp$ then, for $D,E\in[A]^\perp$, \[D\sim E\quad\Leftrightarrow\quad B\cap D\sim_B B\cap E\]
\end{prp}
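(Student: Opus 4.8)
The plan is to reduce everything to one identity relating annihilators of $B$ to annihilators of $A$. Write $\phi$ for the map $C\mapsto B\cap C$, which by hypothesis sends $[A]^\perp$ into $[B]^\perp$ injectively. First I would prove the auxiliary fact that, for every $b\in B$,
\[\{b\}^{\perp_B\perp_B}=B\cap\{b\}^{\perp\perp}.\]
Here $\supseteq$ is automatic: since $B\cap\{b\}^\perp\subseteq\{b\}^\perp$, antitonicity gives $\{b\}^{\perp\perp}\subseteq(B\cap\{b\}^\perp)^\perp$, hence $B\cap\{b\}^{\perp\perp}\subseteq(B\cap\{b\}^\perp)^\perp\cap B=\{b\}^{\perp_B\perp_B}$. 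For $\subseteq$, note $B\cap\{b\}^{\perp\perp}$ lies in $[B]^\perp$ (because $\phi$ is well defined) and contains $b$, while $\{b\}^{\perp_B\perp_B}$ is the smallest element of $[B]^\perp$ containing $b$. Only well-definedness of $\phi$ enters here; injectivity is used later.

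For the implication $\Leftarrow$, suppose $b\in B$ witnesses $B\cap D\sim_B B\cap E$, so $\{b\}^{\perp_B\perp_B}=B\cap D$ and $\{b^*\}^{\perp_B\perp_B}=B\cap E$. Then $b\in B\cap D\subseteq D$, so $\{b\}^{\perp\perp}\subseteq D$; by the auxiliary fact $\phi(\{b\}^{\perp\perp})=B\cap\{b\}^{\perp\perp}=\{b\}^{\perp_B\perp_B}=B\cap D=\phi(D)$, and injectivity of $\phi$ yields $\{b\}^{\perp\perp}=D$. Symmetrically $\{b^*\}^{\perp\perp}=E$, so $b$, regarded as an element of $A$, witnesses $D\sim E$.

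For the implication $\Rightarrow$, let $a\in A$ witness $D\sim E$. Using anniseparability of $B$ (every annihilator of $B$ is principal) I would pick generators $d\in(B\cap D)_+$, $e\in(B\cap E)_+$ with $\{d\}^{\perp_B\perp_B}=B\cap D$ and $\{e\}^{\perp_B\perp_B}=B\cap E$. Exactly as in the previous paragraph, $d\in D$ forces $\{d\}^{\perp\perp}\subseteq D$, and the auxiliary fact plus injectivity of $\phi$ upgrade this to $\{d\}^{\perp\perp}=D=\{a\}^{\perp\perp}$; likewise $\{e\}^{\perp\perp}=E=\{a^*\}^{\perp\perp}$. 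Now set $b=ead$. Since $b^*b=d(a^*e^2a)d\le\|a^*e^2a\|\,d^2$ and $bb^*=e(ad^2a^*)e\le\|ad^2a^*\|\,e^2$ with $d^2,e^2\in B$, hereditarity gives $b^*b,bb^*\in B$, hence $b\in B$. To identify its annihilators I would use $\{x\}^{\perp\perp}=\{x^*x\}^{\perp\perp}=\{xx^*\}^{\perp\perp}$ together with \autoref{simlem}: from $\{a^*\}^{\perp\perp}=E=\{e\}^{\perp\perp}$ one gets $\{ea\}^{\perp\perp}=\{a\}^{\perp\perp}=D$, so with $W=a^*e^2a=(ea)^*(ea)\ge 0$ one has $\{W\}^{\perp\perp}=\{ea\}^{\perp\perp}=D$ and therefore $\{b\}^{\perp\perp}=\{b^*b\}^{\perp\perp}=\{dWd\}^{\perp\perp}=\{W^{1/2}d\}^{\perp\perp}=\{d\}^{\perp\perp}=D$, the last step being \autoref{simlem} since $\{d\}^{\perp\perp}=D=\{W^{1/2}\}^{\perp\perp}$. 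A symmetric computation, using $\{d\}^{\perp\perp}=D=\{a\}^{\perp\perp}$ to get $\{da^*\}^{\perp\perp}=\{a^*\}^{\perp\perp}=E$, gives $\{b^*\}^{\perp\perp}=\{bb^*\}^{\perp\perp}=E$. Finally the auxiliary fact returns $\{b\}^{\perp_B\perp_B}=B\cap D$ and $\{b^*\}^{\perp_B\perp_B}=B\cap E$, so $b$ witnesses $B\cap D\sim_B B\cap E$.

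The hard part is the $\Rightarrow$ direction: an equivalence $D\sim E$ is witnessed by some $a\in A$ with no relation to $B$, so a witness must be manufactured inside $B$, and the natural candidate, the sandwich $b=ead$ of $a$ between principal generators of $B\cap D$ and $B\cap E$, has the correct two-sided annihilator behaviour precisely because injectivity of $\phi$ forces those $B$-generators to generate $D$ and $E$ in $A$ as well — this is where both hypotheses are consumed. Everything else is routine bookkeeping with $\{x\}^{\perp\perp}=\{x^*x\}^{\perp\perp}$ and \autoref{simlem}.
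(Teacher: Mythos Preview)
Your proof is correct and follows essentially the same approach as the paper: you prove the same auxiliary identity $\{b\}^{\perp_B\perp_B}=B\cap\{b\}^{\perp\perp}$, use injectivity the same way for $\Leftarrow$, and for $\Rightarrow$ you sandwich a witness $a$ between principal generators $d,e$ just as the paper does (the paper uses $dae$, you use $ead$, and cites hereditarity and \autoref{simlem} more tersely where you spell out the annihilator computations via $b^*b$ and $bb^*$).
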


\begin{proof}
For any $b\in B$, $B\cap\{b\}^{\perp\perp}$ is the smallest element of $\{B\cap C:C\in[A]^\perp\}$ containing $b^*b$.  As $C\mapsto B\cap C$ maps $[A]^\perp$ to $[B]^\perp$, $[B]^\perp=\{B\cap C:C\in[A]^\perp\}$ so
\begin{equation}\label{perpeq}
\{b\}^{\perp_B\perp_B}=\{b\}^{\perp\perp}\cap B.
\end{equation}
So if $\{b\}^{\perp_B\perp_B}=B\cap D$ and $\{b^*\}^{\perp_B\perp_B}=B\cap E$ then $\{b\}^{\perp\perp}=D$ and $\{b^*\}^{\perp\perp}=E$, by the injectivity of $C\mapsto B\cap C$, i.e. $B\cap D\sim_B B\cap E$ implies $D\sim E$.

On the other hand, as $B$ is anniseparable, we have $d\in B\cap D_+$ such that $B\cap D=\{d\}^{\perp_B\perp_B}=B\cap\{d\}^{\perp\perp}$ and hence $D=\{d\}^{\perp\perp}$, as $C\mapsto B\cap C$ is injective.  Likewise, we have $e\in B\cap E_+$ such that $\{e\}^{\perp\perp}=E$.  If $a\in A$ witnesses $D\sim E$ then so does $dae$, by \autoref{simlem}.  But $B$ is a hereditary C*-subalgebra so $dae\in B$ and hence witnesses $B\cap D\sim_BB\cap E$, again by \eqref{perpeq}.
\end{proof}

In particular, under the hypothesis of \autoref{simsub}, $A$ is also anniseparable.  However, the converse fails in general.  For example, if $H$ is a non-separable Hilbert space then $\mathcal{K}(H)$ is not anniseparable, as $\mathcal{K}(H)$ is a non-principal annihilator in itself, even though $\mathcal{K}(H)$ is an essential ideal in the (necessarily anniseparable) von Neumann algebra $\mathcal{B}(H)$.  In fact, if $B$ is an essential ideal in anniseparable $A$ then \[B\textrm{ is principal (in itself) }\quad\Leftrightarrow\quad B\textrm{ is anniseparable}.\]  The $\Leftarrow$ part is immediate, and for the $\Rightarrow$ part simply note that if $\{b\}^{\perp_B\perp_B}=B$, for some $b\in B$, then $\{b\}^{\perp\perp}=A$ and, for any $c\in A$, we have $bc\in B$ and $\{bc\}^{\perp_B\perp_B}=\{c\}^{\perp\perp}\cap B$, by \eqref{perpeq}.

\subsection{Abelian Annihilators}\label{AA}

\begin{thm}\label{commuteBoolean}
$B\in[A]^\perp$ is commutative if and only if $[B]$ is a Boolean algebra.
\end{thm}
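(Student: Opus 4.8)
The plan is to compute the lattice $[B]=\{C\in[A]^\perp:C\subseteq B\}$ explicitly when $B$ is commutative, and to run a non-distributivity obstruction for the converse. Suppose first that $B$ is commutative, say $B\cong C_0(X)$ with $X$ locally compact Hausdorff. Any $C\in[B]$ is a hereditary C*-subalgebra of $A$ contained in $B$, hence a hereditary C*-subalgebra of $B$, so $C=C_0(U)$ for a unique open $U\subseteq X$. A routine computation gives $C^{\perp_B}=C^\perp\cap B=C_0(\mathrm{int}(X\setminus U))$ and hence $C^{\perp_B\perp_B}=C_0(\mathrm{int}\,\overline U)$; built from $C^\perp$ and $B$ by the meet $\cap$ and the orthocomplementation ${}^\perp$ of $[A]^\perp$, this $C^{\perp_B\perp_B}$ again lies in $[A]^\perp$. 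Since $U$ is dense in $\mathrm{int}\,\overline U$, the C*-algebra $C$ is an essential ideal of the hereditary C*-subalgebra $C^{\perp_B\perp_B}$ of $A$, so $C^\perp=(C^{\perp_B\perp_B})^\perp$ by \autoref{essidann}; as $C$ and $C^{\perp_B\perp_B}$ both lie in $[A]^\perp$ this forces $C=C^{\perp_B\perp_B}$, i.e.\ $U$ is regular open, and conversely the same computation shows $C_0(U)\in[A]^\perp$ for every regular open $U$. A further application of \autoref{essidann} identifies the operations $[B]$ inherits from $[A]^\perp$ with the regular-open ones ($C_0(U)\wedge C_0(V)=C_0(U\cap V)$ and $C_0(U)\vee C_0(V)=C_0(\mathrm{int}\,\overline{U\cup V})$), so $[B]$ is (isomorphic to) the complete Boolean algebra of regular open subsets of $X$.

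For the converse I would argue the contrapositive: if $B$ is noncommutative then $[B]$ is not distributive. Since $B$ is noncommutative it has an irreducible representation of dimension $\geq2$, and a Kadison transitivity argument produces an annihilator $D\in[A]^\perp$ with $D\subseteq B$ and a surjection $\rho\colon D\to M_2$; one then transfers the non-distributivity of $[M_2]^\perp$ (the ``Chinese lantern'' $\mathrm{MO}_\kappa$) along $\rho$, lifting three distinct rank-one projections of $M_2$ to annihilators of $B$ that generate a non-distributive sublattice of $[B]$, which is therefore not Boolean. Alternatively: a Boolean, hence orthomodular, $[B]$ equals $[B]_B\cong[B]^\perp$, since $[B]_B$ is order-dense in the separative lattice $[B]$ (the remark after \autoref{prp1}) and \autoref{jdod} then makes it join-dense; by Kalmbach's theorem that a uniquely complemented ortholattice is Boolean, together with \autoref{centralannihilators} applied to $B$, this would make every annihilator of $B$ an ideal of $B$, which it is not when $B$ is noncommutative.

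I expect the converse to be the main difficulty, and specifically the transfer step, since surjective $*$-homomorphisms do not respect annihilators: $\ker\rho$ must be tracked carefully, and when it is essential in $D$ no proper annihilator of $D$ contains it, so the $M_2$-structure cannot be split off a single fibre but must be detected ``generically'', in the spirit of the projection-valued-function description of hereditary subalgebras of $C(Y,M_n)$. Moreover algebras with no $M_2$-corner at all, such as simple projectionless C*-algebras, require a different source for the obstruction, e.g.\ non-normal elements and the spectral annihilators they generate. By contrast, the implications ``$[B]$ Boolean $\Rightarrow$ every annihilator of $B$ is an ideal of $B$'' and the whole forward direction are essentially formal once \autoref{essidann} and \autoref{centralannihilators} are in hand.
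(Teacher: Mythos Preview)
Your forward direction is correct and reaches the same conclusion as the paper, namely $[B]=[B]_B$, but by a different route: the paper does not invoke Gelfand duality or \autoref{essidann}, instead giving a short direct computation (for $C\in[B]$ and $b\in(C^{\perp_B\perp_B})_+$, commutativity of $B$ lets one show $bab\in C^{\perp_B}\cap C^{\perp_B\perp_B}=\{0\}$ for every $a\in C^\perp_+$, hence $b\in C^{\perp\perp}=C$). Both arguments work; yours is more geometric, the paper's more algebraic.

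Your converse has a genuine gap in approach~2. From order-density of $[B]_B$ in separative $[B]$ you correctly get join-density via \autoref{jdod}, but join-density does \emph{not} force $[B]_B=[B]$: any infinite complete Boolean algebra has proper join-dense, meet-closed subsets (e.g.\ the finite sets in $\mathcal{P}(\omega)$). The orthomodularity of Boolean $[B]$ does not help either, because the orthocomplementation it provides is the unique-complement map, and you have not shown this coincides with $C\mapsto C^{\perp_B}$; equivalently, you have not shown that complementary elements of $[B]$ are orthogonal in the C*-sense, which is exactly the missing content. The final implication ``every annihilator of $B$ is an ideal of $B$ $\Rightarrow$ $B$ commutative'' is also not quite as immediate as you suggest (it needs something like: clopen spectral projections are topologically regular and span a dense set).

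The paper's converse avoids the reduction $[B]=[B]_B$ altogether. It first observes (in a preliminary paragraph) that $B$ is commutative if and only if every hereditary C*-subalgebra of $B$ is an ideal, so noncommutative $B$ yields (via \autoref{centralannihilators} applied inside $B$) an explicit $D\in[B]_B$ and $C\in[B]_B$ with $C\cap D^{\perp_B}=\{0\}$ and $C\not\subseteq D$. The point is then that this very triple witnesses non-distributivity directly in $[B]$: infimums agree in $[B]$ and $[B]_B$, and since one of the two meets is $\{0\}$ the join $(C\wedge D)\vee(C\wedge D^{\perp_B})$ is trivially $C\cap D$ in either structure, strictly smaller than $C$. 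This is morally your approach~1 (produce an explicit non-distributive configuration), but the obstruction is built intrinsically from $a,b\in B_+$ with $ab\notin D$ via the functional calculus, rather than lifted along a quotient to $M_2$---which sidesteps exactly the difficulties you anticipated.
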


\begin{proof}
First note that $A$ is commutative if and only if every hereditary C*-subalgebra of $A$ is an ideal.  For if $A$ is commutative then so is $A''$ and, in particular, $\mathcal{P}(A'')^\circ$, so all hereditary C*-subalgebras of $A$ are ideals.  While if every hereditary C*-subalgebra of $A$ is an ideal then every open projection in $A''$ is central.  As $\mathcal{P}(A'')^\circ$ contains all spectral projections of elements of $A_+$ corresponding to open subsets of $\mathbb{R}$, it follows that $\mathrm{span}(\mathcal{P}(A'')^\circ)$ is dense in $A$ and hence $A$ is commutative.

So if $B$ is commutative then, by the previous paragraph and \autoref{centralannihilators}, $[B]_B=\mathrm{c}_B[B]_B$ is a Boolean algebra, so we only have to show that
\begin{equation}\label{BooleanAnn}
[B]_B=[B].
\end{equation}
Given $C\in[B]$, take $b\in C^{\perp_B\perp_B}_+$.  Then $babc=bacb=0$, for all $a\in C^\perp_+$ and $c\in C$, so $bab\in B\cap C^\perp=C^{\perp_B}$, but $bab\in C^{\perp_B\perp_B}$ too so $ba=0$ and hence $b\in C^{\perp\perp}=C$, i.e. $C^{\perp_B\perp_B}=C$ and hence $C\in[B]_B$.

Conversely, if $B$ is not commutative then, by the first paragraph and the other direction of \autoref{centralannihilators}, $[B]_B$ is not a Boolean algebra.  It may be that $[B]$ strictly contains $[B]_B$, but infimums still agree in both structures, as do supremums when one of the annihilators is $\{0\}$, so the counterexample to distributivity in the proof of \autoref{centralannihilators} still works in the possibly larger structure $[B]$.
\end{proof}

\begin{dfn}\label{abeliandef}
If $B\in[A]^\perp$ is commutative it will be called \emph{abelian}.  If $[A]^\perp$ contains no non-zero abelian annihilator ideals, $A$ will be called \emph{properly non-abelian}.  If $\mathrm{c}(B)=A$ for some abelian $B\in[A]^\perp$, we call $A$ \emph{discrete}.  If $A$ contains no non-zero abelian annihilators it will be called \emph{continuous}.
\end{dfn}

These definitions are consistent with classical von Neumann (or AW*-)algebra terminology (see \cite{Berberian1972} \S15 Definition 3).  It follows from \autoref{commuteBoolean} (and \eqref{BooleanAnn}) that the collection of abelian annihilators coincides with the type ideal \[[A]^\perp_{\mathbf{D}}=[A]^\perp_{\mathbf{D}\mathrm{rel}}\] (see \eqref{tctd1}, \eqref{tctd2} and \eqref{Ddef} for this notation).  It then follows from \autoref{BDCDthm} \autoref{Boolthm} that when $B$ and $C$ are abelian annihilators in anniseparable orthoseparable $A$, \[B\sim C\quad\Leftrightarrow\quad\mathrm{c}(B)=\mathrm{c}(C).\]  Also \eqref{tdcor1} gives us a decomposition of $A$ into central abelian and properly non-abelian parts, while \eqref{tdcor2} gives us a decomposition into central discrete and countinuous parts.  We could actually obtain these decompositions in a more algebraic, rather than order theoretic way, using \autoref{algebratd} and the following result.

\begin{thm}\label{commutativesubalgebra}
If $B$ is a commutative hereditary C*-subalgebra then so is $B^{\perp\perp}$.
\end{thm}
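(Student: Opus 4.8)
The plan is to pass to \autoref{commuteBoolean} through the orthocompletion machinery of \S\ref{TheCompletion}. Write $D=B^{\perp\perp}$. As an annihilator $D$ is automatically a hereditary C*-subalgebra, so everything hinges on showing that its relative annihilator lattice $[D]_D=[D]^{\perp_D}$ is a Boolean algebra: then \autoref{commuteBoolean}, applied with $D$ itself in the role of the ambient algebra (so that its ``$[D]$'' is exactly $[D]_D$), gives that $D$ is commutative.

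The first step is that $B$ is essential in $D$: if $x\in B^\perp\cap B^{\perp\perp}$ then $x\perp x$, whence $xx^*=0$ and $x=0$, so $B^{\perp_D}=B^\perp\cap D=\{0\}$. The second step is that $B$ is order-dense in $D$ for the induced preorder $\dashv$. Here one checks, using the hereditariness of $B$ exactly as in the proof of \autoref{centralannihilators} (via $b_0^{1/2}d^2b_0^{1/2}\le\|d\|^2b_0\in B$), that $dBd\subseteq B$ for every $d\in D_+$; and $dBd=\{0\}$ would force $Bd=\{0\}$ and hence $d\in B^\perp\cap D=\{0\}$, so for $d\ne\mathbf{0}$ the set $dBd$ contributes a non-zero element of $B$ sitting below $d$. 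Since $[A]^\perp$, and with it the annihilator lattice of the C*-algebra $D$, is separative by \autoref{epsep}, \autoref{jdod} promotes order-density to join-density, so $B$ is join-dense in $D$ and \eqref{jdorthoiso} yields an orthoisomorphism $[D]_D\cong[B]_B$ implemented by $U\mapsto U\cap B$.

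Finally, $B$ is commutative, so $[B]$ is Boolean by \autoref{commuteBoolean}, and $[B]=[B]_B$ by \eqref{BooleanAnn}; hence $[B]_B$, and therefore the orthoisomorphic lattice $[D]_D$, is a Boolean algebra, and the reduction above finishes the proof. I expect the middle step — manufacturing a genuine orthoisomorphism $[B^{\perp\perp}]_{B^{\perp\perp}}\cong[B]_B$ from the mere essentiality of $B$ in $B^{\perp\perp}$ — to be the main obstacle; this is precisely where separativity (\autoref{epsep}), rather than orthomodularity, does the work, since $[A]^\perp$ need not be orthomodular. A subsidiary point that needs care is that \autoref{commuteBoolean} must be invoked for $B^{\perp\perp}$ regarded as a C*-algebra in its own right — it is the relative lattice $[B^{\perp\perp}]_{B^{\perp\perp}}$ that we control, not the possibly larger $[B^{\perp\perp}]$ inside $[A]^\perp$.
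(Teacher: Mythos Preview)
There is a genuine gap at the order-density step. Your justification ``via $b_0^{1/2}d^2b_0^{1/2}\le\|d\|^2b_0\in B$'' establishes that $b^{1/2}d^2b^{1/2}\in B$, i.e.\ $BDB\subseteq B$ --- but that is bare hereditariness, true for \emph{any} $d\in A$, and it is not the statement $dBd\subseteq B$ you go on to use. These are different: for $b\in B_+$ and $d\in D_+$ one has $dbd=(d^{1/2}b^{1/2})(d^{1/2}b^{1/2})^*$ while $b^{1/2}d^2b^{1/2}=(d^{1/2}b^{1/2})^*(d^{1/2}b^{1/2})$, and for a hereditary (non-ideal) $B$ there is no reason $x^*x\in B$ should force $xx^*\in B$. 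Your argument needs an element of $B$ that sits below $d$; the candidate $dbd$ does sit below $d$ (since $dbd\le\|b\|d^2$) but is not shown to lie in $B$, whereas the candidate $bdb$ does lie in $B$ but need not satisfy $\{bdb\}^{\perp\perp}\subseteq\{d\}^{\perp\perp}$, because $b$ and $d$ are not known to commute.

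In fact the missing ingredient --- that $dbd\in B$, equivalently that $B$ is an ideal in $B^{\perp\perp}$ --- is essentially the content of the paper's first (and main) step: a direct functional-calculus argument showing that every element of $B$ commutes with every element of $B^{\perp\perp}$. Once one knows $bd=db$ the inclusion $dBd\subseteq B$ is immediate and your lattice route (or the paper's short second step, $b(ef-fe)=0$ for all $b\in B_+$ hence $ef-fe\in B^\perp\cap B^{\perp\perp}=\{0\}$) finishes things; but the separativity/join-density machinery does not by itself produce this commutation, so the obstacle you flag is not actually overcome by the proposal.
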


\begin{proof}
We first claim that every element of $B$ commutes with every element of $B^{\perp\perp}$.  If not, we would have $b\in B^1_+$ and $a\in B^{\perp\perp}_+$ such that $ab\neq ba$.  Then, for some $\epsilon>0$, we must have $ab_{[\epsilon,1]}\neq b_{[\epsilon,1]}a$ and hence $b_{[\epsilon,1]}a(1-b_{[\epsilon,1]})=b_{[\epsilon,1]}ab_{[0,\epsilon)}\neq0$.  Thus, for some $\delta<\epsilon$ sufficiently close to $\epsilon$, we must have $b_{[\epsilon,1]}ab_{[0,\delta]}\neq0$ and hence $f(b)ag(b)\neq0$ where $f=f_{(\epsilon+\delta)/2,\epsilon}$ and $g=1-f_{\delta,(\epsilon+\delta)/2}$.  If we had $g(b)af(b)^2ag(b)\in B^\perp$ then, as $a\in B^{\perp\perp}$, $f(b)ag(b)a=0$ and hence $f(b)ag(b)=0$, a contradiction.  Thus $f(b)ag(b)c\neq0$ for some $c\in B$.  As $B$ is hereditary and both $f(b)$ and $c$ are in $B$, this means that $d=f(b)ag(b)c\in B$ and, likewise $d^*\in B$.  However, $dd^*\leq\lambda f(b)^2$ for some $\lambda>0$ while $d^*d\leq\lambda'g(b)^2$ (note that $b,c\in B$ so $c$ commutes with $b$ and hence with $g(b)\in B+\mathbb{C}1$) for some $\lambda'>0$.  As $f(b)g(b)=0$, this means that $d$ and $d^*$ do not commute, contradicting the fact $B$ is commutative.

Now the claim is proved, take any $e,f\in B^{\perp\perp}_+$.  Given any $b\in B_+$, note that $b(ef-fe)=b^{1/4}eb^{1/2}fb^{1/4}-b^{1/4}fb^{1/2}eb^{1/4}=0$, as $b^{1/4}eb^{1/4},b^{1/4}fb^{1/4}\in B$.  Thus $ef-fe\in B^\perp\cap B^{\perp\perp}=\{0\}$ and hence, as $e$ and $f$ were arbitrary, $B^{\perp\perp}$ is commutative.
\end{proof}

It now follows that the continuous C*-algebras are, in fact, already well-known.

\begin{cor}\label{tII/III}
$A$ is continuous if and only if it is antiliminal.
\end{cor}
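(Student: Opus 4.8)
The plan is to reduce the statement, via \autoref{commutativesubalgebra}, to the classical characterization of antiliminal C*-algebras in terms of abelian hereditary C*-subalgebras. The key point to record first is that, for the purpose of detecting abelian subalgebras, it suffices to look at annihilators. Every annihilator is a hereditary C*-subalgebra (see \S\ref{annsec}), so a non-zero abelian annihilator is in particular a non-zero abelian hereditary C*-subalgebra; conversely, if $B$ is a non-zero commutative hereditary C*-subalgebra of $A$, then $B^{\perp\perp}$ is an annihilator, it contains $B$ and is therefore non-zero, and it is commutative by \autoref{commutativesubalgebra}. Hence $A$ is continuous if and only if $A$ has no non-zero abelian hereditary C*-subalgebra.

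I would then invoke the standard fact that a C*-algebra is antiliminal exactly when it has no non-zero abelian hereditary C*-subalgebra --- equivalently, no non-zero abelian element, i.e. no $h\in A_+\setminus\{0\}$ with $\overline{hAh}$ commutative (see, e.g., \cite{Pedersen1979} \S6.1). Combining this with the equivalence of the previous paragraph gives the corollary at once.

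The one point that genuinely requires work is precisely this last, classical equivalence, which I would cite rather than reprove. Its forward direction uses that, for an irreducible representation $\pi$ with $\pi(h)\neq0$, the weak closure of $\pi(\overline{hAh})$ is $[\pi(h)]\,\mathcal B(H_\pi)\,[\pi(h)]$, which is commutative only if the range projection $[\pi(h)]$ has rank at most $1$, so that $h$ is an abelian element; its converse (an abelian element rules out antiliminality) rests on Glimm-type arguments about composition series, together with the elementary fact that a closed ideal of a closed ideal of a C*-algebra is again a closed ideal, which is what lets one pass freely between the ``no non-zero liminal ideal'' and ``no non-zero postliminal ideal'' formulations of antiliminality. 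None of this is new input: the only ingredient genuinely needed here, namely that one may test annihilators in place of arbitrary hereditary C*-subalgebras, is \autoref{commutativesubalgebra}, which is already in hand, so no further obstacle remains.
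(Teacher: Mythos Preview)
Your proposal is correct and follows essentially the same route as the paper: both arguments use \autoref{commutativesubalgebra} to show that $A$ has a non-zero abelian annihilator if and only if it has a non-zero commutative hereditary C*-subalgebra, and then cite the classical characterization of antiliminality as the absence of such subalgebras (the paper cites \cite{Pedersen1979} 6.1.1 and \cite{Li1992} Propositions 13.3.11 and 13.3.12). Your extra commentary on the proof of the classical equivalence is not needed but does no harm.
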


\begin{proof}
A C*-algebra is antiliminal if and only if it contains no commutative hereditary C*-subalgebras (either by definition, as in \cite{Pedersen1979} 6.1.1, or by a theorem, as in \cite{Li1992} Propositions 13.3.11 and 13.3.12).  So if $A$ is antiliminal then it certainly will not contain any abelian annihilators, while conversely if $A$ contains a commutative hereditary C*-algebra $B$ then $B^{\perp\perp}$ is an abelian annihilator, by \autoref{commutativesubalgebra}, and so $A$ can not be continuous.
\end{proof}

And we now have our first simple application of decomposition.

\begin{cor}\label{GCRtypeI}
If $A$ is postliminal then it is discrete.
\end{cor}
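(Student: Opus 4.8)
The plan is to extract $A$'s central discrete/continuous decomposition from the abstract machinery of \S\ref{tdsec} and then use postliminality to kill the continuous part. Recall that $[A]^\perp$ is a complete ortholattice (\S\ref{annsec}) which is separative by \autoref{epsep}, and that the abelian annihilators form a type ideal $T:=[A]^\perp_{\mathbf{D}}$ (this is $\mathbb{P}_{\mathbf{D}}$ for the type class $\mathbf{D}$ of \eqref{Ddef}, via \autoref{tctd}; the identification of the abelian annihilators with $[A]^\perp_{\mathbf{D}}$ was noted after \autoref{commuteBoolean}). So \autoref{tdcor} \eqref{tdcor2} applies and furnishes $q_T\in\mathrm{c}T$ with $[q_T^\perp]\cap T=\{\mathbf{0}\}$. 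Since $q_T\in\mathrm{c}T$ means $q_T=\mathrm{c}(B)$ for some abelian $B\in[A]^\perp$, by \autoref{abeliandef} it suffices to prove $q_T=A$, equivalently $q_T^\perp=\{0\}$, since then $A=\{0\}^\perp=q_T^{\perp\perp}=q_T=\mathrm{c}(B)$.

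Set $J:=q_T^\perp$. As $q_T^\perp\in\mathrm{c}[A]^\perp$, \autoref{centralannihilators} (with \autoref{centralideals}) shows $J$ is a closed ideal of $A$. The next step is to observe that annihilators of the C*-algebra $J$ are annihilators of $A$: for $S\subseteq J$ the relations defining $\perp$ are algebraic identities, so $S^{\perp_J}=J\cap S^{\perp_A}=(J^{\perp_A}\cup S)^{\perp_A}\in[A]^\perp$, and this set is of course contained in $J$. Hence any nonzero abelian annihilator of $J$ would be a nonzero abelian element of $[A]^\perp$ sitting below $J=q_T^\perp$, contradicting $[q_T^\perp]\cap T=\{\mathbf{0}\}$. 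Therefore $J$ has no nonzero abelian annihilator, i.e. $J$ is continuous, and \autoref{tII/III} gives that $J$ is antiliminal.

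To finish, invoke the structure theory of postliminal C*-algebras. An ideal of a postliminal C*-algebra is again postliminal (see \cite{Pedersen1979} 6.2), so $J$ is postliminal; but a nonzero postliminal C*-algebra is never antiliminal — it contains a nonzero liminal closed ideal, whereas antiliminality forces $\{0\}$ to be its only liminal closed ideal (equivalently, by the characterization quoted in the proof of \autoref{tII/III}, a nonzero postliminal algebra has a nonzero commutative hereditary C*-subalgebra, which an antiliminal algebra cannot). Hence the algebra $J$, being both postliminal and antiliminal, must be $\{0\}$, so $q_T^\perp=\{0\}$, $q_T=A=\mathrm{c}(B)$, and $A$ is discrete.

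Essentially everything here is routine: the passage from \autoref{tdcor} to the pair $(q_T,q_T^\perp)$, the one-line check that $S^{\perp_J}=J\cap S^{\perp_A}$, and the citations that ideals of postliminal algebras are postliminal and that nonzero postliminal algebras are not antiliminal. The only point requiring any care — and the one I would highlight — is reconciling the abstract notion "$J$ is continuous" (no nonzero abelian annihilator \emph{of $J$}) with the order-theoretic output "$[q_T^\perp]\cap T=\{\mathbf{0}\}$" (no nonzero abelian annihilator \emph{of $A$} below $q_T^\perp$); once the annihilators of the ideal $J$ are seen to be annihilators of $A$, \autoref{tII/III} and the postliminality contradiction close the argument with no further work.
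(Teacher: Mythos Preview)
Your proof is correct and follows essentially the same line as the paper's: obtain the discrete/continuous decomposition from \autoref{tdcor}, identify the continuous part as antiliminal via \autoref{tII/III}, and then use that (sub)algebras of postliminal algebras are postliminal to force the continuous part to vanish. Your version simply unpacks more of the order-theoretic bookkeeping (in particular the check that $[J]^{\perp}\subseteq[A]^\perp$ for the ideal $J=q_T^\perp$), whereas the paper compresses this into the phrase ``the continuous part of $A$ must be $\{0\}$''.
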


\begin{proof}
Every C*-subalgebra of a postliminal algebra is postliminal (see \cite{Pedersen1979} Proposition 6.2.9 or \cite{Li1992} Proposition 13.3.5) and, in particular, not antiliminal.  Thus the continuous part of $A$ must be $\{0\}$ and $A$ must be discrete.
\end{proof}

It should be mentioned that there is already a well-known postliminal-antiliminal decomposition theorem, which says that every C*-algebra has a unique postliminal ideal $I$ such that $A/I$ is antiliminal (see \cite{Pedersen1979} Proposition 6.2.7 or \cite{Li1992} Proposition 13.3.6).  The continous/antiliminal part $A_\mathrm{NI}$ obtained from \eqref{tdcor2} (using the type ideal of abelian annihilators) is quite different, being a subalgebra rather than a quotient, and not just any subalgebra either, an annihilator ideal.  Although it does naturally give rise to an antiliminal quotient.  To see how, note that, as the relation $ab=0$, for $a,b\in A^1_+$, is liftable (see \cite{Loring1997} Proposition 10.1.10), whenever $B\in\mathrm{c}[A]^\perp$ and $\pi:A\rightarrow A/B^\perp$ is the canonical homomorphism, $\pi\upharpoonright B$ is injective and $\pi(B)$ is an essential ideal in $A/B^\perp$.  Thus $A/A_\mathrm{I}$ (where $A_\mathrm{I}$ denotes the discrete part of $A$) contains $\pi(A_\mathrm{NI})$ as an essential antiliminal ideal.  If $A/A_\mathrm{I}$ contained a non-zero commutative hereditary C*-subalgebra $B$ then $B\cap\pi(A_\mathrm{NI})$ would be a non-zero commutative hereditary C*-subalgebra of $\pi(A_\mathrm{NI})$, a contradiction, so $A/A_\mathrm{I}$ must also be antiliminal.

As noted in \S\ref{Motivation}, any infinite dimensional type I von Neumann factor will not be postliminal, so the converse of \autoref{GCRtypeI} fails in general.  However, there is the possibility that they could be equivalent in the separable case.  If this were true then it would show that the discrete-continuous annihilator decomposition really is stronger than the classical postliminal-antiliminal decomposition in the separable case, because you could obtain the latter as a corollary of the former by the comments in the previous paragraph.

Here are some more consequences of \autoref{commutativesubalgebra} that will be important in the next subsection.

\begin{cor}\label{comsim}
If $B\in[A]^\perp_\mathbf{D}$, $C\in[A]^\perp$ and $B\sim C$ then $C\in[A]^\perp_\mathbf{D}$.
\end{cor}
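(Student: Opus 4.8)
The claim $B\sim C$ and $B\in[A]^\perp_\mathbf{D}$ imply $C\in[A]^\perp_\mathbf{D}$, i.e. commutativity is an invariant of annihilator equivalence. By \autoref{commuteBoolean} it is equivalent to show that $[C]$ is a Boolean algebra whenever $[B]$ is, but the cleanest route is to work directly with elements of $A$. Since $B\sim C$, fix $a\in A$ with $\{a\}^{\perp\perp}=B$ and $\{a^*\}^{\perp\perp}=C$. The plan is to show that every element of $C$ commutes with every other element of $C$, so that $C$ is commutative and hence abelian; then $C$ is an abelian annihilator, and since $B$ abelian forces (by \autoref{centralannihilators} and \autoref{commuteBoolean}) that $B$ is an ideal in $B^{\perp\perp}=B$... more carefully, $B$ is commutative means $C$ needs to be shown commutative and that is all $[A]^\perp_\mathbf{D}$ requires, since $[A]^\perp_\mathbf{D}=[A]^\perp_{\mathbf{D}\mathrm{rel}}$ consists precisely of the commutative annihilators.

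First I would reduce to a convenient form of $a$. Replacing $a$ by $a^*$ swaps $B$ and $C$, so the roles are symmetric; I need $B$ commutative $\Rightarrow$ $C$ commutative. The key algebraic fact is that for $a\in A$, the map $x\mapsto a^*xa$ sends $B=\{a\}^{\perp\perp}$ into $C=\{a^*\}^{\perp\perp}$ and, together with \autoref{nonorthodiv} (applied with $a$ in place of the "$a$" there and noting $(Ba)^{\perp\perp}=\{a^*a\}^{\perp\perp}\cdot$...), relates the order structures. Actually the more direct approach: take $c,d\in C_+$; I want $cd=dc$. Using that $C=\overline{a^*Aa}$ up to the annihilator closure — more precisely, $C^\perp=\{a^*\}^\perp$, and elements of the form $a^*xa$ with $x\in A_+$ are order-dense (in the induced preorder) in $C$ since $\{a^*xa\}^{\perp\perp}\subseteq C$ and their supremum is $C$ by the $\{a^*\}^{\perp\perp}=\{aa^*\}^{\perp\perp}$ remark together with \autoref{nonorthodiv}. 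By the comments after \autoref{prp1} and \autoref{specann}, it suffices to check the commutation relation on a join-dense subset, so reduce to $c=a^*xa$, $d=a^*ya$ with $x,y\in A_+$ small enough. Now compute $cd=a^*xaa^*ya$ and $dc=a^*yaa^*xa$. The term $aa^*$ lies in $C=\{a^*\}^{\perp\perp}$ and $a^*a\in B=\{a\}^{\perp\perp}$; since $B$ is commutative, all elements of $B$ commute, so in particular $x':=a^*xa$ and $y':=a^*ya$ need their commutation deduced from commutativity of elements of $B$ — but these are in $C$, not $B$.

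The right maneuver is therefore to transport the question back into $B$. Use \autoref{xab}-type manipulations: $cd=dc$ for $c=a^*xa,\,d=a^*ya$ is equivalent (multiplying by $a$ on the left and $a^*$ on the right and using $ycc^*=0\Leftrightarrow yc=0$ together with \autoref{xab} to strip and restore the outer $a,a^*$, valid because everything sits inside the hereditary subalgebra generated by $aa^*$) to an identity purely among $xaa^*$, $yaa^*$, $aa^*$ and hence — conjugating once more — among elements of $\overline{a^*a\,A\,a^*a}\subseteq B$. Since $B$ is commutative, that identity holds, and one runs the equivalences backwards. The main obstacle, and where I would spend the real effort, is making this stripping/restoring argument rigorous: \autoref{xab} lets one cancel a one-sided factor $a$ only in the presence of a compensating positive power, so I would need to insert approximate-unit elements $f_\delta(aa^*)$ (which lie in $A$ and act as units on $\overline{a^*a\,A\,a^*a}$ by \autoref{prp1}) at the right places, pass to the limit, and check that the commutator, being an element of $C^\perp\cap C$ after the reduction (as in the final step of the proof of \autoref{commutativesubalgebra}), must vanish. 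Alternatively — and this may be cleaner — I would invoke \autoref{commutativesubalgebra} directly: $B$ commutative is given; \autoref{nonorthodiv} and \autoref{simlem} show $C=\{a^*\}^{\perp\perp}=\{a^*b\}^{\perp\perp}$ for suitable $b\in B$, exhibiting $C$ as an annihilator generated by elements of the hereditary subalgebra $\overline{a^*Ba}$, which is commutative because $B$ is and conjugation by $a$ preserves commutativity on the relevant hereditary piece; then $C=(\overline{a^*Ba})^{\perp\perp}$ is commutative by \autoref{commutativesubalgebra}. I expect the bookkeeping around "conjugation by $a$ preserves commutativity" — i.e. that $\{a^*xa: x\in \overline{bAb}_+\}$ is a commuting family when $\overline{bAb}\subseteq B$ is commutative — to be the crux, and it follows from $a^*xa\cdot a^*ya-a^*ya\cdot a^*xa = a^*(x\,aa^*\,y - y\,aa^*\,x)a$ together with $aa^*\in C$, $x,y\in B$, and the fact that in a commutative hereditary subalgebra containing $aa^*$ (which one gets by enlarging if necessary, or by the argument in \autoref{commutativesubalgebra} that elements of $B$ commute with elements of $B^{\perp\perp}\ni aa^*$) the bracket $x\,aa^*\,y-y\,aa^*\,x$ vanishes.
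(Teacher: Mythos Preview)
Your final approach is exactly the paper's, but you have the conjugation backwards, and this is not a cosmetic slip: it breaks the key step. With $\{a\}^{\perp\perp}=B$ and $\{a^*\}^{\perp\perp}=C$ one has $a^*a\in B$ and $aa^*\in C$. The map $x\mapsto a^*xa$ does \emph{not} send $B$ into $C$ (e.g.\ $a^*(a^*a)a=(a^*a)^2\in B$), and your proposed commutator reduction leads to $x(aa^*)y-y(aa^*)x$ with $x,y\in B$ but $aa^*\in C$, which you cannot kill using commutativity of $B$; your attempted rescue ``$aa^*\in B^{\perp\perp}$'' is false since $B^{\perp\perp}=B$.

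The correct direction is $x\mapsto axa^*$. Then $\overline{aBa^*}$ is hereditary (because $aBa^*\,A\,aBa^*=a(B A B)a^*\subseteq aBa^*$) and sits inside $C$ (if $c\in C^\perp=\{a^*\}^\perp$ then $a^*c=a^*c^*=0$, so $(aba^*)c=(aba^*)c^*=0$). It is commutative because $(aba^*)(ada^*)=a\,b(a^*a)d\,a^*$ and $b,d,a^*a$ all lie in the commutative algebra $B$, so $b(a^*a)d=d(a^*a)b$. Finally $C=\{aa^*aa^*\}^{\perp\perp}\subseteq\overline{aBa^*}^{\perp\perp}\subseteq C$, and \autoref{commutativesubalgebra} gives $C$ commutative. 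This is precisely the paper's proof; once you flip the conjugation, your sketch collapses to it with no need for the approximate-unit or \autoref{xab} gymnastics you were anticipating.
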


\begin{proof}
Assume $\{a\}^{\perp\perp}=B$ and $\{a^*\}^{\perp\perp}=C$.  As $B$ is commutative, we have $ba^*ad=da^*ab$ and hence $aba^*ada^*=ada^*aba^*$, for all $b,d\in B$.  As $B$ is hereditary, so is $\overline{aBa^*}$ (because $aBa^*AaBa^*\subseteq aBa^*$) and \[C=\{a^*\}^{\perp\perp}=\{aa^*aa^*\}^{\perp\perp}\subseteq\overline{aBa^*}^{\perp\perp}\subseteq\{a^*\}^{\perp\perp}=C\] so, by \autoref{commutativesubalgebra}, $C$ is also commutative.
\end{proof}

\begin{cor}\label{discomden}
If $A$ is discrete, $[A]^\perp_\mathbf{D}$ is order-dense in $[A]^\perp$.
\end{cor}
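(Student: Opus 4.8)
The plan is to show that every non-zero annihilator $C\in[A]^\perp$ contains a non-zero abelian annihilator, which is precisely the statement that $[A]^\perp_\mathbf{D}$ is order-dense in $[A]^\perp$ (see \eqref{densedef}). First I would record the key consequence of discreteness: by \autoref{commuteBoolean} (and the identification of the abelian annihilators with the type ideal $[A]^\perp_\mathbf{D}=[A]^\perp_{\mathbf{D}\mathrm{rel}}$) together with \eqref{tdcor1}, discreteness says that the central cover of the join of all abelian annihilators is $A$. Equivalently, writing $B$ for an abelian annihilator with $\mathrm{c}(B)=A$, the ideal generated by $B$ is essential in $A$; so for any non-zero $C\in[A]^\perp$ we have $BAC\neq\{0\}$ by \autoref{vo}, since otherwise $C$ would be very orthogonal to $B$ and hence $\mathrm{c}(C)\leq\mathrm{c}(B)^\perp=A^\perp=\{0\}$.

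Next I would use this non-trivial interaction to produce an abelian piece \emph{inside} $C$. Since $BAC\neq\{0\}$, pick $b\in B_+$, $a\in A$ and $c\in C_+$ with $bac\neq0$, and consider the element $x=bac\in A$; then $\{x^*\}^{\perp\perp}\subseteq\{c\}^{\perp\perp}\subseteq C$ is a non-zero annihilator, and $\{x\}^{\perp\perp}\subseteq\{b\}^{\perp\perp}\subseteq B$ is a non-zero \emph{abelian} annihilator (using that $B$ is abelian and that sub-annihilators of abelian annihilators are abelian, which follows from \eqref{BooleanAnn} and \autoref{commuteBoolean}). By construction $x$ witnesses $\{x\}^{\perp\perp}\sim\{x^*\}^{\perp\perp}$, so by \autoref{comsim} the annihilator $\{x^*\}^{\perp\perp}$ is also abelian. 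Thus $\{0\}<\{x^*\}^{\perp\perp}\subseteq C$ is an abelian annihilator contained in $C$, as required.

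Finally I would note that, since $C$ was an arbitrary non-zero annihilator, this verifies \eqref{densedef} for $S=[A]^\perp_\mathbf{D}$, so $[A]^\perp_\mathbf{D}$ is order-dense in $[A]^\perp$. The main obstacle is really just the bookkeeping in the middle step: one must be careful that the element $x=bac$ simultaneously sits (in the left-annihilator sense) below something abelian and (in the right-annihilator sense) below $C$, and that the equivalence witnessed by $x$ is exactly the hypothesis of \autoref{comsim} — but once discreteness is translated into the essentiality/very-orthogonality statement via \autoref{vo}, everything else is a short chain of earlier lemmas.
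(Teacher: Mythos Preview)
Your approach is exactly the paper's, but you have interchanged $\{x\}^{\perp\perp}$ and $\{x^*\}^{\perp\perp}$. Under the paper's convention (see \S\ref{annsec}, where $\{a\}^\perp=\{a^*a\}^\top$), the annihilator $\{x\}^{\perp\perp}$ is determined by $x^*x=ca^*b^2ac\in cAc\subseteq C$, so in fact $\{x\}^{\perp\perp}\subseteq\{c\}^{\perp\perp}\subseteq C$; likewise $\{x^*\}^{\perp\perp}\subseteq\{b\}^{\perp\perp}\subseteq B$. (Your claimed inclusion $\{x\}^{\perp\perp}\subseteq\{b\}^{\perp\perp}$ would require $bd=0\Rightarrow bacd=0$, which is false in general.) With this swap your argument goes through verbatim: $\{x^*\}^{\perp\perp}\subseteq B$ is abelian, hence by \autoref{comsim} so is $\{x\}^{\perp\perp}\subseteq C$, giving the desired non-zero abelian annihilator inside $C$. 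This is precisely the paper's proof, which sets $C'=\{bac\}^{\perp\perp}\subseteq C$ and $B'=\{cab\}^{\perp\perp}\subseteq B$.
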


\begin{proof}
Take $B\in[A]^\perp_\mathbf{D}$ with $\mathrm{c}(B)=A$.  For any other non zero $C\in[A]^\perp$, we have $\mathrm{c}(C)\wedge\mathrm{c}(B)=\mathrm{c}(C)\neq0$ and hence $bac\neq0$, for some $b\in B$ $a\in A$ and $c\in C$, by \autoref{vo}.  So $C'\sim B'\subseteq B$, where $B'=\{cab\}^{\perp\perp}$ and $C'=\{bac\}^{\perp\perp}$, which, as $B$ and hence $B'$ is commutative, means $C'$ is also commutative, by \autoref{comsim}.
\end{proof}

Before leaving the topic of abelian annihilators, let us point out that, while projection lattices are a complete isomorphism invariant for commutative AW*-algebras (by Stone's representation theorem), the same is not true for annihilator lattices of C*-algebras.  Indeed, if $X$ is any topological space with a countable basis of regular open subsets then \[[C^b(X)]^\perp\cong[\mathbb{B}_\infty]^\perp\times\{0,1\}^n,\] where $\mathbb{B}_\infty$ denotes the free Boolean algebra with infinitely many generators and $n$ is the number of isolated points of $X$ (see \cite{Birkhoff1967} Ch IX \S4 Theorem 3).  So $[C([0,1])]^\perp$ is orthoisomorphic to $[C([0,1]\times[0,1])]^\perp$, for example, even though $C([0,1])$ and $C([0,1]\times[0,1])$ are not isomorphic C*-algebras (as their spectrums $[0,1]$ and $[0,1]\times[0,1]$ are not homeomorphic, having dimension $1$ and $2$ respectively).

\subsection{Homogeneous Annihilators}\label{HomogeneousAnnihilators}

We have two competing notions of homogeneity.  One is classical, where $A$ is said to be \emph{$n$-homogeneous ($n$-subhomogeneous)}, for some $n\in\mathbb{N}$, if $\dim(H_\pi)=n$ ($\dim(H_\pi)\leq n$), for all $\pi\in\widehat{A}(=$ the set of all irreducible representations of $A)$.  The other is the natural one coming from the abelian annihilators, that is the notion of $A$ being $n$-$[A]^\perp_\mathbf{D}$-(sub)homogeneous, according to \autoref{homdef}.  We show in this subsection that these concepts are closely related.

\begin{prp}\label{[A]hom}
If $A$ is $n$-$[A]^\perp$-homogeneous then $A$ is not $(n-1)$-subhomogeneous.
\end{prp}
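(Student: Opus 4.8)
The plan is to contradict the assumption that $A$ is $(n-1)$-subhomogeneous by exhibiting an irreducible representation of dimension at least $n$. By hypothesis $\mathbf{1}=A$ is $n$-$[A]^\perp$-homogeneous, so there are pairwise orthogonal nonzero $B_1,\dots,B_n\in[A]^\perp$ with $\bigvee_iB_i=A$ and $\mathrm{c}(B_i)=A$ for every $i$ (padding an $n$-part decomposition with copies of $\mathbf{0}$ is irrelevant here, so we take the $B_i$ nonzero).

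First I would convert the order-theoretic condition $\mathrm{c}(B_i)=A$ into the algebraic statement that the closed ideal $I_i:=\overline{AB_iA}$ generated by $B_i$ is essential in $A$. By \autoref{centralannihilators} the central annihilators are exactly the annihilator ideals, so $\mathrm{c}(B_i)$ is the smallest annihilator ideal containing $B_i$; since any closed ideal containing $B_i$ contains $I_i$, this is the smallest annihilator ideal containing $I_i$, namely $I_i^{\perp\perp}$ (using that $I_i^\perp$ is again an ideal, by the remark after \autoref{centralprojections}, and that $^{\perp}$ is involutive on $[A]^\perp$). Hence $\mathrm{c}(B_i)=A$ is equivalent to $I_i^\perp=\{0\}$, i.e. to $I_i$ being essential.

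Next, a finite intersection of essential ideals is essential (if $J\neq\{0\}$ is a closed ideal, then $J\cap I_1\neq\{0\}$, so $(J\cap I_1)\cap I_2\neq\{0\}$, and so on), so $I:=\bigcap_{i=1}^n I_i$ is an essential, hence nonzero, closed ideal of $A$. Choose an irreducible representation $\pi$ of $A$ with $\pi(I)\neq\{0\}$ (such $\pi$ exists since irreducible representations separate the points of $A$). If $\pi$ annihilated some $B_j$ then $\ker\pi$ would be a closed ideal containing $B_j$, hence containing $I_j\supseteq I$, contradicting $\pi(I)\neq\{0\}$; therefore $\pi(B_j)\neq\{0\}$ for all $j$. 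Now suppose $A$ is $(n-1)$-subhomogeneous, so $d:=\dim H_\pi\leq n-1<\infty$ and thus $\pi(A)=\mathcal{B}(H_\pi)\cong M_d$. Each $\pi(B_j)$ is a nonzero hereditary C*-subalgebra of $M_d$, hence of the form $p_jM_dp_j$ for a nonzero projection $p_j\in M_d$; and for $i\neq j$ orthogonality $B_iB_j=\{0\}$ gives $\pi(B_i)\pi(B_j)=\{0\}$, so, as $p_i\in\pi(B_i)$ and $p_j\in\pi(B_j)$, we get $p_ip_j=0$. Thus $p_1,\dots,p_n$ are $n$ pairwise orthogonal nonzero projections in $M_d$, forcing $n\leq d\leq n-1$, a contradiction.

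The main obstacle is the first step — the translation of $\mathrm{c}(B_i)=A$ into "$I_i$ essential" — which rests on the identification of central annihilators with annihilator ideals (\autoref{centralannihilators}) and on pinning down $\mathrm{c}(B_i)$ as $I_i^{\perp\perp}$; once this is in place, the rest is the elementary observation that orthogonal nonzero hereditary subalgebras of a matrix algebra give orthogonal nonzero projections, together with the Baire-type fact that a finite intersection of essential ideals is essential.
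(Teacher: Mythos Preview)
Your proof is correct and follows essentially the same strategy as the paper: translate $\mathrm{c}(B_i)=A$ into essentiality of the ideal generated by $B_i$, then find an irreducible representation not annihilating any $B_i$ and observe that the orthogonal images force $\dim H_\pi\geq n$. The only tactical difference is that the paper locates this representation by iteratively building a nonzero product $c=b_1a_1b_2\cdots a_{n-1}b_n$ (using essentiality at each step) and taking $\pi$ with $\pi(c)\neq0$, whereas you pass through the intersection of the essential ideals $I_i$; both are standard and your version is arguably a bit cleaner.
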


\begin{proof}
Take orthogonal $B_1,\ldots,B_n\in[A]^\perp$ with $\mathrm{c}(B_k)=A$, for all $k$.  This means that the ideal generated by each $B_k$ is essential in $A$.  So, taking any $b_1\in B_1\setminus\{0\}$, we can therefore find $a_1\in A$ and $b_2\in B_2$ with $b_1a_1b_2\neq0$.  Continuing in this manner, we obtain $a_1,\ldots,a_{n-1}\in A$ and $b_1,\ldots,b_n$ with $b_k\in B_k$, for all $k$, and $c=b_1a_1b_2a_2\ldots..a_{n-1}b_n\neq0$.  Let $\pi$ be an irreducible representation of $A$ on a Hilbert space $H$ with $\pi(c)\neq0$.  Then $\pi(b_k)\neq0$, for all $k$, which, as the $B_1,\ldots,B_n$ are orthogonal, means that $\dim(H)\geq n$.
\end{proof}

\begin{dfn}
A map $F$ on $A_\mathrm{sa}$, for each C*-algebra $A$, is \emph{functorial} if \[\pi\circ F=F\circ\pi\] whenever $\pi:A\rightarrow B$ is a (possibly non-unital) C*-algebra homomorphism.
\end{dfn}

\begin{lem}
If we have rank one $p_1,\ldots,p_n\in\mathcal{P}(M_n)$ with $p_1\vee\ldots\vee p_n=1$ then there are functorial maps $F=F_{p_1,\ldots,p_n}$ and $G=G_{p_1,\ldots,p_n}$ with $G(p_1,\ldots,p_n)=p_1$ such that, whenever we have rank one $q_1,\ldots,q_n\in\mathcal{P}(M_n)$ and $G(q_1,\ldots,q_n)\neq0$, \[q_1\vee\ldots\vee q_n=1=F(q_1,\ldots,q_n).\]
\end{lem}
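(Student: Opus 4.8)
The plan is to build both maps out of the single self-adjoint element $S_q:=q_1+\cdots+q_n$, reading all numerical thresholds off from the fixed tuple $(p_1,\dots,p_n)$.

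First I would isolate the linear algebra. Writing $p_i=v_iv_i^*$ for unit vectors, the hypothesis $p_1\vee\cdots\vee p_n=1$ says $\{v_i\}$ is a basis of $\mathbb{C}^n$, so $S_p:=\sum_i p_i$ is positive and invertible; set $2\varepsilon:=\min\sigma(S_p)>0$. The fact to prove and reuse is: for rank-one projections $q_1,\dots,q_n\in\mathcal P(M_n)$, $S_q$ is invertible iff $q_1\vee\cdots\vee q_n=1$, and in that case $S_q^{-1/2}q_iS_q^{-1/2}=(S_q^{-1/2}v_i)(S_q^{-1/2}v_i)^*$ are nonzero rank-one positives with $\sum_i S_q^{-1/2}q_iS_q^{-1/2}=1$, so by a trace argument ($n$ nonzero rank-$\le 1$ positives summing to the identity on an $n$-dimensional space each have trace $1$, hence are projections, hence are mutually orthogonal) they are forced to be an orthonormal family summing to $1$. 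Then $F$ is immediate: take $g(x):=\min(x/\varepsilon,1)=x/\max(x,\varepsilon)$, a continuous function vanishing at $0$, and put $F(q_\bullet):=g(S_q)$. This is functorial (continuous functional calculus by a function vanishing at $0$), $F(q_\bullet)=1$ exactly when $S_q\ge\varepsilon$ — which forces $S_q$ invertible and hence $q_1\vee\cdots\vee q_n=1$ — and $F(p_\bullet)=1$ since $\sigma(S_p)\subseteq[2\varepsilon,n]$. So everything reduces to producing a functorial $G$ with $G(p_\bullet)=p_1$ and $\{q_\bullet:G(q_\bullet)\neq 0\}\subseteq\{q_\bullet:F(q_\bullet)=1\}$.

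For $G$ I would work with the truncated orthonormalisers $P_i(q_\bullet):=\rho(S_q)\,q_i\,\rho(S_q)$, where $\rho(x):=\max(x,\varepsilon)^{-1/2}$. Although $\rho(0)\neq 0$, writing $\rho=\rho(0)+\tilde\rho$ with $\tilde\rho:=\rho-\rho(0)$ vanishing at $0$ exhibits $P_i$ as a genuine element and a functorial polynomial expression in $q_i$ and $\tilde\rho(S_q)$; moreover $\sum_i P_i(q_\bullet)=\rho(S_q)S_q\rho(S_q)=g(S_q)=F(q_\bullet)$, so $\sum_i P_i=1$ precisely when $S_q\ge\varepsilon$, i.e. precisely when $F(q_\bullet)=1$. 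Since each $P_i$ is an automatically nonzero rank-$\le 1$ positive element, the trace argument above gives the chain: $F(q_\bullet)=1\iff\sum_i P_i=1\iff$ the $P_i$ are pairwise orthogonal rank-one projections $\iff P_iP_jP_i=\delta_{ij}P_i$ for all $i,j$ $\iff$ the functorial positive element $\Omega(q_\bullet):=\sum_{i,j}(P_iP_jP_i-\delta_{ij}P_i)^2$ vanishes. In particular $\Omega(p_\bullet)=0$ and $\{\Omega=0\}=\{F=1\}$.

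The remaining step — turning ``$\Omega(q_\bullet)=0$'' into the nonvanishing of a functorial $G$ with $G(p_\bullet)=p_1$ — is where I expect the main obstacle to lie. The naive $G(q_\bullet):=q_1\,\phi(\Omega(q_\bullet))\,q_1$ with $\phi$ continuous, $\phi(0)=1$, supported near $0$, does give $G(p_\bullet)=q_1\cdot 1\cdot q_1=p_1$, but $\phi(\Omega(q_\bullet))\neq 0$ only says $\Omega(q_\bullet)$ has \emph{some} small spectral value, not that it vanishes: continuous functional calculus detects presence, not absence, of small spectrum, and since $v_1$ always ``sees'' the large part of $S_q$ this cannot be fixed by cornering with $q_1$ alone. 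To get around this I would either (i) feed $\Omega$ back in multiplicatively so that it actively damps $G$ to $0$ off the locus $\{\Omega=0\}$, or (ii) replace the one-shot $\Omega$ by an induction on $n$ in which at each stage one splits off a single orthonormal frame vector, using truncated functional calculus whose threshold is read off from $p_\bullet$, so that a lower spectral bound on a partial sum propagates; in either approach the point is that the ``bad'' configurations (where $S_q\not\ge\varepsilon$) form a closed subset of the compact space of $n$-tuples of rank-one projections in $M_n$ that stays a positive distance from $(p_1,\dots,p_n)$, and it is this separation, together with the rigidity of functorial maps, that should make the certificate possible. Finally I would check functoriality of $F$ and $G$ directly from their definitions — composites of $*$-algebra operations and continuous functional calculus by functions vanishing at $0$, modulo the harmless $\rho(0)$- and $\phi(0)$-shifts — and re-verify $G(p_\bullet)=p_1$ and $F(p_\bullet)=1$ from the computations above.
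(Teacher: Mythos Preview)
Your proposal has a genuine gap, and you correctly identify where it is: you never actually construct $G$. Your $\Omega$-certificate satisfies $\Omega(q_\bullet)=0\iff F(q_\bullet)=1$, but as you note, continuous functional calculus can only detect \emph{presence} of small spectrum of $\Omega$, not its vanishing, so no expression of the form $q_1\phi(\Omega)q_1$ will have the required support property. Your option (i) does not get around this: ``damping by $\Omega$'' still requires a functorial way to force $G=0$ whenever $\Omega\neq 0$, and without an a priori spectral gap for $\Omega$ on the bad set (which there is not---tuples $q_\bullet$ with $S_q$ barely failing invertibility give $\Omega$ arbitrarily small but nonzero) no continuous function of $\Omega$ will do this. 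The compactness remark at the end is not enough either: separation of $\{F=1\}$ from $p_\bullet$ in the space of tuples does not by itself produce a functorial witness, because functorial maps are built from $*$-algebra operations and functional calculus, not from arbitrary continuous functions on the tuple space.

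The paper's construction is your option (ii), but carried out with a specific device you have not written down. It is inductive on $n$, and the key identity is that for projections $r,s$ one has $r\vee s = f_\delta\bigl(1 - s^\perp r^\perp s^\perp\bigr)$ whenever $0<\delta<1-\sup(\sigma(rs)\setminus\{1\})$, since $\sigma(rs)$ and $\sigma(r^\perp s^\perp)$ agree away from $\{0,1\}$. One sets $\delta_k = 1 - \|(p_1\vee\cdots\vee p_{k-1})p_k\|^2$, reads this threshold off the fixed tuple $p_\bullet$, and defines recursively
\[
F_k(q_\bullet)=f_{\delta_k/2}\bigl(1-(1-q_k)(1-F_{k-1})(1-q_k)\bigr),\qquad
G_k(q_\bullet)=f_{\delta_k}\bigl(\text{same argument}\bigr)\cdot G_{k-1}(q_\bullet),
\]
with $F_1=G_1=\mathrm{id}$. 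The point is that $G_k\neq 0$ forces, inductively, $F_{k-1}$ to already be the projection $q_1\vee\cdots\vee q_{k-1}$, and then the extra factor $f_{\delta_k}(\cdots)\neq 0$ forces enough of a spectral gap that $f_{\delta_k/2}(\cdots)$ equals the next join. Note that the expression $1-(1-q_k)(1-F_{k-1})(1-q_k)$ expands to $q_k+F_{k-1}-q_kF_{k-1}-F_{k-1}q_k+q_kF_{k-1}q_k$, so no unit is needed and functoriality is immediate. This inductive ``one projection at a time'' join is exactly the missing idea; your orthonormalisation-based $\Omega$ can be discarded entirely.
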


\begin{proof}
For projections $p$ and $q$ on a Hilbert space, $\sigma(pq)=\sigma(p^\perp q^\perp)=\sigma(p^\perp q^\perp p^\perp)$.  So, if $0<\delta<1-\sup(\sigma(pq)\setminus\{1\})$, \[f_\delta(1-p^\perp q^\perp p^\perp)=1-(p^\perp q^\perp p^\perp)_{\{1\}}=(p^\perp\wedge q^\perp)^\perp=p\vee q.\]
So we get the functorial maps we want be letting $F_{p_1}$ and $G_{p_1}$ be the identity and recursively defining
\begin{eqnarray*}
F_{p_1,\ldots,p_n}(q_1,\ldots,q_n) &=& f_{\delta/2}(q)\quad\textrm{and}\\
G_{p_1,\ldots,p_n}(q_1,\ldots,q_n) &=& f_\delta(q)G_{p_1,\ldots,p_{n-1}}(q_1,\ldots,q_{n-1}),\quad\textrm{where}\\
q &=& 1-(1-q_n)(1-F_{p_1,\ldots,p_{n-1}}(q_1,\ldots,q_{n-1}))(1-q_n)\quad\textrm{and}\\
\delta &=& 1-||(p_1\vee\ldots\vee p_{n-1})p_n||^2.
\end{eqnarray*}
\end{proof}

\begin{thm}\label{[A]homthm}
If $A=B_1\vee\ldots\vee B_n$, for $B_1,\ldots,B_n\in[A]^\perp_{\mathbf{D}}$, $A$ is $n$-subhomogeneous.
\end{thm}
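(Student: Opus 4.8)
The plan is to reduce the statement to a claim about irreducible representations and then to combine a rank/dimension count coming from the abelian $B_i$ with a lower‑semicontinuity argument on $\operatorname{Prim}(A)$; the functorial maps $F=F_{p_1,\dots,p_n}$ and $G=G_{p_1,\dots,p_n}$ of the preceding lemma are the tool that makes the count work. First recall that $n$‑subhomogeneity of $A$ means exactly that $\dim H\le n$ for every irreducible $\pi\colon A\to\mathcal B(H)$, so it suffices to prove this. The essential point about the $B_i$ is that each is an \emph{abelian} hereditary C*‑subalgebra: using the standard fact that for a hereditary C*‑subalgebra $B$ of $A$ and an irreducible $\pi$ with $\pi(B)\neq 0$ the compression of $\pi|_B$ to $\overline{\pi(B)H}$ is an irreducible representation of $B$, commutativity of $B_i$ forces $\dim\overline{\pi(B_i)H}\le 1$, so $\pi(b)$ has rank at most one for every $b\in(B_i)_{\mathrm{sa}}$. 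Hence, writing $C_0:=\overline{\sum_i B_i}$ for the hereditary C*‑subalgebra generated by $\bigcup_iB_i$, we get $\overline{\pi(C_0)H}=\sum_i\overline{\pi(B_i)H}$, a subspace of dimension at most $n$; in particular, \emph{whenever $C_0$ acts nondegenerately in $\pi$} (that is, $\overline{\pi(C_0)H}=H$) we already have $\dim H\le n$.

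Next I would bring in the hypothesis $B_1\vee\dots\vee B_n=A$, which says precisely $C_0^{\perp\perp}=A$, i.e. $C_0^{\perp}=\{0\}$: so $C_0$, and hence the ideal $I:=\overline{AC_0A}$ it generates, is an essential hereditary subalgebra of $A$. Thus the hull of $I$ is closed and nowhere dense in $\operatorname{Prim}(A)$, and any representation killing $I$ (equivalently killing all the $B_i$) lives there. The heart of the proof is the stronger claim that the \emph{whole} set $N:=\{P\in\operatorname{Prim}(A):C_0\text{ acts degenerately in }\pi_P\}$ is nowhere dense. This is where the functorial maps enter: for $P$ in a dense subset of $\operatorname{Prim}(I)$ one chooses $c_i\in B_i$ with $\pi_P(c_i)$ a rank‑one projection onto $\overline{\pi_P(B_i)H_P}$ (as in the discussion after \autoref{prp1} and in \autoref{specann}, take $c_i=f_\delta(b_i)$ for a suitable $b_i\in(B_i)^1_+$), and then $F(c_1,\dots,c_n)$ — built by the recursion $x\mapsto f_{\delta/2}\!\bigl(1-(1-x)(1-y)(1-x)\bigr)$, each step of which raises rank by at most one — has $\pi_P\bigl(F(c_1,\dots,c_n)\bigr)$ of rank at most $n$, while the defining property of $F$ (with $p_1,\dots,p_n$ matched to the $\pi_P(c_i)$) forces it to equal $1_{H_P}$; this simultaneously gives $\dim H_P\le n$ and shows $P\notin N$.

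Finally, since the degree function $P\mapsto\dim H_{\pi_P}$ on $\operatorname{Prim}(A)$ is lower semicontinuous, $\{P:\dim H_{\pi_P}\le n\}$ is closed; by the previous paragraphs it contains the dense set $\operatorname{Prim}(A)\setminus N$, hence equals all of $\operatorname{Prim}(A)$, which is the assertion. (Alternatively, one can organize the argument by induction on $n$ via $A=B_1\vee(B_2\vee\dots\vee B_n)$: one checks, using $C^{\perp}\cap C=\{0\}$ for the hereditary subalgebra $C:=B_2\vee\dots\vee B_n$, that $C$ is the join of the abelian annihilators $B_2,\dots,B_n$ inside $C$, so $C$ is $(n-1)$‑subhomogeneous by induction, reducing to the case of a join of one abelian annihilator and one $(n-1)$‑subhomogeneous annihilator; \autoref{ess} and \autoref{algebratd} are convenient here too.)

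The hard part is the nowhere–density claim of the second paragraph — promoting the lattice identity $B_1\vee\dots\vee B_n=A$ to the genuinely representation‑theoretic statement that $C_0$ acts nondegenerately in almost every irreducible representation. A single irreducible $\pi$ can annihilate $C_0$ entirely while $H_\pi$ is small for an unrelated reason — for instance $C([0,1])=B\vee B^{\perp}$ with $B$ the annihilator of the functions supported in $[0,\tfrac12]$, where evaluation at $\tfrac12$ kills both summands — so the dimension bound on the singular locus cannot simply be read off from the subspaces $\overline{\pi(B_i)H}$; one must argue globally, combining the essentiality of $C_0$ with the rank count above (and the functorial maps) to pin the degree down to exactly $n$ on a dense open set, and only then invoke semicontinuity of the degree.
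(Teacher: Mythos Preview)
Your reductions in the first paragraph are correct: each $\pi(B_i)$ has rank at most one, and the open projection of the hereditary subalgebra $C_0$ generated by $\bigcup_iB_i$ is $\bigvee_ip_{B_i}$, so $\overline{\pi(C_0)H_\pi}$ has dimension $\le n$; together with lower semicontinuity of the degree on $\operatorname{Prim}(A)$ this would finish the proof \emph{if} you knew $\{P:\overline{\pi_P(C_0)H_P}=H_P\}$ is dense. The gap is precisely there, and your second paragraph does not fill it. The lemma about $F=F_{p_1,\dots,p_n}$ and $G=G_{p_1,\dots,p_n}$ only promises that, when $G(q_1,\dots,q_n)\neq0$ and the $q_i$ are rank one, $F(q_1,\dots,q_n)=q_1\vee\dots\vee q_n$; the extra conclusion ``$=1$'' in the lemma comes from being in $M_n$. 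Applying this at a point $P$ with $\dim H_P>n$ gives only $\pi_P(F(c_1,\dots,c_n))=\pi_P(c_1)\vee\dots\vee\pi_P(c_n)$, a projection of rank $\le n$, never $1_{H_P}$. So ``the defining property of $F$ forces it to equal $1_{H_P}$'' is exactly the statement $\dim H_P\le n$ you are trying to prove. (Your $c_i$ also vary with $P$, so there is no single element of $A$ in play; and you need \emph{all} $\pi_P(B_i)\neq0$, not just one, which is not available on a dense set a priori.) Your inductive alternative runs into the same nondegeneracy issue at the step ``$B_1\vee C=A$ with $C$ $(n-1)$-subhomogeneous''.

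The paper does not argue via density in $\operatorname{Prim}(A)$ at all; it produces a concrete contradiction to $C_0^\perp=\{0\}$. Assuming some irreducible $\pi$ has $\dim H_\pi>n$, it first picks among such $\pi$ one where $m:=\operatorname{rank}\bigl(\bigvee_k r_{k\pi}\bigr)$ is \emph{maximal}, and elements $b_1,\dots,b_m\in\bigcup_kB_k$ with $\pi(b_j)$ rank one and $\operatorname{rank}\bigl(\bigvee_j\pi(b_j)\bigr)=m$. Set $b_F=F(f_{\delta/2}(b_1),\dots,f_{\delta/2}(b_m))$ and $b_G=G(\dots)$ with the lemma's $F,G$ built from the $\pi(b_j)$. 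Using irreducibility of $\pi$, one chooses $a\in A_+$ with $|\sigma(\pi(a))|>n$ and defines
\[
c=(1-b_F)\,a_0\,b_G\,a_1\,g_1(a)\cdots a_{n+1}\,g_{n+1}(a)\,a_{n+2}\,f_\delta(b_1)\cdots a_{n+m+1}\,f_\delta(b_m),
\]
with the $g_k$ having disjoint supports meeting $\sigma(\pi(a))$ and the interleaving $a_j$ chosen so that $\pi(c)\neq0$. The point of the extra factors is this: for \emph{any} irreducible $\pi'$ with $\pi'(c)\neq0$ one reads off (i) $\dim H_{\pi'}>n$ from the $g_k(a)$, (ii) each $f_{\delta/2}(\pi'(b_j))$ is a rank-one projection, and (iii) $\pi'(b_G)\neq0$, so by the lemma $\pi'(b_F)=\bigvee_jf_{\delta/2}(\pi'(b_j))$ has rank $m$; by the maximality of $m$ this projection is exactly $\bigvee_kr_{k\pi'}$. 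Hence $\pi'((1-b_F)b)=0$ for every $b\in\bigcup_kB_k$, so $\pi'(bc)=0$; as this holds for all $\pi'$ (trivially when $\pi'(c)=0$) one gets $bc=0$ and thus $cc^*\in(B_1\vee\dots\vee B_n)^\perp=\{0\}$, contradicting $\pi(c)\neq0$.

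In short, the functorial $F$ is not used to force $1_{H_P}$; it is used to realize $\bigvee_kr_{k\pi'}$ as the image of a \emph{fixed} element $b_F\in A$ on the locus where $c$ survives, so that the single factor $(1-b_F)$ annihilates all of $\bigcup_kB_k$ there. That, together with the maximal choice of $m$ and the ``dimension-forcing'' factors $g_k(a)$, is the missing idea in your sketch.
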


\begin{proof}
Assume that we have $\pi\in\widehat{A}$ with $\dim(H_\pi)>n$.  For all $k\leq n$, $B_k$ is a commutative hereditary C*-subalgebra of $A$ and hence, for all $\pi\in\widehat{A}$, there is a projection $r_{k\pi}$ of rank at most one with $\pi[B_k]=\mathbb{C}r_{k\pi}$.  Let \[m=\max\{\mathrm{rank}(\bigvee_{k\leq n}r_{k\pi}):\pi\in\widehat{A}\textrm{ and }\dim(H_\pi)>n\}\] and pick $\pi\in\widehat{A}$ such that $\dim(H_\pi)>n$ and $m=\mathrm{rank}(\bigvee_{k\leq m}r_{k\pi})$.  We can then also pick $b_1,\ldots,b_m\in\bigcup_{k\leq n}B_{k+}$ such that $\pi(b_k)$ is a (necessarily rank one) projection, for each $k\leq m$, and $\mathrm{rank}(\bigvee_{k\leq m}\pi(b_k))=m$.  Take any $\delta\in(0,1)$ and set
\begin{eqnarray*}
b_F &=& F_{\pi(b_1),\ldots,\pi(b_m)}(f_{\delta/2}(b_1),\ldots,f_{\delta/2}(b_m))\quad \textrm{and}\\
b_G &=& G_{\pi(b_1),\ldots,\pi(b_m)}(f_{\delta/2}(b_1),\ldots,f_{\delta/2}(b_m)).
\end{eqnarray*}

By irreducibility, we have $a\in A_+$ with at least $n+1$ points in $\sigma(\pi(a))$.  Take continuous functions $g_1,\ldots,g_{n+1}$ on $\mathbb{R}$ with disjoint supports that each have non-empty intersection with $\sigma(\pi(a))$.  Now define \[c=(1-b_F)a_0b_Ga_1g_1(a)\ldots a_{n+1}g_{n+1}(a)a_{n+2}f_\delta(b_1)\ldots a_{n+m+1}f_\delta(b_m),\] where $a_0,\ldots,a_{n+m+1}\in A$ are chosen so that $\pi(c)\neq0$ (which is possible by the irreducibility of $\pi$ and the fact $\mathrm{rank}(\pi(b_F))<\dim(H_\pi)$).

Now say we have another $\pi'\in\widehat{A}$ such that $\pi'(c)\neq0$.  Then $g_k(\pi'(a))\neq0$, for all $k\leq n+1$, and hence \[\dim(H_{\pi'})\geq|\sigma(\pi'(a))|>n.\] Likewise, for all $k\leq m$, $f_\delta(\pi'(b_k))\neq0$ and hence $f_{\delta/2}(\pi'(b_k))$ is a rank one projection.  As we also have $\pi'(b_G)\neq0$, this means that $\pi'(b_F)=\bigvee_{k\leq m}f_{\delta/2}(\pi'(b_k))$ and $\mathrm{rank}(\pi'(b_F))=m$ which, by our choice of $m$, means $\pi'(b_F)=\bigvee_{k\leq n}r_{k\pi'}$.  Thus $0=\pi'(b(1-b_F))=\pi'(bc)$, for all $b\in\bigcup_{k\leq n}B_k$, and this also holds of course if $\pi'(c)=0$.  As $\pi'$ was arbitrary (and the atomic representation is faithful), it follows that $bc=0$, for all $b\in\bigcup_{k\leq n}B_k$, i.e. $cc^*\in(B_1\vee\ldots\vee B_n)^\perp=A^\perp=\{0\}$, contradicting $\pi(c)\neq0$.
\end{proof}

\begin{thm}\label{homchar}
$A$ is $n$-$[A]^\perp_{\mathbf{D}}$-homogeneous if and only if $A$ contains an $n$-homogeneous essential ideal.
\end{thm}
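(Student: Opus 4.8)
The plan is to prove the two implications separately, letting the abstract decomposition theory of \S\ref{OrderTheory} carry most of the weight and invoking representation-theoretic facts only twice, through \autoref{[A]homthm} and \autoref{[A]hom}. Throughout, recall that an $n$-homogeneous C*-algebra is CCR, hence postliminal, and that an annihilator ideal of a C*-algebra $C$ inherits $m$-homogeneity and $m$-subhomogeneity from $C$, since its irreducible representations are exactly the (nonzero) restrictions of those of $C$, with the same dimension. Also recall that being abelian passes to sub-C*-algebras (hence to annihilators contained in an abelian one, by \autoref{commuteBoolean}), and that $[A]^\perp_{\mathbf{D}}$ is a type ideal, so that a very orthogonal join of abelian annihilators is again abelian.

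For the forward implication, suppose $A$ is $n$-$[A]^\perp_{\mathbf{D}}$-homogeneous, so $A=B_1\vee\cdots\vee B_n$ for orthogonal $B_i\in[A]^\perp_{\mathbf{D}}$ with $\mathrm{c}(B_i)=A$. By \autoref{[A]homthm}, $A$ is $n$-subhomogeneous, and by \autoref{[A]hom} it is not $(n-1)$-subhomogeneous. Since the dimension function is lower semicontinuous on $\widehat{A}$, the set $\widehat{A}_n=\{[\pi]:\dim H_\pi=n\}$ is a nonempty open subset of $\widehat{A}$, corresponding to an ideal $J$ which, being supported where the dimension is exactly $n$, is $n$-homogeneous; I claim $J$ is essential. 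If not then $J^\perp\neq\{0\}$, and $J^\perp$ is a central annihilator (an annihilator ideal, by \autoref{centralannihilators}), so by the canonical isomorphism $[A]^\perp\cong[J^\perp]\times[J^{\perp\perp}]$ of \autoref{centralequiv} we get $J^\perp=(B_1\cap J^\perp)\vee\cdots\vee(B_n\cap J^\perp)$; each factor is abelian, and by the relative centre property (\autoref{c_B}) together with \eqref{c(pwedgeq)} each has relative central cover equal to $J^\perp$, hence is nonzero. Thus $J^\perp$ is $n$-$[J^\perp]^\perp_{\mathbf{D}}$-homogeneous, so by \autoref{[A]hom} it is not $(n-1)$-subhomogeneous, i.e. $\widehat{J^\perp}$ contains a representation of dimension $n$ --- contradicting the disjointness of $\widehat{J^\perp}$ from $\widehat{A}_n$. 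Hence $\widehat{A}_n$ is dense, and $J$ is an essential $n$-homogeneous ideal.

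For the converse, suppose $A$ has an $n$-homogeneous essential ideal $I$. By \autoref{ess} the map $C\mapsto C\cap I$ is an orthoisomorphism $[A]^\perp\cong[I]^\perp$; it preserves the top element, orthogonality, centrality and central covers (all order-theoretic), hence the abelian annihilators (\autoref{commuteBoolean}) and the property of being $n$-$(\cdot)_{\mathbf{D}}$-homogeneous, so it suffices to show $I$ is $n$-$[I]^\perp_{\mathbf{D}}$-homogeneous. Now $I$ is postliminal, hence discrete by \autoref{GCRtypeI}, so $[I]^\perp_{\mathbf{D}}$ is order-dense in $[I]^\perp$ by \autoref{discomden}, and (the proof of) \autoref{homthm} produces pairwise orthogonal \emph{central} annihilators $s_\alpha$ with $I=\bigvee s_\alpha$, each $s_\alpha$ being $\kappa_\alpha$-$[s_\alpha]^\perp_{\mathbf{D}}$-homogeneous for some cardinal $\kappa_\alpha$. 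Discarding the zero terms, each $s_\alpha$ is a nonzero annihilator ideal, hence an $n$-homogeneous C*-algebra, and (being central) equals its own central cover, so \autoref{[A]homthm} and \autoref{[A]hom} apply to it as a stand-alone C*-algebra. Then $\kappa_\alpha=n$: if $\kappa_\alpha\geq n+1$, grouping the homogeneous pieces of $s_\alpha$ into $n+1$ orthogonal abelian annihilators with full central cover exhibits $s_\alpha$ as $(n+1)$-$[s_\alpha]^\perp_{\mathbf{D}}$-homogeneous, so by \autoref{[A]hom} it is not $n$-subhomogeneous, against $n$-homogeneity; and if $\kappa_\alpha<n$, then $s_\alpha$ is a join of $\kappa_\alpha$ abelian annihilators, hence $\kappa_\alpha$-subhomogeneous by \autoref{[A]homthm}, again against $n$-homogeneity. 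Labelling the $n$ homogeneous pieces of $s_\alpha$ as $B^\alpha_1,\dots,B^\alpha_n$ and putting $B_j=\bigvee_\alpha B^\alpha_j$, the $B_j$ are orthogonal (being contained in orthogonal $s_\alpha$'s), lie in the type ideal $[I]^\perp_{\mathbf{D}}$ (very orthogonal joins of abelian annihilators), satisfy $\mathrm{c}(B_j)=\bigvee_\alpha\mathrm{c}(B^\alpha_j)=\bigvee_\alpha s_\alpha=I$, and $\bigvee_j B_j=\bigvee_\alpha\bigvee_j B^\alpha_j=\bigvee_\alpha s_\alpha=I$; so $I$ is $n$-$[I]^\perp_{\mathbf{D}}$-homogeneous.

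The main obstacle is arranging the two directions so that the abstract theory does the real work. In the forward direction one must convert the bare fact ``$n$-subhomogeneous but not $(n-1)$-subhomogeneous'' into a concrete essential ideal; this forces one to invoke the standard lower semicontinuity of the dimension function (to know the top-dimension locus is open) and then to check carefully that the annihilator complement $J^\perp$ of the candidate ideal is again of the form $B_1\vee\cdots\vee B_n$ with nonzero abelian factors, so that \autoref{[A]hom} can be re-applied to it. In the converse direction the delicate point is that \autoref{homthm} only yields a \emph{sub}homogeneous decomposition $I=\bigvee s_\alpha$, so one must pin each local order $\kappa_\alpha$ down to $n$; the two inequalities $\kappa_\alpha\leq n$ and $\kappa_\alpha\geq n$ both come from $n$-homogeneity of $s_\alpha$, but only once one has noted that a central homogeneous piece is its own central cover and therefore a genuine $n$-homogeneous C*-algebra to which \autoref{[A]homthm} and \autoref{[A]hom} apply.
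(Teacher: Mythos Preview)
Your proof is correct. The converse direction is essentially the paper's argument: reduce to the essential ideal via \autoref{ess}, observe it is liminal hence discrete, apply \autoref{discomden} and \autoref{homthm} to get a central subhomogeneous decomposition, and use \autoref{[A]hom} and \autoref{[A]homthm} to force every nonzero piece to have order exactly $n$. Your explicit reassembly $B_j=\bigvee_\alpha B_j^\alpha$ is a minor bookkeeping variation; the paper simply observes that the single surviving central piece $C_n$ equals $B$ and is already $n$-$[B]^\perp_{\mathbf{D}}$-homogeneous.

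The forward direction, however, takes a genuinely different route. The paper constructs the essential $n$-homogeneous ideal directly: with $C_k$ the closed ideal generated by $B_k$, each $C_k$ is essential (since $\mathrm{c}(B_k)=A$), so $C=\bigcap_k C_k$ is essential, and for any $\pi\in\widehat{C}$ one has $\pi(B_k)\neq0$ for every $k$ (else $\pi$ would annihilate $C_k\supseteq C$), giving $\dim H_\pi\geq n$, while \autoref{[A]homthm} applied inside $C$ gives $\dim H_\pi\leq n$. Your approach instead passes through the primitive spectrum: you use lower semicontinuity of $\pi\mapsto\dim H_\pi$ to carve out the open set $\widehat{A}_n$, take the corresponding ideal $J$, and prove essentiality by contradiction, showing $J^\perp$ would itself be $n$-$[J^\perp]^\perp_{\mathbf{D}}$-homogeneous and hence (by \autoref{[A]hom}) possess an $n$-dimensional irreducible, contrary to $\widehat{J^\perp}\cap\widehat{A}_n=\emptyset$. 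Both arguments are short; the paper's is self-contained and produces an explicit ideal without appealing to the Jacobson topology, while yours is perhaps more conceptual and identifies the canonical candidate ideal, at the cost of importing the standard semicontinuity fact (which the paper does mention elsewhere, after \autoref{subequiv}, as the classical alternative route).
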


\begin{proof}
Say $A$ contains an $n$-homogeneous essential ideal $B$.  Then $B$ is liminal and hence discrete so $B=\bigvee\mathrm{c}_B[B]^\perp_\mathbf{D}$.  As $[B]^\perp_\mathbf{D}$ is order-dense in $[B]^\perp$, by \autoref{discomden}, we have $B=\bigvee C_\lambda$ for $(C_\lambda)\subseteq\mathrm{c}_B[B]^\perp$ with each $C_\lambda$ being $\lambda$-$[B]^\perp_\mathbf{D}$-homogeneous.  If we had $C_\lambda\neq\{0\}$, for some $\lambda\neq n$, then we would have $\pi\in\widehat{C}_\lambda$ with $\dim(H_\pi)\neq n$, by either \autoref{[A]hom} or \autoref{[A]homthm}.  As $C_\lambda$ is an ideal, this $\pi$ extends to an irreducible representation of $B$, contradicting $n$-homogeneity.  Thus $B$ is $n$-$[B]^\perp_\mathbf{D}$-homogeneous and hence $A$ is $n$-$[A]^\perp_\mathbf{D}$-homogeneous, by \autoref{ess}.

Conversely, assume $A=B_1\vee\ldots\vee B_n$, for orthogonal $B_1,\ldots,B_n\in[A]^\perp_\mathbf{D}$ with $\mathrm{c}(B_k)=A$, for all $k$.  This means that the closed ideal $C_k$ generated by $B_k$ is essential in $A$, for all $k$.  As the intersection of a pair of essential ideals $D$ and $E$ is again essential (because then, for any $a\in A\setminus\{0\}$, we have $e\in E$ with $ae\neq0$, and then $f\in F$ with $aef\neq0$, where $ef\in E\cap F$), we see that $C=\bigcap C_k$ is an essential ideal in $A$.  If $\pi\in\widehat{C}$ then $\ker(\pi)\cap B_k\neq0$, for each $k$, otherwise we would have $C\subseteq\ker(\pi)$ and $\pi$ would be the zero representation.  Thus $\dim(H_\pi)\geq n$.  But also $C=B_1\vee_C\ldots\vee_CB_k$ and hence $\dim(H_\pi)\leq n$, by \autoref{[A]homthm}.
\end{proof}

\begin{cor}\label{subequiv}
The following are equivalent.
\begin{enumerate}
\item\label{subequiv1} $A\in[A]^{\perp n}_{\mathbf{D}}$.
\item\label{subequiv2} $A\in[A]^\perp_{\mathbf{D}<n+1}$.
\item\label{subequiv3} $A$ is $n$-subhomogeneous.
\item\label{subequiv4} There are orthogonal $k$-homogeneous ideals $B_k$ with $B_1\oplus\ldots\oplus B_n$ essential.
\end{enumerate}
\end{cor}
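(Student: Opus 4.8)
The plan is to prove the implications $(1)\Rightarrow(3)$, $(3)\Rightarrow(4)$, $(3)\Rightarrow(2)$, $(4)\Rightarrow(1)$ and $(2)\Rightarrow(1)$; these form a strongly connected diagram and so establish all four equivalences. The implication $(1)\Rightarrow(3)$ is immediate, since $(1)$ says exactly that $A=B_1\vee\dots\vee B_n$ for some $B_1,\dots,B_n\in[A]^\perp_\mathbf{D}$, whence $A$ is $n$-subhomogeneous by \autoref{[A]homthm}.

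The main work is $(3)\Rightarrow(4)$, which simultaneously gives $(3)\Rightarrow(2)$. An $n$-subhomogeneous C*-algebra is postliminal, hence discrete by \autoref{GCRtypeI}, so $[A]^\perp_\mathbf{D}$ is order-dense in $[A]^\perp$ by \autoref{discomden}; it is also a type ideal. I would then invoke the corollary following \autoref{homthm} with $T=[A]^\perp_\mathbf{D}$: once the disjointness hypothesis is checked (see below), it produces orthogonal central annihilators $t_\kappa$, one per cardinal $\kappa$, with $t_\kappa$ a $\kappa$-$[A]^\perp_\mathbf{D}$-homogeneous ideal and $A=\bigvee_\kappa t_\kappa$. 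Because $t_\kappa$ is a central annihilator, its annihilator lattice as a C*-algebra is the principal ideal $[t_\kappa]$ of $[A]^\perp$ and $[A]^\perp$ has the relative centre property at $t_\kappa$ (\autoref{centralequiv}, \autoref{c_B}), so $t_\kappa$ is in fact $\kappa$-$[t_\kappa]^\perp_\mathbf{D}$-homogeneous in its own right: $t_\kappa=\bigvee_{i<\kappa}C_i$ with the $C_i$ pairwise orthogonal abelian annihilators of $t_\kappa$ and $\mathrm{c}_{t_\kappa}(C_i)=t_\kappa$.

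The crucial sub-lemma is: \emph{if a C*-algebra $D$ has pairwise orthogonal abelian annihilators $C_1,\dots,C_m$ with $\mathrm{c}_D(C_i)=D$ for all $i$, then $\dim H_\pi\geq m$ for every $\pi\in\widehat D$.} Indeed $\mathrm{c}_D(C_i)=D$ forces $C_i\nsubseteq\ker\pi$, so $\pi|_{C_i}\neq0$; a commutative hereditary C*-subalgebra has $\pi[C_i]=\mathbb{C}e_i$ with $e_i$ a rank-one projection (as in the proof of \autoref{[A]homthm}); and orthogonality of the $C_i$ makes the $e_i$ pairwise orthogonal. Applied to $D=t_\kappa$ with $m=\kappa$ (for finite $\kappa$) this gives $\dim H_\pi\geq\kappa$ for all $\pi\in\widehat{t_\kappa}$, while \autoref{[A]homthm} gives the reverse bound, so $t_\kappa$ is $\kappa$-homogeneous when $\kappa\leq n$. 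When $\kappa>n$, the sub-lemma with $m=n+1$ would force $\widehat{t_\kappa}$ to contain a representation of dimension $>n$; but $t_\kappa$, an ideal of the $n$-subhomogeneous algebra $A$, is itself $n$-subhomogeneous, so $\widehat{t_\kappa}=\emptyset$ and $t_\kappa=\{0\}$. The same argument shows that an ideal which is simultaneously $\lambda$- and $\kappa$-$[A]^\perp_\mathbf{D}$-homogeneous with $\lambda\neq\kappa$ must be $\{0\}$, verifying the disjointness hypothesis. Hence $A=t_1\vee\dots\vee t_n$ with the $t_k$ orthogonal $k$-homogeneous ideals and $t_1\oplus\dots\oplus t_n$ essential (its double annihilator is $\bigvee t_k=A$), giving $(4)$; since the $t_k$ are very orthogonal and each of order $\leq n$, this is also $(2)$.

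Finally, $(4)\Rightarrow(1)$ and $(2)\Rightarrow(1)$ are two instances of the same regrouping. For $(4)$, with $C=B_1\oplus\dots\oplus B_n$ essential, \autoref{ess} gives an orthoisomorphism $[A]^\perp\cong[C]^\perp$ carrying $[A]^\perp_\mathbf{D}$ onto $[C]^\perp_\mathbf{D}$ (by \autoref{commuteBoolean}), reducing the task to $C\in[C]^{\perp n}_\mathbf{D}$; in $[C]^\perp$ the $B_k$ are very orthogonal central elements with join $C$, each $k$-$[B_k]^\perp_\mathbf{D}$-homogeneous by \autoref{homchar} (being a $k$-homogeneous essential ideal of itself), say $B_k=\bigvee_{i\leq k}C_{ki}$, and then $D_i=\bigvee_{k\geq i}C_{ki}$ for $i=1,\dots,n$ are abelian annihilators (joins of very orthogonal abelian annihilators, via \autoref{algebratd} and \autoref{commutativesubalgebra}, or since $[C]^\perp_\mathbf{D}$ is a type ideal) with $\bigvee_{i=1}^n D_i=C$. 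For $(2)$, write $A=\bigvee S$ with $S$ very orthogonal and each $s\in S$ being $\kappa_s$-$[A]^\perp_\mathbf{D}$-homogeneous with $\kappa_s\leq n$; decompose $s=\bigvee_{i\leq\kappa_s}C_{si}$ and set $D_i=\bigvee\{C_{si}:s\in S,\ \kappa_s\geq i\}$ for $i=1,\dots,n$, so the $C_{si}$ feeding a fixed $D_i$ lie in very orthogonal $s$'s and hence $D_i\in[A]^\perp_\mathbf{D}$ with $\bigvee_{i=1}^n D_i=\bigvee S=A$. The main obstacle is the sub-lemma, i.e.\ identifying order-theoretic $\kappa$-homogeneity of a central-annihilator ideal with genuine $\kappa$-homogeneity as a C*-algebra, together with the care needed to transport the annihilator lattice only along essential \emph{ideals}, since \autoref{ess} genuinely fails for arbitrary essential hereditary C*-subalgebras.
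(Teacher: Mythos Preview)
Your overall structure is sound and close to the paper's, but your ``crucial sub-lemma'' is false, and this breaks your $(3)\Rightarrow(4)$. The specific error is the claim that $\mathrm{c}_D(C_i)=D$ forces $C_i\nsubseteq\ker\pi$ for every $\pi\in\widehat{D}$. The condition $\mathrm{c}_D(C_i)=D$ only says the closed ideal generated by $C_i$ is essential, and primitive ideals can certainly contain essential ideals. For a concrete counterexample, let $D=\{f\in C([0,1],M_2):f(0)\text{ is diagonal}\}$. By \autoref{homchar}, $D$ is $2$-$[D]^\perp_\mathbf{D}$-homogeneous (the ideal $\{f:f(0)=0\}\cong C_0((0,1],M_2)$ is $2$-homogeneous and essential), so orthogonal abelian $C_1,C_2$ with full central cover exist; yet $D$ has the $1$-dimensional irrep $f\mapsto f(0)_{11}$. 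Thus your conclusion that each $t_\kappa$ is $\kappa$-homogeneous as a C*-algebra is wrong, and your $B_k=t_k$ do not satisfy $(4)$.

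The repair is exactly what the paper does in its $(2)\Rightarrow(4)$: invoke \autoref{homchar} to extract from each non-zero $t_k$ a $k$-homogeneous essential ideal $B_k\subseteq t_k$, and then $\bigoplus_k B_k$ is essential in $\bigoplus_k t_k$, hence in $A$. Your other uses of the sub-lemma survive with the weaker \autoref{[A]hom} in place of your false statement: for $t_\kappa=\{0\}$ when $\kappa>n$ you only need \emph{some} irrep of $t_\kappa$ to have dimension $\geq\kappa$, which \autoref{[A]hom} provides (and which then extends to $A$ by \cite{Pedersen1979} Proposition 4.1.8); and disjointness follows from \autoref{homchar} by intersecting the two essential homogeneous ideals. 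With this correction your argument becomes essentially the paper's proof, just with the cycle traversed in a slightly different order.
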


\begin{proof}\
\begin{itemize}
\item[\eqref{subequiv1}$\Rightarrow$\eqref{subequiv3}] See \autoref{[A]homthm}.
\item[\eqref{subequiv3}$\Rightarrow$\eqref{subequiv2}] If $A$ is discrete but $A\notin[A]^\perp_{\mathbf{D}<n+1}$, then there exists non-zero $B\in[A]^\perp_{\mathbf{D}(n+1)}$.  By \autoref{[A]homthm}, $B$ has representations $\pi$ with $\dim(H_\pi)>n$.  By \cite{Pedersen1979} Proposition 4.1.8, the same is true of $A$ and so $A$ is not $n$-subhomogeneous.
\item[\eqref{subequiv2}$\Rightarrow$\eqref{subequiv1}] Immediate.
\item[\eqref{subequiv2}$\Rightarrow$\eqref{subequiv4}] Immediate from the `only if' part of \autoref{homchar}.
\item[\eqref{subequiv4}$\Rightarrow$\eqref{subequiv2}] If \eqref{subequiv4} holds then, from the `if' part of \autoref{homchar}, we see that each $B_k^{\perp\perp}$ is $k$-$[B_k^{\perp\perp}]^\perp_{\mathbf{D}}$-homogeneous and $A=B_1^{\perp\perp}\vee\ldots\vee B_n^{\perp\perp}$.
\end{itemize}
\end{proof}

We should mention that a more classical proof of \eqref{subequiv3}$\Leftrightarrow$\eqref{subequiv4} could be obtained by utilizing the Jacobson topology on $\widecheck{A}(=\{\ker(\pi):\pi\in\widehat{A}\}=$ the primitive ideal space \textendash\, see \cite{Pedersen1979} \S 4.1), specifically by using \cite{Pedersen1979} Theorem 4.4.6 and Proposition 4.4.10.  Another thing $\widecheck{A}$ can be used for is establishing a similar result about $<\!\!\aleph_0$-subhomogeneity.  Specifically, we call $A$ \emph{$<\!\!\aleph_0$-subhomogeneous} if $\dim(H_\pi)<\infty$ for all $\pi\in\widehat{A}$.  This notion turns out to be different from $<\!\!\aleph_0$-$[A]^\perp_\mathbf{D}$-subhomogeneity (see \autoref{aleph0sub}), but we can at least show it is stronger.

\begin{prp}
If $A$ is $<\!\!\aleph_0$-subhomogeneous then $A\in[A]^\perp_{\mathbf{D}<\aleph_0}$.
\end{prp}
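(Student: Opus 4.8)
The plan is to combine the abstract homogeneity result \autoref{homthm} with a representation-theoretic bound on the order of a homogeneous annihilator. First I would note that $<\!\!\aleph_0$-subhomogeneity forces $A$ to be postliminal: for $\pi\in\widehat A$ the space $H_\pi$ is finite-dimensional, so $\pi(A)$ is a C*-subalgebra of $\mathcal B(H_\pi)$ acting irreducibly on a finite-dimensional space, hence $\pi(A)=\mathcal B(H_\pi)=\mathcal K(H_\pi)$. Thus $A$ is postliminal, so discrete by \autoref{GCRtypeI}, and therefore $[A]^\perp_\mathbf D$ is order-dense in $[A]^\perp$ by \autoref{discomden}. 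Since $[A]^\perp_\mathbf D$ is a type ideal in the separative complete ortholattice $[A]^\perp$ (\autoref{tctd}, using \autoref{epsep}), \autoref{homthm} gives that $\mathbf 1=A$ is $[A]^\perp_\mathbf D$-subhomogeneous; write $A=\bigvee_\alpha C_\alpha$ with the $C_\alpha$ pairwise very orthogonal and each $C_\alpha$ being $\lambda_\alpha$-$[A]^\perp_\mathbf D$-homogeneous. If every $\lambda_\alpha$ is finite this exhibits $A$ as $<\!\!\aleph_0$-$[A]^\perp_\mathbf D$-subhomogeneous, i.e. $A\in[A]^\perp_{\mathbf D<\aleph_0}$, so the whole content is to rule out infinite orders.

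Suppose then that some $C:=C_\alpha\neq\{0\}$ is $\kappa$-$[A]^\perp_\mathbf D$-homogeneous with $\kappa\geq\aleph_0$, so there are pairwise orthogonal nonzero annihilators $D_1,D_2,\dots$ in $[C]$ with $\mathrm c(D_i)=\mathrm c(C)=:Z$ for all $i$. By \autoref{centralannihilators} $Z$ is a nonzero ideal of $A$, hence itself a $<\!\!\aleph_0$-subhomogeneous C*-algebra (every irreducible representation of $Z$ extends to one of $A$ on the same space), and, working inside $Z$, the closed ideal $I_i$ generated by $D_i$ is essential in $Z$ (this is just a restatement of $\mathrm c(D_i)=Z$, i.e. $I_i^{\perp\perp}=Z$ computed in $[A]^\perp$, whence the $Z$-annihilator of $I_i$ is $Z^\perp\cap Z=\{0\}$). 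Now I would use that $\mathrm{Prim}(Z)$ is a Baire space: each $U_i=\{P\in\mathrm{Prim}(Z):I_i\nsubseteq P\}$ is open and dense (density being exactly essentiality of $I_i$), so there is $P\in\bigcap_i U_i$; taking $\pi\in\widehat Z$ with $\ker\pi=P$ gives $\pi(I_i)\neq 0$, and hence $\pi(D_i)\neq 0$, for every $i$. Picking positive $d_i\in D_i$ with $\pi(d_i)\neq 0$, orthogonality of the $D_i$ gives $\pi(d_i)\pi(d_j)=0$ for $i\neq j$, so the nonzero subspaces $\overline{\pi(d_i)H_\pi}$ are pairwise orthogonal in $H_\pi$; having infinitely many of them forces $\dim H_\pi=\infty$, contradicting $<\!\!\aleph_0$-subhomogeneity of $Z$.

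The routine parts — postliminality, the application of \autoref{homthm}, and the bookkeeping with central covers and generated ideals — are straightforward. The one step needing genuine care, and the crux of the argument, is producing a \emph{single} irreducible representation not annihilating any of the infinitely many $D_i$ at once: the finite-order analogue is the iterated-product construction already used in \autoref{[A]hom}, but passing from finitely many to infinitely many $D_i$ genuinely requires the Baire category theorem for $\mathrm{Prim}(Z)$ (with each essential $I_i$ contributing a dense open set $U_i$) rather than any direct product construction.
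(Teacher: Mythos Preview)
Your proof is correct and follows essentially the same route as the paper's: both establish discreteness (you via postliminality, the paper via liminality), invoke the homogeneous decomposition to reduce to ruling out an $\aleph_0$-homogeneous piece, and then use the Baire category theorem on the primitive ideal space to produce a single irreducible representation that is nonzero on every one of the countably many orthogonal abelian annihilators, forcing an infinite-dimensional representation. The only cosmetic difference is that the paper applies Baire to $\widecheck{B}$ for the homogeneous annihilator $B$ itself, whereas you work in $\mathrm{Prim}(Z)$ for $Z=\mathrm{c}(C)$; since the relevant open dense sets are determined by the essential ideals generated by the $D_i$, the two are equivalent.
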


\begin{proof}
As $A$ is $<\!\!\aleph_0$-subhomogeneous, it is liminal and hence discrete.  Thus if $A\notin[A]^\perp_{\mathbf{D}<\aleph_0}$ we would have non-zero $B\in[A]^\perp_{\mathbf{D}\aleph_0}$, meaning there are orthogonal $(B_n)\subseteq[A]^\perp_\mathbf{D}$ with $\mathrm{c}(B_n)=\mathrm{c}(B)$, for all $n$.  This means that the subsets of $\widecheck{B}$ defined by $O_n=\{I\in\widecheck{B}:B_n\setminus I\neq\{0\}\}$ are open dense in the Jacobson topology.  Thus $O=\bigcap O_n$ is non-empty, as $\widecheck{B}$ is a Baire space (see \cite{Pedersen1979} Theorem 4.3.5), so $B$, and hence $A$, has an irreducible representation $\pi$ which is not zero on all of $B_n$, for any $n$.  Thus $\dim(H_\pi)=\infty$, contradicting the $<\!\!\aleph_0$-subhomogeneity of $A$.
\end{proof}

\begin{thm}\label{nsubess}
If $A$ is $<\!\!\aleph_0$-$[A]^\perp_\mathbf{D}$-subhomogeneous then every essential hereditary C*-subalgebra $B$ of $A$ contains an essential ideal.
\end{thm}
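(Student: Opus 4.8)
The plan is to decompose $A$ into very orthogonal homogeneous annihilator ideals, pass inside each to a genuine finite-rank matrix bundle, and there run a semicontinuity argument on the rank of the open projection of the given subalgebra.

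\emph{Step 1: the decomposition.} Since $A$ is $<\!\aleph_0$-$[A]^\perp_\mathbf{D}$-subhomogeneous, \autoref{homdef} gives $A=\bigvee_\alpha B_\alpha$ with $(B_\alpha)$ pairwise very orthogonal and each $B_\alpha$ of finite order $n_\alpha$ over $T=[A]^\perp_\mathbf{D}$. Fixing $\gamma$ and working coordinatewise in $[A]^\perp\cong[\mathrm{c}(B_\gamma)]\times[\mathrm{c}(B_\gamma)^\perp]$ (legitimate by \autoref{centralequiv}), using that joins are coordinatewise and that $B_\alpha\perp\mathrm{c}(B_\gamma)$ for $\alpha\neq\gamma$, one gets $\mathrm{c}(B_\gamma)=\mathrm{c}(B_\gamma)\wedge\bigvee_\alpha B_\alpha=B_\gamma$; so each $B_\alpha$ is a central annihilator, i.e. an annihilator \emph{ideal} of $A$ (\autoref{centralannihilators}). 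Being central, $[B_\alpha]^\perp=[B_\alpha]_{B_\alpha}=[B_\alpha]$, and the relative centre property (\autoref{c_B}) together with \eqref{c([p])} shows each of the $n_\alpha$ abelian annihilators witnessing the homogeneity of $B_\alpha$ lies in $[B_\alpha]^\perp_\mathbf{D}$ with relative central cover $B_\alpha$; hence $B_\alpha$, as a C*-algebra, is $n_\alpha$-$[B_\alpha]^\perp_\mathbf{D}$-homogeneous, so by \autoref{homchar} it contains an $n_\alpha$-homogeneous essential ideal $I_\alpha$.

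\emph{Step 2: assembling the ideal.} I will use the standard fact that a hereditary C*-subalgebra is essential precisely when it meets every non-zero closed ideal (for central $K$, one has $B\cap K=\{0\}\Leftrightarrow B\subseteq K^\perp$, via the non-commutative topology). Given an essential hereditary C*-subalgebra $B$ of $A$: every non-zero ideal of $I_\alpha$ is an ideal of $A$, so $B$ meets it, whence $B\cap I_\alpha$ is essential in $I_\alpha$. If Step 3 produces, for each $\alpha$, an essential ideal $J_\alpha$ of $I_\alpha$ with $J_\alpha\subseteq B\cap I_\alpha$, then $J_\alpha$ is an ideal of $A$ (ideal of ideal of ideal) and is dense in $B_\alpha$, so $J_\alpha^{\perp\perp}=B_\alpha$; consequently the $c_0$-sum $\bigoplus_\alpha J_\alpha$ is an ideal of $A$ contained in $B$ (each $J_\alpha\subseteq B$ and $B$ is closed) with $(\bigoplus_\alpha J_\alpha)^\perp=\bigcap_\alpha B_\alpha^\perp=(\bigcup_\alpha B_\alpha)^\perp=A^\perp=\{0\}$, i.e. an essential ideal of $A$ inside $B$. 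This reduces the theorem to the homogeneous case.

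\emph{Step 3: the homogeneous case, and the main obstacle.} Let $C$ be $n$-homogeneous and $D\subseteq C$ essential hereditary. By the classical structure theory of homogeneous C*-algebras (equivalently, the analysis of \S\ref{MVF}), $C\cong\Gamma_0(\mathcal{E})$ for a locally trivial bundle $\mathcal{E}$ with fibre $M_n$ over the locally compact Hausdorff space $Y=\widehat{C}$, and $D$ is determined by $y\mapsto p(y):=\pi_y(p_D)\in\mathcal{P}(\mathcal{E}_y)$, where $p_D$ is the open projection of $D$ and $\pi_y$ the normal (\autoref{A**}) extension of the representation at $y$. Since $p_D$ is open, $y\mapsto\mathrm{rank}\,p(y)=\mathrm{tr}\,p(y)$ is an integer-valued lower semicontinuous function, so $\{\mathrm{rank}\,p\ge k\}$ is open and $\{\mathrm{rank}\,p^\perp\ge j\}$ is closed for all $k,j$. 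The crux is that $U:=\{y:p(y)=1_{\mathcal{E}_y}\}$, which is open, is moreover \emph{dense}: if some non-empty open $V$ were disjoint from $U$, then intersecting $V$ with $\{\mathrm{rank}\,p\ge1\}$ (open, and dense because $D$ meets every non-zero ideal) and then repeatedly shrinking to the open sets where $\mathrm{rank}\,p^\perp$ strictly drops, one reaches a non-empty open $W^\ast$ on which $\mathrm{rank}\,p^\perp$ is a constant $j^\ast\ge1$; inside a trivialising chart containing $W^\ast$, constant rank forces $p$ to be norm-continuous, hence clopen there, so its rank-$j^\ast$ complementary summand is a non-zero ideal orthogonal to $D$, contradicting essentiality. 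Granting density, $J:=\Gamma_0(\mathcal{E}|_U)$ is an ideal of $C$ contained in $D$ (on $U$, $1_{\mathcal{E}_y}\le p(y)$ puts every section supported in $U$ into the $p_D$-corner) and dense in $C$, hence an essential ideal of $C$ inside $D$. I expect this density of $U$ — and within it the elementary but essential fact that a constant-rank lower semicontinuous projection-valued section of $\mathcal{E}$ is automatically norm-continuous — to be the main obstacle; the central-decomposition bookkeeping of Steps 1--2 is routine given \autoref{centralequiv}, \autoref{c_B}, \eqref{c([p])} and \autoref{homchar}.
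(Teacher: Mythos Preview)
Your argument is essentially correct, but it takes a genuinely different route from the paper.

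\textbf{One slip to fix.} In Step~3 you say ``its rank-$j^\ast$ complementary summand is a non-zero \emph{ideal} orthogonal to $D$''. On a trivialising chart $W^\ast$ the $p^\perp$-corner is only a hereditary C*-subalgebra of $C$, not an ideal (the projection $p^\perp$ is not central). This does not hurt you: any non-zero section supported in $W^\ast$ with values in $p(y)^\perp M_n p(y)^\perp$ lies in $D^\perp$, which already contradicts essentiality ($D^\perp=\{0\}$). Just replace the word ``ideal'' by ``hereditary C*-subalgebra'' and the contradiction survives.

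\textbf{Comparison with the paper.} The paper never invokes the Fell/Tomiyama--Takesaki bundle structure theorem. For the $n$-subhomogeneous case it writes down the candidate ideal directly as
\[
C=\bigcap\{\ker\pi:\pi\in\widehat A,\ \dim\pi[B]H_\pi<\dim H_\pi\},
\]
checks $C\subseteq B$ via the atomic representation, and then shows $C^\perp=\{0\}$ by an explicit functional-calculus construction inside $A$: starting from a hypothetical $c\in C^\perp_+\setminus\{0\}$, it manufactures an element $a=(1-f_\delta(b))a_0g_1(b)\cdots g_m(b)a_mc$ whose image under every $\pi\in\widehat A_B$ with $\pi(c)\neq0$ is killed by $\pi[B]$, forcing $a\in B^\perp=\{0\}$ while $\pi(a)\neq0$. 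The reduction from $<\!\aleph_0$-subhomogeneous to the bounded case is done by an \emph{increasing} chain of $n$-subhomogeneous annihilator ideals $I_n$, rather than your very-orthogonal homogeneous pieces, but this part is routine either way.

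What each approach buys: the paper's proof is entirely self-contained within the representation-theoretic toolkit already developed (no external structure theorem, no local trivialisations), at the cost of a somewhat opaque element-chasing argument. Your proof is more geometric and transparent --- the density of $\{p=1\}$ via the rank drop is exactly the intuition --- but imports the locally trivial bundle description of $n$-homogeneous algebras and, implicitly, the semicontinuity analysis of \S\ref{MVF} transported to non-trivial bundles. Both are valid; yours makes clearer \emph{why} the result holds, while the paper's keeps the dependency graph smaller.
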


\begin{proof}
First assume that $A$ is $n$-($[A]^\perp_\mathbf{D}$-)subhomogeneous, for some $n\in\mathbb{N}$.  Let $\widehat{A}_B=\{\pi\in\widehat{A}:\dim(\pi[B]H_\pi)<\dim(H_\pi)\}$ and \[C=\bigcap_{\pi\in\widehat{A}_B}\ker(\pi).\]  This is immediately seen to be an ideal, and $C\subseteq B$ because the atomic representation is faithful on open projections, by \cite{Pedersen1979} Proposition 4.3.13 and Theorem 4.3.15.  All we need to verify is that $C^\perp=\{0\}$.  Assume to the contrary that we have non-zero $c\in C^\perp_+$.  Then $c\notin C$ so we have $\pi\in\widehat{A}_B$ with $\pi(c)\neq0$ and \[\dim(\pi[B]H_\pi)=m=\max\{\dim(\pi'[B]H_{\pi'}):\pi'\in\widehat{A}_B\textrm{ and }\pi'(c)\neq0\}.\]  As $B$ is hereditary, so is $\pi[B]$ and we can take $b\in B_+$ with $|\sigma(\pi(b)\setminus\{0\})|=m$.  Let $g_1,\ldots,g_m$ be continuous functions on $\mathbb{R}$ with disjoint supports contained in $[\delta,\infty)$, for some $\delta>0$, that each have non-empty intersection with $\sigma(\pi(b))$.  Now define \[a=(1-f_\delta(b))a_0g_1(b)a_1\ldots g_m(b)a_mc\in C^\perp,\] where $a_0,\ldots,a_m\in A$ are chosen so that $\pi(a)\neq0$.

Take any $b'\in B$.  As $a\in C^\perp$ we have $b'a\in C^\perp$ so, if $b'a\neq0$, there exists $\pi'\in\widehat{A}_B$ with $\pi'(b'a)\neq0$.  But then $\pi'(a)\neq0$ so $\pi'(c)\neq0$ and $g_k(\pi'(b))\neq0$, for all $k\leq m$.  This means $\sigma(\pi'(b)\backslash\{0\})\subseteq[\delta,\infty)$ and $\pi'(b'a)=\pi'(b'(1-f_\delta(b)))=0$, by the definition of $m$, a contradiction.  Thus $b'a=0$ which, as $b'$ was arbitrary, means $aa^*\in B^\perp=\{0\}$, contradicting $\pi(a)\neq0$.

For the general case, take an increasing sequence of (annihilator) ideals $(I_n)$ in $A$ such that $\bigcup I_n$ is essential in $A$ and $I_n$ is $n$-subhomogeneous, for all $n$.  As $B$ is essential in $A$, and $I_n$ is an ideal in $A$, $B\cap I_n$ is essential in $I_n$ (if $a\in I_{n+}$ then we have $b\in B_+$ with $ab\neq0\neq abab$ and $bab\in B\cap I_{n+}$), and hence contains an essential ideal $C_n$ in $I_n$.  But then $\bigcup C_n$ is an essential ideal in $\bigcup I_n$ which, as $\bigcup I_n$ is itself an essential ideal in $A$, means $\bigcup C_n$ is an essential ideal in $A$.
\end{proof}

\begin{cor}\label{subhom=}
If $B\in[A]^\perp$ is $<\!\!\aleph_0$-$[B]^\perp_\mathbf{D}$-subhomogeneous then $[B]=[B]_B$ so \[[A]^\perp_{\mathbf{D}<\aleph_0}\subseteq[A]^\perp_=.\]
\end{cor}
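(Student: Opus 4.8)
The plan is to prove that $[B]=[B]_B$ for every $B\in[A]^\perp_{\mathbf{D}<\aleph_0}$; since $[A]^\perp_=$ is by definition (see \eqref{eqtypeideal}) the set of such $B$, the displayed inclusion follows at once. As $[B]_B\subseteq[B]$ always and, by \autoref{[p]_p}, $[B]_B=\{C:C=C^{\perp_B\perp_B}\}$, it is enough to fix $C\in[A]^\perp$ with $C\subseteq B$ and show $C=C^{\perp_B\perp_B}$. Put $D=C^{\perp_B\perp_B}$; this lies in $[B]_B\subseteq[B]\subseteq[A]^\perp$, so $D$ is simultaneously an annihilator of $A$ and of $B$, with $C\subseteq D$. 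The first, easy, point is that $C$ is essential in $D$: any $e\in C^\perp\cap D$ lies in $C^\perp\cap B\cap D=C^{\perp_B}\cap C^{\perp_B\perp_B}$, which is $\{0\}$ because $C^{\perp_B}$ and $C^{\perp_B\perp_B}=D$ are complementary in the ortholattice $[B]^\perp$.

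The heart of the argument is to show that $D$ inherits the subhomogeneity hypothesis, i.e.\ $D\in[D]^\perp_{\mathbf{D}<\aleph_0}$. Mimicking the last paragraph of the proof of \autoref{nsubess} (which uses \autoref{homchar} and \autoref{subequiv}), from $B\in[A]^\perp_{\mathbf{D}<\aleph_0}$ one obtains an increasing sequence $I_1\subseteq I_2\subseteq\cdots$ of annihilator ideals of $B$ with $J:=\overline{\bigcup_nI_n}$ essential in $B$ and each $I_n$ $n$-subhomogeneous; in particular every irreducible representation of $J$ is finite-dimensional. Now $J$ is an ideal of $B$ and $D$ is a hereditary C*-subalgebra of $B$, so $D\cap J$ is an ideal of $D$, and it is essential in $D$ since $(D\cap J)^{\perp_D}\subseteq J^\perp\cap B=J^{\perp_B}=\{0\}$. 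As $D\cap J$ is a hereditary C*-subalgebra of $J$, each of its irreducible representations is a subrepresentation of one of $J$ and hence finite-dimensional, so $D\cap J$ is $<\!\!\aleph_0$-subhomogeneous and therefore $D\cap J\in[D\cap J]^\perp_{\mathbf{D}<\aleph_0}$ by the proposition preceding \autoref{nsubess}. Finally $D\cap J$ is an essential ideal of $D$, so \autoref{ess} yields an orthoisomorphism $[D]^\perp\cong[D\cap J]^\perp$, and such an orthoisomorphism carries $\kappa$-$[\,\cdot\,]^\perp_{\mathbf{D}}$-homogeneous annihilators to $\kappa$-$[\,\cdot\,]^\perp_{\mathbf{D}}$-homogeneous ones (using \autoref{commuteBoolean}), whence $D\in[D]^\perp_{\mathbf{D}<\aleph_0}$.

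With this established, applying \autoref{nsubess} to $D$ shows that the essential hereditary C*-subalgebra $C$ of $D$ contains an essential ideal $E$ of $D$. Then $E$ is an essential ideal in the hereditary C*-subalgebra $D$ of $A$, so \autoref{essidann} gives $D^\perp=E^\perp$, whence $D=D^{\perp\perp}=E^{\perp\perp}\subseteq C^{\perp\perp}=C$ because $E\subseteq C$. Combined with $C\subseteq D$ this gives $C=D=C^{\perp_B\perp_B}$, completing the proof. The step I expect to be the main obstacle is the middle one: $<\!\!\aleph_0$-$[B]^\perp_{\mathbf{D}}$-subhomogeneity is not visibly downward closed, so one is forced to extract a genuine essential ideal of $B$ on which all irreducible representations are finite-dimensional and then transport the property to $D$ via \autoref{ess}; once $D$ is known to be subhomogeneous, the conclusion is a short combination of \autoref{nsubess}, \autoref{essidann}, and the observation that $C$ is essential in the annihilator $D$ it generates inside $B$.
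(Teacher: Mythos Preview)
Your proof is correct and follows the same skeleton as the paper's: show $C$ is essential in $D=C^{\perp_B\perp_B}$, apply \autoref{nsubess} to obtain an essential ideal of $D$ inside $C$, then use \autoref{essidann} to collapse $D$ back to $C$. The paper compresses all this into three lines and silently takes for granted that $D$ inherits $<\!\aleph_0$-$[D]^\perp_{\mathbf{D}}$-subhomogeneity from $B$; your middle paragraph makes this explicit via a representation-theoretic detour (passing through the essential ideal $J\subseteq B$ built from finitely subhomogeneous pieces, then transporting along the orthoisomorphism of \autoref{ess}), which is sound though slightly longer than the direct observation that the very orthogonal homogeneous decomposition of $B$ restricts to one of any $D\in[B]_B$.
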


\begin{proof}
Take $C\in[B]$.  As $C$ is essential in $C^{\perp_B\perp_B}$, we have an essential ideal $D$ in $C^{\perp_B\perp_B}$ with $D\subseteq C$, by \autoref{nsubess}.  Then $C^{\perp_B\perp_B}=C^{\perp_B\perp_B\perp\perp}=D^{\perp\perp}=C$, by \autoref{essidann}.
\end{proof}

\begin{cor}
If $B$ is a hereditary C*-subalgebra of $A$ and $B\in[B]^\perp_{\mathbf{D}n}$ then \[B^{\perp\perp}\in[A]^\perp_{\mathbf{D}n}.\]
\end{cor}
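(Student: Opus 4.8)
The plan is to reduce the statement to representation theory by means of the characterisation of homogeneity in \autoref{homchar}. Since $B\in[B]^\perp_{\mathbf{D}n}$, \autoref{homchar} applied to the C*-algebra $B$ produces an $n$-homogeneous essential ideal $C$ of $B$. As $C$ is an ideal of the hereditary C*-subalgebra $B$ of $A$, \autoref{essidann} gives $B^\perp=C^\perp$, so $B^{\perp\perp}=C^{\perp\perp}$; note also that $C$, being an ideal of the hereditary $B$, is itself a hereditary C*-subalgebra of $A$, and that $C$ is automatically essential in $C^{\perp\perp}$ (as $C^{\perp}\cap C^{\perp\perp}=\{0\}$). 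It therefore suffices to exhibit an $n$-homogeneous essential ideal of $C^{\perp\perp}$: then \autoref{homchar}, applied to the C*-algebra $C^{\perp\perp}=B^{\perp\perp}$ (an annihilator of $A$, so that $[C^{\perp\perp}]^\perp=[C^{\perp\perp}]_{C^{\perp\perp}}$ and being $n$-$[C^{\perp\perp}]^\perp_\mathbf{D}$-homogeneous is exactly membership in $[A]^\perp_{\mathbf{D}n}$), gives $B^{\perp\perp}\in[A]^\perp_{\mathbf{D}n}$.

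The heart of the argument is the claim that $C$ is \emph{already} an ideal of $C^{\perp\perp}$. Write $e=p_C$ and $q=p_{C^{\perp\perp}}=\overline{e}^{\circ}$. Because $C$ is essential in $C^{\perp\perp}$ we have $e\le q\le\overline{e}$, so the ``boundary'' projection $s=q-e$ satisfies $s\le\overline{e}-e$; a short computation using $es=0$ and $e\le q$ shows that $C$ being an ideal of $C^{\perp\perp}$ is equivalent to $e$ being central in $(C^{\perp\perp})''=qA''q$, which in turn (taking adjoints, since $A''_{\mathrm{sa}}$ spans) amounts to $exs=0$ for all $x\in A''$. To prove this, test against an irreducible representation $\pi$ of $C^{\perp\perp}$: if $\pi|_C=0$ then $\pi(e)=0$, while if $\pi|_C\neq0$ then $\pi$ compresses to an $n$-dimensional irreducible representation of the $n$-homogeneous $C$, and local triviality of the associated $M_n$-bundle supplies a ``local unit'' $v\in C_+$ with $\pi(v)=\pi(e)$ the rank-$n$ projection onto $\overline{\pi(C)H_\pi}=H_\pi$; hence $\pi(e)=1$ and $\pi(s)=1-\pi(e)=0$. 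Either way $\pi(exs)=0$. Once $C$ is known to be an ideal of $C^{\perp\perp}$, it is an $n$-homogeneous essential ideal of $C^{\perp\perp}$ (ideals of $n$-homogeneous algebras are $n$-homogeneous), which is what we needed.

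The step I expect to be the genuine obstacle is passing from ``$exs=0$ after every irreducible representation of $C^{\perp\perp}$'' to the operator identity $exs=0$ in $A''$: the normal extension of the atomic representation of $C^{\perp\perp}$ need not be faithful on the double commutant, so one must argue more carefully that $s$ is supported on the nowhere-dense boundary set $\widehat{C^{\perp\perp}}\setminus\widehat{C}$ (where $e$ vanishes), and that for an \emph{open} projection $e$ and $x\in A''$ one has $exs=0$ whenever $\overline{s}\perp e$ — it is exactly here that full $n$-homogeneity of $C$, rather than mere $n$-subhomogeneity, is used. Should this local analysis prove awkward, an alternative is to first establish that $C^{\perp\perp}$ is $n$-subhomogeneous (so that \autoref{subequiv} and \autoref{nsubess} apply to $C^{\perp\perp}$, producing an essential ideal of $C^{\perp\perp}$ contained in $C$, which, being an ideal of $C$, is $n$-homogeneous) and then invoke \autoref{homchar} as above.
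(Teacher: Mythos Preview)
Your approach via \autoref{homchar} is different from the paper's and, as you yourself flag, has a genuine gap at the key step. The claim that $C$ is an ideal of $C^{\perp\perp}$ is exactly what you would need, but testing $exs=0$ against irreducible representations of $C^{\perp\perp}$ cannot establish it: the elements $e,x,s$ live in the double commutant, and the atomic representation of $C^{\perp\perp}$ need not separate such elements --- a point you state but do not resolve. Your patch via ``$\overline s\perp e$ and $e$ open'' is not justified either; there is no general principle of that form available here. Your alternative route (first show $C^{\perp\perp}$ is $n$-subhomogeneous, then invoke \autoref{nsubess} and \autoref{subequiv}) begs the question: without knowing $C$ is an ideal of $C^{\perp\perp}$ you cannot restrict an irreducible representation of $C^{\perp\perp}$ to $C$, so you have no direct handle on $\dim H_\pi$ for $\pi\in\widehat{C^{\perp\perp}}$.

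The paper avoids this circularity by working inside $B$, where homogeneity is already known, rather than inside $C^{\perp\perp}$. It takes the abelian witnesses $B_1,\ldots,B_n\in[B]^\perp_\mathbf{D}$ for $B\in[B]^\perp_{\mathbf{D}n}$, passes to $B_1^{\perp\perp},\ldots,B_n^{\perp\perp}\in[A]^\perp_\mathbf{D}$ (via \autoref{commutativesubalgebra}), and sets $C=B_1^{\perp\perp}\vee\ldots\vee B_n^{\perp\perp}\in[A]^\perp_{\mathbf{D}n}$ (using \autoref{c_B} for the central covers). The hereditary C*-subalgebra generated by $B_1,\ldots,B_n$ is essential in $B$, so \autoref{nsubess} --- applied to $B$, which \emph{is} known to be $<\!\aleph_0$-$[B]^\perp_\mathbf{D}$-subhomogeneous --- produces an essential ideal $D$ of $B$ inside it, hence inside $C$. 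Then \autoref{essidann} gives $B^{\perp\perp}=D^{\perp\perp}\subseteq C\subseteq B^{\perp\perp}$. The moral: apply \autoref{nsubess} where you already have subhomogeneity, not where you are trying to prove it.
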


\begin{proof}
Say $B_1,\ldots,B_n\in[B]^\perp_\mathbf{D}$ witness $B\in[B]^\perp_{\mathbf{D}n}$.  Then $B_1^{\perp\perp},\ldots,B_n^{\perp\perp}\in[A]^\perp_\mathbf{D}$, by \autoref{commutativesubalgebra}, and $C=B_1^{\perp\perp}\vee\ldots\vee B_n^{\perp\perp}\in[A]^\perp_{\mathbf{D}n}$, by \autoref{c_B}.  We do not immediately know that $B\subseteq C$, as supremums in $B$ and $A$ can differ, but $C$ will contain the hereditary C*-subalgebra generated by $B_1,\ldots,B_n$.  As this hereditary C*-subalgebra is essential in $B$, it contains an essential ideal $D$ in $B$, by \autoref{nsubess}.  By \autoref{essidann}, we have $B^{\perp\perp}=D^{\perp\perp}\subseteq C\subseteq B^{\perp\perp}$.
\end{proof}

\begin{thm}\label{Dnsim}
If $B,C\in[A]^\perp$ are perspective and $B\in[A]^\perp_{\mathbf{D}n}$ then $C\in[A]^\perp_{\mathbf{D}n}$.
\end{thm}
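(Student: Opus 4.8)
\textit{Proof proposal.} The plan is to pin down the central cover first, then reduce to a C*-algebra of the right shape, and finally transport the abelian decomposition of $B$ across the common complement.

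First I would record that perspectivity is a \emph{proper} type relation on the separative complete ortholattice $[A]^\perp$ (it is a type relation by the remarks after \autoref{tr}, and it is proper since $p\sim_\mathrm{p}\mathbf{0}$ forces the common complement to be $\mathbf{1}$ and hence $p=\mathbf{0}$). By the characterisation of $\mathrm{c}(p)\leq\mathrm{c}(q)$ as the weakest proper type relation (the comments after \eqref{cpleqcq}), applied in both directions, perspectivity of $B$ and $C$ forces $\mathrm{c}(B)=\mathrm{c}(C)$; call this annihilator ideal $z$. Since $z\in\mathrm{c}[A]^\perp$ is an ideal (\autoref{centralannihilators}), it is a C*-algebra with $[z]^\perp=[z]_z\subseteq[A]^\perp$, and the relative centre property (\autoref{c_B}) together with the $\eqref{c([p])}$-type identities for central elements shows that $B,C\in[z]^\perp$, that $B$ remains $n$-$[z]^\perp_\mathbf{D}$-homogeneous with $\mathrm{c}_z(B)=z$, and (since $z$ is central) that any later $n$-$[z]^\perp_\mathbf{D}$-homogeneous decomposition of $C$ automatically yields $C\in[A]^\perp_{\mathbf{D}n}$. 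So I may assume from now on that $\mathrm{c}(B)=\mathrm{c}(C)=A$ and that $B$ and $C$ are perspective in $[A]^\perp$ with common complement $E$.

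Next I would upgrade the hypothesis on $B$ to a genuinely C*-algebraic statement: replacing each $B_i$ by $B_i^{\perp_B\perp_B}$ keeps it abelian (\autoref{commutativesubalgebra}), pairwise orthogonal, and with join $B$ in $[B]^\perp=[B]_B$, while \autoref{c_B} and $\mathrm{c}_A(B_i)=\mathrm{c}_A(B)=A$ give $\mathrm{c}_B(B_i^{\perp_B\perp_B})=B$; hence the C*-algebra $B$ is $n$-$[B]^\perp_\mathbf{D}$-homogeneous, hence $n$-subhomogeneous, so by \autoref{subequiv} it contains an essential ideal which is a direct sum of $k$-homogeneous ideals ($k\leq n$), and the fact that every irreducible representation of such an ideal extends to one of $B$ (which is $n$-subhomogeneous of full central cover) forces $k=n$, i.e.\ $B$ contains an $n$-homogeneous essential ideal $I$. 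Now comes the transfer. Since perspectivity has the same transitive closure as semiorthoperspectivity, $B\sim_\mathrm{sop}E^\perp\sim_\mathrm{sop}C$, and I would use this chain, together with \autoref{sopcor}, \autoref{pqposp'}, \autoref{posp'} and the fact that $B\in[A]^\perp_=$ (from \autoref{subhom=}, as $[A]^\perp_{\mathbf{D}n}\subseteq[A]^\perp_{\mathbf{D}<\aleph_0}\subseteq[A]^\perp_=$), to push the decomposition $B=B_1\vee\cdots\vee B_n$ onto $C$: after localising to $I$ via \autoref{ess} so the relevant annihilators become principal, one produces $C_i\leq C$ with $B_i\sim C_i$, whence each $C_i$ is abelian by \autoref{comsim} and $\mathrm{c}(C_i)=\mathrm{c}(B_i)=A$, and supremum preservation (\autoref{nonorthodiv}) gives $C_1\vee\cdots\vee C_n=C$.

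The hard part will be the \textbf{orthogonality} of the transported pieces $C_i$: the natural transport (either $C_i=(B_i\vee E)\wedge C$, or $C_i=(B_i a^*)^{\perp\perp}$ when $B\sim C$ is witnessed by $a$) does not visibly carry the pairwise orthogonal $B_i$ to pairwise orthogonal $C_i$, since that requires a near-distributive interaction between $E$ (resp.\ $a$) and the $B_i$ which a general ortholattice does not provide. I expect this to be exactly what the combination of separativity of $[A]^\perp$ and membership of $B$ in the equality type ideal $[A]^\perp_=$ buys us — everything below $B$ behaves orthomodularly, so the pieces can be orthogonalised without losing their central covers — and once orthogonality of the $C_i$ is in hand the remaining properties (abelian, full central cover, join $C$) follow as above, giving $C\in[A]^\perp_{\mathbf{D}n}$ by definition.
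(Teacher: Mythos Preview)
Your proposal has a real gap precisely where you flag it: the orthogonality of the transported pieces $C_i$. The hope that $B\in[A]^\perp_=$ lets you ``orthogonalise without losing central covers'' is unfounded --- orthomodularity \emph{below $B$} tells you nothing about how the images sit \emph{below $C$}, and the paper's own example immediately after \autoref{nonorthodiv} exhibits orthogonal $B,C\subseteq\{a^*\}^{\perp\perp}$ whose transports $(Ba)^{\perp\perp}$ and $(Ca)^{\perp\perp}$ overlap. There is a secondary issue too: your chain $B\sim_\mathrm{sop}E^\perp\sim_\mathrm{sop}C$ needs \autoref{posp'} and \autoref{sopcor}, but $E^\perp$ has no reason to be principal or to satisfy $[E^\perp]_{E^\perp}=[E^\perp]$, so neither applies at that middle step.

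The paper sidesteps orthogonality completely by arguing by contradiction and cashing it out in representation theory. After reducing to semiorthoperspective $B,C$, it uses \autoref{discomden} and \autoref{homthm} to show $C$ is $[C]^\perp_\mathbf{D}$-subhomogeneous; if some homogeneous summand has order $\neq n$, then after central cut-downs and one application of \autoref{sopcor} (legitimate here since both sides are finitely $\mathbf{D}$-homogeneous, hence in $[A]^\perp_=$ by \autoref{subhom=}) one arrives at $m$-homogeneous $B'$ semiorthoperspective to $k$-homogeneous $C'$ with $m<k$, both principal. Now \autoref{posp'} gives $B'\sim C'$ directly, and \autoref{nonorthodiv} transports the $m$ abelian pieces of $B'$ to abelian $D_1,\ldots,D_m$ with $C'=D_1\vee\cdots\vee D_m$. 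These $D_i$ are \emph{not} claimed to be orthogonal --- but \autoref{[A]homthm} does not need them to be: a join of $m$ abelian annihilators already forces $C'$ to be $m$-subhomogeneous, contradicting \autoref{[A]hom}, which says $k$-$[A]^\perp_\mathbf{D}$-homogeneity rules out $(k-1)$-subhomogeneity. The missing idea is exactly this pair of representation-theoretic bounds: \autoref{[A]homthm} gives an upper bound on $\dim H_\pi$ from a \emph{non-orthogonal} abelian cover, \autoref{[A]hom} gives a lower bound from an \emph{orthogonal} one, and their clash replaces any attempt to orthogonalise.
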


\begin{proof}
It suffices to prove the result for semiorthoperspective $B$ and $C$ (see \S\ref{persec}).  By \autoref{discomden}, $[A]^\perp_\mathbf{D}$ is order-dense in $[\bigvee \mathrm{c}[A]^\perp_\mathbf{D}]\supseteq[\mathrm{c}(B)]=[\mathrm{c}(C)]$.  By \autoref{commutativesubalgebra}, $[C]^\perp_\mathbf{D}$ is order-dense in $C$ so by \autoref{homthm}, $C$ is $[C]^\perp_\mathbf{D}$-subhomogeneous.  If $C$ is not $n$-$[C]^\perp_\mathbf{D}$-homogeneous then one of the non-zero homogeneous parts $D$ has order $<n$ or $>n$.  In the first case, $D=C\cap\mathrm{c}(D)$ and $B\cap\mathrm{c}(D)$ are semiorthoperspective.  In the second, we have $E\in[D]$ that is $(n+1)$-$[C]^\perp_\mathbf{D}$-homogeneous.  Then $[E]_E=[E]$ and $[B]=[B]_B$, by \autoref{subhom=}, and hence $E$ is semiorthoperspective to some $F\in[B]$, by \autoref{sopcor}.  By cutting down further by a central element, we may assume that $F$ is $m$-homogeneous for some $m\leq n$.  So in either case, after some renaming, we have $m$-$[A]^\perp_\mathbf{D}$-homogeneous $B$ semiorthoperspective to $n$-$[A]^\perp_\mathbf{D}$-homogeneous $C$ with $m<n$.  We show that this leads to a contradiction.

Take orthogonal $B_1,\ldots,B_m\in[A]^\perp_\mathbf{D}$ with $\mathrm{c}(B_k)=\mathrm{c}(B)$, for all $k\leq m$.  By \autoref{vo}, we have $B_1'\subseteq B_1$ and $B_2'\subseteq B_2$ such that $B_1'\sim B_2'$.  We also have non-zero $B_2''\subseteq B_2'$ and $B_3''\subseteq B_3$ with $B_2''\sim B_3''$.  We then also have $B_1''\subseteq B_1'$ with $B_1''\sim B_2''$, by \autoref{simtran}.  Continuing in this way, and again renaming, we get non-zero orthogonal $B_1\sim\ldots\sim B_m\sim C_1\sim\ldots\sim C_n$ in $[A]^\perp_\mathbf{D}$ such that $B\cap D=B_1\vee\ldots\vee B_m$ and $C\cap D=C_1\vee\ldots\vee C_n$, where $D=\mathrm{c}(B_1)=\ldots=\mathrm{c}(C_n)$.  As $D$ is central, replacing $B$ and $C$ with $B\cap D$ and $C\cap D$ we still see that $B$ and $D$ are semiorthoperspective.  As they are also principal, we have $B\sim C$, by \autoref{posp'}.  This means that $C=D_1\vee\ldots\vee D_m$, with $B_k\sim D_k$, for all $k\leq m$.  By \autoref{comsim}, each $D_k$ will be abelian and hence $C$ will be $m$-subhomogeneous, by \autoref{[A]homthm}, contradicting \autoref{[A]hom}.
\end{proof}

\begin{cor}\label{nsubcor}
Assume $A$ is $<\!\!\aleph_0$-$[A]^\perp_\mathbf{D}$-subhomogeneous.
\begin{enumerate}
\item\label{nsubcor1} Every open dense projection in $A''$ is regular.
\item\label{nsubcor2} The function $||BC^\perp||$, for $B,C\in[A]^\perp$, satisfies the triangle inequality.
\item\label{nsubcor3} $[A]^\perp$ is modular, and hence a continuous geometry.
\end{enumerate}
\end{cor}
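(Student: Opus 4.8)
The plan is to treat the three items separately, the first two being quick consequences of \autoref{nsubess} together with the projection facts of \S\ref{psec}, and the third requiring a reduction to the homogeneous case.

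For \eqref{nsubcor1}: let $p$ be an open dense projection in $A''$, so $p=p_B$ for an essential hereditary C*-subalgebra $B$, and recall $\mathrm{sep}(B)=\gamma(b)\in\{0,1\}$ by \autoref{0sep}. By \autoref{nsubess}, $B$ contains an essential ideal $I$ of $A$. Then for any non-zero $D\in[A]^\perp$ the annihilator $D\cap I$ is non-zero (as $I$ is essential), and picking a norm-one $d\in(D\cap I)_+\subseteq B^1_+\cap D^1_+$ gives $\|BD\|\geq\|d^2\|=1$; hence $\mathrm{sep}(B)=1$, which is precisely the statement that the dense projection $p_B$ is regular.

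For \eqref{nsubcor2}: I would first upgrade \eqref{nsubcor1} to the claim that $\|ap_D^\perp\|=\|ap_{D^\perp}\|$ for every $a\in A$ and every $D\in[A]^\perp$. Here $p_{D^\perp}=p_D^{\perp\circ}$ is open by \eqref{perpseq}, and $\overline{p_{D^\perp}}=p_D^\perp$ since $p_D$ is topologically regular by \autoref{annproj}, so this is again a regularity assertion, now localized to the closed projection $p_D^\perp$; it reduces to \eqref{nsubcor1} after splitting $D$ by the central projections $\mathrm{c}(D)$ and $\mathrm{c}(D^\perp)$, on each central piece of which one of $D,D^\perp$ is either trivial or essential so that \autoref{nsubess} applies. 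Granting this, a routine approximate-unit argument gives $\|BD^\perp\|=\sup_{b\in B^1_+}\|bp_{D^\perp}\|=\sup_{b\in B^1_+}\|bp_D^\perp\|=\|p_Bp_D^\perp\|$, so the asserted inequality $\|BD^\perp\|\leq\|BC^\perp\|+\|CD^\perp\|$ becomes the elementary projection inequality $\|p_Bp_D^\perp\|\leq\|p_Bp_C^\perp\|+\|p_Cp_D^\perp\|$ obtained by inserting $p_C+p_C^\perp$, exactly as in the discussion before \autoref{||p-q||}.

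For \eqref{nsubcor3}: since $A\in[A]^\perp_{\mathbf{D}<\aleph_0}$, $A$ is discrete, so $[A]^\perp_\mathbf{D}$ is order-dense by \autoref{discomden} and $\mathbf{1}$ is $[A]^\perp_\mathbf{D}$-subhomogeneous by \autoref{homthm}; grouping the homogeneous summands by order and using that homogeneous ideals of distinct orders have irreducible representations of distinct (finite) dimensions, the corollary after \autoref{homthm} produces orthogonal central $t_n\in[A]^\perp_{\mathbf{D}n}$ ($n<\aleph_0$) with $\bigvee t_n=\mathbf{1}$, so that $[A]^\perp\cong\prod_n[t_n]$ by \autoref{ordertd} (i.e. \eqref{ordertdeq}). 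A product of modular lattices is modular, so it suffices to show each $[t_n]$ is modular, i.e.\ by \eqref{modperfin} that perspectivity is finite on the annihilator lattice of the $n$-homogeneous C*-algebra $t_n$. By \autoref{homchar} and \autoref{ess}, $t_n$ has an $n$-homogeneous essential ideal $C$ with $[t_n]^\perp\cong[C]^\perp$, and, via local triviality of the associated $M_n$-bundle, the analysis of \S\ref{MVF} represents $[C]^\perp$ by lower semicontinuous $M_n$-projection-valued functions modulo nowhere dense sets, with lattice operations computed fibrewise off a nowhere dense set; since the modular law holds in each fibre $\mathcal{P}(M_n)$ (a finite-dimensional projective geometry), it holds throughout $[C]^\perp$. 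Finally $[A]^\perp$ is then a complete complemented modular lattice, hence a continuous geometry by \cite{Kaplansky1955}. The main obstacle is precisely this modularity of $[t_n]^\perp$ for $n$-homogeneous $t_n$: it is the one step that does not follow formally from the order theory of \S\ref{OrderTheory} plus \autoref{nsubess}, and it is where finite-dimensionality of the fibres must be used — either through the explicit function model of \S\ref{MVF}, or (alternatively) through a \autoref{Dnsim}-style argument showing that a perspectivity $B\sim_\mathrm{p}C$ with $C\leq B$ in $[t_n]^\perp$ forces $B,C$ to be $m$-$[A]^\perp_\mathbf{D}$-homogeneous of equal order $m\leq n$ with $\mathrm{c}(B)=\mathrm{c}(C)$, reducing matters to the coincidence of two rank-$m$ subobjects of one.
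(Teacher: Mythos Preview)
Your arguments for \eqref{nsubcor1} and \eqref{nsubcor2} are essentially the paper's, though in \eqref{nsubcor2} your ``splitting by $\mathrm{c}(D)$ and $\mathrm{c}(D^\perp)$'' is not the right justification for the upgrade claim: those central covers need not be complementary. What actually works (and what the paper does, without isolating it as a separate claim) is to apply \autoref{nsubess} to the essential hereditary C*-subalgebra with open projection $p_C+p_{C^\perp}$; this yields a central dense open $q\leq p_C+p_{C^\perp}$, and then for $b\in B^1_+$, $d\in(D^\perp)^1_+$ one has $\|bd\|=\|bdq\|=\|bqd\|\leq\|b(p_C+p_{C^\perp})d\|\leq\|bp_{C^\perp}\|+\|p_Cd\|$, giving the triangle inequality directly.

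For \eqref{nsubcor3} your main line has a genuine gap. The analysis in \S\ref{MVF}, and in particular \autoref{matfunfin}\eqref{matfunfin3}, is established only for the trivial bundles $C^b(X,M_n)$ with $X$ hereditarily paracompact Hausdorff; an arbitrary $n$-homogeneous C*-algebra is only a locally trivial $M_n$-bundle over its (Hausdorff but otherwise unrestricted) spectrum, and neither the global function model nor the passage from fibrewise modularity of $\mathcal{P}(M_n)$ to modularity of $[C]^\perp$ is available in that generality from what is proved there. The paper avoids this entirely and follows exactly the alternative you sketch at the end: one shows perspectivity is finite directly, using the representation-theoretic characterisation of homogeneity. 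Given perspective $C<B$, cut down by a central element so that $C$ is $n$-$[A]^\perp_{\mathbf D}$-homogeneous; by \autoref{subhom=} we have $[B]_B=[B]$, so there is non-zero $b\in B_+\cap C^\perp$, and since $\mathrm{c}(B)=\mathrm{c}(C)$ the chaining argument in the proof of \autoref{[A]hom}, started from $b$, produces an irreducible representation of $B$ of dimension $\geq n+1$. But \autoref{Dnsim} gives $B\in[A]^\perp_{\mathbf{D}n}$, hence $B$ is $n$-subhomogeneous by \autoref{[A]homthm}, a contradiction. Modularity then follows from \eqref{modperfin}, and continuous geometry from \cite{Kaplansky1955}.
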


\begin{proof}\
\begin{enumerate}
\item Every dense $p\in\mathcal{P}^\circ(A'')$ dominates some dense $q\in\mathcal{P}^\circ(A''\cap A')$, by \autoref{nsubess}.  As central projections are necessarily regular (see \cite{Effros1963} \S6), $p$ must also be regular.
\item Take $B,C,D\in[A]^\perp$ and $q\in\mathcal{P}^\circ(A''\cap A')$ with $q\leq p_C+p_{C^\perp}$.  For every $b\in B^1_+$ and $d\in D^{\perp1}_+$, we have \[||bd||=||bdq||=||bqd||\leq||b(p_C+p_{C^\perp})d||\leq||bp_{C^\perp}||+||p_Cd||,\] and hence $||BD^\perp||\leq||BC^\perp||+||CD^\perp||$.
\item We prove that perspectivity is finite (see \eqref{modperfin}).  If not, we would have perspective $B,C\in[A]^\perp$ with $C<B$.  By cutting down by a central element, we may assume that $C$ is $n$-$[A]^\perp_\mathbf{D}$-homogeneous, for some $n\in\mathbb{N}$.  By \autoref{subhom=}, $C\in[B]=[B]_B$ so $C<B$ means we have non-zero $b\in B_+\cap C^\perp$.  As $\mathrm{c}(B)=\mathrm{c}(C)$, by perspectivity, the proof of \autoref{[A]hom} (starting with $b$ instead of $b_1$) shows that $B$ is not $n$-subhomogeneous.  However, $B$ is $n$-$[A]^\perp_\mathbf{D}$-homogeneous, by \autoref{Dnsim}, and hence $n$-subhomogeneous, by \autoref{[A]homthm}, a contradiction.
\end{enumerate}
\end{proof}

So, by \autoref{nsubcor}\eqref{nsubcor3} and \cite{Kaplansky1955}, the annihilators $[A]^\perp$ in a $<\!\!\aleph_0$-$[A]^\perp_\mathbf{D}$-subhomogeneous C*-algebra $A$ form a continuous geometry.  As far as we know, these kinds of continuous geometries have not been investigated before.

\subsection{Matrix Valued Functions}\label{MVF}

\begin{dfn}\label{lscts}
Assume $X$ and $Y$ are topological spaces.  We denote the set of continuous functions from $X$ to $Y$ by $C(X,Y)$.  If $A$ is a C*-algebra, we call $f:X\rightarrow A$ \emph{bounded} if $\sup_{x\in X}||f(x)||<\infty$.  The C*-algebras (with pointwise operations) of bounded and bounded continuous functions from $X$ to $A$ will be denoted by $B(X,A)$ and $C^b(X,A)$ respectively.  We call $f:X\rightarrow\mathcal{P}(A)$ \emph{lower semicontinuous} at $x\in X$ if \[\forall\epsilon>0\exists\textrm{ a neighbourhood $Y$ of $x$ s.t. }\forall y\in Y(||p(y)^\perp p(x)||<\epsilon).\]  We say that $p$ is lower semicontinuous if it is lower semicontinuous at every $x\in X$ and denote the set of all lower semicontinuous $p$ by $C^\circ(X,\mathcal{P}(A))$.  We call $p$ \emph{upper semicontinuous} if $p^\perp$ (i.e. the function defined by $p^\perp(x)=p(x)^\perp$, for all $x\in X$) is lower semicontinuous and denote the set of upper semicontinuous $p$ by $\overline{C}(X,\mathcal{P}(A))$.
\end{dfn}

By \autoref{||p-q||}, $p:X\rightarrow\mathcal{P}(A)$ will be continuous at $x$ if and only if it is simultaneously upper and lower semicontinuous at $x$ and hence \[C(X,\mathcal{P}(A))=\overline{C}(X,\mathcal{P}(A))\cap C^\circ(X,\mathcal{P}(A)).\]  Also note that if $d:\mathcal{P}(A)\rightarrow\mathbb{R}$ is continuous and monotone in the sense that $p\leq q\Rightarrow d(p)\leq d(q)$ then, by \autoref{pnearq} \eqref{pnearq1}$\Rightarrow$\eqref{pnearq3}, $d\circ p$ will be lower semicontinuous in the usual sense (i.e. $f^{-1}(r,\infty)$ is open, for all $r\in\mathbb{R}$) if $p$ is lower semicontinous according to \autoref{lscts}.  In particular, $d$ could be the restriction to $\mathcal{P}(A)$ of a dimension function on $A$.

For $f:X\rightarrow A$, $[f]$ denotes the map $x\mapsto[f(x)]=(f(x)f(x)^*)_{(0,\infty)}$ to $\mathcal{P}(A'')$.

\begin{prp}\label{finspec}
If $f\in C(X,A_+)$, $x\in X$ and $f(x)$ has finite spectrum then $[f]$ is lower semicontinuous at $x$.
\end{prp}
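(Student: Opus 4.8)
The plan is to use the finite-spectrum hypothesis to produce a spectral gap at $0$ for $f(x)$, so that a single continuous functional-calculus expression captures $[f(x)]$ exactly, and then to exploit norm-continuity of that expression. Since $f(x)\in A_+$ we have $[f](x)=[f(x)]=(f(x)^2)_{(0,\infty)}=f(x)_{(0,\infty)}$, the support projection of $f(x)$. As $\sigma(f(x))$ is finite, fix $\delta$ with $0<\delta<\min(\sigma(f(x))\setminus\{0\})$ (interpreting $\min\emptyset=+\infty$). Then, in the notation $f_\delta=f_{\delta/2,\delta}$ of \S\ref{specsec}, the function $f_\delta$ vanishes at $0$ and equals $1$ at every non-zero point of $\sigma(f(x))$, so
\[f_\delta(f(x))=[f(x)].\]

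First I would check that $y\mapsto f_\delta(f(y))$ is norm-continuous near $x$: restricting to a neighbourhood of $x$ on which $\|f(y)\|\le\|f(x)\|+1$, the map $a\mapsto f_\delta(a)$ is uniformly continuous on $\{a\in A_+:\|a\|\le\|f(x)\|+1\}$ (approximate $f_\delta$ by polynomials), so $\|f_\delta(f(y))-f_\delta(f(x))\|=\|f_\delta(f(y))-[f(x)]\|\to0$ as $y\to x$; hence, given $\epsilon>0$, there is a neighbourhood $Y$ of $x$ with $\|f_\delta(f(y))-[f(x)]\|<\epsilon$ for all $y\in Y$. Second, for each $y$ the function $t\mapsto f_\delta(t)$ vanishes on $[0,\delta/2]$, so $f_\delta(f(y))$ lies in $\overline{f(y)Af(y)}$, and therefore $[f(y)]f_\delta(f(y))=f_\delta(f(y))$, i.e. $[f(y)]^\perp f_\delta(f(y))=0$. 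Combining the two observations, for every $y\in Y$,
\[\|[f(y)]^\perp[f(x)]\|=\|[f(y)]^\perp\bigl([f(x)]-f_\delta(f(y))\bigr)\|\le\|[f(x)]-f_\delta(f(y))\|<\epsilon,\]
which is precisely lower semicontinuity of $[f]$ at $x$ in the sense of \autoref{lscts}.

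The argument is routine functional calculus once the spectral gap is available; the only point requiring a word of care is that $X$ need not be compact, so $\sup_{x\in X}\|f\|$ may be infinite. This is harmless, since lower semicontinuity is a local condition and on any bounded neighbourhood of $x$ the continuity of $a\mapsto f_\delta(a)$ used above is automatic. I do not expect any genuine obstacle. (Note also that one should \emph{not} expect $[f]$ to be continuous at $x$: finiteness of $\sigma(f(x))$ says nothing about $\sigma(f(y))$ for $y$ near $x$, which is exactly why only the one-sided estimate $[f(y)]^\perp f_\delta(f(y))=0$, rather than $[f(y)]=f_\delta(f(y))$, is available.)
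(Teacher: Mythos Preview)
Your proof is correct. Both your argument and the paper's rest on the same observation --- the finite spectrum of $f(x)$ gives a spectral gap at $0$ --- but the executions differ. The paper works eigenvalue by eigenvalue: for each nonzero $\lambda\in\sigma(f(x))$ and unit $v\in\mathcal{R}(f(x)_{\{\lambda\}})$ it computes
\[\lambda\|f(y)_{\{0\}}v\|=\|f(y)_{\{0\}}(f(y)-\lambda)v\|\le\|(f(y)-\lambda)v\|\le\|f(y)-f(x)\|,\]
giving the explicit bound $\|f(y)_{\{0\}}f(x)_{\{\lambda\}}\|\le\|f(y)-f(x)\|/\lambda$, and then sums over the finitely many $\lambda$. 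You instead package the whole support projection as $f_\delta(f(x))$ in one stroke and invoke norm-continuity of the continuous functional calculus. Your route is cleaner and avoids any vector-level computation; the paper's route has the minor advantage of giving an explicit quantitative estimate in terms of $\|f(y)-f(x)\|$ and the smallest nonzero eigenvalue. Neither approach is deeper than the other.
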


\begin{proof}
Say $\delta>0$, $a,b\in A$, $||a-b||\leq\delta$ and $\lambda\in\sigma(b)$.  For unit $v\in\mathcal{R}(b_{\{\lambda\}})$ we have \[\lambda||a_{\{0\}}v||=||a_{\{0\}}(a-\lambda)v||\leq||(a-\lambda)v||\leq||(b-\lambda)v||+\delta=\delta,\] and hence $||a_{\{0\}}b_{\{\lambda\}}||\leq\delta/\lambda$.  As $f(x)_{(0,\infty)}$ is a finite sum of spectral projections, this calculation shows that, for any $\epsilon>0$, by choosing a sufficientely small neighbourhood $Y$ of $x$, we can ensure that $||f(y)_{\{0\}}f(x)_{(0,\infty)}||\leq\epsilon$, for all $y\in Y$.
\end{proof}

If $f(x)$ has infinite spectrum then $f$ can indeed fail to be lower semicontinuous at $x$.  For example, let $X=\mathbb{N}_\infty$ (the one point $\{\infty\}$ compactification of $\mathbb{N}$) and $A=\mathcal{K}(H)$, where $H$ is infinite dimensional separable with basis $(e_n)$.  Let $p_n$ denote the projection onto $\mathbb{C}e_n$, for each $n\in\mathbb{N}$.  Now consider $f\in C(X,A_+)$ defined by $f(n)=\sum_{k=1}^n2^{-k}p_{2k}+\sum_{k=n+1}^\infty2^{-k}p_{2k+1}$ and $f(\infty)=\sum_{k=1}^\infty2^{-k}p_{2k}$.  Note that $[f](n)=\sum_{k=1}^np_{2k}+\sum_{k=n+1}^\infty p_{2k+1}$ and $[f](\infty)=\sum_{k=1}^\infty p_{2k}$ so $||[f](n)^\perp[f](\infty)||=1$, for all $n\in\mathbb{N}$, and hence $[f]$ is not lower semicontinuous at $\infty$.

\begin{lem}\label{[pq]}
The map $(p,q)\mapsto[pq]$ is lower semicontinuous on \[\{(p,q)\in\mathcal{P}(A)\times\mathcal{P}(A):\inf(\sigma(pq)\setminus\{0\})>0\}.\]
\end{lem}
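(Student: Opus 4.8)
The plan is to calibrate a single continuous cutoff function to the base point and to exploit the fact that lower semicontinuity, like the estimate we actually need, is one-sided. Fix a point $(p_0,q_0)$ in the domain and $\epsilon>0$. If $p_0q_0=0$ then $[p_0q_0]=0$ and the condition of \autoref{lscts} holds trivially, so I would assume $p_0q_0\neq0$ and set $m=\inf(\sigma(p_0q_0)\setminus\{0\})>0$, which also equals $\inf(\sigma(p_0q_0p_0)\setminus\{0\})$ by elementary spectral theory (as in the proof of \autoref{sigmapq}). Since $0\leq p_0q_0p_0\leq p_0\leq1$ we have $\sigma(p_0q_0p_0)\subseteq\{0\}\cup[m,1]$, and the function $f_m=f_{m/2,m}$ satisfies $f_m(0)=0$ and $f_m\equiv1$ on $[m,1]$, so
\[
f_m(p_0q_0p_0)=(p_0q_0p_0)_{(0,\infty)}=[p_0q_0p_0]=[p_0q_0],
\]
the last equality because $\overline{\mathcal{R}(p_0q_0)}=\overline{\mathcal{R}(p_0q_0p_0)}$ (apply $[a]=(aa^*)_{(0,\infty)}$ with $a=p_0q_0$). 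The same observation shows $[pq]=[pqp]\in\mathcal{P}(A)$ on the whole domain, so the lemma really does concern a map into $\mathcal{P}(A)$.

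The decisive step is the one-sided identity $[pq]^\perp f_m(pqp)=0$, valid for \emph{all} $p,q\in\mathcal{P}(A)$ (not only those in the domain). Indeed $f_m(pqp)\,(pqp)_{\{0\}}=f_m(0)\,(pqp)_{\{0\}}=0$, and $(pqp)_{\{0\}}=[pqp]^\perp=[pq]^\perp$; as $f_m(pqp)$ and $[pq]^\perp$ are commuting positive operators with vanishing product, their ranges are orthogonal, i.e. $[pq]^\perp f_m(pqp)=0$ (equivalently $f_m(pqp)\leq[pq]$). Consequently, for every $p,q\in\mathcal{P}(A)$,
\[
\big\|[pq]^\perp[p_0q_0]\big\|=\big\|[pq]^\perp\big(f_m(p_0q_0p_0)-f_m(pqp)\big)\big\|\leq\big\|f_m(pqp)-f_m(p_0q_0p_0)\big\|.
\]

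To finish I would invoke norm-continuity of the continuous functional calculus for the fixed function $f_m$ on $[0,1]$: there is $\delta>0$ with $\|f_m(pqp)-f_m(p_0q_0p_0)\|<\epsilon$ whenever $\|pqp-p_0q_0p_0\|<\delta$. Writing $pqp-p_0q_0p_0=(p-p_0)qp+p_0(q-q_0)p+p_0q_0(p-p_0)$ and using that projections have norm at most $1$ gives $\|pqp-p_0q_0p_0\|\leq2\|p-p_0\|+\|q-q_0\|$, so the neighbourhood $\{\|p-p_0\|<\delta/3,\ \|q-q_0\|<\delta/3\}$ works, which is precisely the lower semicontinuity condition of \autoref{lscts} at $(p_0,q_0)$.

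I do not anticipate a genuine obstacle here; the only point needing care is the temptation to express $[pq]$ for the varying pair via a cutoff adapted to $(p,q)$, whose spectral gap $\inf(\sigma(pq)\setminus\{0\})$ may degenerate as $(p,q)\to(p_0,q_0)$. Using instead the base-point gap $m$ together with the universally valid inclusion $f_m(pqp)\leq[pq]$ — which matches the one-sided nature of lower semicontinuity exactly — avoids this, and everything else is continuity of multiplication and of functional calculus.
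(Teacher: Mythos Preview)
Your proof is correct and takes a genuinely different route from the paper's.

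The paper introduces the auxiliary \emph{projection} $r=(qpq)_{(0,1]}\in A$, notes that $[pq]=[pr]$ with $\|p^\perp r\|<1$, and then appeals to two external facts from \cite{Bice2012}: a continuity lemma for $(p,r)\mapsto[pr]$ on pairs with $\|p^\perp r\|<1$, and a perturbation estimate giving $r'=(q'rq')_{(0,1]}\in A$ with $\|r-r'\|$ small. The one-sided inequality $\|[p'q']^\perp[pq]\|\leq\|[p'r']^\perp[pr]\|$ then comes from $r'\leq q'$, hence $[p'r']\leq[p'q']$.

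Your argument replaces the auxiliary projection by the \emph{positive element} $f_m(pqp)$ with $m$ frozen at the base point, and the external continuity lemma by the elementary norm-continuity of $a\mapsto f_m(a)$ via the functional calculus. The one-sided mechanism is the same in spirit --- an auxiliary object dominated by $[pq]$ at every $(p,q)$ --- but your version $f_m(pqp)\leq[pq]$ holds for \emph{all} pairs without any spectral-gap hypothesis on $(p,q)$, which is exactly what sidesteps the degeneration issue you flag at the end. The trade-off: the paper's approach, once the cited lemmas are in hand, gives actual norm-continuity of $[pr]$ (two-sided) on a nice subset, which is reusable; your approach is fully self-contained and tailored to the one-sided conclusion needed here.
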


\begin{proof}
If $\inf(\sigma(pq)\setminus\{0\})>0$ then $r=(qpq)_{(0,1]}\in A$, $||p^\perp r||<1$ and $[pq]=[pr]\in A$.  By \cite{Bice2012} Lemma 2.3, for any $\epsilon>0$ we can choose $\delta>0$ so that $||p-p'||,||r-r'||\leq\delta$ implies $||[p'r']-[pr]||\leq\epsilon$.  But then $||p-p'||,||q-q'||\leq\delta$ implies $r'=(q'rq')_{(0,1]}\in A$ and $||r-r'||<\delta$ (see \cite{Bice2012} (2.3)) and hence \[||[p'q']^\perp[pq]||\leq||[p'r']^\perp[pr]||\leq||[p'r']-[pr]||\leq\epsilon.\]
\end{proof}

\begin{lem}\label{pveeq}
The map $(p,q)\mapsto p\vee q$ is lower semicontinuous on \[\{(p,q)\in\mathcal{P}(A)\times\mathcal{P}(A):\sup(\sigma(pq)\setminus\{1\})<1\}.\]
\end{lem}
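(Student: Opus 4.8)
The plan is to realise $p\vee q$, throughout the stated domain, as one fixed continuous function applied to a norm-continuous expression in $p$ and $q$, exploiting the computation already carried out in the proof of the lemma preceding \autoref{[A]homthm}. Recall from there that for any projections $p,q$ one has $\sigma(pq)=\sigma(p^\perp q^\perp p^\perp)$, and that the positive contraction $c:=1-p^\perp q^\perp p^\perp=p+q-pq-qp+pqp\in A^1_+$ satisfies $c_{(0,1]}=(p^\perp\wedge q^\perp)^\perp=p\vee q$; moreover, whenever $0<\delta<1-\sup(\sigma(pq)\setminus\{1\})$ one has $f_\delta(c)=c_{(0,1]}=p\vee q$, so in that case $f_\delta(c)$ is a projection.

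So fix $(p_0,q_0)$ in the domain, set $s_0:=\sup(\sigma(p_0q_0)\setminus\{1\})<1$, fix $\epsilon>0$, and fix $\delta$ with $0<\delta<1-s_0$. For $(p,q)\in\mathcal{P}(A)\times\mathcal{P}(A)$ write $c=p+q-pq-qp+pqp\in A^1_+$, and let $c_0$ be the corresponding element for $(p_0,q_0)$. Two functional-calculus observations are needed. First, since $f_\delta(t)\le\chi_{(0,1]}(t)$ on $[0,1]$ and $\|c\|\le1$, we get $f_\delta(c)\le c_{(0,1]}=p\vee q$ for \emph{every} $(p,q)$; in particular $(p\vee q)^\perp\le1-f_\delta(c)$. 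Second, $\sigma(c_0)=\{1-t:t\in\sigma(p_0q_0)\}\subseteq\{0\}\cup[1-s_0,1]$, so the choice of $\delta$ gives $f_\delta(c_0)=c_{0,(0,1]}=p_0\vee q_0$, which is a projection.

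Now estimate, using that $(p\vee q)^\perp$ is a projection, then the first observation, then $p_0\vee q_0=f_\delta(c_0)$ together with its idempotence:
\begin{align*}
\|(p\vee q)^\perp(p_0\vee q_0)\|^2
&=\|(p_0\vee q_0)(p\vee q)^\perp(p_0\vee q_0)\|\\
&\le\|(p_0\vee q_0)\big(1-f_\delta(c)\big)(p_0\vee q_0)\|\\
&=\|f_\delta(c_0)\big(f_\delta(c_0)-f_\delta(c)\big)f_\delta(c_0)\|\\
&\le\|f_\delta(c)-f_\delta(c_0)\|.
\end{align*}
Finally, $(p,q)\mapsto c=p+q-pq-qp+pqp$ is norm-continuous on $\mathcal{P}(A)\times\mathcal{P}(A)$, and $z\mapsto f_\delta(z)$ is norm-continuous on $A^1_+$ since $f_\delta(0)=0$, so $\|f_\delta(c)-f_\delta(c_0)\|\to0$ as $(p,q)\to(p_0,q_0)$. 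Hence there is a neighbourhood of $(p_0,q_0)$ on which $\|(p\vee q)^\perp(p_0\vee q_0)\|<\epsilon$, which is lower semicontinuity at $(p_0,q_0)$.

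There is no deep obstacle; the one thing to handle with care is that, for $(p,q)$ not in the domain, $f_\delta(c)$ is merely a positive contraction, not a projection, so one cannot write ``$f_\delta(c)^\perp$'' and must instead argue through the operator inequality $f_\delta(c)\le p\vee q$ and the compression by the projection $p_0\vee q_0$ (whose idempotence is what kills the would-be cross term $f_\delta(c_0)\big(1-f_\delta(c_0)\big)f_\delta(c_0)$). One should also double-check the spectral bookkeeping $\sigma(c_0)\subseteq\{0\}\cup[1-s_0,1]$ used in the second observation, but this is exactly the computation already recorded before \autoref{[A]homthm}.
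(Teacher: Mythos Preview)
Your proof is correct and takes a genuinely different route from the paper's. The paper replaces $q$ by the projection $r=(qpq)_{[0,1)}$, observes $p\vee q=p\vee r$ with $\|pr\|<1$, and then invokes an external continuity result (\cite{Bice2012} Lemma~2.4) for $(p,r)\mapsto p\vee r$ in that regime, finally comparing $p'\vee q'$ with $p'\vee r'$. Your argument instead writes $p_0\vee q_0=f_\delta(c_0)$ as a single continuous functional calculus applied to the polynomial $c=p+q-pq-qp+pqp$, and exploits the one-sided inequality $f_\delta(c)\le c_{(0,1]}=p\vee q$, valid for \emph{every} pair $(p,q)$, to bound $\|(p\vee q)^\perp(p_0\vee q_0)\|$ directly. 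This buys you a self-contained proof with no external reference, and it handles nearby pairs outside the domain gracefully, precisely because you only need the inequality $f_\delta(c)\le p\vee q$ rather than equality. One minor remark: your forward reference to the computation before \autoref{[A]homthm} is harmless since that computation is elementary and does not rely on the present lemma; and the spectral bookkeeping $\sigma(c_0)\subseteq\{0\}\cup[1-s_0,1]$ follows from \autoref{sigmapq} applied to $p_0^\perp,q_0^\perp$, which gives $\sigma(p_0^\perp q_0^\perp p_0^\perp)\setminus\{0,1\}=\sigma(p_0q_0)\setminus\{0,1\}\subseteq[0,s_0]$.
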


\begin{proof}
If $\sup(\sigma(pq)\setminus\{1\})<1$ then $r=(qpq)_{[0,1)}\in A$, $||pr||<1$ and $p\vee q=p\vee r\in A$.  By \cite{Bice2012} Lemma 2.4, for any $\epsilon>0$ we can choose $\delta>0$ so that $||p-p'||,||r-r'||\leq\delta$ implies $||p'\vee r'-p\vee r||\leq\epsilon$.  But then $||p-p'||,||q-q'||\leq\delta$ implies $r'=(q'rq')_{(0,1]}\in A$ and $||r-r'||<\delta$ (see \cite{Bice2012} (2.3)) and hence \[||(p'\vee q')^\perp(p\vee q)||\leq||(p'\vee r')^\perp(p\vee r)||\leq||p'\vee r'-p\vee r||\leq\epsilon.\]
\end{proof}

If $p,q\in C^\circ(X,\mathcal{P}(M_n))$ then $\sup(\sigma(p(x)q(x))\setminus\{1\})<1$, for all $x\in X$, so it follows from \autoref{pveeq} and \autoref{pnearq} \eqref{pnearq1}$\Rightarrow$\eqref{pnearq3} that $p\vee q\in C^\circ(X,\mathcal{P}(M_n))$ (where $(p\vee q)(x)=p(x)\vee q(x)$, for all $x\in X$).  Also, if $(p_n)\subseteq C^\circ(X,\mathcal{P}(M_n))$ is increasing (i.e. for any $x\in X$, $(p_n(x))$ is (non-strictly) increasing in $\mathcal{P}(M_n)$) then, for any $x\in X$, it follows that $\bigvee_n p_n(x)=p_m(x)$, for some $m\in\mathbb{N}$, which, as $p_m\leq\bigvee_np_n$ and $p_m$ is lower semicontinuous, means that $\bigvee_np_n$ is lower semicontinuous at $x$.  As $x\in X$ was arbitrary, $\bigvee p_n\in\mathcal{P}(M_n)$.  This applies equally well to transfinite sequences and thus, combining thes facts, we see that $C^\circ(X,\mathcal{P}(M_n))$ is closed under taking arbitrary suprema, i.e.
\begin{equation}\label{bigveeP}
\mathcal{P}\subseteq C^\circ(X,\mathcal{P}(M_n))\quad\Rightarrow\quad\bigvee\mathcal{P}\in C^\circ(X,\mathcal{P}(M_n)).
\end{equation}

Given a Hilbert space $H$, we represent the C*-algebra of bounded functions $B(X,\mathcal{B}(H))$ from $X$ to $\mathcal{B}(H)$ on $\bigoplus_{x\in X}H$ by defining $f(v_x)_{x\in X}=(f(x)v_x)_{x\in X}$, for any $f\in B(X,\mathcal{B}(H))$ and $(v_x)_{x\in X}\in\bigoplus_{x\in X}H$.  Thus, our assumption that $A$ is represented concretely and faithfully on some Hilbert space $H$ means we can identify $C^b(X,A)''$ with a subset of $B(X,A'')$ and then use non-commutative topological concepts on $B(X,\mathcal{P}(\mathcal{B}(H)))$ with reference to $C^b(X,A)$. In fact, if $X$ is a completely regular Hausdorff topological space then it is not difficult to see that $C^b(X,A)''$ will be identified in this way with the entirety of $B(X,A'')$, i.e. \[C^b(X,A)''=B(X,A'').\]

\begin{dfn}
For any function $f$ between topological spaces we denote the set of points on which $f$ is continuous by $C_f$.
\end{dfn}

\begin{prp}\label{pcircp}
If $X$ is a completely regular topological space and $p:X\rightarrow\mathcal{P}(A)$ then $p^\circ(x)=p(x)=\overline{p}(x)$, for all $x\in C_p^\circ$.
\end{prp}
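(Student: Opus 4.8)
The plan is to establish the two inequalities $p^\circ(x)\geq p(x)$ and $\overline p(x)\leq p(x)$. Since $p^\circ\leq p\leq\overline p$ holds in $C^b(X,A)''=B(X,A'')$ and the order on $B(X,A'')$ is pointwise, evaluating at $x$ yields the opposite inequalities $p^\circ(x)\leq p(x)\leq\overline p(x)$ for free, so these two displayed inequalities give the result.

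Fix $x\in C_p^\circ$ and choose an open $U$ with $x\in U\subseteq C_p$, so that $p$ is norm-continuous at every point of $U$; by complete regularity fix $h\in C(X,[0,1])$ with $h(x)=1$ and $h\equiv 0$ on $X\setminus U$. First I would put $g(y)=h(y)p(y)$ for $y\in U$ and $g(y)=0$ otherwise. Then $g$ is continuous at every point of $U$ (a product of maps continuous there) and at every point of $X\setminus U$ (because $\|g(y)\|\leq h(y)$, which tends to $0$), so $g\in C^b(X,A)_+$. As $0\leq g(y)\leq p(y)$ pointwise and $p(y)$ is a projection, $g(y)=p(y)g(y)$, so $[g(y)]\leq p(y)$ for all $y$, i.e. $[g]\leq p$ in $B(X,A'')$; since $[g]$ is the range projection of a positive element of $C^b(X,A)$ it is open, whence $[g]\leq p^\circ$. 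Evaluating at $x$ gives $p^\circ(x)\geq[g](x)=[g(x)]=[h(x)p(x)]=[p(x)]=p(x)$.

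For the closure I would run the parallel argument for $p^\perp$, but routed through $A$ rather than $A''$ so that it survives $A$ being non-unital. Set $e=p(x)\in\mathcal P(A)$ and $D=(1-e)A(1-e)$, a hereditary C*-subalgebra of $A$ whose support projection in $A''$ is $e^\perp$ (this is where an approximate unit of $A$ and non-degeneracy enter). For each $d\in D_+^1$ define $g_d(y)=h(y)(1-p(y))d(1-p(y))$ for $y\in U$ and $g_d(y)=0$ otherwise. Since $p(y)\in A$ and $p$ is continuous on $U$, the map $y\mapsto(1-p(y))d(1-p(y))=d-p(y)d-dp(y)+p(y)dp(y)$ is $A$-valued and continuous on $U$, and $g_d$ is continuous on all of $X$ once more because $\|g_d(y)\|\leq h(y)$, so $g_d\in C^b(X,A)_+$. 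Pointwise $0\leq g_d(y)\leq p(y)^\perp$, so, as before, $[g_d]$ is an open projection with $[g_d]\leq p^\perp$, i.e. $[g_d]\leq p^{\perp\circ}$; moreover $g_d(x)=e^\perp d e^\perp=d$, so $[g_d](x)=[d]$. Taking the supremum over $d\in D_+^1$ (bounded suprema in $B(X,A'')=\prod_{x\in X}A''$ are computed pointwise), and noting each $[d]$ lies between $d$ and $e^\perp$ while $\sup D_+^1=e^\perp$, we get $p^{\perp\circ}(x)\geq\sup_{d\in D_+^1}[d]=e^\perp=p(x)^\perp$. Together with $p^{\perp\circ}(x)\leq p(x)^\perp$ this forces $p^{\perp\circ}(x)=p(x)^\perp$, hence $\overline p(x)=\bigl(p^{\perp\circ}(x)\bigr)^\perp=p(x)$.

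The genuinely delicate point is just keeping track of which operators lie in $A$ rather than in $A''$: this is exactly why the interior argument cannot be applied verbatim to $p^\perp$ (which is $A''$-valued but not $A$-valued when $A$ is non-unital), forcing the detour through the hereditary subalgebra $D\subseteq A$ and the $A$-valued functions $g_d$. Everything else — the continuity of $g$ and $g_d$, the openness of range projections of positive elements of $C^b(X,A)$, and the fact that evaluation at $x$ commutes with the relevant suprema and spectral projections in $B(X,A'')$ — is routine.
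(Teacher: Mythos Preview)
Your proof is correct. For the first equality $p^\circ(x)=p(x)$, your argument is essentially identical to the paper's: multiply $p$ by a scalar cutoff function $h$ supported in $C_p^\circ$ to obtain an element of $C^b(X,A)_+^1$ dominated by $p$ and agreeing with $p$ at $x$.

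For the second equality the paper takes a shorter route: it simply observes that $C_p=C_{p^\perp}$ (since $p^\perp=1-p$ is continuous exactly where $p$ is) and invokes the already-proved first equality with $p^\perp$ in place of $p$, obtaining $(p^\perp)^\circ(x)=p^\perp(x)$ and hence $\overline p(x)=p(x)$. As you correctly diagnose, this shortcut needs $hp^\perp\in C^b(X,A)$, which in turn needs $1\in A$; for non-unital $A$ the map $p^\perp$ is $\mathcal P(A'')$-valued rather than $\mathcal P(A)$-valued, and the first part does not literally apply. Your detour through the hereditary subalgebra $D=(1-e)A(1-e)\subseteq A$ and the family $g_d$ is exactly the right repair: it replaces the single (possibly illegitimate) function $hp^\perp$ by a directed family of bona fide elements of $C^b(X,A)_+^1$ whose values at $x$ have supremum $e^\perp$. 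In the paper's applications $A=M_n$ is unital, so the shortcut suffices there, but your argument establishes the proposition in the generality in which it is stated.

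One small simplification: the passage through the range projections $[g]$ and $[g_d]$ is unnecessary. From $g\in(pC^b(X,A)p\cap C^b(X,A))_+^1$ you get $g\leq p^\circ$ directly, and evaluating at $x$ gives $p(x)=g(x)\leq p^\circ(x)$; likewise $d=g_d(x)\leq p^{\perp\circ}(x)$ for every $d\in D_+^1$, so $p^{\perp\circ}(x)\geq\sup D_+^1=e^\perp$ without ever forming $[g_d]$.
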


\begin{proof}
For any $x\in C_p^\circ$ we have a function $g:X\rightarrow[0,1]$ with $g(x)=1$ and $g[X\setminus C_p^\circ]=\{0\}$.  Then $gp$ (the map $x\mapsto g(x)p(x)$) is continuous on $X$, $gp\leq p$ and $g(x)p(x)=p(x)$.  As $p^\circ=\sup(pC^b(X,A)p\cap C^b(X,A))^1_+$, the first statement is proved.  The second follows from this, $C_p=C_{p^\perp}$ and $\overline{p}=p^{\perp\circ\perp}$.
\end{proof}

Representing $M_n$ canonically on $\mathbb{C}^n$, we have $M_n''=M_n$, and thus we can identify $C^b(X,M_n)''$ with a subset of $B(X,M_n)$.  The following result says that, under this identification, the open and closed projections in $C^b(X,M_n)''$ are precisely the lower and upper semicontinous projection functions respectively.

\begin{prp}\label{normregopenclosed}
If $X$ is a normal regular topological space then
\[C^\circ(X,\mathcal{P}(M_n))=\mathcal{P}(C^b(X,M_n)'')^\circ\qquad\textrm{and}\qquad\overline{C}(X,\mathcal{P}(M_n))=\overline{\mathcal{P}(C^b(X,M_n)'')}\]
\end{prp}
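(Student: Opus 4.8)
The plan is to prove the two displayed equalities at once, since they are equivalent: passing to orthocomplements interchanges open and closed projections (by definition) and interchanges lower and upper semicontinuous functions (by \autoref{lscts}), so it suffices to prove $C^\circ(X,\mathcal{P}(M_n))=\mathcal{P}(C^b(X,M_n)'')^\circ$. Throughout I will use that $X$, being normal and regular (hence in particular completely regular and Hausdorff), satisfies $C^b(X,M_n)''=B(X,M_n)$, so that the projections of $C^b(X,M_n)''$ are exactly the functions $p\colon X\to\mathcal{P}(M_n)$ and suprema of increasing nets are computed pointwise in $M_n$.

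The inclusion $\mathcal{P}(C^b(X,M_n)'')^\circ\subseteq C^\circ(X,\mathcal{P}(M_n))$ uses neither normality nor regularity. If $p=\sup_\alpha p_\alpha$ with $(p_\alpha)\subseteq C^b(X,M_n)^1_+$ increasing, fix $x\in X$ and $\epsilon>0$. Since $M_n$ is finite dimensional the increasing net $(p_\alpha(x))$ converges in norm to $p(x)$, so choose $\alpha$ with $\|p_\alpha(x)-p(x)\|<\epsilon/2$ and then a neighbourhood $Y$ of $x$ with $\|p_\alpha(y)-p_\alpha(x)\|<\epsilon/2$ on $Y$. As $p_\alpha(y)\le p(y)$ we get $p(y)^\perp p_\alpha(y)=0$ (recall that $0\le c\le p$ forces $p^\perp c=0$), so for $y\in Y$, $\|p(y)^\perp p(x)\|\le\|p(x)-p_\alpha(x)\|+\|p_\alpha(x)-p_\alpha(y)\|<\epsilon$; hence $p$ is lower semicontinuous.

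For the reverse inclusion take $p\in C^\circ(X,\mathcal{P}(M_n))$; I will show $p=p^\circ$. Writing $B=C^b(X,M_n)$, an element of $(pBp\cap B)^1_+$ is precisely a continuous $b\colon X\to(M_n)^1_+$ with $[b(y)]\le p(y)$ for all $y$, and this set is directed, so $p^\circ(x)=\sup\{b(x):b\in(pBp\cap B)^1_+\}$, a norm-convergent directed supremum with $p^\circ(x)\le p(x)$. Since $p(x)$ has finite rank, it is enough to produce, for each $x_0\in X$, each unit $v\in\mathcal{R}(p(x_0))$ and each $\epsilon>0$, some such $b$ with $\langle b(x_0)v,v\rangle>1-\epsilon$: taking $b$ dominating one such function for each vector of an orthonormal basis of $\mathcal{R}(p(x_0))$, the trace estimate $\operatorname{Tr}(p(x_0)-b(x_0))<(\dim\mathcal{R}(p(x_0)))\,\epsilon$ together with $0\le b(x_0)\le p(x_0)$ forces $p^\circ(x_0)\ge p(x_0)$ as $\epsilon\to0$. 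To build $b$: lower semicontinuity at $x_0$ gives an open $U\ni x_0$ with $\|p(y)^\perp p(x_0)\|<\epsilon$ on $U$, whence $\|p(y)v-v\|<\epsilon$ there (so $y\mapsto p(y)v$ is continuous at $x_0$); regularity and normality (Urysohn) give a continuous $h\colon X\to[0,1]$ with $h(x_0)=1$ and support with closure inside $U$; and I will set $b(y)=h(y)^2\,w(y)w(y)^*$, where $w\colon X\to\mathbb{C}^n$ is a continuous selection with $w(x_0)=v$ and $w(y)\in\mathcal{R}(p(y))$ for all $y$ in the small closed (hence normal) neighbourhood on which $h\ne0$. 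Such a $w$ exists because $y\mapsto\mathcal{R}(p(y))$ is lower hemicontinuous with closed convex values and passes through $v$ at $x_0$; then $[b(y)]\le p(y)$ everywhere, $b$ is continuous (the delicate points being $x_0$, handled by $p(y)v\to v$, and the boundary of the support of $h$, handled by $\|b(y)\|\le h(y)^2$), and $b(x_0)=vv^*$.

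The main obstacle is exactly this last construction: producing functions that are globally continuous and lie below $p$, despite $p$ itself being discontinuous in general. The two naive candidates fail for complementary reasons — $h^2vv^*$ is continuous but not below $p$, while $h^2\,p(\cdot)vv^*p(\cdot)$ is below $p$ but generally discontinuous away from $x_0$ — and reconciling them is where the hypotheses on $X$ do real work, through the continuous selection of a unit vector $w(y)\in\mathcal{R}(p(y))$ through $(x_0,v)$. Verifying the existence and properties of this selection (and, if one prefers to avoid quoting a Michael-type selection theorem, building $w$ directly from Urysohn functions and the estimate $\|p(y)v-v\|<\epsilon$ on $U$) is the step that needs the most care.
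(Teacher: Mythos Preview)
Your reduction and your argument for $\mathcal{P}(C^b(X,M_n)'')^\circ\subseteq C^\circ(X,\mathcal{P}(M_n))$ are fine. The gap is exactly where you flag it, and it is a real one: Michael's selection theorem in its standard form requires the domain to be paracompact, not merely normal, and you give no reason why linear-subspace values in $\mathbb{C}^n$ downgrade that hypothesis. Your fallback of building $w$ ``directly from Urysohn functions and the estimate $\|p(y)v-v\|<\epsilon$'' is not carried out, and the obvious candidate $w(y)=p(y)v$ fails for precisely the reason you already noted ($p$ is only lower semicontinuous). So as written the hard inclusion is a plan, not a proof.

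The paper avoids selections entirely. For fixed $x$ it takes a closed neighbourhood $Y$ on which $\|p(y)^\perp p(x)\|<1$, stratifies by the closed sets $Y_m=\{y\in Y:\dim p(y)\le\dim p(x)+m\}$, and builds a continuous $f\le p$ on $Y$ with $f(x)=p(x)$ by iterated Tietze extension up the strata: on $Y_0$ the dimension of $p$ is constant, so $p|_{Y_0}$ is continuous and one sets $f_0=p|_{Y_0}$; Tietze extends this to $g_0\in C(X,M_n^1)$; then $f_1(y)=p(y)g_0(y)^*g_0(y)p(y)$ is continuous on $Y_1$ (lower semicontinuity of $p$ and continuity of $g_0$ force convergence at boundary points of $Y_0$); and so on for $n-\dim p(x)$ steps. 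Multiplying by a bump function supported in $Y$ gives $f_x\in C^b(X,M_n)^1_+$ with $f_x\le p$ and $f_x(x)=p(x)$, whence $p=\sup_x f_x$ is open. If you tried to produce your vector-valued $w$ rigorously under normality alone, you would end up running essentially this same induction on the $Y_m$---so the paper's argument is what your ``direct'' construction has to become.
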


\begin{proof}
First note that $[f]\in C^\circ(X,\mathcal{P}(M_n))$ whenever $f\in C(X,M_n)$, by \autoref{finspec}.  Thus, for any hereditary C*-subalgebra $B$ of $C^b(X,M_n)$, we have $\sup B^1_+=\bigvee\{[f]:f\in B^1_+\}\in C^\circ(X,\mathcal{P}(M_n))$, by \eqref{bigveeP}, which means that $\mathcal{P}^\circ(C^b(X,M_n)'')\subseteq C^\circ(X,\mathcal{P}(M_n))$.

Given $p\in C^\circ(X,\mathcal{P}(A))$ and $x\in X$, let $Y$ be a closed neighbourhood of $x$ such that $||p(y)^\perp p(x)||<1$, for all $y\in Y$.  For each $m\in\mathbb{N}$, the set \[Y_m=\{y\in Y:\dim(p(y))\leq\dim(p(x))+m\}\] is closed.  The function defined by $f_0(y)=[p(y)p(x)]$ is continuous on $Y_0$ and thus has a continuous extension $g_0:X\rightarrow M_n^1$, by the Tietze extension theorem.  The function defined by $f_1(y)=p(y)g_0(y)^*g_0(y)p(y)$ is continuous on $Y_1$ and, continuing to apply the Tietze extension theorem in this way we eventually obtain $f=f_{n-\dim(p(x))}\in C(Y,M_{2+})$ with $f(y)\leq p(y)$, for all $y\in Y$.  As $X$ is completely regular, we have a function $h:X\rightarrow[0,1]$ with $h(x)=1$ and $h[X\setminus Y]=\{0\}$.  The function $f_x$ defined by $f_x(y)=h(y)f(y)$, for all $y\in Y$, and $f_x(y)=0$, for $y\in X\setminus Y$, is continuous on all of $X$.  We thus have $p=\sup_{x\in X}f_x\in\mathcal{P}^\circ(C^b(X,M_n)'')$
\end{proof}

We suspect that normality here could not be replaced with complete regularity.  To see this, all you would need is a continuous function $f:Y\rightarrow[0,1]$, where $Y$ is a closed subset of a completely regular space $X$ such that $Y$ is not locally compact and $f$ has no extension to any $Z\subseteq\overline{Y}^{\beta X}$ properly containing $Y$ ($\beta X$ here denotes the Stone-\v{C}ech compactification of $X$ \textendash\, see \cite{GillmanJerison1960}).  Identifying $[0,1]$ with a subspace of $\mathcal{P}(M_2)$ and extending $f$ to $X$ by defining $f(x)$ to be the identity of $M_2$, for $x\notin Y$, we see that $f\in C^\circ(X,\mathcal{P}(M_2))$.  If $f\in\mathcal{P}^\circ(C^b(X,M_n)'')$ then we would have a net $(f_n)\subseteq C^b(X,M^1_{n+})$ with $f=\sup f_n$ and hence $f^\beta=\sup f^\beta_n$ (where $f^\beta_n$ denotes the unique continuous extension of $f_n$ to $\beta X$) would be a lower semicontinuous extension of $f$ to $\beta X$.  Then $f$ would have to be $0$ on $\overline{Y}^{\beta X}\setminus Y$.  But $Y$ is not locally compact so $\overline{Y}^{\beta X}\setminus Y$ is not closed and hence we would have $f(y)=0$ for some $y\in Y$, a contradiction.

\begin{thm}\label{Cp}
For every $p\in C^\circ(X,\mathcal{P}(M_n))$, $C_p$ is open dense.
\end{thm}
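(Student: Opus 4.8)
The plan is to reduce everything to the behaviour of the integer-valued rank function $x\mapsto\dim p(x)$. First I would record that lower semicontinuity of $p$ makes this rank function lower semicontinuous in the usual sense. Indeed, given $x\in X$ and $\epsilon<1$, choose a neighbourhood $Y$ of $x$ with $\|p(y)^\perp p(x)\|<\epsilon$ for all $y\in Y$. Since $\|p(y)^\perp p(x)\|=\|p(x)p(y)^\perp\|$, \autoref{pnearq} (\ref{pnearq1}$\Rightarrow$\ref{pnearq3}) supplies $r\in\mathcal{P}(M_n)$ with $r\leq p(y)$ and $\|r-p(x)\|<1$; two projections in $M_n$ at norm distance less than $1$ have the same rank (a dimension count on $\mathcal{R}(p(x))\cap\mathcal{N}(p(y))$ shows this directly), so $\dim p(y)\geq\dim r=\dim p(x)$. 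Hence $\{x\in X:\dim p(x)\geq k\}$ is open for every integer $k$.

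Next I would identify $C_p$ with $\bigcup_{k=0}^{n}\mathrm{int}\{x\in X:\dim p(x)=k\}$, which is manifestly open. For the inclusion $\subseteq$: if $x\in C_p$ then, by \autoref{||p-q||}, $p$ is continuous at $x$ exactly when $\|p(y)-p(x)\|\to 0$ as $y\to x$, so there is a neighbourhood $Y$ of $x$ with $\|p(y)-p(x)\|<1$, forcing $\dim p(y)=\dim p(x)$ for all $y\in Y$, i.e. $x\in\mathrm{int}\{z:\dim p(z)=\dim p(x)\}$. For $\supseteq$: suppose $\dim p\equiv k$ on an open set $U\ni x$. For any $y\in U$ and $\epsilon<1$, lower semicontinuity of $p$ gives a neighbourhood $V\subseteq U$ of $y$ and, for each $z\in V$, some $r\leq p(z)$ with $\|r-p(y)\|<\epsilon$; since $\dim r=\dim p(y)=k=\dim p(z)$ and $r\leq p(z)$, necessarily $r=p(z)$, whence $\|p(z)-p(y)\|<\epsilon$. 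Thus $p$ is norm continuous at every point of $U$, so $U\subseteq C_p$ and in particular $x\in C_p$.

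For density, let $W\subseteq X$ be nonempty and open. Since $\dim p$ takes only the finitely many values $0,\dots,n$, it attains a largest value $m$ on $W$, say at $x_0\in W$. Then $D:=W\cap\{x:\dim p(x)\geq m\}$ is open (by the first paragraph), contains $x_0$, and satisfies $\dim p\equiv m$ on $D$ (by maximality of $m$ on $W$); hence $D\subseteq\mathrm{int}\{x:\dim p(x)=m\}\subseteq C_p$, so $\emptyset\neq D\subseteq C_p\cap W$. Therefore $C_p$ meets every nonempty open subset of $X$, i.e. $C_p$ is dense.

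The only genuine input here is \autoref{pnearq}, which converts the ``degree of orthogonality'' $\|p(y)^\perp p(x)\|$ into the existence of a nearby subprojection of $p(y)$, together with the elementary remark that a subprojection of a projection in $M_n$ of the same rank is the whole projection; everything else is bookkeeping. The only mildly delicate point, and the step I would be most careful to present cleanly, is the equivalence ``$p$ continuous at $x$'' $\iff$ ``$\dim p$ constant near $x$'', since once that is in place both openness and density of $C_p$ are immediate.
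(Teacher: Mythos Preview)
Your proof is correct and follows essentially the same route as the paper: both reduce to the integer-valued rank function $\dim\circ p$, use its lower semicontinuity, and exploit the fact that for projections of equal finite rank the norm distance agrees with $\|p(y)^\perp p(x)\|$. Your density argument (take the maximal rank on an open set) is the contrapositive of the paper's descent argument (if $C_p$ missed an open set, the rank there would drop step by step to $0$), so the two are really the same observation phrased differently.
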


\begin{proof}
For finite rank projections $q$ and $r$, $\dim(q)=\dim(r)$ implies $||q-r||=||qr^\perp||=||rq^\perp||$ and hence $C_p=C_{\dim\circ p}$.  As noted after \autoref{lscts}, $\dim\circ p$ is lower semicontinuous and, as $\{0,\ldots,n\}$ is discrete, $\dim\circ p$ will be constant, and hence continuous, on some open neighbourhood about any $x\in C_{\dim\circ p}$.  Now note that $\dim(p(x))<n$, for all $x\in X\setminus C_p$, by discontinuity.  If $X\setminus C_p$ contained an open set $O$, then we would have $\dim(p(x))<n-1$, for all $x\in O$, again by discontinuity.  But then $\dim(p(x))<n-2$, for all $x\in O$, etc. and hence we eventually get $\dim(p(x))=0$, for all $x\in O$, which shows that $\dim\circ p$ is continuous on $O$, a contradiction, i.e. $C_p$ is open dense.
\end{proof}

\begin{dfn}
Given functions $f,g$ on a topological space $X$, we define
\begin{eqnarray*}
f=_\mathrm{od}g &\Leftrightarrow& \{x\in X:f(x)=g(x)\}^\circ\textrm{ is dense, and}\\
f=_\mathrm{d}g &\Leftrightarrow& \{x\in X:f(x)=g(x)\}\textrm{ is dense}.
\end{eqnarray*}
\end{dfn}

Note that $=_\mathrm{od}$ is a transitive relation, while $=_\mathrm{d}$ may not be.  However, they coincide for the functions we are considering.

\begin{prp}\label{DpDq}
If $X$ is a completely regular topological space, the following are equivalent, for $p,q\in C^\circ(X,\mathcal{P}(M_n))$.
\begin{enumerate}
\item\label{DpDq1} $\overline{p}=\overline{q}$.
\item\label{DpDq2} $C_p\cap C_q\subseteq\{x\in X:p(x)=q(x)\}$.
\item\label{DpDq3} $p=_\mathrm{od}q$.
\item\label{DpDq4} $p=_\mathrm{d}q$.
\end{enumerate}
If these conditions hold and $p\leq q$ then $C_p\subseteq C_q$, so $C_p\subseteq\{x\in X:p(x)=q(x)\}$.
\end{prp}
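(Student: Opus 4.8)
I will prove the four equivalences by establishing the cycle $\eqref{DpDq1}\Rightarrow\eqref{DpDq2}\Rightarrow\eqref{DpDq3}\Rightarrow\eqref{DpDq4}\Rightarrow\eqref{DpDq1}$, and then handle the closing addendum separately. Throughout, recall from \autoref{Cp} that $C_p$ and $C_q$ are open and dense, so $C_p\cap C_q$ is again open and dense, and in particular $C_p^\circ=C_p$ and $C_q^\circ=C_q$. For $\eqref{DpDq1}\Rightarrow\eqref{DpDq2}$: by \autoref{pcircp} we have $\overline{p}(x)=p(x)$ for all $x\in C_p$ and $\overline{q}(x)=q(x)$ for all $x\in C_q$, so if $\overline{p}=\overline{q}$ then $p(x)=\overline{p}(x)=\overline{q}(x)=q(x)$ for every $x\in C_p\cap C_q$, which is \eqref{DpDq2}. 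The implication $\eqref{DpDq2}\Rightarrow\eqref{DpDq3}$ is then immediate, since $C_p\cap C_q$ is an open dense subset of $\{x:p(x)=q(x)\}$, so $\{x:p(x)=q(x)\}^\circ$ is dense; and $\eqref{DpDq3}\Rightarrow\eqref{DpDq4}$ is trivial.

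\textbf{The main step, $\eqref{DpDq4}\Rightarrow\eqref{DpDq1}$.} Since $r\mapsto r^\perp$ is an involution on the projections of $C^b(X,M_n)''$, and $\overline{p}=p^{\perp\circ\perp}$, the equality $\overline{p}=\overline{q}$ is equivalent to $p^{\perp\circ}=q^{\perp\circ}$, where $p^{\perp\circ}$ (resp. $q^{\perp\circ}$) is the largest open projection below $p^\perp$ (resp. $q^\perp$). So it suffices to prove: if $o$ is an open projection of $C^b(X,M_n)''$ with $o\leq p^\perp$ then $o\leq q^\perp$, the reverse inclusion then following by symmetry of $p$ and $q$. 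Now $q$ is lower semicontinuous by hypothesis, and open projections are lower semicontinuous functions on $X$ by \autoref{finspec} and \eqref{bigveeP} (the first paragraph of the proof of \autoref{normregopenclosed}), so $o$ is lower semicontinuous. Fix $x\in X$ and $\epsilon>0$. Lower semicontinuity of $o$ and of $q$ at $x$, together with density of $\{y:p(y)=q(y)\}$ (which is \eqref{DpDq4}), let us pick $y$ in an arbitrarily small neighbourhood of $x$ with $||o(y)^\perp o(x)||<\epsilon$, $||q(y)^\perp q(x)||<\epsilon$ and $p(y)=q(y)$; then $o(y)\leq p^\perp(y)=q^\perp(y)=q(y)^\perp$, i.e. $o(y)q(y)=0$. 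A routine estimate (using $o(y)q(y)=0$ together with the two displayed norm bounds) gives $||o(x)q(x)||\leq||o(y)q(x)||+||o(y)^\perp o(x)||<2\epsilon$, whence $o(x)q(x)=0$, i.e. $o(x)\leq q^\perp(x)$. As $x$ was arbitrary, $o\leq q^\perp$, as required. I expect this lemma — that the largest open projection below $p^\perp$ is determined by the values of $p^\perp$ on a dense set — to be the main obstacle, precisely because the analogous assertion is false if "open projection" is weakened to "upper semicontinuous function" (two such functions can agree densely yet differ).

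\textbf{The addendum.} Suppose \eqref{DpDq1}--\eqref{DpDq4} hold and $p\leq q$ (so $p(x)\leq q(x)$ for all $x$); I must show $C_p\subseteq C_q$. As in the proof of \autoref{Cp} we have $C_q=C_{\dim\circ q}$, and both $\dim\circ p$ and $\dim\circ q$ are lower semicontinuous as real-valued functions (the remark after \autoref{lscts}). Fix $x\in C_p=C_{\dim\circ p}$; then $\dim\circ p$ is constant, equal to $\dim p(x)$, on some open neighbourhood $U$ of $x$. Since $p\leq q$ we have $\dim q(y)\geq\dim p(y)=\dim p(x)$ for all $y\in U$. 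Moreover the set $\{y\in U:\dim q(y)>\dim p(x)\}$ is open, and if it were nonempty it would meet the dense set $\{y:p(y)=q(y)\}$ (dense by \eqref{DpDq4}), where $\dim q(y)=\dim p(y)=\dim p(x)$ — a contradiction. Hence $\dim\circ q\equiv\dim p(x)$ on $U$, so $\dim\circ q$ is continuous at $x$ and $x\in C_{\dim\circ q}=C_q$. Finally $C_p=C_p\cap C_q\subseteq\{x:p(x)=q(x)\}$ by \eqref{DpDq2}, completing the proof.
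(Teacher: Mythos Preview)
Your proof is correct and follows the same cycle $\eqref{DpDq1}\Rightarrow\eqref{DpDq2}\Rightarrow\eqref{DpDq3}\Rightarrow\eqref{DpDq4}\Rightarrow\eqref{DpDq1}$ as the paper, with essentially the same arguments for $\eqref{DpDq2}\Rightarrow\eqref{DpDq3}\Rightarrow\eqref{DpDq4}$ and for the addendum. The two places where you deviate slightly are worth noting: for $\eqref{DpDq1}\Rightarrow\eqref{DpDq2}$ you invoke \autoref{pcircp} directly (using that $C_p$ is open so $C_p^\circ=C_p$), which is cleaner than the paper's contrapositive argument that builds an explicit continuous witness to $p^{\perp\circ}\neq q^{\perp\circ}$; and for $\eqref{DpDq4}\Rightarrow\eqref{DpDq1}$ you work with an arbitrary open projection $o\leq p^\perp$ and use its lower semicontinuity, whereas the paper works with an arbitrary continuous $f\in p^\perp C^b(X,M_n)p^\perp$ and uses lower semicontinuity of $y\mapsto\|q(y)f(y)\|$ --- these are really the same density-plus-semicontinuity idea expressed at the projection level versus the element level.
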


\begin{proof}\
\begin{itemize}
\item[\eqref{DpDq1}$\Rightarrow$\eqref{DpDq2}]  Assume $p(x)\nleq q(x)$ for some $x\in C_p\cap C_q$.  As $X$ is completely regular, we have a continuous function $f:X\rightarrow[0,1]$ such that $f(x)=1$ and $f[X\setminus C_q]=\{0\}$.  Letting $g(y)=f(y)q(x)^\perp$, for all $y\in X$, we see that $g$ is continuous and $p(x)g(x)\neq0$, so $g$ witnesses the fact $q^{\perp\circ}\nleq p^{\perp\circ}$ and hence $\overline{p}=p^{\perp\circ\perp}\neq q^{\perp\circ\perp}=\overline{q}$.
\item[\eqref{DpDq2}$\Rightarrow$\eqref{DpDq3}]  Immediate from \autoref{Cp}.
\item[\eqref{DpDq3}$\Rightarrow$\eqref{DpDq4}]  Immediate from the definitions.
\item[\eqref{DpDq4}$\Rightarrow$\eqref{DpDq1}]  If $f\in p^\perp C(X,M_n)p^\perp\cap C^b(X,M_n)^1_+$ then $pf$ is $0$ everywhere on $X$.  If $p=_\mathrm{d}q$ then $qf$ is $0$ on a dense subset of $X$ which, as $||qf||$ is immediately seen to be lower semicontinuous, means $qf$ is $0$ on the entirety of $X$, i.e. $f\in q^\perp C(X,M_n)q^\perp\cap C^b(X,M_n)^1_+$.  Thus $p^{\perp\circ}=q^{\perp\circ}$ and hence we have $\overline{p}=p^{\perp\circ\perp}=q^{\perp\circ\perp}=\overline{q}$.
\end{itemize}
For the last statement, assume that $x\in C_p\setminus C_q$, so there is an open neighbourhood $Y$ of $x$ such that $\dim(p(y))=\dim(p(x))$, for all $y\in Y$.  As $x\notin C_q$, $x$ must be a limit point of the open set $Y\cap\{y\in X:\dim(q(y))>\dim(q(x))\}$ so, in particular, this set is not empty.  As $p(x)\leq q(x)$, it is contained in $\{y\in X:p(y)\neq q(y)\}$, showing that this set has non-empty interior and $p\neq_\mathrm{d}q$.
\end{proof}

Say $X$ is a completely regular topological space and we have a $=_\mathrm{od}$ equivalence class $E\subseteq C^\circ(X,\mathcal{P}(M_n))$.  For any $p\in E$, we have $\overline{p}^\circ\in E$, by \autoref{pcircp} and \autoref{Cp}.  And, for any other $q\in E$, we have $\overline{p}=\overline{q}$ and hence $\overline{p}^\circ=\overline{q}^\circ$, by \autoref{DpDq}, i.e. $E$ contains precisely one topologically open projection.  This will in fact be the maximum of $E\cap\mathcal{P}(C^b(X,M_n)'')^\circ$ and hence, if $X$ is normal, the maximum of $E$, by \autoref{normregopenclosed}.

\begin{prp}\label{commute=od}
If $X$ is completely regular and $p,q\in\overline{\mathcal{P}(C^b(X,M_n)'')}^\circ$, \[p\textrm{ and }q\textrm{ commute in }\overline{\mathcal{P}(C^b(X,M_n)'')}^\circ\quad\Leftrightarrow\quad pq=_\mathrm{od}qp.\]
\end{prp}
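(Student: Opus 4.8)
The plan is to reduce commutativity in $\overline{\mathcal{P}(C^b(X,M_n)'')}^\circ$ to a pointwise statement and then translate that back into an $=_\mathrm{od}$ condition, exploiting that $M_n$ has finite rank so that all the local semicontinuity machinery of this section applies. Write $\mathbb{P}=\overline{\mathcal{P}(C^b(X,M_n)'')}^\circ$, and recall from \autoref{normregopenclosed}, \autoref{pcircp} and \autoref{Cp} that each $p\in\mathbb{P}$ is the unique topologically regular open projection in its $=_\mathrm{od}$ class, that $C_p$ is open dense, and that $p$ agrees with the genuine (pointwise) supremums/closures at every point of $C_p$. The key observation is that on the open dense set $C_p\cap C_q$ (open dense since $X$ is Baire-free of that issue here — actually we only need it to be dense, which follows from \autoref{Cp} twice since a finite intersection of dense open sets is dense open), the lattice operations $p\vee_{\mathbb{P}}q$, $p\wedge_{\mathbb{P}}q$ and the orthocomplement $q^{\perp_{\mathbb{P}}}$ all coincide pointwise with the corresponding operations in $\mathcal{P}(M_n)$: this uses \autoref{pveeq} together with \autoref{pcircp} to see that the pointwise join of two lower semicontinuous functions, restricted to where both are continuous, already gives a topologically regular open projection agreeing with it; and $q^{\perp_{\mathbb{P}}}=\overline{q^\perp}^\circ$ agrees pointwise with $q(x)^\perp$ on $C_q$ by \autoref{pcircp}.

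First I would prove the $(\Leftarrow)$ direction. Suppose $pq=_\mathrm{od}qp$. Since $C_p\cap C_q$ is dense and open, and since at each $x\in C_p\cap C_q$ the functions $p,q$ are continuous hence $pq,qp$ are continuous there, the set where $p(x)q(x)=q(x)p(x)$ contains a dense open subset of $C_p\cap C_q$; but $pq$ and $qp$ are continuous on that dense open set, so by density $p(x)q(x)=q(x)p(x)$ for all $x\in C_p\cap C_q$. Now form the projection-valued function $r$ defined by $r(x)=(p(x)\wedge q(x))\vee(p(x)\wedge q(x)^\perp)$ on $C_p\cap C_q$; pointwise (where $p(x),q(x)$ commute) this equals $p(x)$. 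Using \autoref{pcircp} and the fact that the pointwise meets and joins computed on $C_p\cap C_q$ represent the $\mathbb{P}$-operations there, one checks $(p\wedge_{\mathbb{P}}q)\vee_{\mathbb{P}}(p\wedge_{\mathbb{P}}q^{\perp_{\mathbb{P}}})$ and $p$ agree on the dense open set $C_p\cap C_q$, hence are $=_\mathrm{od}$-equal, hence equal in $\mathbb{P}$ (both being topologically regular open). By \autoref{centralequiv} and the comments after \autoref{centredef} (the identity $p=(p\wedge q)\vee(p\wedge q^\perp)$ is exactly commutativity of $p$ and $q$ in an ortholattice), $p$ and $q$ commute in $\mathbb{P}$.

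For $(\Rightarrow)$, suppose $p$ and $q$ commute in $\mathbb{P}$, i.e.\ $p=(p\wedge_{\mathbb{P}}q)\vee_{\mathbb{P}}(p\wedge_{\mathbb{P}}q^{\perp_{\mathbb{P}}})$ and similarly with $p,q$ interchanged. Restrict attention to $C_p\cap C_q$, where (as above) all of $p\wedge_{\mathbb{P}}q$, $q^{\perp_{\mathbb{P}}}$, $p\wedge_{\mathbb{P}}q^{\perp_{\mathbb{P}}}$ and the join agree with their pointwise $\mathcal{P}(M_n)$-counterparts — here I would be careful to justify each such agreement in turn, handling the meet via \cite{Akemann1969} Theorem II.7 / the discussion after \autoref{annproj} once we know the relevant projections commute pointwise, or more simply by \autoref{pcircp} since on $C_p\cap C_q$ the pointwise meet of two continuous projection functions is again continuous and below both. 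Then the $\mathbb{P}$-identity, evaluated pointwise on the dense open set $C_p\cap C_q$, becomes $p(x)=(p(x)\wedge q(x))\vee(p(x)\wedge q(x)^\perp)$ for all $x$ in a dense open subset of $C_p\cap C_q$, which by \autoref{pq=qp} forces $p(x)q(x)=q(x)p(x)$ there; on such a dense set this gives $pq=_\mathrm{od}qp$.

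The main obstacle I anticipate is the bookkeeping in the middle step: verifying that the lattice operations of $\mathbb{P}$ really do restrict to the pointwise operations of $\mathcal{P}(M_n)$ on $C_p\cap C_q$, uniformly enough to push an identity back and forth. The delicate point is that $p\vee_{\mathbb{P}}q=\overline{(p\vee q)}^{\,\circ}$ in general differs from the pointwise join even on $C_p\cap C_q$ if one is not careful — but it does not, because the pointwise join $p\vee q$ is already lower semicontinuous there by \autoref{pveeq}, continuous on the (still dense open) subset where $\dim$ is locally constant, hence topologically regular open on a dense open set, hence $=_\mathrm{od}$-equal to $p\vee_{\mathbb{P}}q$ and agreeing with it pointwise on $C_{p\vee q}\supseteq$ a dense open subset of $C_p\cap C_q$. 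Assembling these agreements and intersecting the finitely many dense open sets involved is routine once the individual claims are in place, so the proof reduces to citing \autoref{pveeq}, \autoref{pcircp}, \autoref{Cp}, \autoref{pq=qp} and \autoref{centralequiv} in the right order.
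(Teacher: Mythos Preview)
Your approach is essentially the same as the paper's: both arguments show that each ortholattice operation in $\mathbb{P}=\overline{\mathcal{P}(C^b(X,M_n)'')}^\circ$ agrees $=_\mathrm{od}$ with the corresponding pointwise $\mathcal{P}(M_n)$ operation (via \autoref{pcircp}, \autoref{Cp}, and the lower-semicontinuity of pointwise joins), then use that two topologically regular open projections which are $=_\mathrm{od}$-equal are actually equal, and finally appeal to \autoref{pq=qp} pointwise. The paper packages the bookkeeping more compactly by working directly with the transitive relation $=_\mathrm{od}$ rather than tracking an explicit dense open set, but the content is the same.

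One small imprecision: you assert that ``the identity $p=(p\wedge q)\vee(p\wedge q^\perp)$ is exactly commutativity of $p$ and $q$ in an ortholattice,'' but by \autoref{centredef} commutativity requires \emph{all} instances of \eqref{com1} and \eqref{com2}, and the reduction to a single instance uses orthomodularity, which is not assumed here. The paper is equally terse on this point; in both cases the fix is immediate, since $pq=_\mathrm{od}qp$ gives $p(x)q(x)=q(x)p(x)$ on a dense open set, whence every required identity holds pointwise there and the same $=_\mathrm{od}$ argument applies to each.
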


\begin{proof}
For any $p,q\in\overline{\mathcal{P}(C^b(X,M_n)'')}^\circ$, we have $p,q\in C^\circ(X,\mathcal{P}(M_n))$ by the first part of the proof of \autoref{normregopenclosed}.  We then have $p\vee q\in C^\circ(X,\mathcal{P}(M_n))$, by \eqref{bigveeP}, and hence $\overline{p\vee q}^\circ=_\mathrm{od}p\vee q$, by \autoref{pcircp} and \autoref{Cp}.  Likewise, we have $p^{\perp\circ}=_\mathrm{od}p$ and hence $p\wedge_{\overline{\mathcal{P}(C^b(X,M_n)'')}^\circ}q=(p^{\perp\circ}\vee q^{\perp\circ})^{\perp\circ}=_\mathrm{od}p\wedge q$.  Thus
\begin{equation}\label{commute=odeq}
\overline{(p^{\perp\circ}\vee q^{\perp\circ})^{\perp\circ}\vee(p^{\perp\circ}\vee q)^{\perp\circ}}^\circ=_\mathrm{od}(p\wedge q)\vee(p\wedge q^\perp).
\end{equation}
If $p$ and $q$ commute in $\overline{\mathcal{P}(C^b(X,M_n)'')}^\circ$ then \eqref{commute=odeq} becomes $p=_\mathrm{od}(p\wedge q)\vee(p\wedge q^\perp)$ which is equivalent to $pq=_\mathrm{od}qp$.  If $pq=_\mathrm{od}qp$ then $p=_\mathrm{od}\overline{(p^{\perp\circ}\vee q^{\perp\circ})^{\perp\circ}\vee(p^{\perp\circ}\vee q)^{\perp\circ}}^\circ$ which, as both sides here are topologically regular, means $=_\mathrm{od}$ is actually $=$ and hence $p$ and $q$ commute in $\overline{\mathcal{P}(C^b(X,M_n)'')}^\circ$.
\end{proof}

As $[A]^\perp$ and $\overline{\mathcal{P}(A'')}^\circ$ are isomorphic (see \autoref{annproj}), the relation $\sim$ we have on $[A]^\perp$ can be seen as a relation on $\overline{\mathcal{P}(A'')}^\circ$, which we will also denote by $\sim$, i.e. \[p\sim q\quad\Leftrightarrow\quad\exists a\in A(p=\overline{[a]}^\circ\textrm{ and }q=\overline{[a^*]}^\circ).\]

\begin{thm}\label{BCpara}
If $X$ is a hereditarily paracompact Hausdorff topological space then, for $p,q\in\overline{\mathcal{P}(C^b(X,M_n)'')}^\circ$,
\[p\sim q\qquad\Leftrightarrow\qquad\dim\circ p=_\mathrm{od}\dim\circ q\]
\end{thm}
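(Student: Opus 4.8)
The plan is to reduce both sides of the equivalence to statements about the locally constant parts of the rank functions, using the fact (established in \autoref{Cp}) that for $p\in C^\circ(X,\mathcal{P}(M_n))$ the continuity set $C_p$ is open dense and equals $C_{\dim\circ p}$. First I would observe that we may replace each of $p,q$ by its unique open representative $\overline{p}^\circ$, $\overline{q}^\circ$ in its $=_\mathrm{od}$-class (see the discussion after \autoref{normregopenclosed}), since both $p\sim q$ and $\dim\circ p=_\mathrm{od}\dim\circ q$ depend only on these classes: the relation $\sim$ is defined through $\overline{[a]}^\circ$ and $\overline{[a^*]}^\circ$, and $\dim\circ p=_\mathrm{od}\dim\circ p'$ whenever $\overline p=\overline{p'}$ by \autoref{DpDq}. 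So without loss of generality $p,q\in\mathcal{P}(C^b(X,M_n)'')^\circ$, which by \autoref{normregopenclosed} (paracompact Hausdorff spaces are normal and regular) means $p,q\in C^\circ(X,\mathcal{P}(M_n))$.

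\textbf{The forward direction.} Suppose $p\sim q$, so there is $a\in C^b(X,M_n)$ with $p=\overline{[a]}^\circ$ and $q=\overline{[a^*]}^\circ$. Pointwise, $[a(x)]$ and $[a^*(x)]$ have equal rank since $M_n$ is finite-dimensional and $\dim\overline{\mathcal R(a(x))}=\dim\overline{\mathcal R(a^*(x))}=\mathrm{rank}(a(x))$. Thus $\dim\circ[a]=\dim\circ[a^*]$ as honest functions on $X$. Now on the open dense set $C_{[a]}\cap C_{[a^*]}$ (intersection of two open dense sets, dense by the Baire-type argument — actually just because the intersection of two dense opens is a dense open) the functions $[a],[a^*]$ are continuous, hence agree with $\overline{[a]}^\circ=p$ and $\overline{[a^*]}^\circ=q$ respectively by \autoref{pcircp}. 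Therefore $\dim\circ p$ and $\dim\circ q$ agree on this open dense set, giving $\dim\circ p=_\mathrm{od}\dim\circ q$.

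\textbf{The converse — the main obstacle.} This is the hard part: given $\dim\circ p=_\mathrm{od}\dim\circ q$, I must construct a single $a\in C^b(X,M_n)$ with $\overline{[a]}^\circ=p$ and $\overline{[a^*]}^\circ=q$. The idea is to work on the open dense set $U=C_p\cap C_q$, where both $p$ and $q$ are locally constant-rank and where (shrinking $U$ to the interior of the agreement set) $\dim p(x)=\dim q(x)$ for all $x\in U$. On each connected component, or more carefully on a suitable open cover refined using hereditary paracompactness of $U$, the bundles $x\mapsto p(x)M_n^{\text{col}}$... rather: I want to build, locally, a continuous partial-isometry-valued function $u$ with $u(x)^*u(x)=p(x)$, $u(x)u(x)^*=q(x)$, and then patch these together using a partition of unity subordinate to a locally finite cover (here hereditary paracompactness of $X$ is used, since $U$ is an open subspace and we need $U$ paracompact). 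Because the $u$'s obtained on overlaps need not agree, the naive patched sum $a=\sum_i \varphi_i u_i$ will only satisfy $\mathrm{rank}(a(x))\le\dim p(x)$ in general; the key local lemma is that near any point we can choose the local trivializations compatibly enough that $[a(x)]=p(x)$ and $[a^*(x)]=q(x)$ hold on a dense open subset — it suffices to get this generically, since we only need $\overline{[a]}^\circ=p$, i.e.\ $[a]=_\mathrm{d}p$, by \autoref{DpDq} \eqref{DpDq4}$\Leftrightarrow$\eqref{DpDq1}. Finally one extends $a$ from $U$ to all of $X$ (setting it $0$ off $U$, using complete regularity to damp it continuously near $\partial U$, exactly as in the proof of \autoref{normregopenclosed}), and checks $\overline{[a]}^\circ=p$, $\overline{[a^*]}^\circ=q$ via \autoref{pcircp}, \autoref{Cp} and \autoref{DpDq}. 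The delicate point throughout is producing the local partial isometries with the correct range and domain projections and controlling the rank of the patched function on a dense open set; this is where essentially all the work lies, and hereditary paracompactness is precisely what makes the patching argument go through on the open subspace $U$.
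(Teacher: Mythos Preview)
Your forward direction is correct and matches the paper's argument essentially verbatim.

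For the converse, you have the right overall architecture (work on the open dense set $U=C_p\cap C_q$, build local partial isometries between $p$ and $q$, use hereditary paracompactness of $U$, extend by zero), but the patching step via a partition of unity $a=\sum_i\varphi_i u_i$ is where you yourself flag the gap, and you do not close it. The rank-drop issue is real: on an overlap the local isometries differ by a unitary $w$ in $\mathrm{End}(p)$, and $\varphi_1 u_1+\varphi_2 u_2=u_1(\varphi_1+\varphi_2 w)$ loses rank exactly where $-\varphi_1/\varphi_2$ is an eigenvalue of $w$. Arguing that this happens only on a nowhere-dense set, uniformly over an arbitrary locally finite cover with arbitrary multiplicities of overlap, is not obvious and you have not supplied it.

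The paper sidesteps this entirely with a simpler device: after taking a locally finite open refinement $(Z_\alpha)$ of the $(Y_x)$, it \emph{disjointifies} by replacing each $Z_\alpha$ with $Z_\alpha\setminus\overline{\bigcup_{\beta<\alpha}Z_\beta}$. The result is a locally finite family of pairwise disjoint open sets whose union is still dense in $X$. On each piece one defines $f=g_\alpha\cdot w\,u\,v$ (a single bump times a single local partial isometry from $p$ to $q$), and on the complement $f=0$. There are no overlaps, so no cancellation: wherever $g_\alpha\neq 0$ one has $[f]=q$ and $[f^*]=p$ exactly, hence $[f]=_\mathrm{od}q$ and $[f^*]=_\mathrm{od}p$, and \autoref{DpDq} finishes. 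This disjointification is the key trick you are missing; once you see it, the ``delicate point'' you identify evaporates.
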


\begin{proof}
If $a\in A$ witnesses $p\sim q$ then (even if $X$ is just completely regular), by \autoref{finspec}, \autoref{pcircp} and \autoref{Cp}, \[\dim\circ p=\dim\circ\overline{[a]}^\circ=_\mathrm{od}\dim\circ[a]=\dim\circ[a^*]=_\mathrm{od}\dim\circ\overline{[a]}^\circ=\dim\circ q.\]

On the other hand, if $\dim\circ p=_\mathrm{od}\dim\circ q$ then, for all $x\in C_p\cap C_q$, $\dim(p(x))=\dim(q(x))$.  Let $u$ be a partial isometry with $u^*u=p(x)$ and $uu^*=q(x)$.  Choose an open neighbourhood $Y_x$ of $x$ with $Y_x\subseteq C_p\cap C_q$ and $||p(x)-p(y)||,||q(x)-q(y)||<1$.  This means $p(x)p(y)$ has a polar decomposition for all $y\in Y_x$ and, moreover, the partial isometry appearing in this decomposition can be chosen continuously on $Y_x$.  Specifically, let $v(y)=p(x)p(y)\sqrt{p(y)p(x)p(y)}^{-1}$, where $^{-1}$ here denotes the quasi-inverse, so $v(y)$ is a partial isometry with $v(y)^*v(y)=p(y)$ and $v(y)v(y)^*=p(x)$, and $v$ is continuous on $Y_x$ (alternatively, this can be derived from \cite{Blackadar2006} II.3.3.4).  Likewise, define a continuous function $w$ on $Y_x$ so that $w(y)^*w(y)=q(x)$ and $w(y)w(y)^*=q(y)$, for all $y\in Y_x$.  Take a locally finite refinement $(Z_\alpha)$ of $(Y_x)_{x\in C_p\cap C_q}$.  By replacing each $Z_\alpha$ with $Z_\alpha\setminus\overline{\bigcup_{\beta<\alpha}Z_\beta}$ (and then throwing out the resulting empty sets), we obtain a locally finite collection of disjoint open sets with $\overline{\bigcup Z_\alpha}=X$.  For each $\alpha$ pick $x$ with $Z_\alpha\subseteq Y_x$, choose a function $g_\alpha:Z_\alpha\rightarrow[0,1]$ with $g_\alpha(x)=1$ and $g[X\setminus Z_\alpha]=\{0\}$, and set $f(z)=g(z)w(y)uv(y)$, for all $z\in Z_\alpha$.  Defining $f(z)=0$ for all $z\in X\setminus\bigcup Z_\alpha$ we see that the local finiteness of $(Z_\alpha)$ implies $f$ is continuous on all of $X$, i.e. $f\in C^b(X,M_n)$.  Furthermore, $[f]=_\mathrm{od}q$ and $[f^*]=_\mathrm{od}p$ and so we must have $\overline{[f]}^\circ=q$ and $\overline{[f^*]}^\circ=p$.
\end{proof}

We now show that \autoref{BCpara} can be used to prove a number of important facts about C*-algebras of the form $C^b(X,M_n)$.

\begin{cor}\label{matfunfin}
If $A$ is isomorphic to a C*-algebra of the form $C^b(X,M_n)$, for any hereditarily paracompact Hausdorff topological space $X$, then
\begin{enumerate}
\item\label{matfunfin1} $A$ is anniseparable,
\item\label{matfunfin2} $\sim$ is finite on $[A]^\perp$,
\item\label{matfunfin3} $[A]^\perp$ is modular, and
\item\label{matfunfin4} $\sim$ coincides with perspectivity.
\end{enumerate}
\end{cor}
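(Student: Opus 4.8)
The plan is to derive all four items of \autoref{matfunfin} from \autoref{BCpara} together with the order-theoretic machinery of \S\ref{OrderTheory}. Write $A=C^b(X,M_n)$ with $X$ hereditarily paracompact Hausdorff, and recall from \autoref{annproj} that $[A]^\perp\cong\overline{\mathcal{P}(C^b(X,M_n)'')}^\circ$, so that $\sim$ on annihilators corresponds to the relation $\sim$ on topologically regular open projections described just before \autoref{BCpara}, and by \autoref{BCpara} we have $p\sim q\Leftrightarrow\dim\circ p=_{\mathrm{od}}\dim\circ q$.

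For \eqref{matfunfin2}: suppose $p\sim q\leq p$ in $\overline{\mathcal{P}(C^b(X,M_n)'')}^\circ$. Then $\dim\circ p=_{\mathrm{od}}\dim\circ q$, i.e. the set $U=\{x:\dim p(x)=\dim q(x)\}^\circ$ is dense, while $\dim q(x)\le\dim p(x)$ everywhere since $q\le p$. On $U$ we have $q(x)=p(x)$ (equal rank subprojections), so $p=_{\mathrm{d}}q$; since $q\le p$, \autoref{DpDq} \eqref{DpDq4}$\Rightarrow$\eqref{DpDq1} gives $\overline{q}=\overline{p}$, and as both $p$ and $q$ are topologically regular, $p=\overline{p}^\circ=\overline{q}^\circ=q$. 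Hence every element is $\sim$-finite, proving \eqref{matfunfin2}. For \eqref{matfunfin1}, anniseparability is equivalent (by the discussion after \autoref{annisep}) to $\sim$ being reflexive; and $p\sim p$ holds because $\dim\circ p=_{\mathrm{od}}\dim\circ p$ trivially — one just needs a witness $a\in A$, which \autoref{BCpara} supplies via its (constructive) $\Leftarrow$ direction applied with $p=q$. So \eqref{matfunfin1} follows directly.

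For \eqref{matfunfin3} and \eqref{matfunfin4}: by \autoref{posp'cor} (using \eqref{matfunfin1}), $\sim$ is weaker than perspectivity, hence perspectivity is weaker than $\sim$... wait — the correct direction is that perspective elements are $\sim$-related is \emph{not} what we have; rather \autoref{posp'cor} says $\sim$ is weaker than perspectivity, i.e. $p\sim_{\mathrm p}q\Rightarrow$ (via principality and \autoref{posp'}) $p\sim q$. Combined with \eqref{matfunfin2}, perspectivity is then a finite relation (if $p\sim_{\mathrm p}q\le p$ then $p\sim q\le p$ so $p=q$). Then \eqref{modperfin} gives that $[A]^\perp$ is modular, which is \eqref{matfunfin3}. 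Finally, apply \autoref{permod}: $\sim$ is a symmetric (it is, via $a\leftrightarrow a^*$) transitive (\autoref{simtran}) type relation — here invoke \autoref{simtr}, which needs $A$ orthoseparable; this is where the main obstacle lies, since $C^b(X,M_n)$ need not be orthoseparable for non-separable $X$, so one should instead argue $\sim$ is a type relation directly from the characterization $p\sim q\Leftrightarrow\dim\circ p=_{\mathrm{od}}\dim\circ q$ and the fact that very orthogonal families correspond to families supported on disjoint clopen-up-to-nowhere-dense pieces (using \autoref{vo}), so that both sides of \eqref{treq} reduce to the same pointwise-$=_{\mathrm{od}}$ condition. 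Granting $\sim$ is a finite symmetric transitive type relation weaker than perspectivity, \autoref{permod} yields both that $[A]^\perp$ is modular and that $\sim$ coincides with $\sim_{\mathrm p}$, giving \eqref{matfunfin3} (again) and \eqref{matfunfin4}. The step I expect to be the real obstacle is verifying the type-relation axiom \eqref{treq} for $\sim$ without orthoseparability — i.e. controlling very orthogonal joins of the witnessing elements $a_\alpha$ when the index set is uncountable — which I would handle by passing to the $=_{\mathrm{od}}$ description and patching witnesses over the corresponding disjoint open pieces as in the proof of \autoref{BCpara}.
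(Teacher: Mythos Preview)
Your proof is correct and follows essentially the same route as the paper: items \eqref{matfunfin1} and \eqref{matfunfin2} are argued identically via \autoref{BCpara} and \autoref{DpDq}, and \eqref{matfunfin3}, \eqref{matfunfin4} are deduced from \autoref{permod}. Your intermediate derivation of \eqref{matfunfin3} directly from \eqref{modperfin} (perspectivity finite because $\sim$ is finite and weaker than perspectivity) is a nice shortcut the paper does not spell out.

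The one place you hesitate --- verifying that $\sim$ is a type relation without orthoseparability --- is not the obstacle you fear. Inspecting the proof of \autoref{permod}, the only instance of \eqref{treq} actually used is the $\Leftarrow$ direction for a \emph{two}-piece central partition: from $q\sim r$ and $p\in\mathrm{c}\mathbb{P}$, conclude $p\wedge q\sim p\wedge r$. This follows immediately from \autoref{BCpara}, since a central annihilator in $C^b(X,M_n)$ corresponds (via \autoref{centralannihilators} and \autoref{centralideals}) to $\chi_U\cdot 1$ for a regular open $U\subseteq X$, and $\dim\circ p=_\mathrm{od}\dim\circ q$ restricted to $U$ gives $\dim\circ(p\wedge\chi_U)=_\mathrm{od}\dim\circ(q\wedge\chi_U)$. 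Alternatively, the $\Leftarrow$ half of the proof of \autoref{simtr} uses only anniseparability, which you already have from \eqref{matfunfin1}. Either way, no uncountable patching of witnesses is required; the paper's bare citation of \autoref{permod} is justified, and your proposed workaround, while correct in spirit, is more than is needed.
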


\begin{proof}
\eqref{matfunfin1} follows from \autoref{BCpara} and the fact $=_\mathrm{od}$ is reflexive.  For \eqref{matfunfin2}, simply note that, for $p,q\in\overline{\mathcal{P}(C^b(X,M_n)'')}^\circ$ with $p\leq q$, $\dim\circ p=_\mathrm{od}\dim\circ q$ implies $p=_\mathrm{od}q$ and hence $p=q$, by \autoref{DpDq}.  Now \eqref{matfunfin3} and \eqref{matfunfin4} follow from \autoref{permod}.
\end{proof}

Lastly, let us point out that any C*-algebra of the form $C^b(X,M_n)$ will actually be isomorphic to the C*-algebra $C(\beta X,M_n)$, where $\beta X$ is the Stone-\v{C}ech compactification of $X$.  So we could have restricted ourselves to compact $X$ without restricting the class of C*-algebras under consideration.  However, as mentioned in \S\ref{NCT}, the Stone-\v{C}ech compactification can often be significantly harder to work with than $X$ itself, which is why we chose not to do this (e.g. it is not clear to us that \autoref{BCpara} would apply to $\beta X$ if it applied to $X$, i.e. we do not know if the Stone-\v{C}ech compactification of a hereditarily paracompact space is again hereditarily paracompact).

In fact, if $X$ is compact Hausdorff then the representation of $C(X,M_n)$ coming from the canonical representation of $M_n$ on $\mathbb{C}^n$ is just the atomic representation of $C(X,M_n)$.  As every such space is normal and regular, \autoref{normregopenclosed} shows that lower semicontinous projection functions correspond precisely to the open projections which, in turn, correspond precisely to the hereditary C*-subalgebras of $C(X,M_n)$, by \cite{Pedersen1979} Proposition 3.11.9, Proposition 4.3.13 and Theorem 4.3.15.  Also, analogous theorems for $C_0(X,M_n)$, where $X$ is locally compact, can also be proved in much the same way, or can be derived from the corresponding theorems for $C(X_\infty,M_n)$ (where $X_\infty$ is the one point compactification of $X$), using \autoref{ess} and the fact that $C_0(X,M_n)$ is an essential ideal in $C(X_\infty,M_n)$.

Also, there is presumably some room for the results of this subsection to be generalized to non-trivial Hilbert and C*-bundles (see \cite{RaeburnWilliams1998}).  Indeed, we could have derived \autoref{matfunfin}\eqref{matfunfin3} (which, combined with \eqref{matfunfin4}, also gives \eqref{matfunfin2}) without reference to the topological space $X$, simply by using \autoref{nsubcor}\eqref{nsubcor3} and the fact $C^b(X,M_n)\cong C(\beta X,M_n)$ is $n$-homogeneous.  At any rate, C*-algebras of the form $C^b(X,M_n)$ already allow us to create a number of instructive elementary examples, as we now show.


\subsection{Examples}\label{Examples}

For use in the following examples, define $P_\theta\in\mathcal{P}(M_2)$ by
\[P_\theta=\begin{bmatrix}\sin\theta \\ \cos\theta\end{bmatrix}\begin{bmatrix}\sin\theta & \cos\theta\end{bmatrix}=\begin{bmatrix} \sin^2\theta & \sin\theta\cos\theta \\ \sin\theta\cos\theta & \cos^2\theta \end{bmatrix}.\]

For $p,q\in\overline{\mathcal{P}(A'')}^\circ$, we have \[\overline{p}\leq q\quad\Rightarrow\quad p\leq q\quad\Leftrightarrow\quad p\leq\overline{q}\quad\Leftrightarrow\quad\overline{p}\leq\overline{q},\] and the first implication can not be reversed in general, even when $A$ is commutative.  Slightly more worthy of note is the fact that $p\leq q$ does not even imply that $\overline{p}$ and $q$ commute.

\begin{xpl}[$p,q\in\overline{\mathcal{P}(A'')}^\circ$ with $p<q$ but $\overline{p}q\neq q\overline{p}$]\label{commuteclosure1}\hfill\\
Let $A=C([0,1],M_2)$ and define
\[p(x) = \begin{cases} 0 & \text{for } x\in[0,\frac{1}{2}] \\ P_0 & \text{for } x\in(\frac{1}{2},1]\end{cases}\qquad\textrm{and}\qquad
q(x) = \begin{cases} P_{\pi/4} & \text{for } x\in[0,\frac{1}{2}]\\ 1 & \text{for } x\in(\frac{1}{2},1]\end{cases}.\]
We immediately have $p<q$ and \[\overline{p}(x) = \begin{cases} 0 & \text{for } x\in[0,\frac{1}{2})\\ P_0 & \text{for } x\in[\frac{1}{2},1]\end{cases},\]
so $\overline{p}q(\frac{1}{2})=P_0P_{\pi/4}\neq P_{\pi/4}P_0=q\overline{p}(\frac{1}{2})$.
\end{xpl}

\begin{xpl}[$p,q\in\overline{\mathcal{P}(A'')}^\circ$ with $p\overline{q}=\overline{q}p$, $\overline{p}q=q\overline{p}$ and $\overline{p}\,\overline{q}=\overline{q}\,\overline{p}$ but $pq\neq qp$]\label{commuteclosure2}\hfill\\
Let $A=C([0,1],M_2)$ and define
\[p(x) = \begin{cases} 1 & \text{for } x\in[0,\frac{1}{2}) \\ P_0 & \text{for } x\in[\frac{1}{2},1]\end{cases}\qquad\textrm{and}\qquad
q(x) = \begin{cases} P_{\pi/4} & \text{for } x\in[0,\frac{1}{2}]\\ 1 & \text{for } x\in(\frac{1}{2},1]\end{cases}.\]
We immediately have $pq(\frac{1}{2})=P_0P_{\pi/4}\neq P_{\pi/4}P_0=qp(\frac{1}{2})$ and \[\overline{q}(x) = \begin{cases} P_{\pi/4} & \text{for } x\in[0,\frac{1}{2})\\ 1 & \text{for } x\in[\frac{1}{2},1]\end{cases},\]
so $p\overline{q}=\overline{q}p$.
\end{xpl}

By concatenating the projection functions (and their orthocomplements) in \autoref{commuteclosure1} and \autoref{commuteclosure2} (with the functions taking the constant value $1$ on open intervals in between) it is easy to see that one can obtain $p,q\in\overline{\mathcal{P}(C([0,1],M_2)'')}^\circ$ having any combination of truth values for the statements
\begin{equation}\label{pq=qps}
pq=qp,\quad p\overline{q}=\overline{q}p,\quad\overline{p}q=q\overline{p}\quad\textrm{and}\quad\overline{p}\,\overline{q}=\overline{q}\,\overline{p},
\end{equation}
while still having $pq=_\mathrm{od}qp$, i.e. while still having $p$ and $q$ commute in $\overline{\mathcal{P}(C([0,1],M_2)'')}^\circ$ (although, if even one of the statements in \eqref{pq=qps} is true, then this is automatic from \autoref{commutativityimplication} and the fact $\overline{\mathcal{P}(C([0,1],M_2)'')}^\circ$ is orthomodular, by \autoref{matfunfin} \eqref{matfunfin3}).

While on the topic of commutativity, we mention the following natural question posed by Akemann \textendash\, if $p<q$ are topologically regular open projections in $A''$, for some separable C*-algebra $A$, can we always find commuting $a,b\in A$ that have range projections $p$ and $q$ respectively?  The answer is no in general, even when $p$ and $q$ satisfy all the various combinations of commutativity in \eqref{pq=qps}, as the following example shows.

\begin{xpl}[$p,q\in\overline{\mathcal{P}(A'')}^\circ$, $p<q$ but $ab\neq ba$ when $[a{]}=p$ and $[b{]}=q$]\hfill\\
Let $A=C([0,1],M_2)$ and define
\[p(x) = \begin{cases} P_0 & \text{for } x\in[0,\frac{1}{2})\\ 0 & \text{for }x=\frac{1}{2} \\ P_{\pi/4} & \text{for } x\in(\frac{1}{2},1]\end{cases}\qquad\textrm{and}\qquad
q(x) = \begin{cases} P_0 & \text{for } x\in[0,\frac{1}{2}]\\ 1 & \text{for } x\in(\frac{1}{2},1]\end{cases}.\]
We immediately see that $p,q\in\overline{\mathcal{P}(A'')}^\circ$, $p<q$, and all the equations in \eqref{pq=qps} hold.  If we have $a\in A$ with $[a]=p$ then, for $x\neq\frac{1}{2}$, $a(x)=\lambda(x)p(x)$, for some non-zero $\lambda(x)\in\mathbb{C}$.  Thus if we have $b\in A$ with $ab=ba$ then, for $x\neq\frac{1}{2}$, $b(x)\in\mathbb{C}p(x)+\mathbb{C}p(x)^\perp$.  But $(\mathbb{C}P_0+\mathbb{C}P_0^\perp)\cap(\mathbb{C}P_{\pi/4}+\mathbb{C}P_{\pi/4}^\perp)=\mathbb{C}1$ and hence $b(\frac{1}{2})=\lambda1$, for some $\lambda\in\mathbb{C}$.  In particular, $[b(\frac{1}{2})]\neq P_0$ and hence $[b]\neq q$.
\end{xpl}

A non-regular projection in a liminal C*-algebra is given in \cite{Akemann1970} Example I.2.  The following example shows that non-regular projections even exist in homogeneous C*-algebras and, moreover, that they can still be topologically regular and even equivalent to a regular projection.  In fact, the $p$ and $q$ given below are even Murray-von Neumann equivalent in $A^{**}$, and the partial isometry witnessing this will even yield an isomorphism between $pAp\cap A$ and $qAq\cap A$, i.e $p$ and $q$ will even be Peligrad-Zsid\'{o} equivalent (see \cite{PeligradZsido2000}).

\begin{xpl}[$p,q\in\overline{\mathcal{P}(A'')}^\circ$ with $\dim\circ p=\dim\circ q$, $p$ non-regular and $q$ regular]
Let $A=C([0,1],M_2)$ and let $p(x)=P_{(\pi/4)\sin(1/x)}$, for all $x\in(0,1]$, and set $p(0)=0$.  We immediately see that $p\in\overline{\mathcal{P}(A'')}^\circ$ and $\overline{p}$ is identical to $p$ except at $0$, where we have $\overline{p}(0)=1$.  If $r(x)=P_{\pi/2}$, for all $x\in[0,1]$, then $r\in\mathcal{P}(A)$ and $||\overline{p}r||=||r(0)||=1>1/\sqrt{2}=||pr||$, so $p$ is not regular.  Let $q$ be a continuous function from $(0,1]$ to rank $1$ projections in $M_2$ such that $\{q(1/n):n\in\mathbb{N}\setminus\{0\}\}$ is dense in the collection of rank $1$ projections in $M_2$.  Extending $q$ to a lower semicontinuous function on $[0,1]$ by defining $q(0)=0$, we immediately see that $q\in\overline{\mathcal{P}(A'')}^\circ$, while we also have $||a(0)||\leq\sup_{x\in(0,1]}||a(x)q(x)||\leq||aq||$ and hence $||a\overline{q}||=||aq||$, for any $a\in A$, i.e. $q$ is regular.
\end{xpl}

On the other hand, it is easy to find examples of open projections that are regular but not topologically regular.  Indeed, if $A$ is commutative then every projection in $A''$ is central and hence regular (as noted before \cite{Effros1963} Theorem 6.1) and so any non-regular open subset of the spectrum of $A$ will represent such a projection.  So, apart from the name, there does not appear to be any strong connection between regular and topologically regular projections.


\begin{xpl}[$\overline{\mathcal{P}(A'')}^\circ$ is not (operator norm) closed]\hfill\\
Let $A=C([0,1],M_2)$.  For each $n\in\mathbb{N}$, define $p_n\in\overline{\mathcal{P}(A'')}^\circ$ by $p_n(x)=P_{(1/n)\sin(1/x)}$, for all $x\in(0,1]$, and set $p_n(0)=0$.  Then $p_n$ converges to $p_\infty$, where $p_\infty(x)=P_0$, for all $x\in(0,1]$, and again $p_\infty(0)=0$.  This $p_\infty$ is open (as it should be, because the set of open projections is always norm closed in $A''$, by \cite{Pedersen1979} Proposition 3.11.9) but not topologically regular, as $\overline{p}_\infty(0)=P_0$ and $\overline{p}_\infty^\circ=\overline{p}_\infty$.
\end{xpl}

Note the above sequence also converges in the orthometric $\max(||pq^{\perp\circ}||,||qp^{\perp\circ}||)$ coming from the orthonorm (see \autoref{nsubcor}\eqref{nsubcor2}), but to a topologically regular open projection this time, namely $\overline{p}_\infty^\circ=\overline{p}_\infty$.  Thus the orthometric would appear to yield the more natural topology on $\overline{\mathcal{P}(A'')}^\circ$, at least in this case.  However, $\overline{\mathcal{P}(A'')}^\circ$ may not be complete even with respect to the orthometric.

\begin{xpl}[$\overline{\mathcal{P}(A'')}^\circ$ is not complete in the orthometric]\hfill\\
Let $A=C([0,1],M_2)$.  Let $g_1(x)=\sin(1/x)$, for all $x\in(0,1]$, and recursively define $g_{n+1}(x)=g_n(2x)$, for $x\in(0,\frac{1}{2}]$, and $g_{n+1}(x)=g_n(2x-1)$, for $x\in(\frac{1}{2},1]$.  For each $n$, let $p_n(x)=0$, when $x=m2^{-n}$ for some $m\in\mathbb{N}$ with $m<2^{-n}$, and $p_n(x)=P_{\sum_{k=1}^n4^{-k}g_k(x)}$, for all other $x\in[0,1]$.  Now $(p_n)\subseteq\overline{\mathcal{P}(A'')}^\circ$ and $(p_n)$ is Cauchy in the orthometric but, as dyadic rationals are dense in $[0,1]$, $(p_n)$ has no limit in $\overline{\mathcal{P}(A'')}^\circ$.
\end{xpl}

If we want to work with a complete metric space, we can of course just take the completion with respect to the orthometric (as long as $||\cdot\cdot||$ is an orthonorm on $[A]^\perp$).  The orthonorm then extends continuously to this completion and we can then naturally extend the ordering too by defining \[B\leq C\quad\Leftrightarrow\quad||BC^\perp||=0.\]  However, we do not know whether the order properties of the orthometric completion are generally better or worse than those of $[A]^\perp$ itself.

The next example is important because it shows that all the work we did in \S\ref{OrderTheory}, extending results about orthomodular lattices to separative ortholattices and carefully distinguishing $[p]$ and $[p]_p$, was in fact necessary for the theory to apply to arbitrary C*-algebras.

\begin{xpl}[$[A{]}^\perp$ is not orthomodular]\hfill\label{nonorthoxpl}\\
Let $A=C([0,1],\mathcal{K}(H))$, where $H$ is a separable infinite dimensional Hilbert space with basis $(e_n)$, and let $(s_n)$ enumerate a dense subset of $[0,1]$.  Define $U_n:H\rightarrow\mathbb{C}^2$ by $U_nv=(\langle v,e_{2n}\rangle,\langle v,e_{2n+1}\rangle)$ and set $p_n(s_n)=0$ and, for $x\neq s_n$, \[p_n(x)=U_n^*P_{1/(x-s_n)}U_n.\]  Let $p=\sum p_n\in\overline{\mathcal{P}(A)}^\circ$.  Also let $v=\sum(1/n)e_n\in H$, let $Q\in\mathcal{P}(\mathcal{K}(H))$ be the projection onto $\mathbb{C}v$ and define $q\in\mathcal{P}(A)$ by $q(x)=Q$, for all $x\in[0,1]$.  Note that $p\vee q>p$.

We first claim that $p\vee q\in\overline{\mathcal{P}(A)}^\circ$.  To see this, note that $\overline{p}_n(x)=p_n(x)$, for $x\neq s_n$, and $\overline{p}_n(s_n)=U_n^*U_n$.  Also $\overline{p}=\sum\overline{p}_n$ and $\overline{p\vee q}=\overline{p}\vee q$.  Now take $m\in\mathbb{N}$, let $(x_n)\subseteq[0,1]$ be such that $x_n\rightarrow s_m$ and $P_{1/(x_n-s_m)}=P_0$, for all $n$.  Then $\overline{p\vee q}(x_n)\rightarrow p(s_m)\vee Q\vee U_m^*P_0U_m$ in the weak (and strong) operator topology, while if $a\in A_+$ then $a(x_n)\rightarrow a(s_m)$ in norm.  Thus if $a\leq\overline{p\vee q}$ then $a(s_m)\leq p(s_m)\vee Q\vee U_m^*P_0U_m$.  But we could have also chosen $(x_n)$ such that $P_{1/(x_n-s_m)}=P_{\pi/2}$, for all $n$, and this would show that $a(s_m)\leq p(s_m)\vee Q\vee U_m^*P_{\pi/2}U_m$.  But \[(p(s_m)\vee Q\vee U_m^*P_0U_m)\wedge(p(s_m)\vee Q\vee U_m^*P_{\pi/2}U_m)=p(s_m)\vee Q\] which, as $m$ was arbitrary, shows that $\overline{p\vee q}^\circ=p\vee q$.

We next claim that $(p\vee q)\wedge\overline{p}^\perp=\{0\}$, which will verify the non-orthomodularity of $[A]^\perp$.  If not, we would have non-zero $r\in A_+$ with $r\leq p\vee q$ and $r\perp p$.  For some $n$, we would then have $r(s_n)\neq0$ and hence $r(s_n)=\lambda R$, where $R=p(s_n)\vee Q-p(s_n)$ and $\lambda\neq0$.  But $RU_n^*P_0U_n\neq0$ and hence $r(x)p_n(x)\neq0$ for some $x$ sufficiently close to $s_n$.  This contradicts the fact that $rp=0$.
\end{xpl}

\begin{xpl}[$A\in[A{]}^\perp_{\mathbf{D}<\aleph_0}$ that is not $<\!\!\aleph_0$-subhomogeneous]\label{aleph0sub}\hfill\\
Let $(e_n)$ be a basis for a separable Hilbert space $H$.  For each $n\in\mathbb{N}$, identify $M_n$ in the canonical way with $P_n\mathcal{B}(H)P_n$, where $P_n$ is the projection onto $\mathrm{span}\{e_1,\ldots,e_n\}$.  Let $A$ be the C*-subalgebra of $\prod_nM_n$ of sequences $(a_n)$ converging to some $a_\infty\in\mathcal{B}(H)$.  Then each canonical copy of $M_n$ in $A$ will be an $n$-homogeneous annihilator ideal (the annihilator of the kernel of the $n^\mathrm{th}$ coordinate representation in fact) and $(\bigvee_nM_n)^\perp=\{0\}$ so $A\in[A]^\perp_{\mathbf{D}<\aleph_0}$.  But the representation $\pi$ that takes each $(a_n)$ to its limit $a_\infty$ will map $A$ onto $\mathcal{K}(H)$.  Thus $\pi$ is irreducible which, as $H$ is infinite dimensional, means $A$ is not $<\!\!\aleph_0$-subhomogeneous.
\end{xpl}

We have focused on examples of C*-algebras of continuous functions, as these are the most tractable and already provide a good intuitive basis for working with annihilators.  But there are undoubtedly many secrets to be gleaned from looking at the annihilator structure of all the various C*-algebras under investigation in current research.  We make the first tentative step in this direction with the following simple example.

\begin{prp}\label{CAR}
The CAR algebra $M_{2^\infty}$ is (purely) infinite.
\end{prp}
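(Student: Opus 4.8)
The plan is to exhibit a single element $a\in M_{2^\infty}$ which, via \autoref{equidef}, witnesses
$M_{2^\infty}=\{a\}^{\perp\perp}\sim\{a^*\}^{\perp\perp}\subsetneq M_{2^\infty}$; this says the top annihilator $\mathbf 1=M_{2^\infty}$ fails to be $\sim$-finite, i.e.\ $M_{2^\infty}$ is infinite, and repeating the construction inside an arbitrary corner will give pure infiniteness. First I would record the dictionary. Writing $[a]=(aa^*)_{(0,\infty)}$ as usual, one has $\{a\}^\perp=\{a^*a\}^\perp$, and applying \eqref{perpseq} twice gives $p_{\{a\}^{\perp\perp}}=\overline{[a^*a]}^\circ$ and $p_{\{a^*\}^{\perp\perp}}=\overline{[aa^*]}^\circ$ in $\overline{\mathcal P(A'')}^\circ$ (\autoref{annproj}). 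Hence $\{a\}^{\perp\perp}=M_{2^\infty}$ precisely when there is no nonzero $c\in(M_{2^\infty})_+$ with $c(a^*a)=0$, while $\{a^*\}^{\perp\perp}\subsetneq M_{2^\infty}$ holds as soon as $ea=0$ for some nonzero projection $e$ (for then $\{a^*\}^{\perp\perp}\subseteq(\mathbf 1-e)M_{2^\infty}(\mathbf 1-e)$). So the whole problem reduces to producing $a\in M_{2^\infty}$ with $ea=0$ for some nonzero projection $e$, yet $c(a^*a)\neq 0$ for every nonzero $c\in(M_{2^\infty})_+$.

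For the construction, fix the unique trace $\tau$ on $M_{2^\infty}$, a projection $f\in M_{2^\infty}$ with $\tau(f)=\tfrac12$, and a masa $D\subseteq M_{2^\infty}$ with $D\cong C(K)$, $K$ the Cantor set, so that $\tau|_D$ is the natural dyadic measure $\mu$ on $K$ and there is a $\tau$-preserving conditional expectation $\phi\colon M_{2^\infty}\to D$. A routine greedy enumeration of a countable clopen base of $K$ yields pairwise disjoint nonempty clopen sets $U_n\subseteq K$ with $\sum_n\mu(U_n)\le\tfrac12$ and $\bigcup_nU_n$ dense in $K$; put $g_n=\chi_{U_n}\in D$. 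Since each $\mu(U_n)$ is dyadic and $\sum\mu(U_n)\le\tau(f)$, comparison in the UHF algebra $M_{2^\infty}$ furnishes pairwise orthogonal projections $e_n\le f$ with $\tau(e_n)=\mu(U_n)$ and partial isometries $w_n\in M_{2^\infty}$ with $w_n^*w_n=g_n$ and $w_nw_n^*=e_n$. Set $a=\sum_n2^{-n}w_n$, which converges in norm, so $a\in M_{2^\infty}$; from $g_mg_n=0=e_me_n$ for $m\neq n$ one gets $w_m^*w_n=0=w_mw_n^*$ for $m\neq n$, whence $a^*a=\sum_n4^{-n}g_n$ and $aa^*=\sum_n4^{-n}e_n$.

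It remains to verify the two properties. Since $w_n=e_nw_n$ and $e_n\le f$, we get $(\mathbf 1-f)a=0$ with $\mathbf 1-f\neq 0$, so $\{a^*\}^{\perp\perp}\subseteq(\mathbf 1-f)M_{2^\infty}(\mathbf 1-f)\subsetneq M_{2^\infty}$. For the other property, suppose $c\in(M_{2^\infty})_+$ satisfies $c(a^*a)=0$; multiplying by $g_n$ and using orthogonality of the $g_n$ shows $cg_n=0$ for every $n$, hence $\phi(c)g_n=\phi(cg_n)=0$, so the positive function $\phi(c)\in C(K)$ vanishes on each $U_n$, hence on the dense set $\bigcup_nU_n$, hence $\phi(c)=0$; as $\tau=\tau\circ\phi$ is faithful, $c=0$. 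Thus $\{a\}^{\perp\perp}=M_{2^\infty}$, and $M_{2^\infty}=\{a\}^{\perp\perp}\sim\{a^*\}^{\perp\perp}\subsetneq M_{2^\infty}$, so $\mathbf 1$ is not $\sim$-finite and $M_{2^\infty}$ is infinite. (As $\sim$ lies below perspectivity, this also shows $\mathbf 1$ is neither perspectivity- nor orthoperspectivity-finite.)

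For pure infiniteness, let $B\in[M_{2^\infty}]^\perp$ be any nonzero annihilator. As $M_{2^\infty}$ has property (SP), $B$ contains a nonzero projection, which we may shrink to a projection $e$ with $\tau(e)=2^{-j}$ for some $j$; then $eM_{2^\infty}e=eBe\in[B]_B$ and $eM_{2^\infty}e\cong M_{2^\infty}$. Running the construction above inside this copy of $M_{2^\infty}$ produces a sub-annihilator of $eBe$ equivalent to it but strictly smaller, which via \autoref{simBsim} (applicable since $M_{2^\infty}$, being separable, is anniseparable) shows that $[B]_B$ is not $\sim$-finite; hence $M_{2^\infty}$ is purely infinite. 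The genuinely delicate point is the middle step: arranging orthogonal projections $g_n$ in $M_{2^\infty}$ whose traces sum to a proper fraction of $\mathbf 1$ while $\bigvee_ng_n$ is nevertheless a dense (but proper) open projection — the non-commutative analogue of a fat Cantor set — and then establishing this density via the conditional expectation onto a masa. Everything else is bookkeeping with the annihilator/open-projection correspondence of \autoref{annproj} and \eqref{perpseq}.
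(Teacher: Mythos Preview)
Your argument is correct, with one harmless slip: from $(\mathbf 1-f)a=0$ you get $\mathbf 1-f\in\{a^*\}^\perp$, hence $\{a^*\}^{\perp\perp}\subseteq\{\mathbf 1-f\}^\perp=fM_{2^\infty}f$, not $(\mathbf 1-f)M_{2^\infty}(\mathbf 1-f)$ as you wrote. This does not affect the conclusion.

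Your construction and the paper's are structurally the same: both build $a$ as a norm-convergent weighted sum of partial isometries between two orthogonal families of projections in the diagonal masa, one family ``small'' (fitting under a proper projection, giving $\{a^*\}^{\perp\perp}\subsetneq M_{2^\infty}$) and one family ``dense'' (forcing $\{a\}^{\perp\perp}=M_{2^\infty}$). The paper does this explicitly with matrix units $e_{\delta_n,\sigma_n}$, where the $\sigma_n$ are chosen so that every finite binary string has a restriction or extension among them; your clopen sets $U_n$ with dense union are the same data in the language of $C(K)$. Where you genuinely diverge is in verifying $\{a\}^\perp=\{0\}$: the paper gives a direct combinatorial norm estimate, producing for each $b\in M_{2^n}$ a projection $e_n\in\{a\}^{\perp\perp}$ with $\|e_nbe_n\|=\|b\|$, while you argue via the $\tau$-preserving conditional expectation $\phi$ onto $D$ and faithfulness of $\tau$. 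Your route is cleaner and more portable (it would run unchanged in any UHF algebra, and plausibly in any C*-algebra with a faithful trace and a masa with conditional expectation), at the cost of invoking slightly more structure. The paper's route is entirely self-contained. For the ``purely'' part, your sketch and the paper's are at the same level of detail: both reduce to a corner $eM_{2^\infty}e\cong M_{2^\infty}$ and rerun the argument there.
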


\begin{proof}
For each $n\in\mathbb{N}$, let $(e_{s,t})_{s,t\in\{0,1\}^n}$ be the canonical matrix units of the canonical unital copy of $M_{2^n}$ in $M_{2^\infty}$.  So, for each $m,n\in\mathbb{N}$ and $s,t\in\{0,1\}^n$, we have $e_{s,t}=\sum_{r\in\{0,1\}^m}e_{sr,tr}$, where $sr$ is the $m+n$ length sequence obtained from simply concatenating $s$ and $r$ (and likewise for $tr$).  We also have $e_{q,r}e_{s,t}=0$ whenever $r$ and $s$ differ on their common domain.  For each $n\in\mathbb{N}$, let $\delta_n$ be the sequence of $n$ $0$s followed by a $1$, and recursively define $\sigma_n\in\{0,1\}^{n+1}$ so that $\sigma_j$ and $\sigma_k$ differ on their common domain, for distinct $j$ and $k$, and such that every finite sequence of $0$s and $1$s has some restriction or extension in $(\sigma_n)$.  Setting $a=\sum_{n=1}^\infty2^{-n}e_{\delta_n,\sigma_n}$, we have \[aa^*=\sum_{n=1}^\infty2^{-2n}e_{\delta_n,\delta_n}\qquad\textrm{while}\qquad a^*a=\sum_{n=1}^\infty2^{-2n}e_{\sigma_n,\sigma_n}.\]  Thus, for any $n\in\mathbb{N}$, we can find $m\in\mathbb{N}$ and $(t_r)_{r\in\{0,1\}^n}\subseteq\{0,1\}^m$ so that $e_n=\sum_{r\in\{0,1\}^n}e_{rt_r,rt_r}\leq 2^{2(m+n)}a^*a$ and hence $e_n\in\{a\}^{\perp\perp}$ with $||e_n||=1$.  Also, for any $n\in\mathbb{N}$ and $b=\sum_{r,s\in\{0,1\}^n}\lambda_{r,s}e_{r,s}\in M_{2^n}\subseteq M_{2^\infty}$, we have $||e_nbe_n||=||\sum_{r,s\in\{0,1\}^n}\lambda_{r,s}e_{rt_r,st_s}||=||b||$.  As $\bigcup_nM_{2^n}$ is dense in $M_{2^\infty}$ this means that $\sup_{e\in\{a\}^{\perp\perp},||e||=1}||be||=||b||$ for all $b\in M_{2^\infty}$ so $\{a\}^\perp=\{a\}^{\perp\perp\perp}=\{0\}$, i.e. $\{a\}^{\perp\perp}=M_{2^\infty}$.  On the other hand, we immediately see that $e_{1,1}\in\{a^*\}^\perp$ and so $\{a^*\}^{\perp\perp}\neq M_{2^\infty}$, so $a$ witnesses the equivalence of $M_{2^\infty}$ with a proper subannihilator.

To see that any annihilator $B$ is equivalent to a proper subannihilator just take any projection $p\in B$ and note that it must be unitarily equivalent to some projection in $M_{2^n}$, for some $n\in\mathbb{N}$, which itself will have a subprojection unitarily equivalent to a matrix unit $e\in M_{2^n}$.  But then $\{e\}^{\perp\perp}$ is isomorphic to $M_{2^\infty}$ and so the argument of the previous paragraph applies.
\end{proof}

This example might be a little surprising, given that the CAR algebra, and all other UHF algebras for that matter, are usually thought of as being very finite C*-algebras, primarily due to the fact they possess a (unique bounded) faithful trace $\tau$ which, furthermore, always gives rise to the unique hyperfinite $\mathrm{II}_1$ factor as $\pi_\tau[A]''$ (where $\pi_\tau$ is the representation coming from the GNS construction applied with $\tau$).  This might even lead some operator algebraists to dismiss the annihilators as clearly giving the `wrong' notion of finiteness.  Alternatively, you might blame the relation $\sim$ rather than the annihilators themselves, and if it turned out that the inclusion ordering on the annihilators in the CAR algebra is modular, despite being infinite w.r.t. $\sim$, then it might indeed be the better to focus on perspectivity rather than the $\sim$ relation, at least for some C*-algebras.  However, we would interpret \autoref{CAR} rather as merely showing that the close relationship between traces and projections in von Neumann algebras does not extend to annihilators in C*-algebras.  If the tracial structure is what you are interested in then you are better off looking at the positive elements of $A$ under the equivalence notion given in \cite{CuntzPedersen1979}, as mentioned in \S\ref{Motivation}.

In fact, traces give rise to dimension functions on $A$ but these are quite different from the natural dimension functions you would consider on $[A]^\perp$, even in the commutative case.  To see this, note that dimension functions on $C(X)$ correspond to measures $\mu$ on $X$, while annihilators correspond to regular open subsets $O$ of $X$.  Thus you would naturally think $D(O)=\mu(O)$ defines a dimension function on these regular open sets, but this is not the case.  Consider, for example, $X=[-1,1]$ and $\mu=\delta_0$, the point probability measure at $0$.  Then $\mu([-1,0))=0=\mu((0,1])$ even though $\mu([-1,0)\vee(0,1])=\mu([-1,1])=1$, so this function on regular open sets does not even satisfy the basic dimension function axiom
\begin{equation}\label{dimax}
D(N\vee O)+D(N\wedge O)=D(N)+D(O).
\end{equation}
Even using the Lebesgue measure for $\mu$ instead would not help, as there are Cantor-like nowhere dense subsets of $[-1,1]$ with non-zero Lebesgue measure, and their complements can be expressed as the union of two disjoint regular open sets.  Going up a dimension to $[-1,1]\times[-1,1]$, there even exists a pair of connected disjoint regular open sets whose complement is an Osgood curve (see \cite{Osgood1903}), i.e. a Jordan curve of non-zero Lebesgue measure.

\autoref{CAR} also illustrates the fact that taking direct limits, a common construction with C*-algebras, is likely to produce a C*-algebra that is infinite.  This begs the question of whether there might exist other general constructions that can still produce C*-algebras with the properties we want, but which remain finite.  Indeed, do there exist any separable finite type II C*-algebras at all?  Surely there must, and it is probably just a matter of examining enough examples through the lens of annihilators until one is found.  Although there is the possibility that some new technique might be needed to create one or that somehow separability precludes their existence (for non-separable examples, we can of course just take any type $\mathrm{II}_1$ von Neumann algebra), which would be quite intriguing in either case.

\bibliography{maths}{}

\begin{thebibliography}{MvN36}

\bibitem[Ada69]{Adams1969}
D.~H. Adams.
\newblock The completion by cuts of an orthocomplemented modular lattice.
\newblock {\em Bull. Austral. Math. Soc.}, 1:279--280, 1969.

\bibitem[AE02]{AkemannEilers2002}
Charles~A. Akemann and S{\o}ren Eilers.
\newblock Regularity of projections revisited.
\newblock {\em J. Operator Theory}, 48(3, suppl.):515--534, 2002.

\bibitem[Ake69]{Akemann1969}
Charles~A. Akemann.
\newblock The general {S}tone-{W}eierstrass problem.
\newblock {\em J. Functional Analysis}, 4:277--294, 1969.

\bibitem[Ake70]{Akemann1970}
Charles~A. Akemann.
\newblock Left ideal structure of {C}*-algebras.
\newblock {\em J. Functional Analysis}, 6:305--317, 1970.
\newblock \href {http://dx.doi.org/10.1016/0022-1236(70)90063-7}
  {\path{doi:10.1016/0022-1236(70)90063-7}}.

\bibitem[Arz13]{Arzikulov2013}
Farkhad Arzikulov.
\newblock On the classification problem for {C}*-algebras, 2013.
\newblock \href {http://arxiv.org/abs/1002.4711v3} {\path{arXiv:1002.4711v3}}.

\bibitem[Ber72]{Berberian1972}
Sterling~K. Berberian.
\newblock {\em Baer *-rings}.
\newblock Springer-Verlag, New York, 1972.
\newblock Die Grundlehren der mathematischen Wissenschaften, Band 195.

\bibitem[BH82]{BlackadarHandelman1982}
Bruce Blackadar and David Handelman.
\newblock Dimension functions and traces on {$C^{\ast} $}-algebras.
\newblock {\em J. Funct. Anal.}, 45(3):297--340, 1982.
\newblock \href {http://dx.doi.org/10.1016/0022-1236(82)90009-X}
  {\path{doi:10.1016/0022-1236(82)90009-X}}.

\bibitem[Bic11]{Bice2011}
Tristan Bice.
\newblock Filters in {C}*-algebras.
\newblock {\em Canad. J. Math.}, 2011.
\newblock \href {http://dx.doi.org/10.4153/CJM-2011-095-4}
  {\path{doi:10.4153/CJM-2011-095-4}}.

\bibitem[Bic12]{Bice2009}
Tristan Bice.
\newblock The order on projections in {$C^\ast$}-algebras of real rank zero.
\newblock {\em Bull. Polish Acad. Sci. Math.}, 60(1):37--58, 2012.
\newblock \href {http://dx.doi.org/10.4064/ba60-1-4}
  {\path{doi:10.4064/ba60-1-4}}.

\bibitem[Bic13]{Bice2012}
Tristan Bice.
\newblock The projection calculus.
\newblock {\em M\"{u}nster J. Math.}, 2013.
\newblock URL: \url{http://wwwmath.uni-muenster.de/mjm/acc/Bice.pdf}.

\bibitem[Bir67]{Birkhoff1967}
Garrett Birkhoff.
\newblock {\em Lattice theory}.
\newblock Third edition. American Mathematical Society Colloquium Publications,
  Vol. XXV. American Mathematical Society, Providence, R.I., 1967.

\bibitem[Bla94]{Blackadar1994}
Bruce Blackadar.
\newblock Projections in {C}*-algebras.
\newblock In {\em {C}*-algebras: 1943--1993 ({S}an {A}ntonio, {TX}, 1993)},
  volume 167 of {\em Contemp. Math.}, pages 130--149. Amer. Math. Soc.,
  Providence, RI, 1994.
\newblock \href {http://dx.doi.org/10.1090/conm/167/1292013}
  {\path{doi:10.1090/conm/167/1292013}}.

\bibitem[Bla06]{Blackadar2006}
B.~Blackadar.
\newblock {\em Operator algebras}, volume 122 of {\em Encyclopaedia of
  Mathematical Sciences}.
\newblock Springer-Verlag, Berlin, 2006.
\newblock Theory of {C}*-algebras and von Neumann algebras, Operator Algebras
  and Non-commutative Geometry, III.

\bibitem[Che91]{Chevalier1991}
G.~Chevalier.
\newblock Around the relative center property in orthomodular lattices.
\newblock {\em Proc. Amer. Math. Soc.}, 112(4):935--948, 1991.
\newblock \href {http://dx.doi.org/10.2307/2048637}
  {\path{doi:10.2307/2048637}}.

\bibitem[CK12]{ChajdaKuhr2012}
Ivan Chajda and Jan K{\"u}hr.
\newblock A generalization of effect algebras and ortholattices.
\newblock {\em Math. Slovaca}, 62(6):1045--1062, 2012.
\newblock \href {http://dx.doi.org/10.2478/s12175-012-0063-4}
  {\path{doi:10.2478/s12175-012-0063-4}}.

\bibitem[CP79]{CuntzPedersen1979}
Joachim Cuntz and Gert Pedersen.
\newblock Equivalence and traces on {C}*-algebras.
\newblock {\em J. Funct. Anal.}, 33(2):135--164, 1979.
\newblock \href {http://dx.doi.org/10.1016/0022-1236(79)90108-3}
  {\path{doi:10.1016/0022-1236(79)90108-3}}.

\bibitem[Cun77]{Cuntz1977}
Joachim Cuntz.
\newblock The structure of multiplication and addition in simple {C}*-algebras.
\newblock {\em Math. Scand.}, 40:215--233, 1977.
\newblock URL: \url{http://www.mscand.dk/article.php?id=2360}.

\bibitem[Dou66]{Douglas1966}
R.~G. Douglas.
\newblock On majorization, factorization, and range inclusion of operators on
  {H}ilbert space.
\newblock {\em Proc. Amer. Math. Soc.}, 17:413--415, 1966.

\bibitem[Eff63]{Effros1963}
Edward~G. Effros.
\newblock Order ideals in a {C}*-algebra and its dual.
\newblock {\em Duke Math. J.}, 30:391--411, 1963.

\bibitem[Fla82]{Flachsmeyer1982}
J{\"u}rgen Flachsmeyer.
\newblock Note on orthocomplemented posets. {II}.
\newblock In {\em Proceedings of the 10th {W}inter {S}chool on {A}bstract
  {A}nalysis ({S}rni, 1982)}, number Suppl. 2, pages 67--74, 1982.
\newblock URL: \url{https://eudml.org/doc/221665}.

\bibitem[FP10]{FoulisPulmannova2010}
David~J. Foulis and Sylvia Pulmannov{\'a}.
\newblock Type-decomposition of an effect algebra.
\newblock {\em Found. Phys.}, 40(9-10):1543--1565, 2010.
\newblock \href {http://dx.doi.org/10.1007/s10701-009-9344-3}
  {\path{doi:10.1007/s10701-009-9344-3}}.

\bibitem[FP13]{FoulisPulmannova2013}
David~J. Foulis and Sylvia Pulmannov{\'a}.
\newblock Dimension theory for generalized effect algebras.
\newblock {\em Algebra Universalis}, 69(4):357--386, 2013.
\newblock \href {http://dx.doi.org/10.1007/s00012-013-0237-0}
  {\path{doi:10.1007/s00012-013-0237-0}}.

\bibitem[GJ60]{GillmanJerison1960}
Leonard Gillman and Meyer Jerison.
\newblock {\em Rings of continuous functions}.
\newblock The University Series in Higher Mathematics. D. Van Nostrand Co.,
  Inc., Princeton, N.J.-Toronto-London-New York, 1960.

\bibitem[GW05]{GoodearlWehrung2005}
K.~R. Goodearl and F.~Wehrung.
\newblock The complete dimension theory of partially ordered systems with
  equivalence and orthogonality.
\newblock {\em Mem. Amer. Math. Soc.}, 176(831):vii+117, 2005.
\newblock \href {http://dx.doi.org/10.1090/memo/0831}
  {\path{doi:10.1090/memo/0831}}.

\bibitem[Jac85]{Jacobson1985}
Nathan Jacobson.
\newblock {\em Basic algebra. {I}}.
\newblock W. H. Freeman and Company, New York, second edition, 1985.

\bibitem[Kal83]{Kalmbach1983}
Gudrun Kalmbach.
\newblock {\em Orthomodular lattices}, volume~18 of {\em London Mathematical
  Society Monographs}.
\newblock Academic Press Inc. [Harcourt Brace Jovanovich Publishers], London,
  1983.

\bibitem[Kap51]{Kaplansky1951}
Irving Kaplansky.
\newblock Projections in {B}anach algebras.
\newblock {\em Ann. of Math. (2)}, 53:235--249, 1951.

\bibitem[Kap55]{Kaplansky1955}
Irving Kaplansky.
\newblock Any orthocomplemented complete modular lattice is a continuous
  geometry.
\newblock {\em Ann. of Math. (2)}, 61:524--541, 1955.

\bibitem[Li92]{Li1992}
B.R. Li.
\newblock {\em An Introduction to Operator Algebras}.
\newblock World Scientific Publishing Company Incorporated, 1992.

\bibitem[Loo55]{Loomis1955}
L.~H. Loomis.
\newblock The lattice theoretic background of the dimension theory of operator
  algebras.
\newblock {\em Mem. Amer. Math. Soc.}, 1955(18):36, 1955.

\bibitem[Lor97]{Loring1997}
Terry~A. Loring.
\newblock {\em Lifting solutions to perturbing problems in {$C^*$}-algebras},
  volume~8 of {\em Fields Institute Monographs}.
\newblock American Mathematical Society, Providence, RI, 1997.

\bibitem[Mac64]{MacLaren1964}
M.~Donald MacLaren.
\newblock Atomic orthocomplemented lattices.
\newblock {\em Pacific J. Math.}, 14:597--612, 1964.

\bibitem[MvN36]{MurrayvonNemann1936}
F.~J. Murray and J.~v.~Neumann.
\newblock On rings of operators.
\newblock {\em Ann. of Math. (2)}, 37(1):116--229, 1936.
\newblock \href {http://dx.doi.org/10.2307/1968693}
  {\path{doi:10.2307/1968693}}.

\bibitem[OrT11]{OrtegaRordamThiel2012}
Eduard Ortega, Mikael~R\o rdam, and Hannes Thiel.
\newblock The {C}untz semigroup and comparison of open projections.
\newblock {\em J.Funct. Anal.}, 260(12):3474--3493, 2011.
\newblock \href {http://dx.doi.org/10.1016/j.jfa.2011.02.017}
  {\path{doi:10.1016/j.jfa.2011.02.017}}.

\bibitem[Osg03]{Osgood1903}
William~F. Osgood.
\newblock A {J}ordan curve of positive area.
\newblock {\em Trans. Amer. Math. Soc.}, 4(1):107--112, 1903.
\newblock \href {http://dx.doi.org/10.2307/1986455}
  {\path{doi:10.2307/1986455}}.

\bibitem[Ped79]{Pedersen1979}
Gert~K. Pedersen.
\newblock {\em {$C^{\ast} $}-algebras and their automorphism groups}, volume~14
  of {\em London Mathematical Society Monographs}.
\newblock Academic Press Inc. [Harcourt Brace Jovanovich Publishers], London,
  1979.

\bibitem[PZ00]{PeligradZsido2000}
Costel Peligrad and L\'{a}szl\'{o} Zsid\'{o}.
\newblock Open projections of {C}*-algebras: comparison and regularity.
\newblock In {\em Operator theoretical methods ({T}imi\c soara, 1998)}, pages
  285--300. Theta Found., Bucharest, 2000.

\bibitem[RW98]{RaeburnWilliams1998}
Iain Raeburn and Dana~P. Williams.
\newblock {\em Morita equivalence and continuous-trace {C}*-algebras},
  volume~60 of {\em Mathematical Surveys and Monographs}.
\newblock American Mathematical Society, Providence, RI, 1998.

\bibitem[Tom60]{Tomita1959}
Minoru Tomita.
\newblock Spectral theory of operator algebras. {I}.
\newblock {\em Math. J. Okayama Univ.}, 9:63--98, 1959/1960.
\newblock URL:
  \url{http://www.math.okayama-u.ac.jp/mjou/mjou1-46/mjou_pdf/mjou_09/mjou_09_063.pdf}.

\bibitem[vN30]{vonNemann1930}
J.~v.~Neumann.
\newblock Allgemeine {E}igenwerttheorie {H}ermitescher {F}unktionaloperatoren.
\newblock {\em Math. Ann.}, 102(1):49--131, 1930.
\newblock \href {http://dx.doi.org/10.1007/BF01782338}
  {\path{doi:10.1007/BF01782338}}.

\bibitem[vN40]{vonNemann1940}
J.~v.~Neumann.
\newblock On rings of operators. {III}.
\newblock {\em Ann. of Math. (2)}, 41:94--161, 1940.
\newblock URL: \url{http://www.jstor.org/stable/1968823}.

\bibitem[vN60]{vonNeumann1960}
John von Neumann.
\newblock {\em Continuous geometry}.
\newblock Foreword by Israel Halperin. Princeton Mathematical Series, No. 25.
  Princeton University Press, Princeton, N.J., 1960.

\end{thebibliography}
\bibliographystyle{alphaurl}

\end{document}